\newtheorem{theorem}{Theorem}
\newtheorem{lemma}{Lemma}
\newtheorem{remark}{Remark}
\newtheorem{assumption}{Assumption}
\newtheorem{corollary}[lemma]{Corollary}
\newenvironment{proof}[1][Proof]{\begin{trivlist}
\item[\hskip \labelsep {\bfseries #1}]}{\qed\end{trivlist}}
\newcommand*\at[2]{\left.#1\right|_{#2}}
\newcommand*\del[0]{\partial}
\newcommand*\ddt[0]{\frac{d}{d t}}
\newcommand*\tr[0]{\text{tr}}
\newcommand*\KL[2]{\mathcal{KL}\left(#1\|#2\right)}
\newcommand*\lin[1]{\bm{\left\langle} #1 \bm{\right\rangle}}
\newcommand*\lint[1]{\bm{\left\langle} #1 \bm{\right\rangle}_{\tiny{\circled{x}}}}
\newcommand*\E[1]{\mathbb{E}\left[#1\right]}
\newcommand*\Ep[2]{\mathbb{E}_{#1}\left[#2\right]}
\newcommand\numberthis{\addtocounter{equation}{1}\tag{\theequation}}
\newcommand*\lrb[1]{\left[#1\right]}
\newcommand*\lrbb[1]{\left\{#1\right\}}
\newcommand*\lrp[1]{\left(#1\right)}
\newcommand*\lrn[1]{\left\|#1\right\|}
\newcommand*\lrabs[1]{\left|#1\right|}
\newcommand*\ind[1]{{\mathbbm{1}\lrbb{#1}}}
\newcommand*{\qed}{\hfill\ensuremath{\blacksquare}}
\renewcommand*{\Re}{\mathbb{R}}
\renewcommand*{\P}{\mathbf{P}}
\newcommand*{\M}[1]{M_{#1}}
\newcommand*{\dd}[1]{\frac{\del}{\del #1}}
\newcommand*\circled[1]{\tikz[baseline=(char.base)]{
\node[shape=circle,draw,inner sep=1pt] (char) {#1};}}
\title{Quantitative Weak Convergence for Discrete Stochastic Processes}
\author{Xiang Cheng  \thanks{x.cheng@berkeley.edu; Computer Science Division, UC Berkeley; work performed while at Adobe Research.} \and Peter L. Bartlett \thanks{peter@berkeley.edu; Computer Science Division \& Department of Statistics, UC Berkeley.} \and Michael I. Jordan \thanks{jordan@cs.berkeley.edu; Computer Science Division \& Department of Statistics, UC Berkeley.}}
\begin{document}
\maketitle

\abstract{In this paper, we quantitative convergence in $W_2$ for a family of Langevin-like stochastic processes that includes stochastic gradient descent and related gradient-based algorithms. Under certain regularity assumptions, we show that the
iterates of these stochastic processes converge to an invariant
distribution at a rate of $\tilde{O}\lrp{1/\sqrt{k}}$ where $k$ is the
number of steps; this rate is provably tight up to log factors. Our result reduces to a quantitative form of the classical Central Limit Theorem
in the special case when the potential is quadratic.}

\begin{section}{Introduction}
Many randomized algorithms in machine learning can be analyzed
as some kind of stochastic process.
For example, MCMC algorithms intentionally inject carefully designed
randomness in order to sample from a desired target distribution. 
There is a second category of randomized algorithms for which the
for which the goal is optimization rather than sampling, and the randomness is viewed as a price to pay for computational
tractability. For example, stochastic gradient methods for large scale
optimization use noisy estimates of a gradient because they are cheap.
While such algorithms are not designed with the goal of sampling from a
target distribution, an algorithm of this kind has random
outputs, and its behavior is determined by the distribution of its
output. Results in this paper provide tools for analyzing the convergence of such algorithms as stochastic processes.

We establish a quantitative
Central Limit Theorem for stochastic processes that have the
following form:
\begin{align*}
x_{k+1} = x_k - \delta \nabla U(x_k) + \sqrt{\delta} \xi_k(x_k),
\numberthis\label{e:intro_discrete}
\end{align*}
where $x_k\in \Re^d$ is an iterate, $\delta$ is a stepsize,
$U:\Re^d\to\Re$ is a potential function, and $\xi(\cdot)$ is a zero-mean,
position-dependent noise variable. Under certain assumptions,
we show that \eqref{e:intro_discrete}
converges in $2$-Wasserstein distance to the following SDE:
\begin{align*}
d x(t) = -\nabla U(x(t)) dt + \sigma(x(t)) dB_t,
\numberthis \label{e:intro_exact}
\end{align*}
where $\sigma(x) = \lrp{\E{\xi(x) \xi(x)^T}}^{1/2}$. The notion of
convergence is summarized in the following informal statement of our
main theorem:
\begin{theorem}(Informal)
Let $p_k$ denote the distribution of $x_k$ in
\eqref{e:intro_discrete}, and let $p^*$ denote the invariant
distribution of \eqref{e:intro_exact}. Then there exist constants
$c_1,c_2$, such that for all $\epsilon>0$,
if $\delta \leq c_1 \epsilon^2/d^7$ and $k \ge c_2d^7/\epsilon^2$,
\begin{align*}
W_2(p_k,p^*) \leq \epsilon.
\end{align*}
\end{theorem}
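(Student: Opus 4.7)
The plan is to introduce the continuous SDE \eqref{e:intro_exact} as an intermediate object and split via the triangle inequality: let $q_T$ denote the law of the SDE run for time $T = k\delta$ from a distribution coupled to $p_0$, and write
\[
W_2(p_k, p^*) \leq W_2(p_k, q_T) + W_2(q_T, p^*).
\]
The second term is handled by continuous-time ergodicity. Under a convexity/dissipativity assumption on $U$ and Lipschitz regularity of $\sigma$, the SDE is exponentially contractive in $W_2$ via a synchronous Brownian coupling, giving $W_2(q_T, p^*) \leq C e^{-\mu T} W_2(p_0, p^*)$. Taking $T = \Theta(\log(1/\epsilon))$ drives this term below $\epsilon/2$, with polynomial-in-$d$ dependence entering through $W_2(p_0, p^*)$ and the contraction constants.

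The first term is the crux. On each subinterval $[k\delta, (k+1)\delta]$ I would couple the discrete update to the SDE by freezing the drift at $x_k$ and pairing the discrete noise $\sqrt{\delta}\,\xi_k(x_k)$ with the Gaussian increment $\sigma(x_k)(B_{(k+1)\delta}-B_{k\delta})$. The drift-mismatch part is routine: it contributes $O(\delta^{3/2})$ per step using smoothness of $\nabla U$ and moment bounds on $\xi$. The noise-mismatch part is the main obstacle, because the two increments agree in their first and second moments by construction but not in distribution, so a naive $W_2$ bound is only $O(\sqrt{\delta})$ per step, summing to an $\tilde{O}(1)$ error that is useless for the claim.

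To sharpen the noise-mismatch contribution to $O(\delta^{3/2})$ per step, I would use Stein's method for the Gaussian target: for smooth test functions $f$, solve the Stein equation $\stein{\sigma(x_k)}\phi = f - \Ep{Z}{f(Z)}$ with $Z\sim\N(0,\sigma(x_k)^2)$, and Taylor-expand $\phi$ to third order against the non-Gaussian increment. The first two Taylor terms cancel because $\E{\xi(x_k)} = 0$ and $\E{\xi(x_k)\xi(x_k)^T} = \sigma(x_k)^2$, leaving a remainder of order $\delta^{3/2}$ with a constant controlled by third-derivative bounds of $\phi$ on $\Re^d$. Transferring this smooth-function bound back into a $W_2$ bound (via regularity of optimal transport potentials, or a smoothed Kantorovich--Rubinstein representation together with the strong convexity of $U$ so that Lipschitz test functions on $\Re^d$ suffice) produces a per-step $W_2$ contribution of order $\delta^{3/2}$. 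Accumulating these errors through the \emph{contractive} SDE semigroup, rather than naively summing, yields
\[
W_2(p_k, q_T) \;\leq\; \sum_{j=0}^{k-1} e^{-\mu(k-j)\delta}\cdot O(\delta^{3/2}) \;=\; \tilde{O}(\sqrt{\delta}/\mu).
\]
Combining the two pieces gives $W_2(p_k, p^*) = \tilde{O}\lrp{\sqrt{\delta} + e^{-\mu T}}$; choosing $T = \Theta(\log(1/\epsilon))$ and $\delta = \tilde{O}(\epsilon^2)$ yields the claimed rate, and $k = T/\delta = \tilde{O}(1/\epsilon^2)$. I expect the $d^7$ factor to emerge from the third-derivative bounds on the Stein solution in dimension $d$, uniform-in-time higher-moment bounds on $\xi$ and on the chain, and the Lipschitz constants of $\sigma$ and $\nabla U$. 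The hardest step will be executing the Stein-method reduction directly in $W_2$ (rather than in a weaker smooth metric) and tracking the polynomial-in-$d$ constants cleanly through both the Stein PDE regularity and the Taylor remainder.
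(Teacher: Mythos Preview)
Your proposal has a real gap at the weak-to-$W_2$ conversion step. Stein's method does give an $O(\delta^{3/2})$ bound on $\E{f(\sqrt\delta\,\xi)}-\E{f(\sqrt\delta\,\sigma Z)}$ for test functions $f$ with bounded \emph{third} derivative, because the first two Taylor terms cancel by moment matching. But neither of your suggested transfers delivers this in $W_2$: the Kantorovich--Rubinstein duality is for $W_1$ and uses Lipschitz $f$, which carry no third-derivative control; and the optimal transport potential between two nearby measures is not generically $C^3$ with bounds independent of $\delta$. In fact the one-step $W_2$ error between a discrete increment and an SDE increment from the same point is genuinely $\Theta(\sqrt\delta)$, since $W_2\bigl(x+\sqrt\delta\,\xi,\;x+\sqrt\delta\,\sigma Z\bigr)=\sqrt\delta\,W_2(\xi,\sigma Z)$ and moment matching does not shrink $W_2(\xi,\sigma Z)$. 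So the summand $O(\delta^{3/2})$ in your accumulation $\sum_j e^{-\mu(k-j)\delta}\cdot O(\delta^{3/2})$ is not available as a $W_2$ quantity. (Trying to let the SDE semigroup do the smoothing instead runs into $\lrn{\nabla^3 P_t f}\sim t^{-3/2}$ for Lipschitz $f$, which diverges at $t=0$.)

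The paper avoids this entirely by a different decomposition: it never couples the chain to the SDE path. It writes
\[
W_2(\Phi_\delta^k(p_0),p^*)\le W_2(\Phi_\delta^k(p_0),\Phi_\delta^k(p^*))+W_2(\Phi_\delta^k(p^*),p^*),
\]
bounds the first term by contraction of the \emph{discrete} map $\Phi_\delta$, and reduces the second recursively to the one-step quantity $W_2(\Phi_\delta(p^*),p^*)$. The key is that $p^*$ itself is the initial measure here: writing the pushforward density as $p_\delta(x)=\Ep{\eta}{p^*(F_\eta^{-1}(x))\det(\nabla F_\eta)^{-1}}$ and Taylor-expanding $p^*(F_\eta^{-1}(x))$ to third order, the expectation over $\eta$ turns the order-$\delta$ terms into precisely the Fokker--Planck operator applied to $p^*$, which vanishes by invariance. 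What survives is a \emph{pointwise} bound $\lrabs{p_\delta(x)/p^*(x)-1}=O(\delta^{3/2})$, hence a $\chi^2$ bound, hence (via Talagrand or a subgaussian truncation) a $W_2$ bound of order $\delta^{3/2}$ with no weak-to-strong loss. The $\delta^{3/2}$ comes from Fokker--Planck cancellation at the invariant measure, not from a Stein equation; that is the mechanism your plan is missing.
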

In other words, under the right scaling of the step size, the long-term
distribution of $x_k$ depends only on the expected drift $\nabla U(x)$
and the covariance matrix of the noise $\sigma(x)$. As long
as we know these two quantities, we can draw conclusions
about the approximate behavior of \eqref{e:intro_discrete} through
$p^*$, and ignore the other characteristics of $\xi$.

Our result can be viewed as a general, quantitative form of the classical
Central Limit Theorem, which can be thought of as showing that
$x_k$ in \eqref{e:intro_discrete} converges in distribution to ${\cal
N}(0, I)$, for the specific case of $U(x) = \|x\|_2^2/2$ and $\sigma_x =
I$. Our result is more general: $U(x)$ can be
any strongly convex function satisfying certain regularity
assumptions and $\sigma_x$ can vary with position. We show that
$x_k$ converges to the invariant distribution of \eqref{e:intro_exact},
which is not necessarily a normal distribution.
The fact that the classical CLT is a special case implies that the
$\epsilon^{-2}$ rate in our main theorem cannot be improved in
general. We discuss this in more detail in Section \ref{sss:clt}.
\end{section}

\begin{section}{Related Work}
A crucial part of our technique in this paper is based on \cite{zhai2018high}, which established that for $iid$ random variables $x_i$ with mean zero and covariance $I$, $W_2\lrp{\frac{\sum_{i=1}^k x_i}{\sqrt{k}}, Z} = O\lrp{\frac{\beta \sqrt{d}\log (k)}{\sqrt{k}}}$, where $Z$ is the standard Gaussian random variable, and $\beta$ is a a.s. upper bound on $\|x_i\|_2$. \cite{zhai2018high} also proves a lower bound of $\Omega(\frac{\sqrt{d} \beta}{k})$, thus showing that (under their assumptions), the rate of $O\lrp{\frac{\beta \sqrt{d}\log (k)}{\sqrt{k}}}$ is tight up to log factors.

Prior to this, a number of other authors have proved an optimal $O(1/{\sqrt{k}})$ rate, but without establishing dimension dependence~\citep[see, e.g.,][]{bobkov2013entropic, bonis2015rates,rio2009upper}. Following \cite{zhai2018high}, \cite{eldan2018clt} improved the rate to $O(\frac{\beta \sqrt{d \log (k)}}{\sqrt{k}})$. Under a different set of assumptions, authors of \cite{courtade2019existence} established a $W_2$ CLT with a rate of $O(\frac{\sqrt{d C} }{\sqrt{k}})$, where $C$ is the Poincare constant. It is worth noting that the $\beta$ term in \cite{zhai2018high}, \cite{eldan2018clt} and in the results of this paper is typically on the order of $\sqrt{d}$, whereas the term $C$ in \cite{courtade2019existence} is typically dimension-free. On the other hand, the assumptions of \cite{zhai2018high} and \cite{eldan2018clt} are incompatible with \cite{courtade2019existence}. In \cite{courtade2018bounds}, under more general assumptions than \cite{zhai2018high, eldan2018clt} the author proved an optimal $\sqrt{d}$ dimensional dependence, but with suboptimal $k^{1/4}$ dependence.

Another relevant line of work is the recent work on quantitative
rates for Langevin MCMC algorithms. Langevin MCMC algorithms can be
thought of as discretizations of the Langevin diffusion SDE, which is
essentially \eqref{e:intro_exact} for $\sigma(x)=I$. Authors such as
\cite{dalalyan2017theoretical} and \cite{durmus2016high} were able to
prove quantitative convergence results for Langevin MCMC by bounding its
discretization error from the Langevin SDE. The processes we study in this paper differ from Langevin MCMC in two crucial ways: first, the noise $T_\eta(x)$ is not Gaussian, and second, the diffusion matrix in \eqref{e:exactsde} varies with $x$.

Finally, this work is also motivated by results such as those due to
\cite{ruppert1988efficient}, \cite{polyak1992acceleration},
\cite{fan2018statistical}, which show that iterates of the stochastic
gradient algorithm with diminishing step size converge asymptotically
to a normal distribution. (The limiting distribution of the
appropriately rescaled iterates is Gaussian in this case, because a
smooth $U$ is locally quadratic.) These classical
results are asymptotic and do not give explicit rates.

\end{section}

\begin{section}{Definitions and Assumptions}
\label{s:definitionsandassumptions}

We will study the discrete process given by
\begin{equation}
\label{e:discretesde}
x_{k+1} = x_{k} - \delta \nabla U(x_{k}) + \sqrt{2\delta }
T_{\eta_k}(x_{k}),
\end{equation}
where
\begin{enumerate}
\item $U(x): \Re^d \to \Re$ is the potential function,
\item $\eta_1,\eta_2,\ldots,\eta_k$ are iid random variables which take values in some set $\Omega$ and have distribution $q(\eta)$,
\item $T:\Omega\times\Re^d\to\Re$ is the noise map, and
\item $\delta>0$ is a stepsize.
\end{enumerate}
Let $\hat{p}(x)$ denote the invariant distribution of the markov chain in \eqref{e:discretesde}.
Define
\begin{align*}
\sigma_x := \lrp{\Ep{q(\eta)}{T_\eta(x)T_\eta(x)^T}}^{1/2}.
\numberthis \label{d:sigmax}
\end{align*}
We will also study the continuous SDE given by 
\begin{equation}
\label{e:exactsde}
d x(t) = - \nabla U(x(t)) dt + \sqrt{2} \sigma_{x(t)} dB_t,
\end{equation}
where $B_t$ denotes the standard $d$-dimensional Brownian motion, and
$\sigma_x: \Re^d \to \Re^{d\times d}$ is as defined in \eqref{d:sigmax}. Let
$p^*$ denote the invariant distribution of \eqref{e:exactsde}.

For convenience of notation, we define the following:
\begin{enumerate}
\item Let $p_k$ be the distribution of $x_k$ in \eqref{e:discretesde}.
\item Let $F: \Omega \times \Re^d \to \Re^d$ be the transition map:
\begin{align*}
\numberthis \label{d:Feta}
F_\eta(x) := x - \delta \nabla U(x) + \sqrt{2\delta} T_\eta(x),
\end{align*}
so that $x_{k+1} = F_{\eta_k}(x_{k})$. Note that $F_\eta(x)$ also depends on $\delta$, but we do not write this explicitly; the choice of $\delta$ should be clear from context. 
\item Define $\Phi_\delta$ as
\begin{align*}
\Phi_\delta(p) := \lrp{F_\eta}_{\#} p,
\numberthis \label{d:Phip}
\end{align*}
where $\#$ denotes the pushforward operator; i.e., $\Phi(p)$ is  the
distribution of $F_\eta(x)$ when $x\sim p$, so that $p_{k+1} =
\Phi_\delta(p_k)$
\end{enumerate}

We make the following assumptions about $U$.
\begin{assumption}
\label{ass:uissmooth}
There exist constants $m$ and $L$ satisfying, for all $x$, 
\begin{align*}
1.\ &\nabla U(0)=0, &
2.\ &\nabla^2 U(x) \preceq LI, &
3.\ &\nabla^2 U(x)\succeq mI, &
4.\ &\lrn{\nabla^3 U(x)}_2 \leq L,
\end{align*}
where $\|\cdot\|_2$ denotes the operator norm; see
\eqref{d:operatornorm} below.
\end{assumption}
We make the following assumptions about $q(\eta)$ and $T_\eta(x)$:
\begin{assumption}
\label{ass:ximeanandvariance}
There exists a constant $c_{\sigma}$, such that for all $x$, 
\begin{align*}
&1.\ \Ep{q(\eta)}{T_\eta(x)}=0, &
&2.\ \Ep{q(\eta)}{T_\eta(x)T_\eta(x)^T} \prec c_{\sigma}^2 I.
\end{align*}
\end{assumption}

\begin{subsection}{Basic Notation}
For any two distributions $p$ and $q$, let $W_2(p,q)$ be the 2-Wasserstein distance between $p$ and $q$. We overload the notation and sometimes use $W_2(x,y)$ for random variables $x$ and $y$ to denote the $W_2$ distance between their distributions.

For a $k^{th}$-order tensor $M\in \Re^{d^k}$ and a vector $v\in \Re^d$, we define the product
$A = Mv$ such that $\lrb{A}_{i_1...i_{k-1}} = \sum_{j=1}^d \lrb{M}_{i_1...i_{k-1}, j} \cdot v_j$.
Sometimes, to avoid ambiguity, we will write $A = \lint{Mv}$
instead.

We let $\|\|_2$ denote the operator norm:
\begin{align*}
\numberthis
\label{d:operatornorm}
\lrn{M}_2 = \sup_{v\in \Re^d, \|v\|_2 =1} \lrn{Mv}_2.
\end{align*}
It can be verified that for all $k$, $\|\cdot\|_2$ is a norm over $\Re^{kd}$.

Finally, we use the notation $\lin{}$ to denote two kinds of inner products:
\begin{enumerate}
\item For vectors $u, v\in \Re^d$,
$\lin{u,v}=\sum_{i=1}^d u_i v_i$ (the dot product).
\item For matrices $A, B \in \Re^{2d}$,
$\lin{A,B} := \sum_{i=1}^d \sum_{j=1}^d A_{i,j} B_{j,i}$
(the trace inner product).
\end{enumerate}
Although the notation is overloaded, the usage should be clear from context.

\end{subsection}
\end{section}

\begin{section}{Main Results and Discussion}
We will consider two settings: one in which the noise $T_\eta$ in \eqref{e:discretesde} does not depend on $x$, and one in which it does. We will treat these results separately in Theorem \ref{t:s_convergencerate} and Theorem \ref{t:convergencerate}.

\begin{subsection}{Homogeneous Noise}
\label{ss:mainresult_homogeneousnoise}
Our first theorem deals with the case when $T_\eta$ is a constant with respect to $x$. In addition to Assumption \ref{ass:uissmooth} and Assumption \ref{ass:ximeanandvariance}, we make the following assumptions:
\begin{assumption}\label{ass:simplerassumptions}
For all $x$, 
\begin{align*}
&1.\ T_\eta(x) = T_\eta, &
&2.\ \lrn{T_\eta}_2\leq \sqrt{L}, &
&3.\ \sigma_x = I.
\end{align*}
\end{assumption}
Under these assumptions, the invariant distribution $p^*(x)$ of \eqref{e:exactsde} has the form
\begin{align*}
\numberthis \label{e:simplep*}
p^*(x) \propto e^{-U(x)}.
\end{align*}
\begin{theorem}\label{t:s_convergencerate}
Let $p_0$ be an arbitrary initial distribution, and let $p_{k}$ be defined as in \eqref{e:discretesde} with step size $\delta$. Recall the definition of $\hat{p}$ as the invariant distribution of \eqref{e:discretesde} and $p^*$ as the invariant distribution of \eqref{e:exactsde}.

For $\delta \leq \frac{\epsilon^2}{d^3}\cdot poly
\lrp{\frac{1}{m},L}^{-1}$,
\begin{align*}
W_2(\hat{p},p^*) \leq \epsilon \numberthis\label{e:t1:1}.
\end{align*}
If, in addition, $k \geq \frac{d^3}{\epsilon^2}\log
\frac{W_2(p_0,p^*)}{\epsilon} \cdot poly \lrp{\frac{1}{m},L}$,
\begin{align*}
W_2\lrp{p_{k}, p^*}\leq {\epsilon} \numberthis\label{e:t1:2}.
\end{align*}
\end{theorem}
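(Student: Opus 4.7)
My plan has two parts, one for each claim. Inequality (\ref{e:t1:2}) will follow from (\ref{e:t1:1}) by the triangle inequality $W_2(p_k,p^*)\le W_2(p_k,\hat p) + W_2(\hat p,p^*)$, provided $\Phi_\delta$ contracts in $W_2$. The contraction is immediate from the synchronous-noise coupling (same $\eta$ applied to both copies): $F_\eta(x) - F_\eta(y) = (x-y) - \delta(\nabla U(x)-\nabla U(y))$, so Assumption \ref{ass:uissmooth} gives $\|F_\eta(x)-F_\eta(y)\|^2 \le (1-2m\delta+L^2\delta^2)\|x-y\|^2 \le (1-m\delta)\|x-y\|^2$ for $\delta \le m/L^2$. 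Iterating yields $W_2(p_k,\hat p) \le e^{-m\delta k/2} W_2(p_0,\hat p)$, and the stated lower bound on $k$ drives this below $\epsilon/2$.

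The real content is (\ref{e:t1:1}). I would set up an invariance argument against the continuous flow $\Psi_T$ of (\ref{e:exactsde}). The SDE itself is an $e^{-mT}$-contraction in $W_2$ by the synchronous-Brownian coupling combined with strong convexity of $U$. Picking a batch length $T=n\delta$ and using $\Phi_\delta^n\hat p = \hat p$ and $\Psi_T p^* = p^*$,
\begin{align*}
W_2(\hat p,p^*) \le W_2(\Phi_\delta^n\hat p,\Psi_T\hat p) + e^{-mT}W_2(\hat p,p^*),
\end{align*}
which rearranges to $W_2(\hat p,p^*) \le E_{\mathrm{batch}}/(1-e^{-mT})$ with $E_{\mathrm{batch}}:= W_2(\Phi_\delta^n\hat p,\Psi_T\hat p)$. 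Fixing $T$ of order $1/m$ makes the denominator $\Theta(1)$, so it suffices to show $E_{\mathrm{batch}}\le \epsilon$.

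To control $E_{\mathrm{batch}}$ I would couple the batch $(x_j)_{j=0}^n$ and the SDE path $(x(t))_{t\in[0,T]}$ sharing an initial draw $x_0\sim\hat p$, and decompose $x_n - x(T) = (A) + (B) + (C)$ with
\begin{align*}
(A) &= \delta\sum_{j=0}^{n-1}\bigl(\nabla U(x(j\delta))-\nabla U(x_j)\bigr),\\
(B) &= \delta\sum_{j=0}^{n-1}\nabla U(x(j\delta)) - \int_0^T\nabla U(x(t))\,dt,\\
(C) &= \sqrt{2\delta}\sum_{j=0}^{n-1}T_{\eta_j} - \sqrt 2\, B_T.
\end{align*}
Term $(C)$ is the place to invoke \cite{zhai2018high}: construct a coupling of $(T_{\eta_j})$ to a single standard Gaussian $Z$ with $W_2\bigl(\sum_{j}T_{\eta_j},\sqrt n Z\bigr) = O(\sqrt{Ld}\log n)$, and then \emph{define} $B_T:= \sqrt\delta\, Z$, so $\|(C)\|_{L^2}=O(\sqrt{\delta Ld}\log n)$. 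Term $(B)$ is a Taylor/Riemann-sum error: using $\|\nabla^2 U\|\le L$, $\|\nabla^3 U\|\le L$ together with the stationary moment bound $\Ep{\hat p}{\|x\|^2}=O(d/m)$ (obtained via a separate Lyapunov argument on $\|x\|^2$), this is $L^2$-small with a polynomial $d$-dependence. Term $(A)$ is closed by a discrete Gronwall in $j$, since each summand is at most $\delta L\|x_j-x(j\delta)\|$. Combining the three estimates with $T\sim 1/m$ yields the stated requirement $\delta\le\epsilon^2/d^3\cdot\mathrm{poly}(1/m,L)^{-1}$.

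The hardest step is the joint coupling in $(C)$: the drift term $(A)$ is path-dependent, so the Zhai coupling of $\sum T_{\eta_j}$ with a Gaussian cannot be performed in isolation from the dynamics. I would resolve this by constructing, conditional on $x_0\sim\hat p$, the optimal Zhai coupling of the full sequence $(T_{\eta_j})_{j<n}$ together with $Z$ on a single probability space, then propagating both the discrete and continuous dynamics deterministically given that joint noise (in particular with $B_T:=\sqrt\delta\, Z$ and the interior $B_t$ filled in by a Brownian bridge). A subsidiary obstacle is verifying the second-moment bound on $\hat p$ in the presence of non-Gaussian noise; that I would handle by a one-step Lyapunov contraction of $\|x\|^2$ using Assumptions \ref{ass:uissmooth} and \ref{ass:ximeanandvariance}.
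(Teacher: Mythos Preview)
Your reduction of (\ref{e:t1:2}) to (\ref{e:t1:1}) via the discrete contraction is fine, and so is the scheme for isolating $E_{\mathrm{batch}}$. The gap is in the coupling you propose for $E_{\mathrm{batch}}$, specifically in the interaction between $(A)$ and $(C)$. The Gronwall argument for $(A)$ requires a bound on $\|x_j - x(j\delta)\|$ at \emph{every} intermediate $j$, and that partial error contains $(C)_j := \sqrt{2\delta}\sum_{i<j}T_{\eta_i} - \sqrt{2}\,B_{j\delta}$. Zhai's theorem only couples the \emph{terminal} sum $\sum_{i<n}T_{\eta_i}$ to a single Gaussian; it says nothing about partial sums. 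Once you set $B_T:=\sqrt{\delta}\,Z$ and fill in the interior with a Brownian bridge, the bridge increments carry no information about the individual $T_{\eta_i}$'s, so $\|(C)_j\|_{L^2}$ is of order $\sqrt{j\delta\,d}$ with no CLT gain whatsoever; for $j$ of order $n$ this is $\sqrt{Td}\asymp\sqrt{d/m}$, a constant independent of $\delta$. Feeding that into the Gronwall makes the bound on $(A)$ useless. What your argument would actually need is a multivariate strong-approximation (KMT/Zaitsev-type) coupling that controls $\max_j\|(C)_j\|$; Zhai's result does not supply this, and you have not invoked any such tool.

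The paper avoids this entirely by never comparing the discrete chain to the continuous semigroup. It bounds the \emph{one-step} discrepancy $W_2(\Phi_\delta(p^*),p^*)$ directly at the density level: write $p_\delta(x)=\mathbb{E}_\eta\bigl[p^*(F_\eta^{-1}(x))\,\det(\nabla F_\eta)^{-1}\bigr]$ by change of variables, Taylor-expand to second order in $F_\eta^{-1}(x)-x$, and observe that all $O(\delta)$ terms cancel exactly because $p^*\propto e^{-U}$ satisfies the stationary Fokker--Planck identity. This leaves $|p_\delta/p^*-1|=O(\delta^{3/2})$ pointwise (with polynomial growth in $\|x\|$), and Talagrand's inequality converts the resulting $\chi^2$ bound into $W_2(\Phi_\delta(p^*),p^*)=O(\delta^{3/2})$. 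Telescoping this against the $e^{-m\delta}$ contraction then gives $W_2(\hat p,p^*)=O(\sqrt{\delta})$. The Fokker--Planck cancellation is the crux: it produces the extra half-power of $\delta$ that your pathwise coupling scheme has no mechanism to generate.
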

This implies that for any $k$, there exists a sufficiently small $\delta$ (whose value depends on $k$), such that
\begin{align*}
W_2(p_k,p^*) \leq \tilde{O}\lrp{\frac{1}{\sqrt{k}}}.
\numberthis \label{e:1/rootk}
\end{align*}

\begin{remark}
The dimensional dependence of $d^{3}$ in the expression for $k$ contains a caveat: additional dimensional dependence may enter through the variable $L$. In particular, the assumption that $\lrn{T_\eta}_2 \leq \sqrt{L}$ would imply that $L$ is on the order of $d$, so the actual dimension dependence can be much larger than $d^{3}$.
\end{remark}

\begin{subsubsection} {Relation to Central Limit Theorem}
\label{sss:clt}
Our result can be viewed as a generalization of the classical central limit theorem, which deals with sequences of the form
\begin{align*}
S_{k+1} = \frac{\sum_{i=0}^{k+1} \eta_i}{\sqrt{k+1}}
= \frac{\sqrt{k}}{\sqrt{k+1}} \cdot S_{k} + \frac{\eta_{k+1}}{\sqrt{k+1}}
\approx S_k - \frac{1}{2(k+1)} S_k + \frac{\sqrt{2}}{\sqrt{2(k+1})} \eta_{k+1}
\end{align*}
for some $\eta_k$ with mean $0$ and covariance $I$. Thus, the sequence $S_k$ essentially has the same dynamics as $x_k$ from \eqref{e:discretesde}, with $U(x) = - \frac{1}{2}\|x\|_2^2$,  $T_{\eta_k} = \eta_k$ and variable stepsize $\delta_k = \frac{1}{{k}}$. Assuming $\|\eta_i\|_2\leq \beta$ almost surely, the fastest convergence result is is proven in Theorem~1.1 of \cite{eldan2018clt}, with a rate of $W_2\lrp{S_{k}, Z} \leq O\lrp{\sqrt{d\log(k)}\beta /\sqrt{k}}$. It is also essentially tight, as Proposition~1.2 of \cite{zhai2018high} shows that the $W_2\lrp{S_k, Z}$ is lower bounded by $\Omega\lrp{\sqrt{d}\beta/\sqrt{k}}$ in certain cases.

Our bound in Theorem~\ref{t:s_convergencerate} (equivalently,
\eqref{e:1/rootk}) also shrinks as $1/\sqrt{k}$. We note that the
sequence $x_k$ studied in Theorem 2 differs from $S_k$, as the
stepsize for $x_k$ is constant (i.e., $\delta$ does not depend on $k$).
We stated Theorem~\ref{t:s_convergencerate} for constant step sizes
mainly to simplify the proof. Our proof technique can also be applied
to the variable step size setting; in Appendix \ref{ss:clt}, we show demonstrate how one might obtain a (suboptimal) CLT convergence rate of
$W_2\lrp{S_{k}, Z} \leq \tilde{O}\lrp{1/\sqrt{k}}$ using a similar technique as Theorem \ref{t:s_convergencerate}, but with stepsize $\delta_i = 1/(2i+1)$. This also implies that the $1/\sqrt{k}$ rate in Theorem~\ref{t:s_convergencerate}
is tight. On the other hand, our $d$ dependence is far from the optimal rate of $\sqrt{d}$. However, our bound is applicable
to a more general setting, not just for $U(x) = 1/2 \|x\|_2^2$.
\end{subsubsection}
\end{subsection}
\begin{subsection}{Inhomogeneous Noise}
\label{ss:mainresult_inhomogeneousnoise}
We now examine the convergence of \eqref{e:discretesde} under a general setting, in which the noise $T_\eta(x)$ depends on the position.

In addition to the assumptions in
Section~\ref{s:definitionsandassumptions}, we make some additional
assumptions about how $T_\eta(x)$ depends on $x$.
We begin by defining some notation. For all $x\in \Re^d$ and $\eta \in
\Omega$, we will let $G_\eta(x)\in \Re^{2d}$ denote the derivative of
$T_\eta(x)$ wrt $x$, $M_\eta(x)\in \Re^{3d}$ denote the derivative
of $G_\eta(x)$ wrt $x$, and $N_\eta(x) \in \Re^{4d}$ denote the
derivative of $M_\eta(x)$ wrt $x$, i.e.:
\begin{align*}
&1.\ \forall x,i,j \text{ and for $\eta$ a.s., } \lrb{G_\eta(x)}_{i,j} := \dd{x_j} \lrb{T_\eta(x)}_i\\
&2.\ \forall x,i,j,k \text{ and for $\eta$ a.s., } \lrb{M_\eta(x)}_{i,j,k} := \frac{\del^2}{\del x_j \del x_k} \lrb{T_\eta(x)}_i\\
&3.\ \forall x,i,j,k,l \text{ and for $\eta$ a.s., } \lrb{N_\eta(x)}_{i,j,k,l} := \frac{\del^3}{\del x_j \del x_k \del x_l} \lrb{T_\eta(x)}_i
\end{align*}
We will assume that $T_\eta(x)$, $G_\eta(x)$, $M_\eta(x)$ satisfy the following regularity: 
\begin{assumption}
\label{ass:gisregular}
There exists an $L$ that satisfies Assumption \ref{ass:uissmooth} and,
for all $x$ and for $\eta$ a.s.:
\begin{align*}
&1.\ G_\eta(x) \text{ is symmetric}, &
&2.\ \lrn{T_\eta(x)}_2 \leq  \sqrt{L} (\|x\|_2 + 1),  &
&3.\ \lrn{{G_\eta(x)}}_2\leq \sqrt{L},\\
&4.\ \lrn{{M_\eta(x)}}_2\leq \sqrt{L}, &
&5.\ \lrn{N_\eta(x)}_2 \leq \sqrt{L}.
\end{align*}
\end{assumption}
\begin{assumption}
\label{ass:discreteprocesscontracts}
For any distributions $p$ and $q$,
$
W_2(\Phi_\delta(p),\Phi_\delta(q)) \leq e^{-\lambda \delta} W_2(p,q)
$.
\end{assumption}

Finally, we assume that $\log p^*(x)$ is regular in the following sense:
\begin{assumption}
\label{ass:p^*isregular}
There exists a constant $\theta$, such that the log of the invariant
distribution of~\eqref{e:exactsde},
$f(x) := \log \lrp{p^*(x)}$, satisfies, for all $x$,
\begin{align*}
&1.\ \lrn{\nabla^3 f(x)}_2 \leq \theta, &
&2.\ \lrn{\nabla^2 f(x)}_2 \leq \theta\lrp{\|x\|_2 + 1}, &
&3.\ \lrn{\nabla f(x)}_2 \leq \theta\lrp{\|x\|_2^2 + 1}.
\end{align*}
\end{assumption}
\begin{remark}
If $\nabla^2 f(0)$ and $\nabla f(0)$ are bounded by $\theta$, then 2.~and 3.~are implied by 1., but we state the assumption this way for convenience.
\end{remark}

\begin{subsubsection}{A motivating example}
Before we state our main theorem, it will help to motivate some of our assumptions by considering an application to the stochastic gradient algorithm. 

Consider a classification problem where one tries to learn the
parameters $w$ of a model. One is given $S$ datapoints $(z_1,y_1)...(z_s,y_s)$, and a likelihood function $\ell(w,(z,y))$, and one tries to minimize $H(w)$ for
$$H(w) := \frac{1}{S}\sum_{i=1}^{S} 
H_i(w), \qquad \text{with} \qquad
H_i(w) := \ell(w, (z_i,y_i)).$$
The stochastic gradient algorithm proceeds as follows:
\begin{align*}
w_{k+1} =& w_{k} - \delta \nabla H_{\eta_k}(w_k)
\end{align*}

Let us rescale the above by defining $x := w/\sqrt{\delta}$ and $U(x) := H(w)/\delta$. One can then verify that $\nabla U(x) = \nabla H(w) / \sqrt{\delta}$ so that the above dynamics is equivalent to 
\begin{align*}
x_{k+1} 
=& x_k - \delta \nabla U_{\eta_k}\lrp{x_k}\\
=& x_k - \delta \nabla U\lrp{x_k} + \sqrt{2\delta} T_{\eta_k}(x_k),
\numberthis \label{e:sg:1}
\end{align*}
where for each $k$, $\eta_k$ is an integer sampled uniformly from $\lrbb{1...S}$, and we define $ T_{\eta_k}(x) := \sqrt{\delta/2} \lrp{\nabla U(x) - \nabla U_{\eta_k} (x)}= 1/\sqrt{2}\lrp{\nabla H(\sqrt\delta x) - \nabla H_{\eta_k}(\sqrt\delta x)}$. Notice that \eqref{e:sg:1} is identical to \eqref{e:discretesde}.

The mean and variance of $T_\eta$ are
\begin{align*}
\Ep{\eta}{T_\eta(x)} &= 0\\
\Ep{\eta}{T_\eta(x) T_\eta(x)^T}
&= {\delta/2}\cdot \Ep{i\sim Unif(\lrbb{1...S})}{\lrp{\nabla U(x) - \nabla U_i(x)}\lrp{\nabla U(x) - \nabla U_i(x)}^T}\\
&= {1/2}\cdot \Ep{i\sim Unif(\lrbb{1...S})}{\lrp{\nabla H(\sqrt\delta x) - \nabla H_i(\sqrt\delta x)}\lrp{\nabla H(\sqrt\delta x) - \nabla H_i(\sqrt\delta x)}^T}
\end{align*}
Assume that there exists a constant $c_\sigma$ such that 
$H_i(w)$ satisfies
$$\Ep{i\sim Unif(\lrbb{1...S})}{\lrp{\nabla H(w) - \nabla H_i(w)}\lrp{\nabla H(w) - \nabla H_i(w)}^T} \prec \sqrt{2} c_\sigma I ,$$
then Assumption \ref{ass:ximeanandvariance} is satisfied.

Furthermore, $T_\eta(x), G_\eta(x), M_\eta(x), N_\eta(x)$ are respectively $\sqrt{\delta/2}\nabla \lrp{U(x) - U_\eta(x)}$, \\$\sqrt{\delta/2} \nabla^2 \lrp{U(x) - U_\eta(x)}$, $\sqrt{\delta/2} \nabla^3 \lrp{U(x) - U_\eta(x)}$, $\sqrt{\delta/2} \nabla^4 \lrp{U(x) - U_\eta(x)}$, so Assumption \ref{ass:gisregular} is satisfied if the loss function $\ell$ has 2nd, 3rd and 4th order derivatives (in $w$) which are globally bounded. 

If $\nabla H_i(w)$ is $m$-strongly convex and has $L$-Lipschitz
gradients for all $i$, then $\nabla U_i(x)$ is also $m$-strongly convex and $L$-smooth for all $i$, so that Assumption~\ref{ass:discreteprocesscontracts} is satisfied for $\lambda = m$ for all $\delta\leq 1/(2L)$, by a synchronous coupling argument (see Lemma~\ref{l:discrete_contraction} in Appendix \ref{s:appendix:inhomogeneous}).

\end{subsubsection}

We will now state our main theorem for this section:

\begin{theorem}\label{t:convergencerate}
Let $p_0$ be an arbitrary initial distribution, and let $p_{k}$ be defined as in \eqref{e:discretesde} with step size $\delta$. Recall the defintion of $\hat{p}$ as the invariant distribution of \eqref{e:discretesde} and $p^*$ as the invariant distribution of \eqref{e:exactsde}.
For $\delta \leq \frac{\epsilon^2}{d^7}\cdot poly
\lrp{\frac{1}{m},L,\theta}^{-1}$,
\begin{align*}
W_2\lrp{\hat{p}, p^*}\leq {\epsilon}.
\numberthis \label{e:t2:1}
\end{align*}
If, in addition, $k \geq \frac{d^7}{\epsilon^2}\log
\frac{W_2(p_0,p^*)}{\epsilon} \cdot poly \lrp{L, \theta, \frac{1}{m},
c_{\sigma},  \frac{1}{\lambda} }$, then
\begin{align*}
W_2\lrp{p_{k}, p^*}\leq {\epsilon}.
\numberthis \label{e:t2:2}
\end{align*}
\end{theorem}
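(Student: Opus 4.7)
The second bound \eqref{e:t2:2} follows from the first \eqref{e:t2:1} by a triangle inequality: Assumption \ref{ass:discreteprocesscontracts} gives $W_2(p_k,\hat p) \le e^{-\lambda k\delta}W_2(p_0,\hat p) \le e^{-\lambda k\delta}(W_2(p_0,p^*)+\epsilon)$, which is at most $\epsilon$ once $k$ meets the theorem's lower bound. Thus the real task is \eqref{e:t2:1}, and I would reduce that to comparing a $K$-step trajectory of $\Phi_\delta$ started at $p^*$ with the continuous semigroup $P_{K\delta}$ of \eqref{e:exactsde}, which leaves $p^*$ invariant. Applying Assumption \ref{ass:discreteprocesscontracts} again,
\begin{align*}
W_2(\hat p,p^*) &\le W_2\lrp{\Phi_\delta^K(\hat p),\Phi_\delta^K(p^*)} + W_2\lrp{\Phi_\delta^K(p^*),p^*}\\
&\le e^{-\lambda K\delta}W_2(\hat p,p^*) + W_2\lrp{\Phi_\delta^K(p^*),P_{K\delta}(p^*)},
\end{align*}
so choosing $K$ with $\lambda K\delta\ge \log 2$ yields $W_2(\hat p,p^*) \le 2W_2\lrp{\Phi_\delta^K(p^*),P_{K\delta}(p^*)}$, and the invariant-measure comparison is reduced to a finite-horizon weak discretization error.

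To bound that error I would interpolate through a ``Gaussianized'' chain $\tilde\Phi_\delta$ whose one-step update replaces $\sqrt{2\delta}T_\eta(x)$ by $\sqrt{2\delta}\sigma_xZ$ with $Z\sim\mathcal N(0,I)$, and split
\begin{align*}
W_2\lrp{\Phi_\delta^K(p^*),P_{K\delta}(p^*)} \le W_2\lrp{\Phi_\delta^K(p^*),\tilde\Phi_\delta^K(p^*)} + W_2\lrp{\tilde\Phi_\delta^K(p^*),P_{K\delta}(p^*)}.
\end{align*}
The second term is the Euler--Maruyama error for an SDE with position-dependent diffusion: Taylor-expand $\nabla U$ and $\sigma_x$ on each sub-interval using Assumption \ref{ass:uissmooth} and Assumption \ref{ass:gisregular}, use Assumption \ref{ass:p^*isregular} to control moments of $p^*$, and use Assumption \ref{ass:discreteprocesscontracts} to prevent per-step $L^2$ errors of order $\delta^{3/2}$ from accumulating worse than $\tilde O(\sqrt K\delta^{3/2})$. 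This part is fairly standard.

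The main obstacle is the first term, and this is where the $W_2$ CLT of \cite{zhai2018high} enters. A pointwise per-step comparison is useless, because $T_\eta(x)$ and $\sigma_xZ$ agree in mean and covariance but typically sit at $\Theta(1)$ distance in $W_2$, so telescoping a per-step bound does not shrink with $\delta$. Instead, over a block of $K$ consecutive steps the discrete chain's cumulative noise is $\sqrt{2\delta}\sum_{k=0}^{K-1}T_{\eta_k}(x_k)$ while the Gaussian chain's is $\sqrt{2\delta}\sum_{k=0}^{K-1}\sigma_{x_k}Z_k$. If the positions $x_k$ were frozen to $x_0$, these become iid sums with common covariance $2K\delta\,\sigma_{x_0}^2$, and Zhai's $W_2$ CLT combined with the a.s.\ bound $\lrn{T_\eta(x_0)}_2 \le \sqrt L(\lrn{x_0}_2+1)$ from Assumption \ref{ass:gisregular} yields a $W_2$ distance of order $\tilde O\lrp{\sqrt{\delta d L}(\lrn{x_0}_2+1)}$. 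The real work is ``unfreezing'' $x_k$: control $T_{\eta_k}(x_k)-T_{\eta_k}(x_0)$ and $\sigma_{x_k}-\sigma_{x_0}$ via the Lipschitz bounds on $T_\eta,G_\eta,M_\eta,N_\eta$ in Assumption \ref{ass:gisregular} together with a moment estimate of the form $\mathbb E\lrn{x_k-x_0}_2^2 = \tilde O(K\delta)\cdot\mathrm{poly}(d)$, where the $\mathrm{poly}(d)$ comes from the moment bounds on $p^*$ in Assumption \ref{ass:p^*isregular}. Propagating these regularity and moment estimates through Zhai's bound, and tracking all polynomial-in-$d$ factors along the way, is what ultimately produces the $d^7$ step-size requirement.
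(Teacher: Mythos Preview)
Your outer reductions are fine, but the heart of your argument --- comparing $\Phi_\delta^K(p^*)$ with a Gaussianized chain $\tilde\Phi_\delta^K(p^*)$ by freezing positions and invoking Zhai's CLT as a black box --- has a real gap. With $K\delta\asymp 1/\lambda$ fixed, the ``unfreezing'' corrections $T_{\eta_k}(x_k)-T_{\eta_k}(x_0)$ are of order $\sqrt L\,\lrn{x_k-x_0}_2$ with $\E{\lrn{x_k-x_0}_2^2}\asymp k\delta$; even using their martingale structure, the $L^2$ norm of $\sqrt{2\delta}\sum_k\bigl(T_{\eta_k}(x_k)-T_{\eta_k}(x_0)\bigr)$ is of order $\sqrt{Ld}\,K\delta = O(1)$, not $O(\sqrt\delta)$. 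Chopping into sub-blocks trades this against the per-block CLT error and yields a rate strictly worse than $\sqrt\delta$. So the step you flag as ``the real work'' does not close at the claimed rate with the tools you describe.

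The paper avoids this entirely. It never introduces a Gaussian intermediate chain and never applies Zhai's theorem as a black box. Instead it telescopes
\[
W_2\bigl(\Phi_\delta^K(p^*),p^*\bigr)\ \le\ \frac{1}{\lambda\delta}\,W_2\bigl(\Phi_\delta(p^*),p^*\bigr)
\]
and bounds the \emph{one-step} quantity $W_2(\Phi_\delta(p^*),p^*)$ to order $\delta^{3/2}$. The mechanism is a density-ratio estimate: write $p_\delta(x)=\Ep{\eta}{p^*(F_\eta^{-1}(x))\det(\nabla F_\eta)^{-1}}$ by change of variables, Taylor-expand $p^*$ and the Jacobian in $\delta$, and observe that all $O(\delta)$ terms cancel exactly because $p^*$ satisfies the stationary Fokker--Planck equation for \eqref{e:exactsde}. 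Only the first two moments of $T_\eta(x)$ enter at order $\delta$, which is precisely why the limit is governed by $\sigma_x$; the remainder is $O(\delta^{3/2})$ pointwise in $x$ (times polynomial growth). This is then converted to a $W_2$ bound via a $\chi^2$-type inequality, using Assumption~\ref{ass:p^*isregular} and subgaussianity of $p^*$ (since strong log-concavity is unavailable in the inhomogeneous case). The $d^7$ arises from the polynomial-in-$\lrn{x}$ factors in this expansion combined with the moment bounds on $p^*$, not from tracking constants through Zhai's CLT.

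In short: the correct route is a one-step weak-error bound exploiting the Fokker--Planck identity, not a block-CLT comparison to a Gaussian chain.
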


\begin{remark}

Like Theorem \ref{t:s_convergencerate}, this
also gives a $1/\sqrt k$ rate, which
is optimal. (see Section~\ref{sss:clt}).
\end{remark}

\end{subsection}
\end{section}

\begin{section}{Proof of Main Theorems}
In this section, we sketch the proofs of
Theorems~\ref{t:s_convergencerate} and~\ref{t:convergencerate}.

\begin{subsection}{Proof of Results for Homogeneous Diffusion}
\label{ss:proofsketch_homogeneous}
\begin{proof}[Proof of Theorem \ref{t:s_convergencerate}]

We first prove \eqref{e:t1:2}.

By Theorem \ref{t:s_main} below, for $\delta \leq \frac{\min\lrbb{m^2,1}}{2^{18} d^2 \lrp{L+1}^3}$,
\begin{align*}
W_2(p_{k}, p^*) \leq& e^{-m \delta k/8} W_2(p_0,p^*) + 2^{82}\delta^{1/2} d^{3/2} \lrp{L+1}^{9/2} 
\max\lrbb{ \frac{1}{m}\log \lrp{\frac{1}{m}},1}^{7}.
\numberthis \label{e:go:1}
\end{align*}
Thus if $\delta \leq \epsilon^2 \cdot \lrp{2^{166} d^3 \lrp{L+1}^9 \max\lrbb{\frac{1}{m}\log \lrp{\frac{1}{m}},1}^{14}}^{-1}$, then
\[
2^{82}\delta^{1/2} d^{3/2} \lrp{L+1}^{9/2} 
\max\lrbb{ \frac{1}{m}\log \lrp{\frac{1}{m}},1}^{7} \leq \frac{\epsilon}{2}.
\]
Additionally, if $k \geq \frac{8}{m \delta }\log \frac{2W_2\lrp{p_0,p^*}}{\epsilon}$, then $e^{-m \delta k/8} W_2(p_0,p^*) \leq \frac{\epsilon}{2}$, so together, we get
$
W_2\lrp{p_k,p^*}\leq \epsilon.
$
To summarize, our assumptions are
\begin{align*}
\delta \leq& \min \lrbb{\frac{\min\lrbb{m^2,1}}{2^{18} d^2 \lrp{L+1}^3}, \frac{\epsilon^2}{2^{166} d^3 \lrp{L+1}^9 \max\lrbb{\frac{1}{m}\log \lrp{\frac{1}{m}},1}^{14}}} = \frac{\epsilon^2}{d^3} \cdot poly \lrp{\frac{1}{m},L}^{-1}
\end{align*}
and
\begin{align*}
k \geq& \frac{8}{m\delta} \log \frac{2W_2\lrp{p_0,p^*}}{\epsilon}
= \frac{d^3}{\epsilon^2} \cdot \log \frac{W_2(p_0,p^*)}{\epsilon} poly
\lrp{\frac{1}{m},L}.
\end{align*}
This proves \eqref{e:t1:2}. To prove \eqref{e:t1:1}, use our above assumption on $\delta$, and take the limit of \eqref{e:go:1} as $k\to\infty$.

\end{proof}

\begin{theorem}\label{t:s_main}
Let $p_0$ be an arbitrary initial distribution, and let $p_{k}$ be defined as in \eqref{e:discretesde}.\\
Let $\epsilon>0$ be some arbitrary constant.
For any step size $\delta$  satisfying
$\delta \leq \frac{\min\lrbb{m^2,1}}{2^{18} d^2 \lrp{L+1}^3}$,
the Wasserstein distance between $p_{k}$ and $p^*$ is upper bounded as
\begin{align*}
W_2(p_{k}, p^*) \leq& e^{-m \delta k/8} W_2(p_0,p^*) + 2^{82}\delta^{1/2} d^{3/2} \lrp{L+1}^{9/2} 
\max\lrbb{ \frac{1}{m}\log \lrp{\frac{1}{m}},1}^{7}.
\end{align*}
\end{theorem}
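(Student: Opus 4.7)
My plan is to combine three ingredients: a synchronous-coupling contraction estimate for the continuous SDE \eqref{e:exactsde}, a Zhai-type CLT bound for the accumulated discrete noise, and a batched Lindeberg-interpolation argument. Throughout I will write $\Psi_t$ for the continuous semigroup associated with \eqref{e:exactsde}, so that $W_2(\Psi_t p_0, p^*)$ is what is controlled by the SDE's dissipativity.

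First, since $U$ is $m$-strongly convex (Assumption \ref{ass:uissmooth}), a synchronous coupling of two solutions of \eqref{e:exactsde} driven by the same Brownian motion yields $W_2(\Psi_t p,\Psi_t q) \leq e^{-mt}W_2(p,q)$, and in particular $W_2(\Psi_t p_0, p^*) \leq e^{-mt}W_2(p_0,p^*)$. The technical heart is then a one-batch comparison lemma: for any starting distribution $\mu$ and batch length $n$, one wants
\[
W_2\bigl(\Phi_\delta^n \mu,\, \Psi_{n\delta}\mu\bigr) \leq E(n,\delta).
\]
The coupling behind this bound has two pieces. For the noise, Assumption \ref{ass:simplerassumptions} says $T_\eta$ is zero-mean, identity-covariance and a.s.\ bounded by $\sqrt{L}$, so the result of \cite{zhai2018high} couples the sum $\sqrt{2\delta}\sum_{j=0}^{n-1} T_{\eta_j}$ to the Brownian increment $\sqrt{2}B_{n\delta}$ in $W_2$ with error $O(\sqrt{\delta L d}\,\log n)$. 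For the drift, I would Taylor-expand $\nabla U$ around the batch-start point $x_0$, using $\|\nabla^2 U\|_2 \leq L$ and $\|\nabla^3 U\|_2 \leq L$ to control successive remainders, and bound the resulting differences via the moment estimates $\mathbb{E}\|x_j-x_0\|^2 = O(j\delta d + (j\delta)^2\|\nabla U(x_0)\|^2)$ propagated along the chain.

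Third, I would telescope via Lindeberg interpolation. Writing $k=Kn$ (a remainder of length less than $n$ is handled separately), define $\mu_j := \Psi_{(K-j)n\delta}\Phi_\delta^{jn}p_0$ so that $\mu_0 = \Psi_{Kn\delta}p_0$ and $\mu_K = \Phi_\delta^{Kn}p_0$. Applying the continuous contraction to the tail and the batch lemma to each consecutive pair,
\[
W_2(\Phi_\delta^{Kn}p_0,\Psi_{Kn\delta}p_0) \leq \sum_{j=0}^{K-1} e^{-m(K-j-1)n\delta}\,E(n,\delta) \leq \frac{2\,E(n,\delta)}{m\,n\delta}.
\]
Adding $W_2(\Psi_{Kn\delta}p_0,p^*) \leq e^{-mKn\delta}W_2(p_0,p^*)$ gives the form of Theorem \ref{t:s_main}, with the residual term $\delta^{1/2} d^{3/2}(L+1)^{9/2}\,\mathrm{poly}(1/m,\log(1/m))$ obtained by optimizing $n$ to balance the CLT and drift contributions in $E(n,\delta)/(n\delta)$.

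The main obstacle will be proving the batch lemma with a bound tight enough to produce the $\delta^{1/2}$ rate. A crude first-order Taylor expansion of the drift yields a batch error of order $L\sqrt{d}(n\delta)^{3/2}$, which when balanced against the noise term $\sqrt{\delta L d}\,\log n$ and fed through the $1/(mn\delta)$ amplification gives only a $\delta^{1/6}$ final rate. To recover $\delta^{1/2}$, one must retain the linear term $\nabla U(x_j)-\nabla U(x_0)\approx \nabla^2 U(x_0)(x_j-x_0)$ explicitly and couple the discrete ``iterated noise'' $\delta^{3/2}\sum_{j}\sum_{i<j}T_{\eta_i}$ to the Brownian integral $\int_0^{n\delta} B_s\,ds$ with an error matching the Zhai-type scale; only the quadratic Taylor remainder, of size $L\cdot \mathbb{E}\|x_j-x_0\|^2 = O(Ld\cdot n\delta)$ per step, then contributes to $E(n,\delta)$. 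The combinatorial accounting of these moment bounds, together with the optimization over $n$, is what produces the $d^{3/2}(L+1)^{9/2}$ prefactor in the statement.
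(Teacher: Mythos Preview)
Your approach is genuinely different from the paper's, and the gap you yourself flag is real: the batch lemma you need does not follow from the ingredients you list, and the accounting you sketch does not close.

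Concretely, to land on $\delta^{1/2}$ you need $E(n,\delta)/(n\delta)=O(\delta^{1/2})$, i.e.\ $E(n,\delta)=O(n\delta^{3/2})$. The Zhai noise term $\sqrt{\delta L d}\,\log n$ forces $n\delta\gtrsim \sqrt{Ld}\,\log n$, so $n$ must be of order at least $\delta^{-1}$. But the quadratic drift remainder you quote, summed over the batch, is $\delta\sum_{j<n} L\,\mathbb{E}\|x_j-x_0\|^2 \asymp Ld\,n^2\delta^2$; for $n\asymp \delta^{-1}$ this is $O(1)$, not $O(n\delta^{3/2})=O(\delta^{1/2})$. Retaining the linear drift term does not fix this: you then need to couple the discrete iterated sum $\delta^{3/2}\sum_j\sum_{i<j}T_{\eta_i}$ to the Brownian integral $\int_0^{n\delta}B_s\,ds$ with a Zhai-scale error, and no such ``iterated CLT in $W_2$'' is available off the shelf (Zhai's argument is for a single sum of i.i.d.\ terms). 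Even granting that, the linearized discrete and continuous processes still differ, and the next layer of corrections reintroduces terms of the same order. In short, the pathwise-coupling route from arbitrary $\mu$ pays a genuine $W_2(\sqrt{\delta}\,T_\eta,\sqrt{\delta}\,Z)=\Theta(\sqrt{\delta})$ per step that CLT batching can suppress, but the drift errors it creates over a batch long enough for the CLT to help are too large.

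The paper avoids this entirely by never comparing trajectories and never invoking a CLT. It uses the \emph{discrete} contraction $W_2(\Phi_\delta p,\Phi_\delta q)\le e^{-m\delta/8}W_2(p,q)$ (Lemma~\ref{l:s_discrete_contraction}) to split
\[
W_2(\Phi_\delta^k p_0,p^*)\le e^{-m\delta k/8}W_2(p_0,p^*)+\frac{8}{m\delta}\,W_2(\Phi_\delta(p^*),p^*),
\]
and then bounds the one-step error $W_2(\Phi_\delta(p^*),p^*)$ at the level of densities. Via Talagrand's inequality this reduces to controlling $|p_\delta(x)/p^*(x)-1|$; a change-of-variables formula for $p_\delta=(F_\eta)_\#p^*$ plus a third-order Taylor expansion of $p^*$ gives an expression whose $O(\delta)$ terms cancel \emph{exactly} because $p^*\propto e^{-U}$ satisfies the stationary Fokker--Planck identity (see \eqref{e:fk:1}), leaving an $O(\delta^{3/2})$ residual. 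The $\delta^{1/2}$ in the theorem is then just $\delta^{3/2}/\delta$. The key point you are missing is that starting from $p^*$, rather than from an arbitrary $\mu$, buys you this cancellation for free; working from arbitrary $\mu$ as in your batch lemma forfeits it.
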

\begin{proof}[Proof of Theorem \ref{t:s_main}]
Recall our definition of $\Phi_\delta$ in \eqref{d:Phip}. Let $\Phi_\delta^k$ denote $k$ repeated applications of $\Phi_\delta$, so $p_k = \Phi_\delta^k (p_0)$. Our objective is thus to bound
$W_2(\Phi_{\delta}^k(p_0), p^*)$.

We first use triangle inequality to split the objective into two terms:
\begin{align*}
W_2(\Phi_{\delta}^k(p_0), p^*)
\leq& W_2(\Phi_{\delta}^k(p_0), \Phi_{\delta}^k(p^*)) + W_2(\Phi_{\delta}^k(p^*), p^*) 
\numberthis \label{e:s_sd:9}
\end{align*}

The first term is easy to bound. We can apply Lemma~\ref{l:s_discrete_contraction} (in Appendix \ref{s:appendix:homogeneous}) to get
\begin{align*}
W_2(\Phi_{\delta}^k(p^*), p^*) \leq e^{-m \delta k/8} W_2(p_0,p^*)
\numberthis \label{e:s_sd:8}
\end{align*}

To bound the second term of \eqref{e:s_sd:9}, we use an argument adapted from (Zhai 2016):
\begin{align*}
W_2(\Phi_{\delta}^k(p^*), p^*)
=& W_2(\Phi_{\delta}(\Phi_{\delta}^{k-1} (p^*)), p^*)\\
\leq& W_2(\Phi_{\delta}(\Phi_{\delta}^{k-1} (p^*)), \Phi_{\delta}(p^*)) + W_2(\Phi_{\delta}(p^*), p^*)\\
\leq&  e^{-m \delta/8} W_2(\Phi_{\delta}^{k-1} (p^*), p^*) + W_2(\Phi_{\delta}(p^*), p^*)\\
\vdots&\\
\leq& \sum_{i=0}^{k-1} e^{-m \delta i/8} W_2(\Phi_{\delta}(p^*), p^*)\\
\leq& \frac{8}{m \delta}W_2(\Phi_{\delta}(p^*), p^*).
\end{align*}
Here the third inequality is by induction.
This reduces our problem to bounding the expression $W_2(\Phi_{\delta}(p^*), p^*)$, which can be thought of as the one-step divergence between \eqref{e:discretesde} and \eqref{e:exactsde} when $p_0=p^*$. We apply Lemma \ref{t:s_chisquaredbound} below to get
\begin{align*}
W_2(\Phi_{\delta}(p^*), p^*) \leq& 2^{78}\delta^{3/2} d^{3/2} \lrp{L+1}^{9/2} 
\max\lrbb{ \frac{1}{m}\log \lrp{\frac{1}{m}},1}^{6} \\
\Rightarrow \quad \frac{8}{m \delta}W_2(\Phi_{\delta}(p^*), p^*)\leq& 2^{82}\delta^{1/2} d^{3/2} \lrp{L+1}^{9/2} 
\max\lrbb{ \frac{1}{m}\log \lrp{\frac{1}{m}},1}^{7}.
\numberthis \label{e:s_sd:7}
\end{align*}
Thus, substituting \eqref{e:s_sd:8} and \eqref{e:s_sd:7} into \eqref{e:s_sd:9}, we get
\begin{align*}
W_2(\Phi_{\delta}^k(p_0), p^*) \leq& e^{-m \delta k/8} W_2(p_0,p^*) + 2^{82}\delta^{1/2} d^{3/2} \lrp{L+1}^{9/2} 
\max\lrbb{ \frac{1}{m}\log \lrp{\frac{1}{m}},1}^{7}.
\numberthis \label{e:s_sd:2}
\end{align*}
\end{proof}

\begin{lemma}\label{t:s_chisquaredbound}
Let $p_\delta := \Phi_\delta(p^*)$. Then for any $\delta \leq \frac{\min\lrbb{m^2,1}}{2^{18} d^2 \lrp{L+1}^3}$,
\begin{align*}
W_2 (p_\delta, p^*) \leq 2^{78}\delta^{3/2} d^{3/2} \lrp{L+1}^{9/2} 
\max\lrbb{ \frac{1}{m}\log \lrp{\frac{1}{m}},1}^{6} .
\end{align*}
\end{lemma}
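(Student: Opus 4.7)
The plan is to bound the chi-squared divergence $\chi^2(p_\delta\|p^*)$ — hence the internal name of the lemma — and then convert the bound to $W_2$ via a Talagrand $T_2$ inequality. Because $U$ is $m$-strongly convex (Assumption~\ref{ass:uissmooth}.3), the invariant density $p^* \propto e^{-U}$ is $m$-strongly log-concave and hence satisfies
\[
W_2^2(\mu, p^*) \;\leq\; \frac{2}{m}\,\KL{\mu}{p^*} \;\leq\; \frac{2}{m}\,\chi^2(\mu \| p^*)
\]
for every $\mu$. Applying this with $\mu = p_\delta$ reduces the lemma to showing $\chi^2(p_\delta\|p^*) = O\big(\delta^3 \cdot d^3 \cdot \mathrm{poly}(L, 1/m)\big)$, which after taking a square root produces exactly the claimed $W_2$ scaling.

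To control $\chi^2$, I would Taylor-expand the density ratio. Since $p_\delta$ is the pushforward of $p^*$ under $F_\eta(x) = x - \delta \nabla U(x) + \sqrt{2\delta}\, T_\eta$, for each $y$ the ratio equals $p_\delta(y)/p^*(y) = \Ep{\eta}{\exp \ell_\eta(y)}$ with $\ell_\eta(y) := \log\!\big(p^*(F_\eta^{-1}(y))/p^*(y)\big) + \log\lrabs{\det DF_\eta^{-1}(y)}$. Using $v := F_\eta^{-1}(y) - y = -\sqrt{2\delta}\,T_\eta + \delta\,\nabla U(y) + O(\delta^{3/2})$, together with $\log p^* = -U + \mathrm{const}$ and $\log\det(I - \delta \nabla^2 U) = -\delta\,\tr\nabla^2 U + O(\delta^2)$, Taylor expansion yields
\[
\ell_\eta(y) = \sqrt{2\delta}\,\lin{T_\eta,\,\nabla U(y)} + \delta\!\left[\tr\nabla^2 U(y) - \lin{T_\eta,\,\nabla^2 U(y)\,T_\eta} - \lrn{\nabla U(y)}_2^2\right] + O(\delta^{3/2}).
\]
Now expand $e^{\ell_\eta} = 1 + \ell_\eta + \tfrac{1}{2}\ell_\eta^2 + \cdots$ and average over $\eta$: the $O(\sqrt{\delta})$ piece vanishes because $\Ep{\eta}{T_\eta}=0$ (Assumption~\ref{ass:ximeanandvariance}.1), and the $O(\delta)$ contributions from $\Ep{\eta}{\ell_\eta}$ and $\tfrac{1}{2}\Ep{\eta}{\ell_\eta^2}$ cancel exactly by virtue of $\Ep{\eta}{T_\eta T_\eta^\top} = I$ (Assumption~\ref{ass:simplerassumptions}.3). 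What remains is
\[
p_\delta(y)/p^*(y) - 1 \;=\; \delta^{3/2}\,R(y) \;+\; O(\delta^2),
\]
where $R(y)$ is an explicit polynomial in $\nabla U(y), \nabla^2 U(y), \nabla^3 U(y)$ contracted with the third-moment tensor of $T_\eta$.

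Squaring and integrating against $p^*$ gives $\chi^2(p_\delta\|p^*) \leq C\,\delta^3 \cdot \Ep{p^*}{R(y)^2} + O(\delta^{7/2})$. To close the argument one must bound $\Ep{p^*}{R(y)^2}$: the pointwise bounds $\lrn{\nabla^2 U}_2, \lrn{\nabla^3 U}_2 \leq L$ from Assumption~\ref{ass:uissmooth} and $\lrn{T_\eta}_2 \leq \sqrt{L}$ from Assumption~\ref{ass:simplerassumptions}.2 control the tensorial factors pointwise, while the moments $\Ep{p^*}{\lrn{\nabla U}_2^{2k}}$ are bounded using integration by parts against $p^*$ and the Bakry--Émery concentration of log-concave measures (e.g.\ $\Ep{p^*}{\lrn{\nabla U}_2^2} = \Ep{p^*}{\tr \nabla^2 U} \leq Ld$, and similar for higher powers). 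The $d^3$ dimensional factor in the final bound comes from the cubic structure of $R$ combined with trace sums, while the factors $\max\{(1/m)\log(1/m),1\}^6$ reflect the control of high $p^*$-moments of $\lrn{y}_2$ under a merely log-concave (not Gaussian) density. The main technical obstacle is the careful bookkeeping of the Taylor remainders together with these $p^*$-moment bounds to achieve the precise polynomial factor $d^{3/2}(L+1)^{9/2}$ and the logarithmic exponent; conceptually, however, the heart of the argument is simply the mean-zero and covariance-identity matching of $T_\eta$ with a standard Gaussian, which is exactly what produces the two leading-order cancellations and reduces the naively-$O(\sqrt{\delta})$ single-step error to $O(\delta^{3/2})$.
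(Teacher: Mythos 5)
Your proposal follows essentially the same route as the paper: bound $W_2^2$ by $\chi^2$ via Talagrand's inequality for the $m$-strongly-log-concave $p^*$ (the paper's \eqref{e:s_rc:1} / Lemma~\ref{l:s_upperboundw2bychisquared}); then control the pointwise density ratio $p_\delta/p^* - 1$ by a change-of-variables plus Taylor expansion, observing that the $O(\sqrt{\delta})$ and $O(\delta)$ terms cancel from $\Ep{\eta}{T_\eta}=0$, $\Ep{\eta}{T_\eta T_\eta^\top}=I$, and the closed form $p^*\propto e^{-U}$ (the paper's Lemma~\ref{l:s_pdeltaoverpstariscloseto1} and display \eqref{e:fk:1}); and finally integrate the squared remainder against $p^*$ using moment bounds (the paper's Lemmas~\ref{l:p^*hasboundedexponent} and \ref{l:kthmomentbound}). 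The only cosmetic difference is that you Taylor-expand the log-density $\ell_\eta$ and exponentiate, whereas the paper expands $p^*(F_\eta^{-1}(x))$ and the Jacobian determinant directly as polynomials; the cancellations and the resulting $\delta^{3/2}$ scaling are the same.
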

(This lemma is similar in spirit to Lemma~1.6 in \cite{zhai2018high}.)

\begin{proof}[Proof of Lemma \ref{t:s_chisquaredbound}]
$ $\\
Using Talagrand's inequality and the fact that $U(x)$ is strongly
convex, we can upper bound $W_2^2(q, p^*)$ by $\chi^2(q,p^*)$ for any
distribution $q$ which has density wrt $p^*$, i.e.:
\begin{align*}
W_2^2(p^*,p_\delta) 
\leq& \frac{2}{m} \int \lrp{\frac{p_\delta (x)}{p^*(x)} - 1}^2 p^*(x)
\,dx.
\numberthis \label{e:s_rc:1}
\end{align*}

See Lemma \ref{l:s_upperboundw2bychisquared} in Appendix \ref{s:appendix:inhomogeneous} for a rigorous proof of \eqref{e:s_rc:1}.

Under our assumptions on $\delta$, we can apply
Lemma~\ref{l:s_pdeltaoverpstariscloseto1} below, giving
\begin{align*}
&\int_{B_R} \lrp{\frac{p_\delta(x)}{p^*(x)}-1}^2 p^*(x)\, dx\\
\leq& 2^{23}\delta^{3} d^2 \lrp{L+1}^{9} 
\int \exp\lrp{\frac{m}{32}\|x\|_2^2} \lrp{\|x\|_2^{12} + 1} p^*(x)\, dx\\
\leq& 2^{24}\delta^{3} d^2 \lrp{L+1}^{9} 
\lrp{\int \exp\lrp{\frac{m}{16}\|x\|_2^2} p^*(x) dx + \int
\lrp{\|x\|_2^{24} + 1} p^*(x)\, dx}\\
\leq& 2^{24}\delta^{3} d^2 \lrp{L+1}^{9} 
\lrp{8d + \max\lrbb{\lrp{2^{11} \frac{1}{m} \log \lrp{2^8/m}}^{11}, 2^{11} \frac{1}{m} }}\\
\leq& 2^{156}\delta^{3} d^3 \lrp{L+1}^{9} 
\max\lrbb{ \frac{1}{m}\log \lrp{\frac{1}{m}},1}^{11},
\end{align*}
where the first inequality is by Lemma \ref{l:s_pdeltaoverpstariscloseto1}, the second inequality is by Young's inequality, the third inequality is by Lemma \ref{l:p^*hasboundedexponent} and Lemma \ref{l:kthmomentbound}, with $c_\sigma=1$.
Plugging the above into \eqref{e:s_rc:1},
\begin{align*}
W_2^2(p^*,p_\delta) 
\leq& 2^{156}\delta^3d^3 \lrp{L+1}^{9} 
\max\lrbb{ \frac{1}{m}\log \lrp{\frac{1}{m}},1}^{12} .
\end{align*}
\end{proof}
The following lemma studies the
``discretization error'' between the SDE~\eqref{e:exactsde} and one
step of~\eqref{e:discretesde}.

\begin{lemma}\label{l:s_pdeltaoverpstariscloseto1}
Let $p_\delta := \Phi_\delta(p^*)$. For any $R\geq 0$, $x\in B_R$, and
$\delta \leq \frac{\min\lrbb{m^2,1}}{2^{18} d \lrp{L+1}^3}$,
\begin{align*}
\lrabs{\frac{p_\delta(x)}{p^*(x)}-1}\leq 512 \delta^{3/2} d
\lrp{L+1}^{9/2} \exp\lrp{\frac{m}{32}\|x\|_2^2} \lrp{\|x\|_2^6 + 1}.
\end{align*}
\end{lemma}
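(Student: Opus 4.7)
My plan is to compute $p_\delta(x)/p^*(x)$ by the change-of-variables formula and then Taylor expand in powers of $\sqrt{\delta}$. Since $F_\eta(y) = y - \delta\nabla U(y) + \sqrt{2\delta}\T{\eta}$ has Jacobian $I - \delta\nabla^2 U(y)$ and $\lrn{\nabla^2 U}_2 \leq L$, for the prescribed range of $\delta$ each $F_\eta$ is a global $C^1$-diffeomorphism, so
\begin{align*}
\frac{p_\delta(x)}{p^*(x)} = \Ep{q(\eta)}{\exp\lrp{U(x) - U(y_\eta) - \log\det\lrp{I - \delta\nabla^2 U(y_\eta)}}},
\end{align*}
where $y_\eta := F_\eta^{-1}(x)$ is the unique solution of $y_\eta = x + \delta\nabla U(y_\eta) - \sqrt{2\delta}\T{\eta}$. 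Because $y\mapsto x + \delta\nabla U(y) - \sqrt{2\delta}\T{\eta}$ is a $\delta L$-contraction, fixed-point iteration yields $y_\eta - x = -\sqrt{2\delta}\T{\eta} + \delta\nabla U(x) + R(\eta,x,\delta)$ with an explicit remainder $\lrn{R}_2 = O(\delta^{3/2})$ controlled by $L$ and $\lrn{\T{\eta}}_2 \leq \sqrt L$.

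The heart of the proof is a third-order Taylor expansion of each factor inside the exponent in powers of $y_\eta - x$, combined with $-\log\det(I - \delta A) = \delta\tr(A) + \tfrac12\delta^2\tr(A^2) + \cdots$ at $A = \nabla^2 U(y_\eta)$. Two crucial cancellations trim the post-expectation exponent down to $O(\delta^{3/2})$: (i) the $O(\sqrt\delta)$ pieces are linear in $\T{\eta}$ and vanish by Assumption \ref{ass:ximeanandvariance}; and (ii) at order $\delta$, the quadratic-in-$\T{\eta}$ contribution $-\tfrac12\Ep{q(\eta)}{2\delta\,\T{\eta}^\top\nabla^2 U(x)\T{\eta}} = -\delta\tr(\nabla^2 U(x))$ (using $\sigma_x = I$ from Assumption \ref{ass:simplerassumptions}) cancels the $+\delta\tr(\nabla^2 U(x))$ from the log-determinant, while $-\delta\lrn{\nabla U(x)}_2^2$ from the first-order Taylor term of $U$ cancels the second-moment contribution $\tfrac12\Ep{q(\eta)}{(\sqrt{2\delta}\nabla U(x)\cdot\T{\eta})^2}$ produced when we exponentiate. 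This is precisely the discrete-time counterpart of the Fokker--Planck stationarity identity $\mathcal{L}^* p^* = 0$ for \eqref{e:exactsde}.

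With the cancellations isolated, I would bound $\lrabs{p_\delta(x)/p^*(x) - 1}$ via $\lrabs{e^a - 1} \leq |a| e^{|a|}$ and take expectation. The surviving terms of the exponent are third-order Taylor pieces whose size is bounded by products $\lrn{\nabla U(x)}_2^{k_1}\lrn{\T{\eta}}_2^{k_2}$ with $k_1 + k_2 \leq 3$; using $\lrn{\nabla U(x)}_2 \leq L\lrn{x}_2$ (from $\nabla U(0) = 0$, $\nabla^2 U \preceq LI$), $\lrn{\nabla^3 U}_2 \leq L$, and $\lrn{\T{\eta}}_2 \leq \sqrt L$, each is pointwise $O(\delta^{3/2}(L+1)^{9/2})$ with polynomial-in-$\lrn{x}_2$ growth capped by $\lrn{x}_2^6 + 1$. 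The factor $\exp(m\lrn{x}_2^2/32)$ emerges when bounding $\Ep{q(\eta)}{e^{|\Sigma|}}$: the worst $\lrn{x}_2^2$-coefficient inside $|\Sigma|$ scales like $\delta L^3$, and the hypothesis $\delta \leq \min\{m^2,1\}/(2^{18}d(L+1)^3)$ is calibrated so that this coefficient is at most $m/32$.

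The main obstacle will be careful bookkeeping of cross-terms when exponentiating the Taylor expansion; in particular, verifying that the quadratic piece $\tfrac12(E_1 + E_2)^2$ in $e^{E_1 + E_2}$ does not reintroduce an order-$\delta$ remainder. Only one such cross-term, $(\sqrt{2\delta}\nabla U(x)\!\cdot\!\T{\eta})^2$, is genuinely of size $\delta$ after expectation, and it is precisely the one neutralized by the cancellation described above; every other double-product is already $O(\delta^{3/2})$ and can be absorbed into the final bound.
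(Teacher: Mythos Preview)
Your approach is correct and reaches the same conclusion, but it is organized differently from the paper. The paper Taylor-expands the density $p^*(F_\eta^{-1}(x))$ itself about $x$ to third order, producing terms in $p^*,\nabla p^*,\nabla^2 p^*,\nabla^3 p^*$ paired with powers of $F_\eta^{-1}(x)-x$ and with the Jacobian factor; only at the end does it invoke $p^*\propto e^{-U}$ to collapse the order-$\delta$ sum $p^*\,\delta\,\tr(\nabla^2 U)+\delta\langle\nabla p^*,\nabla U\rangle+\delta\,\tr(\nabla^2 p^*)$ to zero. You instead substitute $p^*=e^{-U}$ at the outset and Taylor-expand the exponent $U(x)-U(y_\eta)-\log\det(\cdot)$, then expand the exponential. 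Your route is a little more direct in the homogeneous case, but the paper's density-level expansion is deliberately agnostic about the form of $p^*$: in the inhomogeneous analogue (Lemma~\ref{l:pdeltaoverpstariscloseto1}) there is no closed form for $p^*$, and the same order-$\delta$ combination is annihilated by the Fokker--Planck stationarity identity rather than by explicit algebra with $e^{-U}$.

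One technical point to tighten. The inequality $|e^a-1|\le |a|e^{|a|}$ applied with $a=\Sigma$ gives only an $O(\sqrt\delta)$ bound, because $|\Sigma|$ is itself of order $\sqrt\delta$; the cancellations you describe occur \emph{after} taking expectation, so they cannot be exploited by a pointwise first-order bound. What you actually need (and implicitly use in your last paragraph) is the third-order remainder $|e^a-1-a-\tfrac12 a^2|\le \tfrac16|a|^3 e^{|a|}$, yielding
\[
\bigl|\E[e^\Sigma]-1\bigr|\;\le\;\bigl|\E[\Sigma+\tfrac12\Sigma^2]\bigr|+\tfrac16\,\E\!\bigl[|\Sigma|^3 e^{|\Sigma|}\bigr],
\]
where the first term is $O(\delta^{3/2})$ by your two cancellations and the second is $O(\delta^{3/2})$ pointwise with the $e^{|\Sigma|}$ factor supplying the $\exp(m\lrn{x}_2^2/32)$. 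State this form of the bound explicitly so that the $\delta^{3/2}$ rate is visibly earned.
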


\begin{proof}[Proof of Lemma \ref{l:s_pdeltaoverpstariscloseto1}]
Recall that $p_\delta = \lrp{F_\eta}_{\#} p^*(x)$.
Thus by the change of variable formula, we have
\begin{align*}
p_\delta(x) 
&= \int p^*(F_\eta^{-1}(x))  \det\lrp{\nabla F_\eta\lrp{F_\eta^{-1}
(x)}}^{-1}  q(\eta)\,d\eta\\
&= \Ep{q(\eta)}{\underbrace{p^*(F_\eta^{-1}(x))}_{\circled{1}}
\underbrace{\det\lrp{\nabla F_\eta\lrp{F_\eta^{-1}
(x)}}^{-1}}_{\circled{2}} },
\numberthis \label{e:s_pdeltaintegralexpression}
\end{align*}
where $\nabla F_\eta(y)$ denotes the Jacobian matrix of $F_\eta$ at $y$. The invertibility of $F_\eta$ is shown in Lemma \ref{l:fisinvertible}.
We rewrite $\circled{1}$ as its Taylor expansion about $x$:
\begin{align*}
p^*\lrp{F_\eta^{-1}(x)}=& \underbrace{p^*(x)}_{\circled{4}} +  \underbrace{{\lin{\nabla p^*(x), F_\eta^{-1}(x) - x}}}_{\circled{5}} +\frac{1}{2}\underbrace{\lin{\nabla^2 p^*(x), \lrp{F_\eta^{-1}(x) -x}\lrp{F_\eta^{-1}(x) -x}^T}}_{\circled{6}} \\
&\quad+ \underbrace{ \int_0^1 \int_0^t \int_0^s \lin{\nabla^3 p^*\lrp{(1-r) x + r F_\eta^{-1}(x)}, \lrp{F_\eta^{-1}(x) - x}^{3}}dr ds dt }_{\circled{7}}.
\end{align*}
Substituting the above into \eqref{e:s_pdeltaintegralexpression} and applying Lemmas~\ref{l:s_circled4times2}, \ref{l:s_circled5times2}, \ref{l:s_circled6times2},
and~\ref{l:s_circled7times2}, we get
\begin{align*}
p_\delta(x) 
=& \Ep{q(\eta)}{ \lrp{\circled{4}+\circled{5}+\circled{6}+\circled{7}} \cdot \circled{2}}\\
=& p^*(x) + p^*(x) \lrp{\delta \tr\lrp{\nabla^2 U(x)}} + \delta \lin{\nabla p^*(x), \nabla U(x)} + \delta \tr\lrp{\nabla^2 p^*(x)} + \Delta,
\numberthis \label{e:la:1}
\end{align*}
for some $\Delta$ satisfying
\begin{align*}
\lrabs{\Delta} 
\leq& p^*(x) \cdot 8 \delta^{3/2} dL^{3/2}\lrp{\|x\|_2 + 1}
+ p^*(x) \cdot 16 \delta^{3/2} d L^{5/2}\lrp{\|x\|_2^3 + 1}\\
&\quad {}+ p^*(x) \cdot 64 \delta^{3/2} d \lrp{L +1}^{5/2}\lrp{\|x\|_2^5 + 1} \\
&\quad {}+ p^*(x) \cdot 256 \delta^{3/2} \exp\lrp{\frac{m}{32}\|x\|_2^2}\lrp{L+1}^{9/2} \lrp{\|x\|_2^6 + 1}\\
\leq& p^*(x) \cdot 512 \delta^{3/2} d \lrp{L+1}^{9/2}
\exp\lrp{\frac{m}{32}\|x\|_2^2} \lrp{\|x\|_2^6 + 1}.
\end{align*} 

Furthermore, by using the expression $p^*(x) \propto e^{-U(x)}$ and some algebra, we see that 
\begin{align*}
& p^*(x) \lrp{\delta \tr\lrp{\nabla^2 U(x)}} + \delta \lin{\nabla p^*(x), \nabla U(x)} + \delta \tr\lrp{\nabla^2 p^*(x)} \\
\numberthis \label{e:fk:1}
&= \delta p^*(x) \lrp{\tr\lrp{\nabla^2 U(x)} - \lrn{\nabla U(x)}_2^2 - \tr\lrp{\nabla^2 U(x)} + \tr\lrp{\nabla U(x) \nabla U(x)^T}}\\
=&=0.
\end{align*}
Substituting the above into \eqref{e:la:1} gives $p_\delta(x) = p^*(x) + \Delta$, which implies that
\begin{align*}
\lrabs{\frac{p_\delta(x)}{p^*(x)} - 1}\leq 512 \delta^{3/2} d
\lrp{L+1}^{9/2} \exp\lrp{\frac{m}{32}\|x\|_2^2} \lrp{\|x\|_2^6 + 1}.
\end{align*}
\end{proof}
\end{subsection}

\begin{subsection}{Proof of Results for Inhomogeneous Diffusion}
The proof of Theorem \ref{t:convergencerate} is quite similar to the
proof of Theorem \ref{t:s_convergencerate}, and can be found in the
Appendix (Section~\ref{s:appendix:inhomogeneous}). We will highlight
some additional difficulties in the proof compared to
Theorem~\ref{t:s_convergencerate}.

The heart of the proof lies in Lemma~\ref{t:chisquaredbound}, which
bounds the discretization error between the SDE~\eqref{e:exactsde} and
one step of the discrete process~\eqref{e:discretesde},
in the form of $W_2(\Phi_\delta(p^*), p^*)$. This is
analogous to Lemma~\ref{t:s_chisquaredbound} in Section~\ref{ss:proofsketch_homogeneous}. Compared to the proof of Lemma~\ref{t:s_chisquaredbound}, one additional difficulty is that we can no
longer rely on Talagrand's inequality \eqref{e:s_rc:1}. This is
because $p^*$ is no longer guaranteed to be strongly log-concave. We
instead use the fact that $p^*(x)$ is subgaussian to upper bound
$W_2(\cdot, p^*)$ by $\chi^2$ (see Corollary
\ref{c:upperboundw2bychisquared}).

Lemma~\ref{t:s_chisquaredbound} in turn relies crucially on bounding the
expression $\lrabs{\frac{p_\delta(x)}{p^*(x)} -1}$. This is proved in
Lemma~\ref{l:pdeltaoverpstariscloseto1}, which is the analog of Lemma~\ref{l:s_pdeltaoverpstariscloseto1} in Section
\ref{ss:proofsketch_homogeneous}. The additional difficulty is that we
have to handle the effects of a diffusion matrix $\sigma_x$ that
depends on the position $x$. Also, Lemma~\ref{l:pdeltaoverpstariscloseto1} relies on the closed-form expression
for $p^*\propto e^{-U(x)}$ in order to cancel out terms of order less
than $\delta^{3/2}$ in~\eqref{e:fk:1}. We do not have a closed-form
expression for $p^*$ when the diffusion is inhomogeneous, and we
instead rely on an argument based on the invariance of $p^*(x)$ under
the Fokker-Planck equation (see~\eqref{e:fk:2}). This allows us to,
somewhat remarkably, prove that $p_k$ converges to $p^*$ using only
the implicit description of $p^*$ as the invariant distribution of
\eqref{e:exactsde}.
\end{subsection}
\end{section}
\begin{section}{Conclusion and Future Directions}
The main result of this paper is a generalization of the
classical Central Limit Theorem to discrete-time
stochastic processes of the form~\eqref{e:discretesde}, giving
rates of convergence to a certain invariant distribution $p^*$.
Our results assume that $U(x)$ is strongly convex (Assumption~\ref{ass:uissmooth}.3). This is not strictly necessary. We use strong convexity in two ways: 
\begin{enumerate}
\item We use it for proving contraction of \eqref{e:discretesde}, as
in Lemma~\ref{l:s_discrete_contraction} and Lemma~\ref{l:discrete_contraction}. Assuming that the noise $T_\eta$
contains an independent symmetric component (e.g., Gaussian noise),
and assuming that $U(x)$ is nonconvex inside but strongly convex
outside a ball, then we can use a reflection
coupling argument to show that Assumption~\ref{ass:discreteprocesscontracts}
holds.
\item We use it for proving that $p^*$ is subgaussian, as in Lemma~\ref{l:p^*issubgaussian}. For this lemma, it suffices that $U(x)$ is
$m$-dissipative.
\end{enumerate}

Another assumption that can be relaxed is Assumption
\ref{ass:ximeanandvariance}.2, which is used to show that $p^*$ is
subgaussian. We can replace this assumption by the weaker condition
\[
\Ep{q(\eta)}{T_\eta(x)T_\eta(x)^T} \prec c_{\sigma}^2 \|x\|_2^2 I.
\]
We only need to make an additional assumption that $U(x)$ is
$M$-dissipative for some radius $D$, with $M \geq 8 c_{\sigma}^2$. We do not prove this here to keep the proofs simple; a proof will be included in the full version of this paper.

Finally, we remark that $\eqref{e:exactsde}$ suggests that $x_t$ moves quickly through regions of large $\sigma_x$. This seems to suggest that in the stochastic gradient algorithm, the iterates will, with higher probability, end up in minima where the covariance of the
gradient is small. This may in turn suggest that the noise SGD tends to select ``stable'' solutions, where stability is defined as the determinant of the covariance of the gradient. This property would not be present with a different noise such as Gaussian noise in Langevin diffusion. A rigorous investigation of this possibility is beyond the scope of this paper.

\end{section}

\begin{section}{Acknowledgements}
This work was supported in part by the Mathematical Data Science program of the
Office of Naval Research under grant number N00014-18-1-2764. Part of this work was done at the Foundations of Data Science program at the Simons Institute for the Theory of Computing. The authors thank Thomas Courtade for helpful discussions on an initial draft of the paper.
\end{section}

\nocite{*}

\newpage
\bibliographystyle{plain}
\bibliography{references}

\newpage
\appendix
\begin{section}{Auxiliary Lemmas for Section \ref{ss:mainresult_homogeneousnoise}}
\label{s:appendix:homogeneous}
In this subsection, we present the proof of Lemma~\ref{t:s_chisquaredbound}, as well as some auxiliary lemmas.
\begin{lemma}\label{l:s_circled4times2}
For $\delta \leq \frac{1}{2d^2 L}$,
\begin{align*}
& \Ep{q(\eta)}{p^*(x) \cdot \det\lrp{\nabla
F_\eta\lrp{F_\eta^{-1}(x)}}}= p^*(x) + p^*(x) \lrp{\delta
\tr\lrp{\nabla^2 U(x)}} + \Delta,
\end{align*}
for some $\lrabs{\Delta} \leq p^*(x) \cdot 8 \delta^{3/2}
dL^{3/2}\lrp{\|x\|_2 + 1}$.
\end{lemma}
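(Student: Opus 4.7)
The plan is to exploit the simplifying feature of the homogeneous setting. Since Assumption~\ref{ass:simplerassumptions} gives $T_\eta(y) = T_\eta$ independent of $y$, the Jacobian reduces to $\nabla F_\eta(y) = I - \delta \nabla^2 U(y)$, so the integrand depends on $\eta$ only through $y_\eta := F_\eta^{-1}(x)$. Reading the $\det$ in the statement as $\det^{-1}$ (as required by the change-of-variables identity at \eqref{e:s_pdeltaintegralexpression}), the claim reduces to showing
\[
\Ep{q(\eta)}{\det(I - \delta \nabla^2 U(y_\eta))^{-1}} = 1 + \delta \tr(\nabla^2 U(x)) + O\lrp{\delta^{3/2} d L^{3/2}(\|x\|_2 + 1)}.
\]

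The first step is to bound $\|y_\eta - x\|_2$ uniformly in $\eta$. From the fixed-point identity $y_\eta = x + \delta \nabla U(y_\eta) - \sqrt{2\delta}\, T_\eta$, together with $\nabla U(0) = 0$, $\|\nabla^2 U\|_2 \leq L$, and $\|T_\eta\|_2 \leq \sqrt{L}$, a one-line contraction argument---valid since $\delta \leq 1/(2 d^2 L)$ forces $\delta L \leq 1/2$---gives a bound of the form $\|y_\eta - x\|_2 \leq 2\sqrt{2\delta L} + 2\delta L \|x\|_2$ that holds for every $\eta$ in the support of $q$.

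Next I would split the integrand as $A_\eta + B_\eta$, where
\[
A_\eta := \det(I - \delta \nabla^2 U(y_\eta))^{-1} - 1 - \delta \tr(\nabla^2 U(y_\eta)), \qquad B_\eta := \delta\lrp{\tr(\nabla^2 U(y_\eta)) - \tr(\nabla^2 U(x))}.
\]
For $A_\eta$ I would use the identity $\det(I - \delta A)^{-1} = \exp\lrp{\sum_{k \geq 1} \delta^k \tr(A^k)/k}$ together with $|e^u - 1 - u| \leq u^2 e^{|u|}$, yielding $|A_\eta| \lesssim \delta^2 d^2 L^2$; under the hypothesis $\delta \leq 1/(2 d^2 L)$ this is in turn $\lesssim \delta^{3/2} d L^{3/2}$. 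For $B_\eta$ I would combine Lipschitz continuity of $\nabla^2 U$ (with constant $L$, inherited from $\|\nabla^3 U\|_2 \leq L$) with $|\tr M| \leq d\|M\|_2$ to conclude $|B_\eta| \leq \delta d L \|y_\eta - x\|_2 \lesssim \delta^{3/2} d L^{3/2}(\|x\|_2 + 1)$, after substituting the bound from the previous paragraph.

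Taking expectations over $\eta$ and multiplying through by $p^*(x)$ then assembles the claim. The main obstacle I expect is arithmetic bookkeeping: the two error sources (determinant expansion and position shift) must both be controlled tightly enough to fit under the advertised constant $8$, and both contributions need to be collapsed into the common form $\delta^{3/2} d L^{3/2}(\|x\|_2 + 1)$. Notably, this particular lemma never invokes the mean-zero property $\Ep{q(\eta)}{T_\eta} = 0$---an a.s.\ bound on $\|T_\eta\|_2$ suffices, because the extra $\sqrt{\delta}$ coming from the $\sqrt{2\delta}\, T_\eta$ term is already absorbed by the $\delta$ prefactor in $B_\eta$. That cancellation is instead what will save the $\circled{5}$, $\circled{6}$, $\circled{7}$ terms in the companion Lemmas \ref{l:s_circled5times2}--\ref{l:s_circled7times2}.
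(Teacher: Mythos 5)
Your proposal is correct and follows essentially the same route as the paper: you read the stray $\det$ as $\det^{-1}$ (matching the change-of-variables identity, which the paper clearly intends), reduce to a pointwise expansion of $\det(I-\delta\nabla^2 U(y_\eta))^{-1}$, and then split the error exactly as the paper does — a determinant-Taylor remainder at $y_\eta$ (their Lemma~\ref{l:s_determinantexpansioninverse}) plus a trace shift from $y_\eta$ to $x$ controlled by $\|\nabla^3 U\|_2\leq L$ and $|\tr M|\leq d\|M\|_2$ (their Lemma~\ref{l:s_auxfordeterminant}), combined in their Lemma~\ref{l:s_discretizeddeterminantexpansioninverse}. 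The only micro-difference is that you get the determinant remainder via $\log\det=\tr\log$ and $|e^u-1-u|\leq u^2e^{|u|}$, whereas the paper diagonalizes $A$ and uses a scalar Taylor expansion of $(1+a)^{-1}$ — both are standard and give the same $O(\delta^2 d^2 L^2)$ bound — and your observation that this lemma (unlike \ref{l:s_circled5times2}--\ref{l:s_circled7times2}) never uses $\Ep{q(\eta)}{T_\eta}=0$, only an a.s.\ bound on $\|T_\eta\|_2$, is correct and matches the paper's proof.
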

\begin{proof}[Proof of Lemma \ref{l:s_circled4times2}]
Let us define 
\begin{align*}
\Delta' :=& \det\lrp{\nabla F_\eta(F_\eta^{-1} (x))}^{-1} -\lrp{1 +
\delta \tr \lrp{\nabla^2 U(x)}}.
\end{align*}
By Lemma~\ref{l:s_discretizeddeterminantexpansioninverse},
$\lrabs{\Delta'} \leq 8 \delta^{3/2} d L^{3/2}\lrp{\|x\|_2 + 1}$, so
\begin{align*}
\Ep{q(\eta)}{p^*(x) \cdot \det\lrp{\nabla F_\eta\lrp{F_\eta^{-1}(x)}}}
= & \Ep{q(\eta)}{p^*(x) \cdot \lrp{1 + \delta \tr\lrp{\nabla^2 U(x)}}}  + \Ep{q(\eta)}{p^*(x) \cdot \Delta'}\\
= & p^*(x) \lrp{1 + \delta \tr\lrp{\nabla^2 U(x)} + p^*(x)} \cdot \Delta'
\end{align*}
We complete the proof by taking $\Delta := p^*(x) \Delta'$.
\end{proof}

\begin{lemma}\label{l:s_circled5times2}
For $\delta \leq \frac{1}{64 d^2 L}$,
\begin{align*}
& \Ep{q(\eta)}{\lin{\nabla p^*(x), F_\eta^{-1}(x) - x}\cdot \det\lrp{\nabla F_\eta(F_\eta^{-1} (x))}^{-1}} = \delta \lin{\nabla p^*(x), \nabla U(x)} + \Delta
\end{align*}
for some $\lrabs{\Delta}\leq  p^*(x) \cdot 16 \delta^{3/2} d
L^{5/2}\lrp{\|x\|_2^3 + 1}$.
\end{lemma}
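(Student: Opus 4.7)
The strategy is to Taylor-expand the two factors $F_\eta^{-1}(x)-x$ and $\det(\nabla F_\eta(F_\eta^{-1}(x)))^{-1}$ separately in powers of $\sqrt{\delta}$, multiply them, and then exploit $\Ep{q(\eta)}{T_\eta}=0$ to annihilate the odd-order-in-$T_\eta$ terms, leaving only the target $\delta \lin{\nabla p^*(x), \nabla U(x)}$ plus $O(\delta^{3/2})$ remainders. The necessary moment bounds on $p^*$ will come from the closed form $p^*(x)\propto e^{-U(x)}$ in the homogeneous setting, which gives $\nabla p^*(x) = -p^*(x)\nabla U(x)$ and hence $\lrn{\nabla p^*(x)}_2 \leq L\lrn{x}_2\, p^*(x)$ via $\nabla U(0)=0$ and $\lrn{\nabla^2 U}_2\leq L$.

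For the first factor I would use the implicit identity $F_\eta^{-1}(x)-x = \delta \nabla U(F_\eta^{-1}(x)) - \sqrt{2\delta}T_\eta$, which is a genuine fixed-point equation under Assumption~\ref{ass:simplerassumptions} because $T_\eta$ does not depend on its argument; invertibility of $F_\eta$ comes from Lemma~\ref{l:fisinvertible}. One iterative substitution then yields
\[
F_\eta^{-1}(x)-x \;=\; -\sqrt{2\delta}\,T_\eta \;+\; \delta \nabla U(x) \;+\; R,
\]
with $\lrn{R}_2 \lesssim \delta^{3/2} L^{3/2}$ uniformly in $\eta$, using $\lrn{T_\eta}_2\leq \sqrt{L}$ and $\lrn{\nabla^2 U}_2\leq L$. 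For the second factor, the expansion from Lemma~\ref{l:s_discretizeddeterminantexpansioninverse} (already invoked in Lemma~\ref{l:s_circled4times2}) gives $\det(\nabla F_\eta(F_\eta^{-1}(x)))^{-1} = 1 + \delta \tr(\nabla^2 U(x)) + \Delta'$ with $\lrabs{\Delta'}\leq 8\delta^{3/2} dL^{3/2}(\lrn{x}_2+1)$. Taking the inner product of the first expansion with $\nabla p^*(x)$, multiplying by the second, and averaging over $\eta$, the terms linear in $T_\eta$ drop out, leaving $\delta\lin{\nabla p^*(x),\nabla U(x)}$ plus four remainder contributions: the pure $\delta^2$ cross term $\delta^2 \lin{\nabla p^*(x), \nabla U(x)}\tr(\nabla^2 U(x))$, the contribution $\lin{\nabla p^*(x),R}$ multiplied by the full determinant expansion, the contribution $\delta\lin{\nabla p^*(x),\nabla U(x)}\cdot\Delta'$, and the cross term $-\sqrt{2\delta}\lin{\nabla p^*(x),T_\eta}\cdot \Delta'$. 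Each is bounded by $O(\delta^{3/2})$ using the gradient bound above and the stepsize constraint $\delta\leq 1/(64 d^2 L)$, which converts any surplus $\delta^{1/2}$ factor into $1/(8d\sqrt{L})$.

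The one step that requires genuine care is the last remainder, $-\sqrt{2\delta}\lin{\nabla p^*(x),T_\eta}\cdot\Delta'$, whose $\eta$-expectation is not automatically zero because $\Delta'$ depends on $T_\eta$ through $F_\eta^{-1}(x)$. I would handle it by Cauchy--Schwarz, using the almost-sure bound $\lrn{T_\eta}_2\leq \sqrt{L}$ together with the uniform bound on $\lrabs{\Delta'}$ from Lemma~\ref{l:s_discretizeddeterminantexpansioninverse}; this contributes a term of size $O(\delta^2 d L^3\lrn{x}_2(\lrn{x}_2+1)p^*(x))$, which under the stepsize condition is absorbed into $\delta^{3/2}\cdot O(dL^{5/2})\lrn{x}_2(\lrn{x}_2+1)p^*(x)$. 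Aggregating this with the three easier remainders and bounding the polynomial factors by $\lrn{x}_2^3+1$ yields exactly the claimed estimate $\lrabs{\Delta}\leq p^*(x)\cdot 16\delta^{3/2} d L^{5/2}(\lrn{x}_2^3+1)$.
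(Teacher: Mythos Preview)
Your proposal is correct and follows essentially the same strategy as the paper: expand $F_\eta^{-1}(x)-x$ to order $\delta$ and the determinant factor to the needed accuracy, then use $\Ep{q(\eta)}{T_\eta}=0$ to kill the $O(\sqrt{\delta})$ term and bound the rest. The only bookkeeping difference is that the paper expands the determinant more coarsely, writing $\det(\nabla F_\eta(F_\eta^{-1}(x)))^{-1}=1+\Delta_2$ with $\lrabs{\Delta_2}\leq 2\delta dL(\lrn{x}_2+1)$ (Corollary~\ref{c:s_determinantinversenaivenaivebound}) rather than isolating the $\delta\tr(\nabla^2 U(x))$ term via Lemma~\ref{l:s_discretizeddeterminantexpansioninverse} as you do; since the $O(\sqrt{\delta})$ inner product against $O(\delta)$ already yields $O(\delta^{3/2})$, this spares them your terms (a), (c), (d) and collapses the remainder into just two pieces, but the substance is identical.
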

\begin{proof}[Proof of Lemma~\ref{l:s_circled5times2}]
Let 
\begin{align*}
&\Delta_1 := F_\eta^{-1}(x) - x - \lrp{-\sqrt{2\delta} T_\eta + \delta
\nabla U(x)},&
& \Delta_2 := \det\lrp{\nabla F_\eta(F_\eta^{-1} (x))}^{-1} - 1.
\end{align*}
By Lemma~\ref{l:s_onestepdiscretizationbounds}.2 and Corollary \ref{c:s_determinantinversenaivenaivebound},
\begin{align*}
&\lrn{\Delta_1}_2 \leq 4 \delta^{3/2} L^{3/2} \lrp{\|x\|_2 + 1}, &
&\lrabs{\Delta_2} \leq 2 \delta d L \lrp{\|x\|_2 + 1}.
\end{align*}
Moving terms around, 
\begin{align}
\nonumber
& \Ep{q(\eta)}{\lin{\nabla p^*(x), F_\eta^{-1}(x) - x}\cdot \det\lrp{\nabla F_\eta(F_\eta^{-1} (x))}^{-1}}\\
\label{e:s_fx:1}
=& \Ep{q(\eta)}{\lin{\nabla p^*(x), -\sqrt{2\delta}T_\eta}} + \Ep{q(\eta)}{\lin{\nabla p^*(x), \delta \nabla U(x)}}\\
\label{e:s_fx:3}
&\quad + \Ep{q(\eta)}{\lin{\nabla p^*(x), \sqrt{2\delta} T_\eta + \delta \nabla U(x)}\cdot \Delta_2}\\
\label{e:s_fx:4}
&\quad + \Ep{q(\eta)}{\lin{\nabla p^*(x), \Delta_1} \cdot
\det\lrp{\nabla F_\eta(F_\eta^{-1} (x))}^{-1}}.
\end{align}
The main term of interest is \eqref{e:s_fx:1}, which evaluates to
\begin{align*}
&\Ep{q(\eta)}{\lin{\nabla p^*(x), -\sqrt{2\delta}T_\eta}} + \Ep{q(\eta)}{\lin{\nabla p^*(x), \delta \nabla U(x)}}\\
=& \Ep{q(\eta)}{\lin{\nabla p^*(x), \delta \nabla U(x)}}\\
=& \delta \lin{\nabla p^*(x), \nabla U(x)},
\end{align*}
where the first equality is by Assumption~\ref{ass:ximeanandvariance}.1.

We now consider the terms in \eqref{e:s_fx:3} and \eqref{e:s_fx:4}:
\begin{align*}
\lrabs{\eqref{e:s_fx:3}} =& \lrabs{\Ep{q(\eta)}{\lin{\nabla p^*(x), \sqrt{2\delta} T_\eta + \delta \nabla U(x)}\cdot \Delta_2}}\\
\leq& \lrn{\nabla p^*(x)}_2 \Ep{q(\eta)}{\lrn{\sqrt{2\delta}T_\eta + \delta \nabla U(x)}_2\lrabs{\Delta_2}}\\
\leq& p^*(x) L\|x\|_2 \cdot \sqrt{2\delta} \lrp{\sqrt{L} + \sqrt{\delta} L\|x\|_2}\cdot 2 \delta dL\lrp{\|x\|_2 + 1}\\
\leq& 8 p^*(x) \delta^{3/2} d L^{5/2} \lrp{\|x\|_2^3 + 1},
\end{align*}
where the first inequality is by Cauchy-Schwarz, and the second
inequality is by Lemma~\ref{l:s_logp^*issmooth}.1, our upperbound on
$\lrabs{\Delta_2}$ at the start of the proof, and
Assumptions~\ref{ass:uissmooth}.2 and~\ref{ass:gisregular}.2.
\begin{align*}
\lrabs{\eqref{e:s_fx:4}} =& \lrabs{\Ep{q(\eta)}{\lin{\nabla p^*(x), \Delta_1} \cdot \det\lrp{\nabla F_\eta(F_\eta^{-1} (x))}^{-1}}}\\
\leq& \lrn{\nabla p^*(x)}_2 \Ep{q(\eta)}{\lrn{\Delta_1}_2 \cdot \lrabs{\det\lrp{\nabla F_\eta(F_\eta^{-1} (x))}^{-1}}}\\
\leq& p^*(x) L\|x\|_2 \cdot 4\delta^{3/2} L^{3/2} \lrp{\|x\|_2 + 1} \cdot \lrp{1+2 \delta dL\lrp{\|x\|_2 + 1}}\\
\leq& 8 p^*(x) \delta^{3/2} d L^{5/2}\lrp{\|x\|_2^3 + 1},
\end{align*}
where the first inequality is by Cauchy-Schwarz, and the second
inequality is by Lemma~\ref{l:s_logp^*issmooth}.1, our upperbound
on $\lrn{\Delta_1}_2$ and $\lrabs{\Delta_2}$ at the start of the
proof, and our assumption on $\delta$.

Letting $\Delta := \eqref{e:s_fx:3}+\eqref{e:s_fx:4}$, we have
\begin{align*}
\lrabs{\Delta}
\leq& 8 p^*(x) \delta^{3/2} d L^{5/2} \lrp{\|x\|_2^3 + 1}
+ 8 p^*(x) \delta^{3/2} d L^{5/2}\lrp{\|x\|_2^3 + 1}\\
\leq& p^*(x) \cdot 16 \delta^{3/2} d L^{5/2} \lrp{\|x\|_2^3 + 1}.
\end{align*}
\end{proof}

\begin{lemma}\label{l:s_circled6times2}
For $\delta \leq \frac{1}{64 d^2 (L+1)}$,
\begin{align*}
& \frac{1}{2} \Ep{q(\eta)}{\lin{\nabla^2 p^*(x), \lrp{F_\eta^{-1}(x) - x}\lrp{F_\eta^{-1}(x) - x}^T}\cdot \det\lrp{\nabla F_\eta(F_\eta^{-1} (x))}^{-1}}\\
=& \delta \tr\lrp{\nabla^2 p^*(x)} + \Delta
\end{align*}
for some $\lrabs{\Delta}\leq p^*(x) \cdot 64 \delta^{3/2} d \lrp{L
+1}^{5/2}\lrp{\|x\|_2^5 + 1} $.
\end{lemma}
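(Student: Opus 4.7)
The plan is to expand $F_\eta^{-1}(x) - x$ and $\det\lrp{\nabla F_\eta(F_\eta^{-1}(x))}^{-1}$ around their leading-order behavior in $\delta$, then isolate the piece that produces $\delta \tr(\nabla^2 p^*(x))$ and absorb the rest into $\Delta$. Using Lemma \ref{l:s_onestepdiscretizationbounds}.2 I would write $F_\eta^{-1}(x) - x = -\sqrt{2\delta}\, T_\eta + \delta \nabla U(x) + \Delta_1$ with $\lrn{\Delta_1}_2 \leq 4\delta^{3/2} L^{3/2}(\|x\|_2+1)$, and using Corollary \ref{c:s_determinantinversenaivenaivebound} write $\det\lrp{\nabla F_\eta(F_\eta^{-1}(x))}^{-1} = 1 + \Delta_2$ with $\lrabs{\Delta_2} \leq 2\delta dL(\|x\|_2+1)$. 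Expanding the outer product $\lrp{F_\eta^{-1}(x)-x}\lrp{F_\eta^{-1}(x)-x}^T$ produces nine pieces; after taking expectation over $\eta$, the only one surviving at order $\delta$ is $2\delta T_\eta T_\eta^T$, since cross terms that are linear in $T_\eta$ are killed by Assumption \ref{ass:ximeanandvariance}.1.

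The main term is then handled by Assumption \ref{ass:simplerassumptions}.3, which gives $\Ep{q(\eta)}{2\delta T_\eta T_\eta^T} = 2\delta \sigma_x^2 = 2\delta I$, so that $\frac{1}{2} \lin{\nabla^2 p^*(x), 2\delta I} = \delta \tr\lrp{\nabla^2 p^*(x)}$, matching the leading term in the statement. The pieces I would dump into $\Delta$, grouped by source, are: (i) the $\delta^2 \nabla U(x) \nabla U(x)^T$ contribution, of size $\lesssim \delta^2 L^2 \|x\|_2^2$; (ii) the mixed pieces with one factor of $\Delta_1$ paired with $T_\eta$ or $\nabla U$, of size $\lesssim \delta^2 L^2 (\|x\|_2+1)^2$ after Cauchy--Schwarz and $\lrn{T_\eta}_2 \leq \sqrt{L}$ from Assumption \ref{ass:simplerassumptions}.2; (iii) the $\Delta_1 \Delta_1^T$ piece, of size $\lesssim \delta^3 L^3 (\|x\|_2+1)^2$; and (iv) the interaction of the leading $2\delta T_\eta T_\eta^T$ with $\Delta_2$, of size $\lesssim \delta^2 d L^2 (\|x\|_2+1)$. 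Contracting each piece with $\nabla^2 p^*(x)$ and using the pointwise bound $\lrn{\nabla^2 p^*(x)}_2 \leq p^*(x)\lrp{L + L^2 \|x\|_2^2}$, which follows in the homogeneous case from $\nabla^2 p^* = p^*\lrp{\nabla U(x)\nabla U(x)^T - \nabla^2 U(x)}$ together with Assumption \ref{ass:uissmooth} (and which should be Lemma~\ref{l:s_logp^*issmooth}.2), one checks that each piece contributes at worst $p^*(x)\cdot O\lrp{\delta^{3/2} d (L+1)^{5/2}(\|x\|_2^5+1)}$ under the hypothesis $\delta \leq 1/(64 d^2(L+1))$. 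Summing gives the stated bound on $\lrabs{\Delta}$.

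The main obstacle is term (iv): because $\Delta_2$ depends on $\eta$ through $F_\eta^{-1}$, it is correlated with $T_\eta$, so I cannot just factor the expectation and use $\Ep{q(\eta)}{T_\eta} = 0$ or independence. To handle this I would use the worst-case bound $\lrabs{\Delta_2} \leq 2\delta d L(\|x\|_2+1)$ together with $\lrn{T_\eta T_\eta^T}_2 \leq L$ from Assumption \ref{ass:simplerassumptions}.2 inside the expectation; this sacrifices one factor of $\sqrt{\delta}$ relative to the naive ``$T_\eta$ and $\Delta_2$ independent'' computation but still lands at the required $\delta^{3/2}$ because the leading $T_\eta T_\eta^T$ already carries $2\delta$. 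The other delicate point is polynomial bookkeeping in $\|x\|_2$: the $\|x\|_2^5$ appearing in the final bound arises from multiplying $\lrn{\nabla^2 p^*(x)}_2 \lesssim p^*(x)(1 + L^2\|x\|_2^2)$ by cubic-in-$\|x\|_2$ error terms from $\Delta_1$ and $\nabla U$, so I would carefully absorb lower-order monomials via Young's inequality while consolidating constants to the stated $64 \delta^{3/2} d(L+1)^{5/2}$.
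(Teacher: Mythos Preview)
Your approach is correct and essentially matches the paper's, with one small organizational difference: the paper expands $F_\eta^{-1}(x)-x$ only to first order, setting $\Delta_1 := F_\eta^{-1}(x)-x -(-\sqrt{2\delta}\,T_\eta)$ via Lemma~\ref{l:s_onestepdiscretizationbounds}.3 (so $\lrn{\Delta_1}_2\le 2\delta L(\|x\|_2+1)$), whereas you go one order further and keep $\delta\nabla U(x)$ explicit via Lemma~\ref{l:s_onestepdiscretizationbounds}.2. The paper's coarser choice collapses your nine-piece outer product into just three terms---the leading $2\delta T_\eta T_\eta^T$, its product with $\Delta_2$, and everything involving $\Delta_1$---which avoids having to separately argue that the $-\sqrt{2\delta}\,T_\eta\otimes\delta\nabla U(x)$ cross terms vanish in expectation (and that their residual interaction with $\Delta_2$ is $O(\delta^{5/2})$, a point you glossed over). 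Your handling of the correlated term (iv) by worst-casing $\lrabs{\Delta_2}$ inside the expectation is exactly what the paper does, and your identification of where the $\|x\|_2^5$ comes from is accurate. Either route yields the stated bound; the paper's is just a bit more economical in bookkeeping.
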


\begin{proof}[Proof of Lemma~\ref{l:s_circled6times2}]
Define
\begin{align*}
&\Delta_1 := F_\eta^{-1}(x) - x - \lrp{-\sqrt{2\delta}T_\eta}, &
&\Delta_2 := \det\lrp{\nabla F_\eta(F_\eta^{-1} (x))}^{-1} -  1.
\end{align*}
By Lemma~\ref{l:s_onestepdiscretizationbounds}.3 and Corollary \ref{c:s_determinantinversenaivenaivebound},
\begin{align*}
\lrabs{\Delta_1} \leq& 2 \delta L\lrp{\|x\|_2 + 1},&
\lrabs{\Delta_2} \leq& 2 \delta d L \lrp{\|x\|_2 + 1}.
\end{align*}
Then
\begin{align}
\nonumber
& \Ep{q(\eta)}{\lin{\nabla^2 p^*(x), \lrp{F_\eta^{-1}(x) - x}\lrp{F_\eta^{-1}(x) - x}^T}\cdot \det\lrp{\nabla F_\eta(F_\eta^{-1} (x))}^{-1}}\\
\label{e:s_ta:1}
=& 2\delta \Ep{q(\eta)}{\lin{\nabla^2 p^*(x), T_\eta T_\eta^T}}\\
\label{e:s_ta:2}
&\quad + 2\delta \Ep{q(\eta)}{\lin{\nabla^2 p^*(x), T_\eta T_\eta^T}\cdot \Delta_2}\\
\label{e:s_ta:3}
&\quad + \Ep{q(\eta)}{\lin{\nabla^2 p^*(x), \Delta_1 \Delta_1^T -
\sqrt{2\delta}T_\eta \Delta_1^T - \sqrt{2\delta}\Delta_1 T_\eta
^T}\cdot \det\lrp{\nabla F_\eta(F_\eta^{-1} (x))}^{-1}}.
\end{align}
We are mainly interested in~\eqref{e:s_ta:1}, which evaluates to 
\begin{align*}
2\delta \Ep{q(\eta)}{\lin{\nabla^2 p^*(x), T_\eta T_\eta^T}}
=& 2\delta \lin{\nabla^2 p^*(x), \Ep{q(\eta)}{T_\eta T_\eta^T}}\\
=& 2\delta \tr\lrp{\nabla^2 p^*(x)},
\end{align*}
where the last equality is by Assumption \ref{ass:simplerassumptions}.1. 

We now bound the magnitudes of $\eqref{e:s_ta:2}$ and $\eqref{e:s_ta:3}$.
\begin{align*}
\lrabs{\eqref{e:s_ta:2}}
=& \lrabs{2\delta \Ep{q(\eta)}{\lin{\nabla^2 p^*(x), T_\eta T_\eta^T}\cdot \Delta_2}}\\
\leq& 2\delta \lrn{\nabla^2 p^*(x)}_2 \Ep{q(\eta)}{\lrn{T_\eta}_2^2 \lrabs{\Delta_2}}\\
\leq& 2\delta  p^*(x) \lrp{L + L^2\|x\|_2^2} \cdot L \cdot 2 \delta d L \lrp{\|x\|_2 + 1}\\
\leq& 8 \delta^2 p^*(x) d \lrp{L + 1}^4 \lrp{\|x\|_2^3 + 1},
\end{align*}
where the first inequality is by Cauchy-Schwarz, and the second inequality is by Lemma~\ref{l:s_logp^*issmooth}.2 and our upper bound on $\lrabs{\Delta_2}$ at the start of the proof.
\begin{align*}
\eqref{e:s_ta:3}
=& \Ep{q(\eta)}{\lin{\nabla^2 p^*(x), \Delta_1 \Delta_1^T - \sqrt{2\delta}T_\eta \Delta_1^T - \sqrt{2\delta}\Delta_1 T_\eta ^T}\cdot \det\lrp{\nabla F_\eta(F_\eta^{-1} (x))}^{-1}}\\
\leq& \lrn{\nabla^2 p^*(x)}_2 \Ep{q(\eta)}{\lrp{\lrn{\Delta_1}_2^2 + 2\sqrt{2\delta}\lrn{T_\eta}_2\lrn{\Delta_1}_2} \lrabs{\det\lrp{\nabla F_\eta(F_\eta^{-1} (x))}^{-1}}}\\
\leq& p^*(x) \lrp{L + L^2\|x\|_2^2} \cdot \lrp{\lrp{2 \delta L \lrp{\|x\|_2 + 1}}^2 + 4\sqrt{\delta} L^{1/2} \lrp{2 \delta L\lrp{\|x\|_2 + 1}}}\\
&\quad \cdot \lrp{1+2 \delta dL\lrp{\|x\|_2 + 1}}\\
\leq& 32 \delta^{3/2} p^*(x)\lrp{L +1}^{5/2}\lrp{\|x\|_2^5 + 1} ,
\end{align*}
where the first inequality is by Cauchy-Schwarz, and the second
inequality is by Lemma~\ref{l:s_logp^*issmooth}.2 and our upper bound
on $\lrabs{\Delta_1}$ at the start of the proof. Defining
$\Delta := \eqref{e:s_ta:2} + \eqref{e:s_ta:3}$, we have
\begin{align*}
\lrabs{\Delta} 
\leq& 8 \delta^2 p^*(x) d \lrp{L + 1}^3 \lrp{\|x\|_2^3 + 1} + 32 \delta^{3/2} p^*(x)\lrp{L +1}^{5/2}\lrp{\|x\|_2^5}\\
\leq& p^*(x) \cdot 64 \delta^{3/2} d \lrp{L +1}^{5/2}\lrp{\|x\|_2^5 +
1} .
\end{align*}
\end{proof}

\begin{lemma}\label{l:s_circled7times2}
For $\delta \leq \frac{\min\lrbb{m^2,1}}{2^{18} d^2 \lrp{L+1}^3}$,
\begin{align*}
&\!\!\!\!\!\! \lrabs{\Ep{q(\eta)}{\lrp{ \int_0^1 \int_0^t \int_0^s
\lin{\nabla^3 p^*\lrp{(1-r) x + r F_\eta^{-1}(x)}, \lrp{F_\eta^{-1}(x)
- x}^{3}}\,dr \,ds \,dt }\cdot \det\lrp{\nabla F_\eta(F_\eta^{-1} (x))}^{-1}}}\\
\leq & p^*(x) \cdot 256 \delta^{3/2}
\exp\lrp{\frac{m}{32}\|x\|_2^2}\lrp{L+1}^{9/2} \lrp{\|x\|_2^6 + 1}.
\end{align*}
\end{lemma}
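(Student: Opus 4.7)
\textbf{Proof plan for Lemma \ref{l:s_circled7times2}.} The plan is to move the absolute value inside the expectation, apply Cauchy--Schwarz to factor the integrand into three pieces (the third-order derivative of $p^*$ along a segment, the cubed displacement $F_\eta^{-1}(x)-x$, and the inverse Jacobian determinant), bound each piece separately using the tools already developed, and finally shift the value of $p^*$ at the interpolated point $y:=(1-r)x+rF_\eta^{-1}(x)$ back to $p^*(x)$. The exponential factor $\exp(\tfrac{m}{32}\|x\|_2^2)$ in the target bound is the telltale sign that the last step will produce a Young-inequality cross term of the form $L\|x\|_2\cdot\|y-x\|_2$.

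First I bound the easy factors. By Corollary \ref{c:s_determinantinversenaivenaivebound} and the assumption on $\delta$, $|\det(\nabla F_\eta(F_\eta^{-1}(x)))^{-1}|\le 2$. By Lemma \ref{l:s_onestepdiscretizationbounds}.1 (the crude one-step displacement bound), $\|F_\eta^{-1}(x)-x\|_2 \le C\sqrt{\delta L}(\|x\|_2+1)$ and therefore $\|F_\eta^{-1}(x)-x\|_2^3 \le C'\delta^{3/2}L^{3/2}(\|x\|_2^3+1)$. For the third-derivative factor, I will use $p^*(y)\propto e^{-U(y)}$ to compute
\begin{align*}
\nabla^3 p^*(y) = \bigl(-\nabla^3 U(y) + 3\,\mathrm{Sym}(\nabla U(y)\otimes\nabla^2 U(y)) - \nabla U(y)^{\otimes 3}\bigr)\,p^*(y),
\end{align*}
so that by Assumption \ref{ass:uissmooth} (which gives $\|\nabla U(y)\|_2\le L\|y\|_2$, $\|\nabla^2 U(y)\|_2\le L$, $\|\nabla^3 U(y)\|_2\le L$),
\begin{align*}
\|\nabla^3 p^*(y)\|_2 \le 5(L+1)^3(\|y\|_2^3+1)\,p^*(y).
\end{align*}

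Next I shift $p^*(y)$ to $p^*(x)$. Writing $y=x+r(F_\eta^{-1}(x)-x)$ with $r\in[0,1]$, a second-order Taylor expansion of $U$ gives
\begin{align*}
U(x)-U(y) \le \|\nabla U(x)\|_2\,\|y-x\|_2 + \tfrac{L}{2}\|y-x\|_2^2 \le L\|x\|_2\|y-x\|_2 + \tfrac{L}{2}\|y-x\|_2^2,
\end{align*}
so $p^*(y)/p^*(x)\le \exp\bigl(L\|x\|_2\|y-x\|_2+\tfrac{L}{2}\|y-x\|_2^2\bigr)$. I apply Young's inequality in the form $L\|x\|_2\|y-x\|_2\le \tfrac{m}{32}\|x\|_2^2+\tfrac{8L^2}{m}\|y-x\|_2^2$; since $\|y-x\|_2^2\le C\delta L(\|x\|_2+1)^2$ and our hypothesis on $\delta$ kills the prefactor, the leftover exponent $\tfrac{8L^2}{m}\|y-x\|_2^2+\tfrac{L}{2}\|y-x\|_2^2$ is bounded by a constant. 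This yields $p^*(y)\le C\,\exp(\tfrac{m}{32}\|x\|_2^2)\,p^*(x)$. Similarly $\|y\|_2^3\le 2(\|x\|_2^3+\|y-x\|_2^3)$ is absorbed into the $\|x\|_2^3+1$ factor up to constants.

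Now I assemble everything. Using Cauchy--Schwarz to put the absolute value inside and multiplying the three bounds,
\begin{align*}
&\Bigl|\Ep{q(\eta)}{\Bigl(\int_0^1\!\!\int_0^t\!\!\int_0^s \langle\nabla^3 p^*(y),(F_\eta^{-1}(x)-x)^{\otimes 3}\rangle\,dr\,ds\,dt\Bigr)\det(\nabla F_\eta(F_\eta^{-1}(x)))^{-1}}\Bigr|\\
&\quad\le 2\cdot \tfrac{1}{6}\sup_{r\in[0,1]}\|\nabla^3 p^*(y)\|_2\cdot \|F_\eta^{-1}(x)-x\|_2^3\\
&\quad\le p^*(x)\cdot 256\,\delta^{3/2}\exp\bigl(\tfrac{m}{32}\|x\|_2^2\bigr)(L+1)^{9/2}(\|x\|_2^6+1),
\end{align*}
where the $\|x\|_2^6$ arises from multiplying the $\|y\|_2^3+1$ from the third-derivative bound by the $\|x\|_2^3+1$ from the cubed displacement. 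The main obstacle is the careful accounting of the polynomial-in-$\|x\|_2$ factors and the constants in front of the exponential, to make sure the Young-inequality split produces exactly $\tfrac{m}{32}$ in the exponent and only accumulates an $O(1)$ multiplicative constant; this is a routine but delicate book-keeping step, and is where the smallness hypothesis $\delta\le \min\{m^2,1\}/(2^{18}d^2(L+1)^3)$ is used in full strength.
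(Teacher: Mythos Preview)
Your strategy is the same as the paper's: bound the determinant factor, the cubed displacement, and $\|\nabla^3 p^*(y)\|_2$ separately, then shift $p^*(y)$ back to $p^*(x)$ via a Taylor expansion of $U$. The one real slip is in the shift step. After Young's inequality you claim the leftover exponent $\bigl(\tfrac{8L^2}{m}+\tfrac{L}{2}\bigr)\|y-x\|_2^2$ is ``bounded by a constant''. It is not: substituting $\|y-x\|_2^2\le C\delta L(\|x\|_2+1)^2$ and the hypothesis on $\delta$ gives a term of size $\tfrac{m}{2^{10}}(\|x\|_2^2+1)$, which still grows like $\|x\|_2^2$. Your conclusion $p^*(y)\le C\exp(\tfrac{m}{32}\|x\|_2^2)p^*(x)$ therefore does not follow from the decomposition as written, because you would end up with a coefficient slightly larger than $\tfrac{m}{32}$ in the exponent.

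The fix is easy, and in fact the paper's route is simpler than yours: do not use Young's inequality at all. Insert the displacement bound $\|y-x\|_2\le 4\delta^{1/2}L^{1/2}(\|x\|_2+1)$ \emph{directly} into the cross term, obtaining
\[
|U(x)-U(y)|\le L(2\|x\|_2+1)\cdot 4\delta^{1/2}L^{1/2}(\|x\|_2+1)\le 16\,\delta^{1/2}L^{3/2}(\|x\|_2^2+1),
\]
and then invoke the hypothesis $\delta\le \min\{m^2,1\}/(2^{18}d^2(L+1)^3)$, which gives $16\,\delta^{1/2}L^{3/2}\le \tfrac{m}{32}$ outright. This yields the target exponent $\tfrac{m}{32}\|x\|_2^2$ plus a genuine $O(1)$ constant in one step, with no need to rebalance a Young split. (If you insist on Young, you can recover the statement by splitting with $\tfrac{m}{64}$ instead of $\tfrac{m}{32}$ and then absorbing the small $O(m\|x\|_2^2)$ leftover, but that is more work for the same result.)
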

\begin{proof}[Proof of Lemma~\ref{l:s_circled7times2}]
Using Lemma~\ref{l:s_onestepdiscretizationbounds}.1 and
our choice of $\delta$, $\lrn{x-F_\eta^{-1}(x)}_2\leq
\frac{1}{2}\lrp{\|x\|_2 + 1}$, and so $\lrn{F_\eta^{-1}(x)} \leq 2\|x\|_2 + 1$.
Thus for all $t\in[0,1]$,
\begin{align}
\label{e:s_he:1}
\lrn{(1-t)x + t F_\eta^{-1}(x)}_2 \leq 2\|x\|_2 + 1.
\end{align}
Thus,
\begin{align*}
&\!\!\!\!\! \lrabs{\Ep{q(\eta)}{\lrp{ \int_0^1 \int_0^t \int_0^s
\lin{\nabla^3 p^*\lrp{(1-t) x + t F_\eta^{-1}(x)}, \lrp{F_\eta^{-1}(x)
- x}^{3}}\,dr \,ds \,dt }\cdot \det\lrp{\nabla F_\eta(F_\eta^{-1} (x))}^{-1}}}\\
\leq& \Ep{q(\eta)}{ \int_0^1 \int_0^t \int_0^s \lrn{\nabla^3
p^*\lrp{(1-t) x + t F_\eta^{-1}(x)}_2} \,dr \,ds \,dt \cdot \lrn{F_\eta^{-1}(x)-x}_2^3\cdot \lrabs{\det\lrp{\nabla F_\eta(F_\eta^{-1} (x))}^{-1}}}\\
\leq& \Ep{q(\eta)}{p^*\lrp{(1-t) x + t F_\eta^{-1}(x)}\cdot  \lrp{L + 2 L^2\|(1-t) x + t F_\eta^{-1}(x)\|_2 + L^3 \|(1-t) x + t F_\eta^{-1}(x)\|_2^3}} \\
&\quad \cdot \lrp{4\delta^{1/2}L^{1/2}\lrp{\|x\|_2 + 1}}^3\cdot \lrp{1+2\delta dL\lrp{\|x\|_2 + 1}}\\
\leq& p^*(x) \exp\lrp{2L \lrp{\|x\|_2 + 1}\cdot 4 \delta^{1/2} L^{1/2}\lrp{\|x\|_2 + 1}}\cdot \lrp{L + 4L^2 \lrp{\|x\|_2 + 1} + 8L^3 \lrp{\|x\|_2^3 + 1}}\\
&\quad \cdot \lrp{4\delta^{1/2} L^{1/2}\lrp{\|x\|_2 + 1}}^3\cdot \lrp{1+2 \delta d L \lrp{\|x\|_2 + 1}}\\
\leq& p^*(x) \exp\lrp{16\delta^{1/2} L^{3/2}\lrp{\|x\|_2^2+1}}\cdot 256 \delta^{3/2} \lrp{L+1}^{9/2} \lrp{\|x\|_2^6 + 1}\\
\leq& 256 \delta^{3/2} p^*(x)
\exp\lrp{\frac{m}{32}\|x\|_2^2}\lrp{L+1}^{9/2} \lrp{\|x\|_2^6 + 1},
\end{align*}
where the first inequality is by Jensen's inequality, the triangle inequality
and the Cauchy-Schwarz inequality, the second inequality is by
Lemmas~\ref{l:s_logp^*issmooth}.3,
\ref{l:s_onestepdiscretizationbounds}.1,
and~\ref{c:s_determinantinversenaivenaivebound}, the third inequality
is by the fact that $p^*(x) \propto \exp\lrp{-U(x)}$, by
Assumption~\ref{ass:uissmooth}.2, and by~\eqref{e:s_he:1}
(we perform a first order Taylor expansion on $U(x)$),
the fourth inequality is by our assumption on $\delta$ and some
algebra, and the fifth inequality is by
our assumption on $\delta$.
\end{proof}

\begin{lemma}\label{l:s_onestepdiscretizationbounds}
For any $\delta \leq \frac{1}{16 L}$, for any $x,y$ such that $x = y - \delta \nabla U(y) + \sqrt{2\delta} T_\eta$ and for $\eta$ a.s.,
\begin{align*}
&1.\ \lrn{y-x}_2 \leq 4 \delta^{1/2} L^{1/2}\lrp{\|x\|_2 + 1},\\
&2.\ \lrn{y - x - \lrp{-\sqrt{2\delta} T_\eta + \delta \nabla U(x)}}_2
\leq 4\delta^{3/2} L^{3/2} \lrp{\|x\|_2 + 1},\\
&3.\ \lrn{y - x -\lrp{-\sqrt{2\delta} T_\eta(x)}}_2
\leq  2 \delta L\lrp{\|x\|_2 + 1}.
\end{align*}
\end{lemma}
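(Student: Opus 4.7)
The identity to exploit is immediate: from $x = y - \delta \nabla U(y) + \sqrt{2\delta}\, T_\eta$ we get $y - x = \delta \nabla U(y) - \sqrt{2\delta}\, T_\eta$. All three bounds follow by different rearrangements of this identity, combined with Assumption~\ref{ass:uissmooth} (which gives $\nabla U(0) = 0$, hence $\lrn{\nabla U(z)}_2 \leq L\|z\|_2$, and $\lrn{\nabla U(a) - \nabla U(b)}_2 \leq L\|a - b\|_2$) and Assumption~\ref{ass:simplerassumptions}.2 (which gives $\lrn{T_\eta}_2 \leq \sqrt{L}$).

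For part 1, the plan is to bound $\|y - x\|_2$ implicitly and then self-bound. Apply the triangle inequality to the identity to get $\lrn{y - x}_2 \leq \delta L \|y\|_2 + \sqrt{2\delta L}$, then use $\|y\|_2 \leq \|x\|_2 + \|y - x\|_2$ to obtain $\lrn{y - x}_2(1 - \delta L) \leq \delta L \|x\|_2 + \sqrt{2\delta L}$. Under $\delta \leq 1/(16L)$ we have $1 - \delta L \geq 1/2$, so $\lrn{y - x}_2 \leq 2\delta L\|x\|_2 + 2\sqrt{2\delta L}$, and since $\sqrt{\delta L} \leq 1/4 \leq 2$ we can absorb both terms into $4\sqrt{\delta L}(\|x\|_2 + 1)$.

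For part 2, observe that $y - x - (-\sqrt{2\delta}\, T_\eta + \delta\nabla U(x)) = \delta(\nabla U(y) - \nabla U(x))$, so by the Lipschitz gradient bound and part 1,
\begin{align*}
\lrn{y - x - (-\sqrt{2\delta}\, T_\eta + \delta\nabla U(x))}_2 \leq \delta L \lrn{y - x}_2 \leq 4\delta^{3/2} L^{3/2}(\|x\|_2 + 1).
\end{align*}
For part 3, observe that $y - x - (-\sqrt{2\delta}\, T_\eta) = \delta \nabla U(y)$, so $\lrn{y - x + \sqrt{2\delta}\, T_\eta}_2 \leq \delta L\|y\|_2 \leq \delta L(\|x\|_2 + \|y - x\|_2)$. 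Using part 1 and $4\sqrt{\delta L} \leq 1$ from $\delta \leq 1/(16L)$, this is at most $\delta L(2\|x\|_2 + 1) \leq 2\delta L(\|x\|_2 + 1)$.

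There is no real obstacle here: the only subtle point is that $y = F_\eta^{-1}(x)$ is defined implicitly, so one cannot directly expand $\nabla U(y)$ in terms of quantities at $x$. The self-bounding step in part 1 (which requires $\delta L$ to be strictly less than $1$, guaranteed by $\delta \leq 1/(16L)$) is the mechanism that trades this implicitness for an explicit bound in $\|x\|_2$; once part 1 is in hand, parts 2 and 3 are immediate consequences of Lipschitz continuity of $\nabla U$ and $\nabla U(0) = 0$.
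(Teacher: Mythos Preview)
Your proof is correct and follows essentially the same approach as the paper. The only cosmetic differences are that in part~1 you self-bound via $\|y\|_2 \leq \|x\|_2 + \|y-x\|_2$ whereas the paper self-bounds via $\nabla U(y) = \nabla U(x) + (\nabla U(y)-\nabla U(x))$, and in part~3 you evaluate $y - x + \sqrt{2\delta}\,T_\eta = \delta\nabla U(y)$ directly whereas the paper routes through part~2 plus $\|\delta\nabla U(x)\|_2$; both routes are equally short and yield the same bounds.
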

\begin{proof}[Proof of Lemma~\ref{l:s_onestepdiscretizationbounds}]
$ $\\
\begin{enumerate}
\item \begin{align*}
\|y-x\|_2 
=& \lrn{\delta \nabla U(y) + \sqrt{2\delta} T_\eta}_2\\
\leq& \lrn{\delta \nabla U(x) + \sqrt{2\delta} T_\eta}_2 + \delta \lrn{\nabla U(y) - \nabla U(x)}_2\\
\leq& \lrn{\delta \nabla U(x) + \sqrt{2\delta} T_\eta(x)}_2 + \delta L \lrn{y-x}_2,
\end{align*}
where the first inequality is by triangle inequality, the second inequality is by Assumption \ref{ass:uissmooth}.2.\\
Moving terms around,
\begin{align*}
(1-\delta L) \|y-x\|_2 
\leq& \lrn{\delta \nabla U(x) + \sqrt{2\delta} T_\eta}_2\\
\leq& \delta L \|x\|_2 + \sqrt{2\delta L}\\
\Rightarrow \qquad\qquad\qquad\qquad \|y-x\|_2 \leq& \lrp{\delta L + \sqrt{\delta L}}\lrp{\|x\|_2 + 1}\\
\leq& 2 \delta^{1/2} L^{1/2}\lrp{\|x\|_2 + 1},
\end{align*}
where the second inequality is by Assumptions~\ref{ass:uissmooth}.2
and~\ref{ass:simplerassumptions}.2, and the third inequality is by our assumption on $\delta$.

\item 
\begin{align*}
y - \delta \nabla U(y)  + \sqrt{2\delta} T_\eta = & x\\
\Rightarrow \quad \lrn{y - x - \lrp{-\sqrt{2\delta} T_\eta + \delta \nabla U(x)}}_2
=& \delta \lrn{\nabla U(y) - \nabla U(x)}_2\\
\leq& \delta L \|y-x\|_2 \\
\leq& 4\delta^{3/2} L^{3/2} \lrp{\|x\|_2 + 1},
\end{align*}
where the first line is by definition of $x$ and $y$, the second line
is by Assumption~\ref{ass:uissmooth}.2, and the third line is by
Lemma~\ref{l:s_onestepdiscretizationbounds}.1.

\item 
\begin{align*}
\lrn{y - x - \lrp{-\sqrt{2\delta} T_\eta(x)}}_2
\leq& \lrn{y - x - \lrp{-\sqrt{2\delta} T_\eta + \delta \nabla U(x)}}_2 + \lrn{\delta \nabla U(x)}_2\\
\leq& 2 \delta^{3/2} L \lrp{\|x\|_2 + 1} + \delta L \|x\|_2\\
\leq& 2 \delta L \lrp{\|x\|_2 + 1},
\end{align*}
where the first line is by triangle inequality, the second line is by
Lemma~\ref{l:s_onestepdiscretizationbounds}.2 and
Assumption~\ref{ass:uissmooth}.2, and the third line is by our
assumption on $\delta$.
\end{enumerate}
\end{proof}

\begin{lemma}\label{l:s_auxfordeterminant}
For any $\delta \leq \frac{1}{16 L}$, for any $x,y$ such that $x = y - \delta \nabla U(y) + \sqrt{2\delta} T_\eta(y)$ and for $\eta$ a.s.,
\begin{align*}
\lrabs{\tr\lrp{\nabla^2 U(y)} - \tr\lrp{\nabla^2 U(x)}} \leq 4
\delta^{1/2} d L^{3/2}\lrp{\|x\|_2 + 1}.
\end{align*}
\end{lemma}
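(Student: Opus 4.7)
The plan is to reduce the bound on the difference of traces to a bound on the difference of Hessians in operator norm, and then use the one-step discretization bound from Lemma~\ref{l:s_onestepdiscretizationbounds}.1 together with the third-order smoothness of $U$ (Assumption~\ref{ass:uissmooth}.4).

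First, I would observe the elementary inequality $\lrabs{\tr(M)} \leq d \lrn{M}_2$ for any symmetric $d\times d$ matrix $M$, which follows from writing $\tr(M) = \sum_i \lin{e_i, M e_i}$ and applying the definition of the operator norm to each summand. Setting $M = \nabla^2 U(y) - \nabla^2 U(x)$, this gives
\begin{align*}
\lrabs{\tr\lrp{\nabla^2 U(y)} - \tr\lrp{\nabla^2 U(x)}} \leq d \lrn{\nabla^2 U(y) - \nabla^2 U(x)}_2.
\end{align*}

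Next, I would invoke Assumption~\ref{ass:uissmooth}.4, namely that $\lrn{\nabla^3 U}_2 \leq L$. By a standard mean-value (fundamental theorem of calculus along the segment from $x$ to $y$) argument applied to the map $z \mapsto \nabla^2 U(z)$, the operator-norm Lipschitz constant of $\nabla^2 U$ is at most $L$, so
\begin{align*}
\lrn{\nabla^2 U(y) - \nabla^2 U(x)}_2 \leq L \lrn{y - x}_2.
\end{align*}

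Finally, since $x = y - \delta \nabla U(y) + \sqrt{2\delta} T_\eta(y)$, the setting of Lemma~\ref{l:s_onestepdiscretizationbounds} applies (noting that in the homogeneous setting $T_\eta(y) = T_\eta$ with $\lrn{T_\eta}_2 \leq \sqrt{L}$), yielding $\lrn{y-x}_2 \leq 4\delta^{1/2} L^{1/2} (\|x\|_2 + 1)$. Chaining the three inequalities gives the claimed bound $4\delta^{1/2} d L^{3/2}(\|x\|_2+1)$. There is no real obstacle here; the only mild care needed is to confirm that Lemma~\ref{l:s_onestepdiscretizationbounds}.1 can indeed be applied with the roles of $x$ and $y$ matching the hypothesis (the relation $x = y - \delta \nabla U(y) + \sqrt{2\delta} T_\eta$ is exactly the form assumed there), so that the resulting bound is stated in terms of $\|x\|_2$ rather than $\|y\|_2$.
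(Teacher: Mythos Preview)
Your proposal is correct and essentially identical to the paper's proof: both use the trace bound $\lrabs{\tr(M)}\leq d\lrn{M}_2$ (the paper cites Lemma~\ref{l:traceupperbound}), then Assumption~\ref{ass:uissmooth}.4 to get $\lrn{\nabla^2 U(y)-\nabla^2 U(x)}_2\leq L\lrn{y-x}_2$, and finally Lemma~\ref{l:s_onestepdiscretizationbounds}.1 for the $\lrn{y-x}_2$ bound. Your check that the hypothesis of Lemma~\ref{l:s_onestepdiscretizationbounds} matches (so that the bound is in terms of $\|x\|_2$) is exactly right.
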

\begin{proof}[Proof of Lemma~\ref{l:s_auxfordeterminant}]
\begin{align*}
\lrabs{\tr\lrp{\nabla^2 U(y)} - \tr\lrp{\nabla^2 U(x)}}
=& \lrabs{\tr\lrp{\nabla^2 U(y) - \nabla^2 U(x)}}\\
\leq& d\lrn{\nabla^2 U(y) - \nabla^2 U(x)}_2\\
\leq& d L \lrn{x-y}_2\\
\leq& 4 \delta^{1/2} d L^{3/2}\lrp{\|x\|_2 + 1},
\end{align*}
where the first inequality is by Lemma~\ref{l:traceupperbound}, the
second inequality is by Assumption~\ref{ass:uissmooth}.4, and the third
inequality is by Lemma~\ref{l:s_onestepdiscretizationbounds}.1.
\end{proof}

\begin{lemma}\label{l:s_determinantexpansioninverse}
For any $\delta \leq \frac{1}{2Ld}$, for any $x$ and for $\eta$ a.s., 
\begin{align*}
\lrabs{\det\lrp{I - \lrp{{\delta} \nabla^2 U(x)}}^{-1} -\lrp{ 1 + \delta \tr\lrp{\nabla^2 U(x)}}}
\leq& 64\delta^2 d^2L^2.
\end{align*}
\end{lemma}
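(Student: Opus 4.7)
The plan is to diagonalize the symmetric matrix $A := \nabla^2 U(x)$ and work eigenvalue by eigenvalue. Since $\nabla^2 U(x) \preceq LI$ (Assumption~\ref{ass:uissmooth}.2) and $\nabla^2 U(x) \succeq mI \succ 0$ (Assumption~\ref{ass:uissmooth}.3), the eigenvalues $\lambda_1,\dots,\lambda_d$ of $A$ satisfy $0 < \lambda_i \leq L$. Then
\begin{align*}
\det\lrp{I - \delta A}^{-1} = \prod_{i=1}^d \frac{1}{1-\delta\lambda_i}, \qquad 1 + \delta\tr(A) = 1 + \delta\sum_{i=1}^d \lambda_i,
\end{align*}
so the problem reduces to comparing $\prod_i(1-\delta\lambda_i)^{-1}$ to $1+\sum_i \delta\lambda_i$ under the constraint $\delta L \leq 1/(2d) \leq 1/2$.

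First I would handle each factor separately via a geometric series: since $|\delta\lambda_i|\leq \delta L \leq 1/2$,
\begin{align*}
\frac{1}{1-\delta\lambda_i} = 1 + \delta\lambda_i + r_i, \qquad |r_i| \leq \sum_{k\geq 2}(\delta L)^k \leq 2(\delta L)^2.
\end{align*}
Set $a_i := \delta\lambda_i + r_i$, so that $|a_i| \leq \delta L + 2(\delta L)^2 \leq 2\delta L$. Then I would expand the product:
\begin{align*}
\prod_{i=1}^d (1 + a_i) = 1 + \sum_{i=1}^d a_i + \sum_{k\geq 2}\sum_{|S|=k}\prod_{i\in S} a_i.
\end{align*}
The linear term contributes $\sum_i a_i = \delta\tr(A) + \sum_i r_i$, so it matches the target $1+\delta\tr(A)$ up to the error $\sum_i r_i$ with $\lrabs{\sum_i r_i}\leq 2d(\delta L)^2$.

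The remaining step is to bound the higher-order terms. Using $\binom{d}{k}\leq d^k/k!$ and $|a_i|\leq 2\delta L$,
\begin{align*}
\lrabs{\sum_{k\geq 2}\sum_{|S|=k}\prod_{i\in S} a_i} \leq \sum_{k\geq 2}\binom{d}{k}(2\delta L)^k \leq \sum_{k\geq 2}\frac{(2d\delta L)^k}{k!} \leq (2d\delta L)^2 \sum_{j\geq 0}\frac{(2d\delta L)^j}{(j+2)!} \leq e\cdot (2d\delta L)^2,
\end{align*}
where I used $2d\delta L \leq 1$ from $\delta\leq 1/(2Ld)$. Adding the two pieces gives a total error of at most $2d\delta^2 L^2 + 4e\, d^2\delta^2 L^2 \leq 64 d^2 \delta^2 L^2$, which is the claimed bound. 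There is no real obstacle here; the only thing to watch is keeping the constants tight enough when combining the $\sum_i r_i$ contribution with the higher-order expansion, but the slack in the target constant $64$ leaves plenty of room.
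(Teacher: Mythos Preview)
Your argument is correct. You diagonalize, invert each eigenvalue factor via a geometric series, and then expand the product; all the bounds go through and the slack in the constant $64$ is ample.

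The paper takes a slightly different route. It first proves a general second-order expansion $\det(I+\epsilon A)=1+\epsilon\tr(A)+\tfrac{\epsilon^2}{2}(\tr(A)^2-\tr(A^2))+O((\epsilon cd)^3)$ (Lemma~\ref{l:determinanttaylor}), and only afterwards inverts the whole expression using the scalar approximation $(1+a)^{-1}=1-a+a^2+O(|a|^3)$. So the paper expands the determinant first and inverts second, whereas you invert each factor first and expand the product second. Your approach is more direct here and avoids carrying the second-order term $\tr(A)^2+\tr(A^2)$ through an extra step. The paper's detour is not wasted effort, though: the same two-step template (expand, then invert) is reused verbatim in the inhomogeneous case (Lemma~\ref{l:determinantexpansioninverse}), where a full second-order expansion in $\sqrt{\delta}$ is actually needed because the leading perturbation is $\sqrt{2\delta}\,G_\eta(x)$ rather than $\delta\nabla^2 U(x)$. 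For the present homogeneous lemma, your shortcut is cleaner.
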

\begin{proof}[Proof of Lemma \ref{l:s_determinantexpansioninverse}]
First, let's consider an arbitrary symmetric matrix $A\in \Re^{2d}$, let $c$ be a constant such that $\|A\|_2 \leq c$ and let $\epsilon$ be a constant satisfying $\epsilon \leq 1/(2cd)$.
By Lemma~\ref{l:determinanttaylor}, we have
\begin{align*}
\det \lrp{I + \epsilon A} = 1 + \epsilon \tr\lrp{A} + \frac{\epsilon^2}{2} \lrp{\tr\lrp{A}^2-\tr\lrp{A^2}} + \Delta
\end{align*}
for some $|\Delta|\leq 2\epsilon^3 c^3 d^3$.
Using a Taylor expansion, we can verify that for any $a\in [-1/2,1/2]$
\begin{align*}
\lrabs{(1+a)^{-1} -\lrp{1-a + a^2}} \leq  |2a|^3.
\numberthis \label{e:s_re:1}
\end{align*}
By our assumption on $\epsilon$, we have $\epsilon \tr\lrp{A} + \frac{\epsilon^2}{2} \lrp{\tr\lrp{A}^2-\tr\lrp{A^2}} + \Delta \in [-1/2,1/2]$, therefore
\begin{align*}
\lefteqn{\lrp{1 + \epsilon \tr\lrp{A} + \epsilon^2/2
\lrp{\tr\lrp{A}^2-\tr\lrp{A^2}} + \Delta}^{-1}} &  \\
&\leq 1 - \epsilon \tr(A) - \epsilon^2/2 \lrp{\tr\lrp{A}^2-\tr\lrp{A^2}} - \Delta  \\
&\quad +\lrp{\epsilon \tr\lrp{A}+ \epsilon^2/2 \lrp{\tr\lrp{A}^2-\tr\lrp{A^2}} + \Delta}^2\\
&\quad + 2 \lrp{\epsilon \tr\lrp{A}+ \epsilon^2/2 \lrp{\tr\lrp{A}^2-\tr\lrp{A^2}} + \Delta}^3\\
&= 1 - \epsilon \tr(A) - \epsilon^2/2 \lrp{\tr\lrp{A}^2-\tr\lrp{A^2}} + \epsilon^2 \tr\lrp{A}^2\\
&\quad + \lrp{\epsilon^2/2 \lrp{\tr\lrp{A}^2-\tr\lrp{A^2}} + \Delta}\lrp{\epsilon \tr\lrp{A}+ \epsilon^2/2 \lrp{\tr\lrp{A}^2-\tr\lrp{A^2}} + \Delta}\\
&\quad + 2 \lrp{\epsilon \tr\lrp{A}+ \epsilon^2/2 \lrp{\tr\lrp{A}^2-\tr\lrp{A^2}} + \Delta}^3\\
&\leq 1 - \epsilon \tr(A) - \epsilon^2/2 \lrp{\tr\lrp{A}^2-\tr\lrp{A^2}} + \epsilon^2 \tr\lrp{A}^2 \\
& \quad + 4\lrp{\epsilon^2 c^2 d^2 + \epsilon^3 c^3 d^3}\lrp{\epsilon c d + \epsilon^2 c^2 d^2 + \epsilon^3 c^3 d^3} + 16 {\epsilon c d + \epsilon^2 c^2 d^2 + \epsilon^3 c^3 d^3}\\
&\leq 1 - \epsilon \tr(A) - \epsilon^2/2 \lrp{\tr\lrp{A}^2-\tr\lrp{A^2}} + \epsilon^2 \tr\lrp{A}^2 + 32 \lrp{\epsilon c d}^3 \\
&= 1 - \epsilon \tr(A) + \epsilon^2/2 \lrp{\tr\lrp{A}^2+\tr\lrp{A^2}}
+ 32 \lrp{\epsilon c d}^3,
\end{align*}
where the first inequality is by \eqref{e:s_re:1}, the first
equality is by moving terms around, the second inequality is by our
assumption that $\|A\|_2 \leq c$, by our assumption that
$\lrabs{\Delta}\leq 2\epsilon^3c^3d^3$, and by Lemma~\ref{l:traceupperbound}, and the last two lines are by collecting
terms. Conversely, one can show that
\begin{align*}
& \lrp{1 + \epsilon \tr\lrp{A} + \epsilon^2/2 \lrp{\tr\lrp{A}^2-\tr\lrp{A^2}} + \Delta}^{-1} \\
\geq& 1 - \epsilon \tr(A) + \epsilon^2/2
\lrp{\tr\lrp{A}^2+\tr\lrp{A^2}} - 32 \lrp{\epsilon c d}^3.
\end{align*}
The proof is similar and is omitted.

Therefore
\begin{align*}
\lrabs{\det\lrp{I + \epsilon A}^{-1} -\lrp{ 1 - \epsilon \tr(A) +
\epsilon^2/2 \lrp{\tr\lrp{A}^2+\tr\lrp{A^2}}}}\leq  32 \lrp{\epsilon c
d}^3.
\numberthis \label{e:s_re:2}
\end{align*}
Now, we consider the case that $A:= -\nabla^2 U(x)$, $\epsilon := {\delta}$ and $c:= {L}$. Recall our assumption that $\delta \leq \frac{1}{2 d L}$. Combined with Assumption \ref{ass:uissmooth}.2, we get
\begin{align*}
&1.\ \lrn{A}_2 \leq c, &
&2.\ \epsilon = {\delta} \leq {1}/\lrp{2Ld}= 1/(2cd).
\end{align*}
Using \eqref{e:s_re:2},
\begin{align*}
\det \lrp{I - \delta\lrp{\nabla^2 U(x) }}^{-1}
=:& \det\lrp{I+\epsilon A}^{-1}\\
=& 1 - \epsilon \tr(A) + \epsilon^2/2 \lrp{\tr\lrp{A}^2+\tr\lrp{A^2}} + 32 \lrp{\epsilon c d}^3\\
\leq& 1 + \delta \tr\lrp{\nabla^2 U(x)}\\
&\quad + \frac{\delta^2}{2} \tr\lrp{\nabla^2 U(x)}^2 + \frac{\delta^2}{2}\tr\lrp{\lrp{\nabla^2 U(x)}^2}\\
&\quad + 32\delta^3 d^3 L^3\\
\leq& 1 + \delta \tr\lrp{\nabla^2 U(x)} + 64\delta^2 d^2L^2,
\end{align*}
where the first inequality is by \eqref{e:s_re:2}, the first
inequality is by definition of $A$ and $\epsilon$, and the second
inequality is by Assumption \ref{ass:uissmooth}.2 and moving terms
around.

Conversely, one can show that
\begin{align*}
\det \lrp{I - \delta\lrp{\nabla^2 U(x) }}^{-1}
\geq& 1 + \delta \tr\lrp{\nabla^2 U(x)} - 64\delta^2 d^2L^2
\end{align*}
The proof is similar and is omitted.
\end{proof}

\begin{lemma}\label{l:s_discretizeddeterminantexpansioninverse}
For any $\delta \leq \frac{1}{64 d^2 L}$, for any $x$ and for $\eta$ a.s., 
\begin{align*}
\det\lrp{\nabla F_\eta(F_\eta^{-1} (x))}^{-1} = 1 + \delta \tr\lrp{\nabla^2 U(x)} + \Delta
\end{align*}
for some $\lrabs{\Delta}\leq 8 \delta^{3/2} dL^{3/2}\lrp{\|x\|_2 + 1}$.
\end{lemma}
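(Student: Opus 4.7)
In the homogeneous noise setting (Assumption \ref{ass:simplerassumptions}.1) the noise term $T_\eta$ does not depend on $x$, so the Jacobian of $F_\eta$ at any point $y$ is simply
$$
\nabla F_\eta(y) \;=\; I - \delta\,\nabla^2 U(y).
$$
The plan is therefore to set $y := F_\eta^{-1}(x)$, so that $x = y - \delta\nabla U(y) + \sqrt{2\delta}\,T_\eta$, and to expand $\det(I - \delta\nabla^2 U(y))^{-1}$ around $y$ using Lemma \ref{l:s_determinantexpansioninverse}, then replace $y$ by $x$ inside the trace using Lemma \ref{l:s_auxfordeterminant}.

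More concretely, first I would verify that the step-size hypothesis $\delta \leq \frac{1}{64 d^2 L}$ is strong enough to invoke both auxiliary lemmas: it implies $\delta \leq \frac{1}{2 d L}$, so Lemma \ref{l:s_determinantexpansioninverse} applies and yields
$$
\det\!\bigl(I - \delta\nabla^2 U(y)\bigr)^{-1} \;=\; 1 + \delta\,\tr\!\bigl(\nabla^2 U(y)\bigr) + \Delta_1,
\qquad |\Delta_1| \leq 64\,\delta^2 d^2 L^2.
$$
It also implies $\delta \leq \frac{1}{16 L}$, so Lemma \ref{l:s_auxfordeterminant} gives
$$
\bigl|\tr\!\bigl(\nabla^2 U(y)\bigr) - \tr\!\bigl(\nabla^2 U(x)\bigr)\bigr| \;\leq\; 4\,\delta^{1/2} d L^{3/2}(\|x\|_2 + 1).
$$
Substituting this into the previous display yields
$$
\det\!\bigl(\nabla F_\eta(F_\eta^{-1}(x))\bigr)^{-1} = 1 + \delta\,\tr\!\bigl(\nabla^2 U(x)\bigr) + \Delta,
$$
with $\Delta = \Delta_1 + \delta\bigl[\tr(\nabla^2 U(y)) - \tr(\nabla^2 U(x))\bigr]$.

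The only real work is the bookkeeping to bound $|\Delta|$ by $8\,\delta^{3/2} d L^{3/2}(\|x\|_2+1)$. The second piece is already of order $\delta^{3/2}$: $\delta \cdot 4\delta^{1/2} d L^{3/2}(\|x\|_2+1) = 4\delta^{3/2} d L^{3/2}(\|x\|_2+1)$. For $\Delta_1$, I use the step-size assumption to upgrade one factor of $\delta$ into $\delta^{1/2}$: since $\delta^{1/2} \leq \frac{1}{8 d L^{1/2}}$,
$$
64\,\delta^2 d^2 L^2 \;=\; \delta^{3/2}\,\bigl(64\,\delta^{1/2} d^2 L^2\bigr) \;\leq\; \delta^{3/2}\,(8\,d L^{3/2}).
$$
Combining the two bounds gives $|\Delta| \leq \delta^{3/2} d L^{3/2}\bigl[4(\|x\|_2 + 1) + 8\bigr]$, which is absorbed into the claimed $8\,\delta^{3/2} d L^{3/2}(\|x\|_2 + 1)$ bound (at the expense of a harmless constant, using $\|x\|_2 + 1 \geq 1$).

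The only mildly subtle step is the absorption of the $\delta^2$ error from Lemma \ref{l:s_determinantexpansioninverse} into the $\delta^{3/2}$-sized target; this is exactly why the hypothesis $\delta \leq 1/(64 d^2 L)$ is chosen the way it is, and it is what forces the $d^2 L$ scaling in the allowed step size. Everything else is a direct application of the two preceding lemmas together with the closed-form Jacobian made available by the assumption that $T_\eta$ is position-independent.
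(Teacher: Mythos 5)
Your proof is correct and follows essentially the same route as the paper: use the homogeneous-noise assumption to write the Jacobian in closed form as $I-\delta\nabla^2 U(F_\eta^{-1}(x))$, expand its inverse determinant via Lemma~\ref{l:s_determinantexpansioninverse}, and transfer the trace from $F_\eta^{-1}(x)$ to $x$ via Lemma~\ref{l:s_auxfordeterminant}, then clean up the $\delta^2$ remainder with the step-size hypothesis. The slight looseness in the final constant that you flag (you get $12$ rather than $8$ when $\|x\|_2<1$) is present in the paper's own chain of inequalities as well and is immaterial.
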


\begin{proof}[Proof of Lemma \ref{l:s_discretizeddeterminantexpansioninverse}]
Consider the Jacobian matrix inside the determinant. By definition of $F_\eta$, we know that 
$$\nabla F_\eta\lrp{F_\eta^{-1}(x)} = I - \delta \nabla^2 U\lrp{F_\eta^{-1}(x)}.$$
Thus,
\begin{align*}
&\det\lrp{\nabla F_\eta(F_\eta^{-1}(x)}^{-1}\\
=& \det\lrp{I - \delta \nabla^2 U\lrp{F_\eta^{-1}(x)}}^{-1} \\
\leq& 1 + \delta \tr\lrp{\nabla^2 U(F_\eta^{-1}(x))} + 64 \delta^{2} d^{2} L^2\\
\leq& 1 + \delta \tr\lrp{\nabla^2 U(x)} + \delta \lrabs{\tr\lrp{\nabla^2 U(F_\eta^{-1}(x)} - \tr\lrp{\nabla^2 U (x)}} + 64 \delta^{2} d^{2} L^2\\
\leq& 1 + \delta\tr\lrp{\nabla^2 U(x)} + 4\delta^{3/2} d L^{3/2}\lrp{\|x\|_2 + 1} + 64\delta^2 d^2 L^2\\
\leq& 1 + \delta\tr\lrp{\nabla^2 U(x)}  + 8 \delta^{3/2} dL^{3/2}\lrp{\|x\|_2 + 1},
\end{align*}
where the first inequality is by Lemma~\ref{l:s_determinantexpansioninverse}, the second inequality is by the triangle inequality,
the third inequality is by Lemma~\ref{l:s_auxfordeterminant}, and the fourth inequality is by our assumption that $\delta \leq \frac{1}{64Ld^2}$.
Conversely, one can show that
\begin{align*}
\det\lrp{\nabla F_\eta(F_\eta^{-1}(x)}^{-1}
\geq& 1 + \delta\tr\lrp{\nabla^2 U(x)}  - 8 \delta^{3/2} dL^{3/2}\lrp{\|x\|_2 + 1}.
\end{align*}
The proof is similar and is omitted.
\end{proof}

\begin{corollary}\label{c:s_determinantinversenaivenaivebound}
For any $\delta \leq \frac{1}{8 Ld^2}$, for any $x$, and for $\eta$ a.s.,
\begin{align*}
\lrabs{\det\lrp{\nabla F_\eta(F_\eta^{-1} (x))}^{-1} - 1}
\leq&  2 \delta d L\lrp{\|x\|_2 + 1}.
\end{align*}
\end{corollary}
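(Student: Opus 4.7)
The plan is to deduce this corollary directly from Lemma \ref{l:s_discretizeddeterminantexpansioninverse}, which already provides the expansion
\begin{align*}
\det\lrp{\nabla F_\eta(F_\eta^{-1}(x))}^{-1} = 1 + \delta \tr\lrp{\nabla^2 U(x)} + \Delta,
\end{align*}
with $\lrabs{\Delta} \leq 8\delta^{3/2} d L^{3/2}(\|x\|_2 + 1)$. After subtracting $1$ and applying the triangle inequality, the problem reduces to bounding the two terms $\delta \lrabs{\tr\lrp{\nabla^2 U(x)}}$ and $\lrabs{\Delta}$ separately.

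For the first term, I would invoke Assumption \ref{ass:uissmooth}.2 (which gives $0 \preceq \nabla^2 U(x) \preceq LI$) combined with Lemma \ref{l:traceupperbound} to obtain $\lrabs{\tr\lrp{\nabla^2 U(x)}} \leq dL$, so that
\begin{align*}
\delta \lrabs{\tr\lrp{\nabla^2 U(x)}} \leq \delta dL \leq \delta dL(\|x\|_2 + 1),
\end{align*}
using the trivial fact that $\|x\|_2 + 1 \geq 1$. For the residual, the useful observation is to rewrite the lemma's bound as
\begin{align*}
\lrabs{\Delta} \leq \delta dL(\|x\|_2 + 1) \cdot 8\sqrt{\delta L},
\end{align*}
so that the $\delta^{3/2}$ error term is expressed as a multiple of the leading $\delta dL(\|x\|_2 + 1)$. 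Under the step size restriction $\delta \leq 1/(8Ld^2)$, we have $\sqrt{\delta L} \leq 1/(2\sqrt{2}\,d)$, so $8\sqrt{\delta L} \leq 1$ for the relevant regime of $d$; summing the two contributions then yields the claimed bound of $2\delta dL(\|x\|_2 + 1)$.

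There is no real obstacle here; the corollary is essentially a clean restatement of the preceding lemma once the leading-order trace is controlled by smoothness of $U$. The only bookkeeping subtlety is checking that the assumed step size is small enough both to meet the hypothesis of Lemma \ref{l:s_discretizeddeterminantexpansioninverse} and to absorb the $\delta^{3/2}$ error into a constant multiple of the $\delta dL(\|x\|_2+1)$ main term, which reduces to the elementary inequality $8\sqrt{\delta L} \leq 1$.
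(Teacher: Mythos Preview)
Your approach is essentially identical to the paper's: both invoke Lemma~\ref{l:s_discretizeddeterminantexpansioninverse}, apply the triangle inequality to split off $\delta\,\tr(\nabla^2 U(x))$ from the residual $\Delta$, bound the trace via Assumption~\ref{ass:uissmooth}.2, and absorb the $\delta^{3/2}$ error into the $\delta dL(\|x\|_2+1)$ main term using the step-size restriction. You correctly flag the bookkeeping subtlety; indeed the corollary's stated hypothesis $\delta \leq 1/(8Ld^2)$ is slightly weaker than the $\delta \leq 1/(64d^2L)$ required by Lemma~\ref{l:s_discretizeddeterminantexpansioninverse} (and than what is needed to make $8\sqrt{\delta L}\leq 1$ for all $d$), a minor constant-level inconsistency that is present in the paper's own proof as well and is harmless since every downstream application imposes a much stronger bound on $\delta$.
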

\begin{proof}[Proof of Corollary \ref{c:s_determinantinversenaivenaivebound}]
From Lemma~\ref{l:s_discretizeddeterminantexpansioninverse}, we get
\begin{align*}
\lrabs{\det\lrp{\nabla F_\eta(F_\eta^{-1} (x))}^{-1} - 1}
\leq& \lrabs{\det\lrp{\nabla F_\eta(F_\eta^{-1} (x))}^{-1} - \lrp{1 + \delta \tr\lrp{\nabla^2 U(x)}}} + \lrabs{\delta \tr\lrp{\nabla^2 U(x)}}\\
\leq& 8 \delta^{3/2} d L^{3/2}\lrp{\|x\|_2 + 1} + \delta d L \|x\|_2\\
\leq& 2 \delta d L\lrp{\|x\|_2 + 1},
\end{align*}
where the first inequality is by the triangle inequality, the second inequality is by Lemma~\ref{l:s_discretizeddeterminantexpansioninverse} and
Assumption~\ref{ass:uissmooth}.2, and the third inequality is by our assumption on $\delta$.
\end{proof}

\begin{lemma}\label{l:s_upperboundw2bychisquared}
Let $p^*(x) \propto e^{-U(x)}$, for any $q$ which is absolutely continuous wrt $p^*(x)$, 
\begin{align*}
W_2^2(p^*,q) 
\leq& \frac{2}{m} \int \lrp{\frac{q(x)}{p^*(x)} - 1}^2 p^*(x) \,dx.
\end{align*}
\end{lemma}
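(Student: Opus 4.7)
The plan is to chain two classical inequalities: first Talagrand's $T_2$ inequality, which turns a $W_2$ bound into a KL bound, and then Pinsker-style bound KL$\le\chi^2$.

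First, I would invoke Talagrand's inequality (Otto--Villani): because $U$ is $m$-strongly convex by Assumption~\ref{ass:uissmooth}.3, the measure $p^*(x)\propto e^{-U(x)}$ is $m$-strongly log-concave, and hence satisfies
\begin{align*}
W_2^2(p^*,q) \leq \frac{2}{m}\,\KL{q}{p^*}
\end{align*}
for every $q$ absolutely continuous with respect to $p^*$. This is a standard consequence of the Bakry--\'Emery curvature-dimension condition and can be cited directly.

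Second, I would reduce $\KL{q}{p^*}$ to the $\chi^2$-divergence. Writing $r(x):=q(x)/p^*(x)$, the KL divergence is $\int r\log r\cdot p^*\,dx$. A short convexity check shows $r\log r \leq r^2 - r$ for all $r\geq 0$ (both sides vanish at $r=1$ and the difference $r^2-r-r\log r$ is convex with minimum $0$ at $r=1$). Integrating against $p^*$ and using $\int r\,p^*\,dx = 1$ gives
\begin{align*}
\KL{q}{p^*} \leq \int (r^2-r)\,p^*\,dx = \int r^2\,p^*\,dx - 1 = \int (r-1)^2 p^*\,dx,
\end{align*}
which is exactly the $\chi^2$ term on the right-hand side of the claim.

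Combining the two bounds yields
\begin{align*}
W_2^2(p^*,q) \leq \frac{2}{m}\,\KL{q}{p^*} \leq \frac{2}{m}\int\lrp{\frac{q(x)}{p^*(x)}-1}^2 p^*(x)\,dx,
\end{align*}
as required. There is no real obstacle here; the only subtlety is citing/using Talagrand's $T_2$ inequality in the right form, which follows from strong log-concavity of $p^*$. The algebraic inequality $r\log r \leq r^2 - r$ is elementary, so the full argument is essentially a one-page invocation of two standard results.
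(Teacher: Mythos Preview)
Your proposal is correct and matches the paper's own proof essentially line for line: the paper likewise invokes Talagrand's inequality (citing Otto--Villani) via the $m$-strong convexity of $U$, then uses the elementary bound $t\log t \le t^2 - t$ to pass from KL to $\chi^2$, and combines the two. There is nothing to add.
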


\begin{proof}[Proof of Lemma \ref{l:s_upperboundw2bychisquared}]
By Theorems~1 and~2 (Talagrand's Inequality) from \cite{otto2000generalization},
we see that if $p^*(x) \propto e^{-U(x)}$ for an $m$-strongly-convex $U(x)$ (Assumption~\ref{ass:uissmooth}.3), then for all $q$ absolutely continuous wrt $p$,
\begin{align*}
W_2^2(q,p^*) \leq \frac{2}{m}\KL{q}{p^*}.
\end{align*}
By the inequality $t\log t \leq t^2 -t$, we get
\begin{align*}
\KL{q}{p^*}
=& \int \frac{q(x)}{p^*(x)} \log \frac{q(x)}{p^*(x)} p^*(x) \,dx\\
\leq& \int \lrp{\lrp{\frac{q(x)}{p^*(x)}}^2 - \frac{q(x)}{p^*(x)}} p^*(x) \,dx\\
=& \int \lrp{\frac{q(x)}{p^*(x)} - 1}^2 p^*(x) \,dx.
\end{align*}
Combining the two inequalities, we get that
\begin{align*}
W_2^2(q,p^*)\leq \frac{2}{m} \int \lrp{\frac{q(x)}{p^*(x)} - 1}^2 p^*(x) \,dx.
\end{align*}
\end{proof}

\begin{lemma}
\label{l:s_logp^*issmooth}
For $p^*(x) \propto e^{-U(x)}$, and for any $x$, 
\begin{align*}
1.\ &\lrn{\nabla p^*(x)}_2\leq p^*(x) \cdot \lrp{L \|x\|_2},\\
2.\ &\lrn{\nabla^2 p^*(x)}_2 \leq p^*(x) \cdot \lrp{L + L^2 \|x\|_2^2},\\
3.\ &\lrn{\nabla^3 p^*(x)}_2\leq \lrp{L + 2 L^2\|x\|_2 + L^3 \|x\|_2^3}.
\end{align*}
\end{lemma}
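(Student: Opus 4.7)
The plan is to differentiate the explicit relation $p^*(x) = C e^{-U(x)}$ (where $C$ is the normalizing constant) and bound each factor in the resulting expressions using Assumption~\ref{ass:uissmooth}. The one preliminary fact that is not immediate from the assumption is the linear-in-$\|x\|_2$ bound
\begin{align*}
\lrn{\nabla U(x)}_2 \leq L \|x\|_2,
\end{align*}
which follows by writing $\nabla U(x) = \nabla U(0) + \int_0^1 \nabla^2 U(tx)\,x\,dt$, using $\nabla U(0) = 0$ (Assumption~\ref{ass:uissmooth}.1) and $\lrn{\nabla^2 U(tx)}_2 \leq L$ (Assumption~\ref{ass:uissmooth}.2).

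For part~1, the product rule gives $\nabla p^*(x) = -p^*(x) \nabla U(x)$, and the displayed bound on $\lrn{\nabla U}_2$ yields the claim. For part~2, differentiating once more gives
\begin{align*}
\nabla^2 p^*(x) = p^*(x)\lrp{\nabla U(x) \nabla U(x)^T - \nabla^2 U(x)},
\end{align*}
so by the triangle inequality and submultiplicativity of the operator norm, $\lrn{\nabla^2 p^*(x)}_2 \leq p^*(x)\lrp{\lrn{\nabla U(x)}_2^2 + \lrn{\nabla^2 U(x)}_2}$, which together with Assumption~\ref{ass:uissmooth}.2 gives the stated bound.

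For part~3, I would differentiate once more, picking up four families of terms: a triple-outer-product term $-p^*\,\nabla U \otimes \nabla U \otimes \nabla U$, three ``mixed'' terms of the shape $p^*\,\nabla^2 U \otimes \nabla U$ arising from differentiating either copy of $\nabla U$ and from differentiating $-\nabla^2 U$ against the factor $\nabla p^* = -p^*\nabla U$, and finally $-p^*\,\nabla^3 U$. Bounding the $3$-tensor operator norm of each summand entrywise -- using that for any vectors $a,b,c$ and matrix $M$ one has $\lrn{a\otimes b \otimes c}_2 \leq \lrn{a}_2\lrn{b}_2\lrn{c}_2$ and $\lrn{M\otimes c}_2 \leq \lrn{M}_2 \lrn{c}_2$ (these follow by testing against a unit vector $v$, yielding rank-$1$ matrices whose operator norms are products of $2$-norms) -- and combining with Assumption~\ref{ass:uissmooth} gives the claimed bound (up to the constants quoted in the statement).

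The main obstacle is the $3$-tensor bookkeeping in part~3: one must carefully keep track of which index of $\nabla^3 p^*$ is being contracted in the definition $\lrn{M}_2 = \sup_{\|v\|_2 = 1} \lrn{Mv}_2$, so that tensor products of the form $\nabla^2 U \otimes \nabla U$ (with the free ``matrix'' indices appearing in various positions) each produce a rank-$1$ matrix after contraction whose operator norm is controlled by the product of the factor norms. Beyond this, the entire lemma is a mechanical three-fold application of the product rule and the operator-norm triangle inequality.
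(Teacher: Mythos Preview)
Your proposal is correct and follows the same approach as the paper: write $p^* = Ce^{-U}$, differentiate repeatedly, and bound each summand via Assumption~\ref{ass:uissmooth} together with $\lrn{\nabla U(x)}_2 \leq L\|x\|_2$ (which you justify explicitly, while the paper simply cites Assumption~\ref{ass:uissmooth}.2). One small bookkeeping note on part~3: your count of three mixed $\nabla^2 U \otimes \nabla U$ terms is correct, so the middle coefficient comes out as $3L^2\|x\|_2$ rather than the $2L^2\|x\|_2$ quoted in the statement (the paper's own proof writes only two such terms); this discrepancy is immaterial for the downstream use of the lemma.
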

\begin{proof}[Proof of Lemma~\ref{l:s_logp^*issmooth}]
$ $\\
\begin{enumerate}
\item
\begin{align*}
\lrn{\nabla p^*(x)}_2
=& \lrn{e^{-U(x)} \lrp{-\nabla U(x)}}_2\\
\leq& p^*(x) \cdot \lrp{L \|x\|_2},
\end{align*}
where the inequality is by Assumption~\ref{ass:uissmooth}.2.
\item
\begin{align*}
\lrn{\nabla^2 p^*(x)}_2
=& \lrn{e^{-U(x)} \lrp{-\nabla^2 U(x) + \nabla U(x) \nabla U(x)^T}}_2\\
\leq& p^*(x) \cdot \lrp{L + L^2 \|x\|_2^2},
\end{align*}
where the inequality is by Assumption~\ref{ass:uissmooth}.2.
\item
\begin{align*}
\lrn{\nabla^3 p^*(x)}
=& p^*(x) \lrn{-\nabla^3 U(x)  + \nabla^2 U(x) \otimes \nabla U(x) + \nabla U \otimes \nabla^2 U(x) - \nabla U(x) \otimes \nabla U(x) \otimes \nabla U(x)}_2\\
\leq& p^*(x) \lrp{L + 2 L^2\|x\|_2 + L^3 \|x\|_2^3},
\end{align*}
where the inequality is by Assumptions~\ref{ass:uissmooth}.2.~and~\ref{ass:uissmooth}.4.
\end{enumerate}
\end{proof}

\begin{lemma}\label{l:s_discrete_contraction}
For any $\delta \leq \frac{1}{2L}$ and for any distributions $p$ and $q$, under the assumptions of Section \ref{ss:mainresult_homogeneousnoise},
\begin{align*}
W_2(\Phi_\delta(p),\Phi_\delta(q))\leq& e^{-m\delta/8} W_2(p,q).
\end{align*}
\end{lemma}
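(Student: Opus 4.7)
The plan is a synchronous coupling argument that exploits the fact that in this subsection $T_\eta$ is position-independent. Draw $(x,y)$ from the optimal $W_2$ coupling of $(p,q)$, and sample a single $\eta \sim q(\eta)$ independently of $(x,y)$. Since $(F_\eta(x), F_\eta(y))$ is a valid coupling of $(\Phi_\delta(p), \Phi_\delta(q))$, it suffices to show the pointwise bound $\|F_\eta(x) - F_\eta(y)\|_2 \leq e^{-m\delta/8} \|x-y\|_2$ almost surely in $\eta$, and then take expectations and square roots. By Assumption~\ref{ass:simplerassumptions}.1, $T_\eta(x) = T_\eta(y)$, so the stochastic term cancels in the difference and
\[
F_\eta(x) - F_\eta(y) = (x-y) - \delta\lrp{\nabla U(x) - \nabla U(y)}.
\]

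What remains is the classical contraction bound for one gradient step on an $m$-strongly convex, $L$-smooth potential. I would expand the squared norm and invoke co-coercivity $\|\nabla U(x) - \nabla U(y)\|_2^2 \leq L \lin{x-y, \nabla U(x) - \nabla U(y)}$ (a consequence of Assumption~\ref{ass:uissmooth}.2), then use $\delta L \leq 1/2$ to pull out a factor $2 - \delta L \geq 3/2$, giving
\[
\|F_\eta(x) - F_\eta(y)\|_2^2 \leq \|x-y\|_2^2 - \frac{3\delta}{2}\lin{x-y, \nabla U(x) - \nabla U(y)}.
\]
Bounding the inner product below by $m \|x-y\|_2^2$ using strong convexity (Assumption~\ref{ass:uissmooth}.3) then yields
\[
\|F_\eta(x) - F_\eta(y)\|_2^2 \leq \lrp{1 - \frac{3 m\delta}{2}}\|x-y\|_2^2 \leq e^{-3m\delta/2}\|x-y\|_2^2;
\]
taking square roots produces a rate of $e^{-3m\delta/4}$, which is already tighter than the $e^{-m\delta/8}$ claimed.

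I do not expect any real obstacle here: the position-independence of $T_\eta$ in this subsection is precisely the structural feature that makes synchronous coupling cancel the noise, reducing the lemma to a textbook contraction argument for strongly convex, smooth gradient descent. The only care needed is in tracking constants so as to land at the (quite loose) rate $e^{-m\delta/8}$ stated in the lemma, which is presumably chosen for consistency with the inhomogeneous-noise counterpart (Assumption~\ref{ass:discreteprocesscontracts}) used later in the paper.
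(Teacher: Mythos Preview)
Your proposal is correct and follows essentially the same synchronous-coupling argument as the paper: take the optimal coupling of $(p,q)$, apply $F_\eta$ with a common $\eta$ so the position-independent noise cancels, and reduce to the standard contraction estimate for one gradient step on an $m$-strongly convex, $L$-smooth potential. The paper's proof is terser in the last step (stating only $\lrn{x-\delta\nabla U(x)-(y-\delta\nabla U(y))}_2^2\leq(1-m\delta/2)\lrn{x-y}_2^2$ without spelling out co-coercivity), while your computation makes this explicit and in fact yields the sharper rate $e^{-3m\delta/4}$; either way the stated bound $e^{-m\delta/8}$ follows.
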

\begin{proof}[Proof of Lemma \ref{l:s_discrete_contraction}]
Let $\gamma^*$ be an optimal coupling between $p$ and $q$, i.e.
\begin{align*}
W_2^2(p,q) = \Ep{\gamma^*(x,y)}{\|x-y\|_2^2}.
\end{align*}
We define a coupling $\gamma'$ as follows:
\begin{align*}
\gamma'(x,y):= \lrp{F_\eta, F_\eta}_{\#} \gamma^*,
\end{align*}
where $\#$ denotes the push-forward operator. (See \eqref{d:Feta} for the definition of $F_\eta$.)
It is thus true by definition that $\gamma'$ is a valid coupling between $\Phi_\delta(p)$ and $\Phi_\delta(q)$. 
Thus,
\begin{align*}
W_2(\Phi_\delta(p),\Phi_\delta(q))
\leq& \Ep{\gamma'(x,y)}{\|x-y\|_2^2}\\
=& \Ep{\gamma^*(x,y)}{\|F_\eta(x)-F_\eta(y)\|_2^2}\\
=& \Ep{\gamma^*(x,y)}{\|x-\delta \nabla U(x) + \sqrt{2\delta} T_\eta - \lrp{y-\delta \nabla U(y) + \sqrt{2\delta} T_\eta}\|_2^2}\\
\leq& \Ep{\gamma^*(x,y)}{\lrp{1-m\delta/2}\lrn{x-y}_2^2}\\
\leq& e^{-m\delta/4}\Ep{\gamma^*(x,y)}{\lrn{x-y}_2^2}\\
=& e^{-m\delta/4} W_2^2(p,q),
\end{align*}
where the second inequality follows from Assumptions~\ref{ass:uissmooth}.2 and~\ref{ass:uissmooth}.3 and our assumption that $\delta\leq \frac{1}{2L}$, and the third inequality is by the fact that $m\delta/2\leq m/(2L) \leq 1/2$.
\end{proof}
\end{section}

\begin{section}{Auxiliary Lemmas for Section \ref{ss:mainresult_inhomogeneousnoise}}
\label{s:appendix:inhomogeneous}

\begin{proof}[Proof of Theorem \ref{t:convergencerate}]
By Theorem \ref{t:main}, for 
\begin{align*}
\frac{1}{\delta} \geq& \max
\begin{cases}
2^8 d^2 L\\
2^{37} L\theta^2\\
2^{37}L\theta^2 \lrp{\frac{c_{\sigma}^2}{m}\log \frac{c_{\sigma}^2}{m}}^3\\
2^{72} L \theta^2 \frac{c_{\sigma}^6}{m^3}\log \lrp{\frac{2^{62} L c_{\sigma}^2}{m}}\\
{d^7}{\epsilon^{-2}} \cdot {2^{142} L^2 \lrp{\theta^3 + \theta^2 + \theta}^2}{\lambda^{-2}} \cdot \lrp{\frac{c_{\sigma}^2}{m} \log \frac{c_{\sigma}^2}{m}}^{12}\\
{d^7}{\epsilon^{-2}} \cdot {2^{142} L^2 \lrp{\theta^3 + \theta^2 + \theta}^2}{\lambda^{-2}}\\
{d^7}{\epsilon^{-2}} \cdot {2^{142} L^2 \lrp{\theta^3 + \theta^2 + \theta}^2}{\lambda^{-2}} \lrp{\frac{c_{\sigma}^2}{m} \log \lrp{2^{324} d^5 L \lrp{\theta^3 + \theta^2 + \theta}\lambda^{-6}\epsilon^{-6}}}^{12}
\end{cases}\\
=& \frac{d^7}{\epsilon^2} \lrp{poly \lrp{L, \theta, \frac{1}{m}, c_{\sigma},  \frac{1}{\lambda} }},
\end{align*}
we can bound
\begin{align*}
W_2(p_{k}, p^*) \leq& e^{-\lambda \delta k} W_2(p_0,p^*) + \frac{\epsilon}{2}.
\numberthis \label{e:in:1}
\end{align*}
To prove \eqref{e:t2:1}, take the limit of \eqref{e:in:1} as $k\to\infty$.

Next, if 
\begin{align*}
k \geq& \frac{1}{\lambda \delta}\log \frac{2W_2\lrp{p_0,p^*}}{\epsilon} \\
=& \frac{d^7}{\epsilon^2} \cdot \log \frac{W_2(p_0,p^*)}{\epsilon} poly \lrp{L, \theta, \frac{1}{m}, c_{\sigma},  \frac{1}{\lambda} },
\end{align*}
then $e^{-m \delta k/8} W_2(p_0,p^*) \leq \frac{\epsilon}{2}$, so we get
\begin{align*}
W_2\lrp{p_k,p^*}\leq \epsilon.
\end{align*}
This proves \eqref{e:t2:2}.
\end{proof}

\begin{theorem}\label{t:main}
Let $p_0$ be an arbitrary initial distribution, and let $p_{k\delta}$ be defined as in \eqref{e:discretesde}.\\
Let $\epsilon>0$ be some arbitrary constant.
For any stepsize $\delta$  satisfying
\begin{align*}
\frac{1}{\delta} \geq \max
\begin{cases}
2^8 d^2 L\\
2^{37} L\theta^2\\
2^{37}L\theta^2 \lrp{\frac{c_{\sigma}^2}{m}\log \frac{c_{\sigma}^2}{m}}^3\\
2^{72} L \theta^2 \frac{c_{\sigma}^6}{m^3}\log \lrp{\frac{2^{62} L c_{\sigma}^2}{m}}\\
{d^7}{\epsilon^{-2}} \cdot {2^{142} L^2 \lrp{\theta^3 + \theta^2 + \theta}^2}{\lambda^{-2}} \cdot \lrp{\frac{c_{\sigma}^2}{m} \log \frac{c_{\sigma}^2}{m}}^{12}\\
{d^7}{\epsilon^{-2}} \cdot {2^{142} L^2 \lrp{\theta^3 + \theta^2 + \theta}^2}{\lambda^{-2}}\\
{d^7}{\epsilon^{-2}} \cdot {2^{142} L^2 \lrp{\theta^3 + \theta^2 + \theta}^2}{\lambda^{-2}} \lrp{\frac{c_{\sigma}^2}{m} \log \lrp{2^{324} d^5 L \lrp{\theta^3 + \theta^2 + \theta}\lambda^{-6}\epsilon^{-6}}}^{12}
\end{cases}
\numberthis \label{e:sd:5}
\end{align*}
the Wasserstein distance between $p_{k}$ and $p^*$ is upper bounded as
\begin{align*}
W_2(p_{k}, p^*) \leq& e^{-\lambda \delta k} W_2(p_0,p^*) + \frac{\epsilon}{2}.
\end{align*}
\end{theorem}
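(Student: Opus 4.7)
The plan is to mirror the proof of Theorem \ref{t:s_main} from the homogeneous case, substituting the inhomogeneous analogs of each ingredient. First I apply the triangle inequality to split the objective as
\begin{align*}
W_2(\Phi_\delta^k(p_0), p^*) \leq W_2(\Phi_\delta^k(p_0), \Phi_\delta^k(p^*)) + W_2(\Phi_\delta^k(p^*), p^*).
\end{align*}
The first term is handled by iterating Assumption~\ref{ass:discreteprocesscontracts} $k$ times, which immediately yields $W_2(\Phi_\delta^k(p_0), \Phi_\delta^k(p^*)) \leq e^{-\lambda \delta k} W_2(p_0, p^*)$. Unlike in Theorem~\ref{t:s_main}, there is no need to re-derive contraction from strong convexity, since the contractivity is baked into the assumption.

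For the second term I reuse the Zhai-style telescoping argument from the proof of Theorem~\ref{t:s_main}, but with the contraction rate $e^{-\lambda \delta}$ coming from Assumption~\ref{ass:discreteprocesscontracts} rather than $e^{-m \delta/8}$ from strong convexity. By induction,
\begin{align*}
W_2(\Phi_\delta^k(p^*), p^*) \leq \sum_{i=0}^{k-1} e^{-\lambda \delta i} \, W_2(\Phi_\delta(p^*), p^*) \leq \frac{1}{\lambda \delta}\, W_2(\Phi_\delta(p^*), p^*),
\end{align*}
reducing the problem to bounding the single-step discretization error $W_2(\Phi_\delta(p^*), p^*)$. This is where I invoke Lemma~\ref{t:chisquaredbound}, the inhomogeneous analog of Lemma~\ref{t:s_chisquaredbound}, which produces a bound of the form $\delta^{3/2}$ times a polynomial in $d$, $L$, $\theta$, $1/m$, $c_\sigma$. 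Dividing by $\lambda \delta$ leaves a $\delta^{1/2}$ factor, which I then force to be $\leq \epsilon/2$ by choosing $\delta$ small enough; this is precisely where the various cases in the max expression \eqref{e:sd:5} come from—each case ensures that one factor in the bound (or one hypothesis of an auxiliary lemma) is satisfied. Combining the two triangle-inequality terms yields the stated bound.

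The main obstacle is Lemma~\ref{t:chisquaredbound} itself, which is substantially harder than its homogeneous counterpart for two reasons highlighted in Section 5.2. First, because $p^*$ is no longer guaranteed to be strongly log-concave when $\sigma_x$ varies with $x$, Talagrand's inequality \eqref{e:s_rc:1} is unavailable; instead the $W_2 \to \chi^2$ conversion must go through subgaussianity of $p^*$ (Corollary~\ref{c:upperboundw2bychisquared}), which in turn relies on a dissipativity argument (Lemma~\ref{l:p^*issubgaussian}). Second, the Lemma~\ref{l:s_pdeltaoverpstariscloseto1} Taylor-expansion analysis of $p_\delta(x)/p^*(x) - 1$ needs a nontrivial upgrade: the cancellation of order-$\delta$ terms in \eqref{e:fk:1} used the closed form $p^* \propto e^{-U}$, which is unavailable in the inhomogeneous case. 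The replacement, discussed in Section 5.2, is to derive the cancellation from the Fokker--Planck invariance equation satisfied by $p^*$ (the analog being \eqref{e:fk:2}), and to carry around the position-dependent $\sigma_x$ through the Jacobian/change-of-variables computation. Once these two structural difficulties are resolved inside Lemma~\ref{t:chisquaredbound}, the outer proof of Theorem~\ref{t:main} is essentially bookkeeping: unpacking the polynomial from Lemma~\ref{t:chisquaredbound}, dividing by $\lambda \delta$, matching each resulting term against a line of \eqref{e:sd:5}, and taking $k \to \infty$ in \eqref{e:in:1} to recover the invariant-distribution comparison stated as \eqref{e:t2:1}.
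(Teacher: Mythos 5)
Your proposal follows the paper's proof of Theorem~\ref{t:main} essentially verbatim: the triangle-inequality split, the geometric telescoping driven by Assumption~\ref{ass:discreteprocesscontracts}, and the reduction to the one-step discretization error bounded in Lemma~\ref{t:chisquaredbound}, with the step-size conditions in \eqref{e:sd:5} collected so that the resulting $\delta^{1/2}$ factor is at most $\epsilon/2$. The only detail you gloss over is that the bound from Lemma~\ref{t:chisquaredbound} contains a $\log(1/\delta)$ factor (via the radius $R$), which the paper dispatches with the separate bookkeeping Lemma~\ref{l:reallyannoyinglemma2}; your remark that the cases in \eqref{e:sd:5} each discharge a hypothesis of an auxiliary lemma implicitly covers this.
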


\begin{proof}[Proof of Theorem \ref{t:main}]
We first use the triangle inequality to split the objective into two terms:
\begin{align*}
W_2(\Phi_{\delta}^k(p_0), p^*) \leq  W_2(\Phi_{\delta}^k(p_0), \Phi_{\delta}^k(p^*)) + W_2(\Phi_{\delta}^k(p^*), p^*)
\numberthis \label{e:ry:1}
\end{align*}

The first term is easy to bound. We use Assumption \ref{ass:discreteprocesscontracts} to get
\begin{align*}
W_2(\Phi_{\delta}^k(p_0), \Phi_{\delta}^k(p^*)) \leq e^{-\lambda\delta k} W_2(p_0,p^*)
\end{align*}

We now bound the second term of \eqref{e:ry:1}:

\begin{align*}
W_2(\Phi_{\delta}^k(p^*), p^*)
=& W_2(\Phi_{\delta}(\Phi_{\delta}^{k-1} (p^*)), p^*)\\
\leq& W_2(\Phi_{\delta}(\Phi_{\delta}^{k-1} (p^*)), \Phi_{\delta}(p^*)) + W_2(\Phi_{\delta}(p^*), p^*)\\
\leq&  e^{-\lambda \delta} W_2(\Phi_{\delta}^{k-1} (p^*), p^*) + W_2(\Phi_{\delta}(p^*), p^*)\\
\vdots&\\
\leq& \sum_{i=0}^{k-1} e^{-\lambda \delta i} W_2(\Phi_{\delta}(p^*), p^*)\\
\leq& \frac{1}{\lambda \delta}W_2(\Phi_{\delta}(p^*), p^*),
\numberthis\label{e:sd:1}
\end{align*}
where the first inequality is by triangle inequality, the second inequality is by Assumption \ref{ass:discreteprocesscontracts}.

Next, we apply Lemma~\ref{t:chisquaredbound} to get
\begin{align*}
& W_2(\Phi_{\delta}(p^*), p^*)\\
\leq& 2^{70}\delta^{3/2} d^{7/2} L \lrp{\theta^3 + \theta^2 + \theta} \max\lrbb{\frac{c_{\sigma}^2}{m} \log \frac{c_{\sigma}^2}{m}, \frac{c_{\sigma}^2}{m}\log
\lrp{\frac{1}{2^{124}d^6 L^2 \lrp{\theta^3 + \theta^2 + \theta} \delta^3 }}, 1}^{6}.
\end{align*}

Note that the first four clauses under \eqref{e:sd:5} satisfy the requirement of Lemma \ref{t:chisquaredbound}.  

There is a little trickiness due to the $\log\frac{1}{\delta}$ term in the above upper bound. The calculations to get rid of the $\log \frac{1}{\delta}$ term are packed away in Lemma \ref{l:reallyannoyinglemma2}. We verify that $\delta$ satisfies the conditions \eqref{e:iw:1} of Lemma \ref{l:reallyannoyinglemma2} as the last 3 clauses of \eqref{e:sd:5} implies,
\begin{align*}
\frac{1}{\delta} \geq
\frac{d^7}{\epsilon^2} \cdot \frac{2^{142} L^2 \lrp{\theta^3 + \theta^2 + \theta}^2}{\lambda^2} \cdot \max
\begin{cases}
\lrp{\frac{c_{\sigma}^2}{m} \log \frac{c_{\sigma}^2}{m}}^{12}\\
1\\
\lrp{\frac{c_{\sigma}^2}{m} \log \lrp{2^{324} d^5 L \lrp{\theta^3 + \theta^2 + \theta}\lambda^{-6}\epsilon^{-6}}}^{12},
\end{cases}
\end{align*}
Thus we can apply Lemma \ref{l:reallyannoyinglemma2} to get
\begin{align*}
&\frac{1}{\lambda \delta} W_2 (\Phi_{\delta}(p^*), p^*)\\
\leq& 2^{70}\delta^{1/2} d^{7/2} L \lrp{\theta^3 + \theta^2 + \theta} \max\lrbb{\frac{c_{\sigma}^2}{m} \log \frac{c_{\sigma}^2}{m}, \frac{c_{\sigma}^2}{m}\log \lrp{\frac{1}{2^{124}d^6 L^2 \lrp{\theta^3 + \theta^2 + \theta} \delta^3 }^2}, 1}^{6} \lambda^{-1}\\
\leq& \frac{\epsilon}{2}.
\numberthis \label{e:sd:3}
\end{align*}
The conclusion follows by substituting \eqref{e:sd:1} and \eqref{e:sd:3} into \eqref{e:ry:1}.

\end{proof}
\begin{lemma}\label{t:chisquaredbound}
Let $p_\delta := \Phi_\delta(p^*)$. For any $\delta$ satisfying
\begin{align*}
\frac{1}{\delta} \geq \max
\begin{cases}
2^8 d^2 L\\
2^{37} L\theta^2\\
2^{37}L\theta^2 \lrp{\frac{c_{\sigma}^2}{m}\log \frac{c_{\sigma}^2}{m}}^3\\
2^{72} L \theta^{2} \frac{c_{\sigma}^2}{m}\log \lrp{\frac{2^{62} L c_{\sigma}^2}{m}},
\end{cases}
\end{align*}
we have
\begin{align*}
W_2^2 (p_\delta, p^*) \leq 2^{140}\delta^3 d^7 L^2 \lrp{\theta^3 + \theta^2 + \theta}^2 \max\lrbb{\frac{c_{\sigma}^2}{m} \log \frac{c_{\sigma}^2}{m},
\frac{c_{\sigma}^2}{m}\log \lrp{\frac{1}{2^{124}d^6 L^2 \lrp{\theta^3 + \theta^2 + \theta}^2 \delta^3 }}, 1}^{11}.
\end{align*}
\end{lemma}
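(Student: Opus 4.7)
The plan is to mimic the proof of Lemma~\ref{t:s_chisquaredbound} from Section~\ref{ss:proofsketch_homogeneous}, with two essential modifications to handle the inhomogeneous diffusion. The overall template is: (i) reduce $W_2^2(p_\delta, p^*)$ to a $\chi^2$-like divergence between $p_\delta$ and $p^*$; (ii) bound the density ratio $|p_\delta/p^* - 1|$ pointwise by an expression of order $\delta^{3/2}$ times a controlled growth factor in $\|x\|_2$; (iii) integrate the squared density-ratio bound against $p^*$ using moment and tail estimates for $p^*$.

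For step (i), I would replace the use of Talagrand's inequality (which underpinned Lemma~\ref{l:s_upperboundw2bychisquared}) by Corollary~\ref{c:upperboundw2bychisquared}. The reason is that $p^*$ is no longer guaranteed to be strongly log-concave: it is only the invariant distribution of \eqref{e:exactsde} with state-dependent diffusion $\sigma_x$ and has no explicit $e^{-U}$ form. Corollary~\ref{c:upperboundw2bychisquared} instead exploits the subgaussianity of $p^*$, which is established separately in Lemma~\ref{l:p^*issubgaussian} from the strong convexity of $U$ and Assumption~\ref{ass:ximeanandvariance}, and yields a bound of roughly the form
\[
W_2^2(p^*, p_\delta)\;\leq\; C\cdot\frac{c_\sigma^2}{m}\int \lrp{\frac{p_\delta(x)}{p^*(x)}-1}^2 p^*(x)\,dx.
\]

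For step (ii), I would invoke Lemma~\ref{l:pdeltaoverpstariscloseto1}, the inhomogeneous analog of Lemma~\ref{l:s_pdeltaoverpstariscloseto1}, which should yield a pointwise estimate of the form
\[
\lrabs{\frac{p_\delta(x)}{p^*(x)} - 1}\;\leq\; \delta^{3/2}\cdot poly(L,\theta,d)\cdot \exp\lrp{\tfrac{m}{c}\|x\|_2^2}\cdot\lrp{\|x\|_2^{k}+1},
\]
for appropriate constants $c,k$. The prefactor will be heavier than in the homogeneous case because the Taylor expansion of $p_\delta$ about $x$, obtained by a change-of-variables analogous to \eqref{e:s_pdeltaintegralexpression}, now picks up additional terms contributed by derivatives of $\sigma_x$, which must be controlled using Assumptions~\ref{ass:gisregular} and~\ref{ass:p^*isregular}. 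For step (iii), squaring the pointwise bound and integrating against $p^*$ produces two tail integrals: an exponential integral $\int\exp\lrp{\frac{m}{c'}\|x\|_2^2}p^*(x)\,dx$ that is finite by the subgaussianity of $p^*$ with a sufficiently small constant (cf.\ Lemma~\ref{l:p^*hasboundedexponent} in the homogeneous appendix) and polynomial moment integrals $\int \|x\|_2^{2k}p^*(x)\,dx$ that are controlled by the inhomogeneous analog of Lemma~\ref{l:kthmomentbound}, giving factors of $\mathrm{poly}(c_\sigma/m,\log(c_\sigma/m))$. Young's inequality separates the cross term in the squared bound, and the resulting product of prefactors matches the claimed estimate after checking that each of the four $\delta$-conditions in the hypothesis is exactly what the auxiliary lemmas require.

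The main obstacle is step (ii), i.e.\ the proof of Lemma~\ref{l:pdeltaoverpstariscloseto1}. In the homogeneous case, the explicit identity $p^*\propto e^{-U}$ allowed us to cancel all $O(\delta)$ terms in the Taylor expansion of $p_\delta$ via the algebraic identity \eqref{e:fk:1}. No such closed form is available here, so the cancellation must instead be arranged by invoking the stationary Fokker--Planck equation for \eqref{e:exactsde}, $\nabla\cdot(\nabla U\, p^*) + \nabla\cdot\nabla\cdot(\sigma_x\sigma_x^T p^*)=0$, as sketched by the authors around \eqref{e:fk:2}. Matching the expansion of $p_\delta$ term by term against this identity, while simultaneously expanding $\sigma_x$ and the inverse determinant of the Jacobian of $F_\eta$ to the required order and controlling all error terms in operator norm, is the core technical effort; once it is complete, steps (i) and (iii) reduce to routine bookkeeping of moments and tail probabilities.
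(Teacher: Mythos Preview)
Your overall architecture is right, and the identification of Corollary~\ref{c:upperboundw2bychisquared} and Lemma~\ref{l:pdeltaoverpstariscloseto1} as the replacements for the homogeneous tools is correct. However, the way you describe steps (i)--(iii) transplants the homogeneous template too literally, and this conceals a structural difference that actually drives the final form of the bound.

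First, Corollary~\ref{c:upperboundw2bychisquared} does not give a global $\chi^2$-type inequality of the shape $W_2^2\lesssim \frac{c_\sigma^2}{m}\int(p_\delta/p^*-1)^2 p^*$. It gives
\[
W_2^2(p^*,p_\delta)\;\le\;4R^2\int_{B_R}\Bigl(\tfrac{p_\delta}{p^*}-1\Bigr)^2 p^*\,dx\;+\;84d\exp\Bigl(-\tfrac{mR^2}{64c_\sigma^2}\Bigr),
\]
i.e.\ an integral restricted to a ball $B_R$ plus an additive subgaussian tail. Second, and relatedly, the pointwise bound of Lemma~\ref{l:pdeltaoverpstariscloseto1} is \emph{purely polynomial},
\[
\bigl|p_\delta(x)/p^*(x)-1\bigr|\le 2^{15}\delta^{3/2}d^3 L^{3/2}(\theta^3+\theta^2+\theta)\bigl(\|x\|_2^{11}+1\bigr),
\]
with no exponential weight, but it is only valid for $x\in B_R$ under the additional hypothesis $\delta\le 1/\bigl(2^{15}\theta^2(R^6+1)L\bigr)$. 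The reason is that under Assumption~\ref{ass:p^*isregular} one has $\|\nabla\log p^*(x)\|\le \theta(\|x\|_2^2+1)$, so $\log p^*$ varies by $O\bigl(\theta\,\delta^{1/2}L^{1/2}\|x\|_2^3\bigr)$ across one step (cf.\ Lemma~\ref{l:p^*growsslowly} and the proof of Lemma~\ref{l:circled7times2}); this cubic exponent cannot be dominated by the quadratic subgaussian tail of $p^*$, so the integral $\int\exp(c\|x\|_2^2)\,p^*$ you propose in step~(iii) would not be the right object. The paper instead absorbs the cubic exponent into a constant by restricting to $B_R$, and then chooses $R$ (depending on $\delta$) to balance the $\chi^2$ term against the tail term. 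This balancing is handled by Lemma~\ref{l:reallyannoyinglemma} and is precisely the origin of the $\frac{c_\sigma^2}{m}\log(1/\delta^3)$ factor inside the $\max\{\cdot\}^{11}$ in the conclusion, which your sketch does not account for. With this adjustment to steps (i)--(ii), your step~(iii) then only needs the polynomial moment bound Lemma~\ref{l:kthmomentbound}, not an exponential-moment bound.
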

\begin{proof}[Proof of Lemma \ref{t:chisquaredbound}]
$ $\\
Let us define the radius
\begin{align*}
R:=& 2^7\sqrt{\max\lrbb{\frac{c_{\sigma}^2}{m} \log\frac{c_{\sigma}^2}{m}, {\frac{c_{\sigma}^2}{m}\log \lrp{\frac{1}{2^{124}d^6 L^2 \lrp{\theta^3 + \theta^2 + \theta}^2 \delta^3 }}}, 1}}
\end{align*}
We can verify that by the defintion of $R$ and our assumptions on $\delta$,
  \[
  R\geq \sqrt{\max\lrbb{2^{13}\frac{c_{\sigma}^2}{m} \lrp{\log \lrp{\frac{2^{11}c_{\sigma}^2}{m}}}, 1}}
  \]
and $\delta \leq \frac{1}{16 L}$, so we can apply Corollary \ref{c:upperboundw2bychisquared} to give
\begin{align*}
W_2^2\lrp{p^*,p_\delta}
\leq& 4R^2 \int_{B_R} \lrp{\frac{p_\delta(x)}{p^*(x)}-1}^2 p^*(x) dx + 84 d \exp\lrp{-\frac{mR^2}{64c_{\sigma}^2}}\\
\leq& 4R^2 \int_{B_R} \lrp{\frac{p_\delta(x)}{p^*(x)}-1}^2 p^*(x) dx + 2^{124}\delta^3 d^6 L^2 \lrp{\theta^3 + \theta^2 + \theta} ^2,
\numberthis \label{e:rc:1}
\end{align*}
where the second inequality follows from the definition of $R$, which implies that $R\geq \frac{c_{\sigma}^2}{m}\log \lrp{\frac{1}{2^{124}d^6 L^2 \lrp{\theta^3 + \theta^2 +
\theta} \delta^3}}$.

Next, we apply Lemma~\ref{l:reallyannoyinglemma}, which shows that under our assumptions on $\delta$ and our definition of $R$,
\begin{align*}
\delta \leq \min\lrbb{\frac{1}{2^8 d^2 L}, \frac{1}{2^{15}L \theta^2 \lrp{R^6 + 1}}}.
\end{align*}
We can thus apply Lemma~\ref{l:pdeltaoverpstariscloseto1} to get
\begin{align*}
&\int_{B_R} \lrp{\frac{p_\delta(x)}{p^*(x)}-1}^2 p^*(x) dx\\
\leq& 2^{30}\delta^{3} d^6 L^2 \lrp{\theta^3 + \theta^2 + \theta}^2 \int_{B_R} \lrp{\|x\|_2^{22}+1} p^*(x) dx\\
\leq& 2^{30}\delta^{3} d^6 L^2 \lrp{\theta^3 + \theta^2 + \theta}^2 \lrp{\max\lrbb{\lrp{640 \frac{c_{\sigma}^2}{m}\log \lrp{\frac{160 c_{\sigma}^2}{m}}}^{10},1280 d \frac{c_{\sigma}^2}{m}} + 1}\\
\leq& 2^{124}\delta^3 d^7 L^2 \lrp{\theta^3 + \theta^2 + \theta}^2 \max\lrbb{\frac{c_{\sigma}^2}{m} \log \frac{c_{\sigma}^2}{m}, 1}^{10},
\numberthis \label{e:rc:2}
\end{align*}
where the second inequality follows from Lemma~\ref{l:kthmomentbound}.

Plugging the above into \eqref{e:rc:1}, we get
\begin{align*}
&\eqref{e:rc:1}\\
=& 4R^2 \lrp{2^{124}\delta^3 d^7 L^2 \lrp{\theta^3 + \theta^2 + \theta}^2 \max\lrbb{\frac{c_{\sigma}^2}{m} \log \frac{c_{\sigma}^2}{m}, 1}^{10}} + \lrp{2^{124}\delta^3 d^7 L^2 \lrp{\theta^3 + \theta^2 + \theta}^2}\\
\leq& 2^{126}\delta^3 d^7 L^2 \lrp{\theta^3 + \theta^2 + \theta}^2 \max\lrbb{\frac{c_{\sigma}^2}{m} \log \frac{c_{\sigma}^2}{m}, 1}^{10} \cdot R^2\\
\leq& 2^{140}\delta^3 d^7 L^2 \lrp{\theta^3 + \theta^2 + \theta}^2 \max\lrbb{\frac{c_{\sigma}^2}{m} \log \frac{c_{\sigma}^2}{m}, \frac{c_{\sigma}^2}{m}\log
\lrp{\frac{1}{2^{124}d^6 L^2 \lrp{\theta^3 + \theta^2 + \theta}^2 \delta^3 }}, 1}^{11},
\end{align*}
where the first line is by \eqref{e:rc:2} and \eqref{e:rc:1}, the second line is because $R\geq 1$, the third line is again by definition of $R$ and some algebra.
\end{proof}

\begin{lemma}\label{l:pdeltaoverpstariscloseto1}
Let $p_\delta := \Phi_\delta(p^*)$. For any $R\geq 0$, for all $x\in B_R$, and for all $\delta \leq \min\lrbb{\frac{1}{2^8 d^2 L}, \frac{1}{2^{15}\theta^2 \lrp{R^6 + 1} L}}$
\begin{align*}
\lrabs{\frac{p_\delta(x)}{p^*(x)}-1}\leq 2^{15}\delta^{3/2} d^3 L^{3/2} \lrp{\theta^3 + \theta^2 + \theta} \lrp{\|x\|_2^{11}+1}.
\end{align*}
\end{lemma}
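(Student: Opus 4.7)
The plan is to mimic the structure of Lemma \ref{l:s_pdeltaoverpstariscloseto1}, but handling two new complications. First, the Jacobian of $F_\eta$ now carries an $O(\sqrt{\delta})$ term from the $x$-dependence of $T_\eta$, which forces a more delicate expansion of the determinant. Second, there is no closed form $p^*\propto e^{-U}$ available to algebraically cancel the $O(\delta)$ terms as in \eqref{e:fk:1}; instead I would invoke Fokker--Planck invariance of $p^*$ under \eqref{e:exactsde}. As a starting point, I would use invertibility of $F_\eta$ (an analog of Lemma \ref{l:fisinvertible}, which holds under $\delta \leq \frac{1}{2^8 d^2 L}$) together with the change-of-variables formula to obtain
\begin{align*}
p_\delta(x) = \Ep{q(\eta)}{p^*(F_\eta^{-1}(x)) \cdot \det\!\lrp{\nabla F_\eta(F_\eta^{-1}(x))}^{-1}},
\end{align*}
and then Taylor-expand both factors in powers of $\sqrt{\delta}$.

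For $p^*(F_\eta^{-1}(x))$ I would use a third-order Taylor expansion about $x$, expressing $\nabla^k p^*$ as $p^*$ times polynomials in $\nabla^j f$ (where $f := \log p^*$) and controlling these via Assumption \ref{ass:p^*isregular}. The expansion requires an inhomogeneous analog of Lemma \ref{l:s_onestepdiscretizationbounds}, yielding
\begin{align*}
F_\eta^{-1}(x) - x = -\sqrt{2\delta}\,T_\eta(x) + \delta\,\nabla U(x) + 2\delta\,G_\eta(x)\,T_\eta(x) + O(\delta^{3/2}),
\end{align*}
where the new $2\delta\,G_\eta T_\eta$ piece arises because $T_\eta(F_\eta^{-1}(x))$ must itself be Taylor-expanded about $x$. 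In parallel, for $\det\!\lrp{\nabla F_\eta(F_\eta^{-1}(x))}^{-1}$ I would apply the Jacobi-style expansion of Lemma \ref{l:s_determinantexpansioninverse} to $\det(I + B)^{-1}$ with $B = -\delta\nabla^2 U(F_\eta^{-1}(x)) + \sqrt{2\delta}\,G_\eta(F_\eta^{-1}(x))$, keeping terms up to order $\delta$, and then re-expand $\nabla^2 U$ and $G_\eta$ from $F_\eta^{-1}(x)$ back to $x$ using Assumption \ref{ass:gisregular} (the re-expansion of $G_\eta$ picks up $M_\eta T_\eta$ corrections).

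Multiplying the two expansions and averaging over $\eta$, the $O(\sqrt{\delta})$ contributions vanish because $\Ep{q(\eta)}{T_\eta(x)}=0$ by Assumption \ref{ass:ximeanandvariance}, which also gives $\Ep{q(\eta)}{G_\eta(x)} = \nabla \Ep{q(\eta)}{T_\eta(x)} = 0$. The $O(\delta)$ contributions, using $\Ep{q(\eta)}{T_\eta T_\eta^T} = \sigma_x^2$, rearrange into $\delta\,p^*(x)$ times
\begin{align*}
\lin{\nabla U, \nabla f} + \tr\lrp{\nabla^2 U} + \lin{\sigma_x^2,\nabla^2 f + \nabla f \nabla f^T} + 2\lin{\nabla f, \nabla\!\cdot\!\sigma_x^2} + \nabla\!\cdot\!\nabla\!\cdot\!\sigma_x^2,
\end{align*}
which equals $(1/p^*)\mathcal{L}^* p^*(x)$ for $\mathcal{L}^*$ the Fokker--Planck adjoint of the generator of \eqref{e:exactsde}, and vanishes by invariance of $p^*$.

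The main obstacle is the bookkeeping in this cancellation: the $O(\delta)$ terms arise from three distinct sources --- expanding $p^*$ to second order paired with the leading-order determinant, expanding $p^*$ to first order paired with the $O(\delta)$ determinant correction, and cross-terms produced by re-expanding $G_\eta$ and $\nabla^2 U$ from $F_\eta^{-1}(x)$ to $x$ --- and identifying the sum with $\mathcal{L}^* p^*$ requires careful tracking of $M_\eta$, $G_\eta G_\eta^T$, and $(\tr G_\eta)^2$ contributions. Once this is established, the $O(\delta^{3/2})$ remainder is bounded term by term using Assumptions \ref{ass:uissmooth}, \ref{ass:gisregular}, and \ref{ass:p^*isregular}; the restriction $x \in B_R$ together with the stepsize bound $\delta \leq \frac{1}{2^{15}\theta^2(R^6+1)L}$ ensures that the exponential factor $\exp(f(F_\eta^{-1}(x)) - f(x))$ arising in $p^*(F_\eta^{-1}(x))/p^*(x)$ remains uniformly bounded, and the resulting polynomial-in-$\|x\|_2$ factors collect into $\|x\|_2^{11}+1$, yielding the stated constant $2^{15} d^3 L^{3/2}(\theta^3+\theta^2+\theta)$.
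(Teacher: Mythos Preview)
Your proposal is correct and matches the paper's approach essentially line for line: change of variables, third-order Taylor expansion of $p^*(F_\eta^{-1}(x))$, determinant expansion picking up the $\sqrt{2\delta}\,G_\eta$ and $M_\eta T_\eta$ corrections, vanishing of $O(\sqrt{\delta})$ terms by $\Ep{q(\eta)}{T_\eta}=0$, and cancellation of the $O(\delta)$ terms via the Fokker--Planck identity (the paper's Lemma~\ref{l:fokkerplanckcharacterizationofstationarydistribution}). The paper organizes the bookkeeping by treating each Taylor term times the full determinant in a separate lemma (Lemmas~\ref{l:circled4times2}--\ref{l:circled7times2}), but the content and the use of the $R$-dependent stepsize bound to control $p^*(F_\eta^{-1}(x))/p^*(x)$ in the third-order remainder are the same as what you describe.
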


\begin{proof}[Proof of Lemma~\ref{l:pdeltaoverpstariscloseto1}]
By the definition~\eqref{d:Phip}, $p_\delta = \Phi_\delta(p^*) = \lrp{F_\eta}_{\#} p^*$.
The change of variable formula gives
\begin{align*}
p_\delta(x) 
=& \int p^*(F_\eta^{-1}(x))  \det\lrp{\nabla F_\eta\lrp{F_\eta^{-1} (x)}}^{-1}  q(\eta)d\eta\\
=&\Ep{q(\eta)}{\underbrace{p^*(F_\eta^{-1}(x))}_{\circled{1}}  \underbrace{\det\lrp{\nabla F_\eta\lrp{F_\eta^{-1} (x)}}^{-1}}_{\circled{2}} },
\numberthis \label{e:pdeltaintegralexpression}
\end{align*}
where in the above, $\nabla F_\eta(y)$ denotes the Jacobian matrix of $F_\eta$ at $y$. The invertibility of $F_\eta$ is proven in Lemma \ref{l:fisinvertible}.
We now rewrite $\circled{1}$ as its Taylor expansion:
\begin{align*}
& p^*\lrp{F_\eta^{-1}(x_\delta)}\\
=& \underbrace{p^*(x)}_{\circled{4}} +  \underbrace{{\lin{\nabla p^*(x), F_\eta^{-1}(x) - x}}}_{\circled{5}}\\
&\quad\quad\ \ +\frac{1}{2}\underbrace{\lin{\nabla^2 p^*(x), \lrp{F_\eta^{-1}(x) -x}\lrp{F_\eta^{-1}(x) -x}^T}}_{\circled{6}} \\
&\quad\quad\quad+ \underbrace{ \int_0^1 \int_0^t \int_0^s \lin{\nabla^3 p^*\lrp{(1-r) x + r F_\eta^{-1}(x)}, \lrp{F_\eta^{-1}(x) - x}^{3}}\,dr \,ds \,dt }_{\circled{7}}.
\end{align*}
Putting everything together, we get
\begin{align*}
p_\delta(x)
=& \Ep{\eta}{ \lrp{\circled{4}+\circled{5}+\circled{6}+\circled{7}} \cdot \circled{2}}\\
=& p^*(x)+ \delta p^*(x)\lrp{\sum_{i=1}^d \sum_{j=1}^d \frac{\del^2}{\del x_i \del x_j} \lrb{\sigma_{x}\sigma_{x}^T}_{i,j} + \delta \tr\lrp{\nabla^2 U(x)}}\\
&\quad + \delta \lrp{\sum_{i=1}^d \dd{x_i} p^*(x) \cdot \dd{x_i} U(x)} \\
&\quad + 2\delta {\sum_{i=1}^{d}\lrp{\dd{x_i} p^*(x)  \lrp{\sum_{j=1}^d\dd{x_j} \lrb{\sigma_{x}\sigma_{x}^T}_{i,j}}}}\\
&\quad +  \delta \lin{\nabla^2p^*(x), \sigma_{x}\sigma_{x}^T}\\
&\quad + \Delta\\
=& p^*(x) + \Delta
\numberthis \label{e:fk:2}
\end{align*}
The third equality is by Lemma~\ref{l:fokkerplanckcharacterizationofstationarydistribution}.
The second equality is by Lemmas~\ref{l:circled4times2}, \ref{l:circled5times2}, \ref{l:circled6times2} and~\ref{l:circled7times2}. Note that by our assumption that $x\in
B_R$ and $\delta \leq \min\lrbb{\frac{1}{2^8d^2L}, \frac{1}{2^{15}\theta^2 \lrp{R^6 + 1} L}}$, $\delta$ satisfies the condition for Lemmas~\ref{l:circled4times2},
\ref{l:circled5times2}, \ref{l:circled6times2} and~\ref{l:circled7times2}.
Also by these four lemmas, we have 
\begin{align*}
\lrabs{\Delta} 
\leq& p^*(x) \cdot 128 \delta^{3/2} d^3 L^{3/2}\lrp{\|x\|_2^2 + 1}\\
&\quad + p^*(x) \cdot 256 \delta^{3/2} d^2  L^{3/2}\theta\lrp{\|x\|_2^6 + 1}\\
&\quad + p^*(x)\cdot 256 \delta^{3/2} d L^{3/2} \lrp{\theta^2 + \theta} \lrp{\|x\|_2^{10} + 1}\\
&\quad + p^*(x) \cdot 2^{14} \delta^{3/2} L^{3/2}\lrp{\theta^3+\theta^2+\theta}\lrp{\|x\|_2^{11} + 1}\\
\leq& p^*(x) \cdot 2^{15}\delta^{3/2} d^3 L^{3/2} \lrp{\theta^3 + \theta^2 + \theta} \lrp{\|x\|_2^{11}+1}.
\end{align*}
As a consequence,
\begin{align*}
\lrabs{\frac{p_\delta(x)}{p^*(x)} - 1}\leq 2^{15}\delta^{3/2} d^3 L^{3/2} \lrp{\theta^3 + \theta^2 + \theta} \lrp{\|x\|_2^{11}+1}.
\end{align*}
\end{proof}

\begin{lemma}\label{l:circled4times2}
For $\delta \leq \frac{1}{2^8 d^2 L}$,
\begin{align*}
& \Ep{q(\eta)}{p^*(x) \cdot \det\lrp{\nabla F_\eta\lrp{F_\eta^{-1}(x)}}}\\
=& p^*(x) + p^*(x) \delta \sum_{i=1}^d \sum_{j=1}^d \frac{\del^2}{\del x_i \del x_j} \lrb{\sigma_{x}\sigma_{x}^T}_{i,j} + \delta \tr\lrp{\nabla^2 U(x)}+ \Delta,
\end{align*}
for some $\lrabs{\Delta} \leq p^*(x) \cdot 128 \delta^{3/2} d^3 L^{3/2}\lrp{\|x\|_2^2 + 1}$.
\end{lemma}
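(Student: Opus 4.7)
The plan is to expand the inverse Jacobian determinant $\det(\nabla F_\eta(F_\eta^{-1}(x)))^{-1}$ about $x$ to order $\delta$ and take expectation. (I read the stated quantity as $\det^{-1}$: this is the factor $\circled{2}$ in~\eqref{e:pdeltaintegralexpression}, and matching the target $1+\delta\tr(\nabla^2 U(x))+\delta\sum_{ij}\partial_i\partial_j[\sigma_x\sigma_x^T]_{ij}$ requires the inverse, exactly as in the homogeneous Lemma~\ref{l:s_discretizeddeterminantexpansioninverse}.) Setting $y:=F_\eta^{-1}(x)$, one has $\nabla F_\eta(y)=I-\delta\nabla^2 U(y)+\sqrt{2\delta}\,G_\eta(y)$. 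An inhomogeneous analog of Lemma~\ref{l:s_onestepdiscretizationbounds} (using Assumption~\ref{ass:gisregular}) gives $y-x=\delta\nabla U(x)-\sqrt{2\delta}\,T_\eta(x)+O(\delta^{3/2})$ and $\|y-x\|_2=O(\sqrt{\delta L}(\|x\|_2+1))$. Taylor expanding $\nabla^2 U(y)$ and $G_\eta(y)$ about $x$, the only $\delta$-order cross term arises from $\sqrt{2\delta}(G_\eta(y)-G_\eta(x))$ combined with the $-\sqrt{2\delta}\,T_\eta(x)$ piece of $y-x$, yielding
\begin{align*}
\nabla F_\eta(y)=I-\delta\nabla^2 U(x)+\sqrt{2\delta}\,G_\eta(x)-2\delta\,\lrb{M_\eta(x)T_\eta(x)}+R,
\end{align*}
where $\lrb{M_\eta(x)T_\eta(x)}_{ij}=\sum_k[M_\eta(x)]_{i,j,k}[T_\eta(x)]_k$ and $\|R\|_2=O(\delta^{3/2}L^{3/2}(\|x\|_2+1))$.

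Next I apply an inhomogeneous analog of Lemma~\ref{l:s_determinantexpansioninverse}: for $\|A\|_2\leq 1/(2d)$,
\begin{align*}
\det(I+A)^{-1}=1-\tr(A)+\tfrac{1}{2}\lrp{\tr(A)^2+\tr(A^2)}+O(d^3\|A\|_2^3).
\end{align*}
Substituting the matrix above and grouping by powers of $\sqrt{\delta}$: the $\sqrt{\delta}$-term is $-\sqrt{2\delta}\,\tr(G_\eta(x))$ and the $\delta$-term is $\delta\tr(\nabla^2 U(x))+2\delta\lin{T_\eta(x),\nabla\tr(G_\eta(x))}+\delta\lrp{\tr(G_\eta(x))^2+\tr(G_\eta(x)^2)}$. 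Taking expectation over $\eta$, Assumption~\ref{ass:ximeanandvariance}.1 gives $\Ep{q(\eta)}{T_\eta(x)}=0$ and, by differentiation under the expectation, $\Ep{q(\eta)}{G_\eta(x)}=0$, so the $\sqrt{\delta}$-term vanishes. For the $\delta$-term I invoke the identity obtained by differentiating $[\sigma_x\sigma_x^T]_{ij}=\Ep{q(\eta)}{[T_\eta(x)]_i[T_\eta(x)]_j}$ twice, summing over $i,j$, and using the symmetry $G_\eta=G_\eta^T$ from Assumption~\ref{ass:gisregular}.1:
\begin{align*}
\sum_{i,j=1}^d\frac{\del^2}{\del x_i\del x_j}\lrb{\sigma_x\sigma_x^T}_{i,j}=\Ep{q(\eta)}{2\lin{T_\eta,\nabla\tr(G_\eta)}+\tr(G_\eta)^2+\tr(G_\eta^2)}.
\end{align*}
Multiplying by $p^*(x)$ reproduces the claimed main term; all remaining contributions are absorbed into $\Delta$.

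The bound $\lrabs{\Delta}\leq p^*(x)\cdot 128\,\delta^{3/2} d^3 L^{3/2}(\|x\|_2^2+1)$ follows by tracking the $O(\delta^{3/2})$ remainders. Each Taylor-remainder from the first step contributes $O(\delta^{3/2}L^{3/2}(\|x\|_2+1))$ via $\lrn{M_\eta}_2,\lrn{N_\eta}_2\leq\sqrt{L}$ and $\|T_\eta(x)\|_2\leq\sqrt{L}(\|x\|_2+1)$; the cubic remainder in the determinant expansion contributes a factor $d^3$ from summing over three trace indices; and the $\|x\|_2^2$ tail arises from squaring the growth bound on $T_\eta$ in the cross-term $-2\delta M_\eta T_\eta$. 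The main obstacle is the divergence identity tying $\sum_{ij}\partial_i\partial_j[\sigma_x\sigma_x^T]_{ij}$ to $2\lin{T_\eta,\nabla\tr(G_\eta)}+\tr(G_\eta)^2+\tr(G_\eta^2)$: this is what forces the cross-term $-2\delta M_\eta T_\eta$ in the Jacobian expansion to appear with precisely the right coefficient to cancel, and verifying it requires careful index-bookkeeping through the second derivative of the matrix-valued function $x\mapsto\sigma_x\sigma_x^T$.
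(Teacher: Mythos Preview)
Your proposal is correct and follows essentially the same route as the paper. The paper packages the determinant expansion into a separate Lemma~\ref{l:discretizeddeterminantexpansioninverse} (which in turn rests on Lemmas~\ref{l:determinantexpansioninverse} and~\ref{l:auxfordeterminant}), but the underlying computation---expand $\det(\nabla F_\eta(F_\eta^{-1}(x)))^{-1}$ to order $\delta$, take expectation, and recognize the result as $\sum_{i,j}\partial_i\partial_j[\sigma_x\sigma_x^T]_{i,j}$---is identical to yours, including your correct reading of the stated determinant as the inverse.
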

\begin{proof}[Proof of Lemma \ref{l:circled4times2}]
Let us define 
\begin{align*}
\Delta' :=& \det\lrp{\nabla F_\eta(F_\eta^{-1} (x))}^{-1} \\*
&\quad {} -\left(1 - \sqrt{2\delta} \tr\lrp{G_\eta(x)} + 2\delta \tr\lrp{\lint{M_\eta(x),T_\eta(x)}}\right. \\*
&\qquad\qquad \left.{} + \delta \tr\lrp{\nabla^2 U(x)} + \delta \tr\lrp{G_\eta(x)}^2 + \delta \tr\lrp{\lrp{G_\eta(x)}^2}\right).
\end{align*}
By Lemma~\ref{l:discretizeddeterminantexpansioninverse}, $\lrabs{\Delta'} \leq 128 \delta^{3/2} d^3 L^{3/2}\lrp{\|x\|_2^2 + 1}$. Hence,
\begin{align*}
& \Ep{q(\eta)}{p^*(x) \cdot \det\lrp{\nabla F_\eta\lrp{F_\eta^{-1}(x)}}}\\
= & \Ep{q(\eta)}{p^*(x) \cdot \lrp{1 - \sqrt{2\delta} \tr\lrp{G_\eta(x)} + 2\delta \tr\lrp{\lint{M_\eta(x),T_\eta(x)}}
  + \delta \tr\lrp{\nabla^2 U(x)} + \delta \tr\lrp{G_\eta(x)}^2 + \delta \tr\lrp{\lrp{G_\eta(x)}^2}}}\\
& \quad + \Ep{q(\eta)}{p^*(x) \cdot \Delta'}\\
= & p^*(x) + p^*(x) \Ep{q(\eta)}{ \lrp{- \sqrt{2\delta} \tr\lrp{G_\eta(x)} + 2\delta \tr\lrp{\lint{M_\eta(x),T_\eta(x)}} + \delta \tr\lrp{G_\eta(x)}^2 + \delta \tr\lrp{\lrp{G_\eta(x)}^2}}}\\
& \quad + p^*(x) \delta \tr\lrp{\nabla^2 U(x)} + p^*(x) \cdot \Delta'\\
=& p^*(x) + p^*(x)\cdot \delta \sum_{i=1}^d \sum_{j=1}^d \frac{\del^2}{\del x_i \del x_j} \lrb{\sigma_x\sigma_x^T}_{i,j} + \delta \tr\lrp{\nabla^2 U(x)}
+ p^*(x) \cdot \Delta'.
\end{align*}
We complete the proof by taking $\Delta := p^*(x) \Delta'$.
\end{proof}

\begin{lemma}\label{l:circled5times2}
For $\delta \leq \frac{1}{2^8 d^2 L}$,
\begin{align*}
& \Ep{q(\eta)}{\lin{\nabla p^*(x), F_\eta^{-1}(x) - x}\cdot \det\lrp{\nabla F_\eta(F_\eta^{-1} (x))}^{-1}}\\
=& \delta \sum_{i=1}^d \lrp{\dd{x_i} p^*(x) \cdot \lrp{\dd{x_i}U(x) + 2\sum_{j=1}^d\dd{x_j} \lrb{\sigma_{x}\sigma_{x}^T}_{i,j}}} + \Delta
\end{align*}
for some $\lrabs{\Delta}\leq  p^*(x) \cdot 256 \delta^{3/2} d^2  L^{3/2}\theta\lrp{\|x\|_2^6 + 1}$.
\end{lemma}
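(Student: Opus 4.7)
The plan mirrors the homogeneous analog Lemma~\ref{l:s_circled5times2}, but with two extra subtleties driven by the position-dependence of $T_\eta$: the expansion of $F_\eta^{-1}(x)-x$ now gains a $2\delta\,G_\eta(x)T_\eta(x)$ correction, and the $O(\sqrt\delta)$ piece of the determinant expansion must be retained rather than discarded. I would first write a finer analog of Lemma~\ref{l:s_onestepdiscretizationbounds}, iterating the implicit identity $y = x + \delta\nabla U(y) - \sqrt{2\delta}T_\eta(y)$ and substituting the leading $y-x \approx -\sqrt{2\delta}T_\eta(x)$ back into the Taylor expansion $T_\eta(y) = T_\eta(x) + G_\eta(x)(y-x) + O(\|y-x\|^2)$. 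Using Assumption~\ref{ass:gisregular}, this yields
\begin{align*}
F_\eta^{-1}(x) - x = -\sqrt{2\delta}\,T_\eta(x) + \delta\nabla U(x) + 2\delta\,G_\eta(x)T_\eta(x) + \Delta_1,
\end{align*}
with $\lrn{\Delta_1}_2 \leq O\lrp{\delta^{3/2}L^{3/2}}\lrp{\|x\|_2^2+1}$. Simultaneously I would invoke the determinant expansion from Lemma~\ref{l:discretizeddeterminantexpansioninverse} (used already in Lemma~\ref{l:circled4times2}) in the truncated form $\det\lrp{\nabla F_\eta(F_\eta^{-1}(x))}^{-1} = 1 - \sqrt{2\delta}\,\tr\lrp{G_\eta(x)} + \Delta_2$ with $\lrabs{\Delta_2} \leq O(\delta d^2 L)\lrp{\|x\|_2^2+1}$.

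Multiplying the two expansions, contracting against $\nabla p^*(x)$ and taking $\Ep{q(\eta)}{\cdot}$ kills every term linear in $T_\eta$ (Assumption~\ref{ass:ximeanandvariance}.1), leaving three $O(\delta)$ contributions: (i) $\delta\lin{\nabla p^*(x),\nabla U(x)}$ from the drift; (ii) $2\delta\,\Ep{q(\eta)}{\lin{\nabla p^*(x), G_\eta(x)T_\eta(x)}}$ from the noise correction inside $F_\eta^{-1}(x)-x$; and (iii) $2\delta\,\Ep{q(\eta)}{\lin{\nabla p^*(x), T_\eta(x)}\tr\lrp{G_\eta(x)}}$ from pairing the leading noise of $F_\eta^{-1}(x)-x$ with the $-\sqrt{2\delta}\tr(G_\eta)$ piece of the determinant. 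Differentiating $\lrb{\sigma_x\sigma_x^T}_{i,j} = \Ep{q(\eta)}{\lrb{T_\eta(x)}_i \lrb{T_\eta(x)}_j}$ and using $\lrb{G_\eta(x)}_{i,j}=\dd{x_j}\lrb{T_\eta(x)}_i$ gives the identity
\begin{align*}
\sum_{j=1}^d \dd{x_j}\lrb{\sigma_x\sigma_x^T}_{i,j} = \Ep{q(\eta)}{\lrb{G_\eta(x) T_\eta(x)}_i + \lrb{T_\eta(x)}_i \tr\lrp{G_\eta(x)}},
\end{align*}
so (ii) and (iii) collapse exactly into $2\delta \sum_i \dd{x_i}p^*(x) \sum_j \dd{x_j}\lrb{\sigma_x\sigma_x^T}_{i,j}$, which together with (i) is the main term asserted by the lemma.

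Everything else is collected into $\Delta$. The surviving cross terms are $(-\sqrt{2\delta}T_\eta)\!\cdot\!\Delta_2$, $(\delta\nabla U)\!\cdot\!(-\sqrt{2\delta}\tr G_\eta)$, $(\delta\nabla U)\!\cdot\!\Delta_2$, $(2\delta G_\eta T_\eta)\!\cdot\!(-\sqrt{2\delta}\tr G_\eta)$, $(2\delta G_\eta T_\eta)\!\cdot\!\Delta_2$, and $\Delta_1\!\cdot\!(\text{any factor})$, each manifestly $O(\delta^{3/2})$. Applying Cauchy--Schwarz together with the bound $\lrn{\nabla p^*(x)}_2 \leq p^*(x)\theta\lrp{\|x\|_2^2+1}$ from Assumption~\ref{ass:p^*isregular}.3, the derivative bounds in Assumption~\ref{ass:gisregular}, Assumption~\ref{ass:uissmooth}, and a naive determinant bound analogous to Corollary~\ref{c:s_determinantinversenaivenaivebound}, every such term is at most $p^*(x)\cdot O(\delta^{3/2} d^2 L^{3/2}\theta)\lrp{\|x\|_2^6+1}$; the worst-case $\|x\|_2^6$ comes from $\|\nabla p^*\|_2 \lesssim p^*(x)\theta\|x\|_2^2$ multiplied by a quartic polynomial in $\|x\|_2$ produced by $\Delta_1\cdot\Delta_2$. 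Summing across the (constant number of) cross terms absorbs all residuals into the stated constant $256$. The principal obstacle is bookkeeping rather than analytic: unlike the homogeneous case, several $O(\sqrt\delta)$ factors appearing in different places must be tracked simultaneously because only their products at order $\delta$, combined with the divergence identity above, reproduce the full inhomogeneous drift; dropping either the $G_\eta(x)T_\eta(x)$ piece of $F_\eta^{-1}(x)-x$ or the $\tr(G_\eta(x))$ piece of the determinant would yield an incorrect leading-order expression.
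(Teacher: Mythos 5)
Your proposal follows essentially the same route as the paper's proof: expand $F_\eta^{-1}(x)-x$ through the $2\delta\,G_\eta(x)T_\eta(x)$ correction (Lemma~\ref{l:onestepdiscretizationbounds}.2), expand the inverse Jacobian determinant through its $-\sqrt{2\delta}\,\tr G_\eta(x)$ term (Corollary~\ref{c:determinantinversenaivebound}), multiply, kill linear-in-$T_\eta$ terms via Assumption~\ref{ass:ximeanandvariance}.1, and combine the two surviving $O(\delta)$ noise contributions with the differentiated-covariance identity (Lemma~\ref{l:sigmaderivative}) to assemble the drift term. The only cosmetic difference is that the paper simultaneously maintains both a coarse remainder $\Delta_2 := \det(\cdot)^{-1}-1$ and a fine remainder $\Delta_3 := \det(\cdot)^{-1}-(1-\sqrt{2\delta}\,\tr G_\eta)$ to streamline the error bookkeeping, whereas you work with the fine one throughout; this does not change the substance of the argument.
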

\begin{proof}[Proof of Lemma \ref{l:circled5times2}]
Let 
\begin{align*}
& \Delta_1 := F_\eta^{-1}(x) - x - \lrp{-\sqrt{2\delta} T_\eta(x) + \delta \nabla U(x) + 2\delta G_\eta(x) T_\eta(x)},\\
& \Delta_2 := \det\lrp{\nabla F_\eta(F_\eta^{-1} (x))}^{-1} - 1,\\
& \Delta_3 := \det\lrp{\nabla F_\eta(F_\eta^{-1} (x))}^{-1} - \lrp{1-\sqrt{2\delta} \tr\lrp{G_\eta(x)}}.
\end{align*}
By Lemma~\ref{l:onestepdiscretizationbounds}.2, Corollary~\ref{c:determinantinversenaivenaivebound} and Corollary~\ref{c:determinantinversenaivebound},
\begin{align*}
&\lrn{\Delta_1}_2 \leq 16 \delta^{3/2} L^{3/2} \lrp{\|x\|_2^2 + 1}, \\
&\lrabs{\Delta_2} \leq  2\delta^{1/2} d L^{1/2} \lrp{\|x\|_2^2 + 1}, \\
&\lrabs{\Delta_3} \leq 8 \delta d^2 L \lrp{\|x\|_2^2 + 1}.
\end{align*}
Moving terms around, 
\begin{align}
\nonumber
& \Ep{q(\eta)}{\lin{\nabla p^*(x), F_\eta^{-1}(x) - x}\cdot \det\lrp{\nabla F_\eta(F_\eta^{-1} (x))}^{-1}}\\
\label{e:fx:1}
=& \Ep{q(\eta)}{\lin{\nabla p^*(x), -\sqrt{2\delta}T_\eta(x)}\cdot (1-\sqrt{2\delta}\tr\lrp{G_\eta(x)}}\\
\label{e:fx:1.5}
&\quad + \Ep{q(\eta)}{\lin{\nabla p^*(x), \delta \nabla U(x) + 2\delta G_\eta(x)T_\eta(x)}}\\
\label{e:fx:2}
&\quad + \Ep{q(\eta)}{\lin{\nabla p^*(x), -\sqrt{2\delta}T_\eta(x)}\cdot \Delta_3}\\
\label{e:fx:3}
&\quad + \Ep{q(\eta)}{\lin{\nabla p^*(x), \delta \nabla U(x) + 2\delta G_\eta(x)T_\eta(x)}\cdot \Delta_2}\\
\label{e:fx:4}
&\quad + \Ep{q(\eta)}{\Delta_1\cdot \det\lrp{\nabla F_\eta(F_\eta^{-1} (x))}^{-1}}.
\end{align}
The main term of interest are \eqref{e:fx:1} and \eqref{e:fx:1.5}, which evaluate to
\begin{align*}
& \Ep{q(\eta)}{\lin{\nabla p^*(x), -\sqrt{2\delta}T_\eta(x)}\cdot (1-\sqrt{2\delta}\tr\lrp{G_\eta(x)}} + \Ep{q(\eta)}{\lin{\nabla p^*(x), \delta \nabla U(x) +2\delta G_\eta(x)T_\eta(x)}}\\
=& \Ep{q(\eta)}{\lin{\nabla p^*(x), -\sqrt{2\delta}T_\eta(x)}\cdot (-\sqrt{2\delta}\tr\lrp{G_\eta(x)}} + \Ep{q(\eta)}{\lin{\nabla p^*(x), \delta \nabla U(x) +2\delta G_\eta(x)T_\eta(x)}}\\
=& \lin{\nabla p^*(x),  \delta \nabla U(x) + 2\delta\Ep{q(\eta)}{\tr\lrp{G_\eta(x)}T_\eta(x)} + 2\delta \Ep{q(\eta)}{G_\eta(x)T_\eta(x)}}\\
=& \delta \sum_{i=1}^d \lrp{\dd{x_i} p^*(x) \cdot \lrp{\dd{x_i}U(x) + 2\sum_{j=1}^d\dd{x_j} \lrb{\sigma_{x}\sigma_{x}^T}_{i,j}}},
\end{align*}
where the first equality is by Assumption~\ref{ass:ximeanandvariance}.1, and the last equality is by Lemma~\ref{l:sigmaderivative}.
We now consider the terms~\eqref{e:fx:2}, \eqref{e:fx:3} and \eqref{e:fx:4}:
\begin{align*}
\lrabs{\eqref{e:fx:2}}
\leq& \lrabs{\|\nabla p^*(x)} \sqrt{2\delta}\Ep{q(\eta)}{\lrabs{T_\eta(x)}\lrabs{\Delta_3}}\\
\leq& p^*(x) \theta \lrp{\|x\|_2^2+1} \cdot \sqrt{2\delta L} \lrp{\|x\|_2+1}\cdot 8 \delta d^2 L \lrp{\|x\|_2^2 + 1}\\
\leq& 16 \delta^{3/2} p^*(x) d^2  L^{3/2} \theta\lrp{\|x\|_2^5 + 1},
\end{align*}
where the first inequality is by Cauchy-Schwarz, and the second inequality is by Lemma~\ref{l:logp^*issmooth}.1 and our upperbound on $\lrabs{\Delta_3}$ at the start of the proof.
\begin{align*}
\lrabs{\eqref{e:fx:3}}
\leq& \lrn{\nabla p^*(x)}_2 \delta \Ep{q(\eta)}{\lrn{\nabla U(x) + 2G_\eta(x)T_\eta(x)}_2\cdot \lrabs{\Delta_2}}\\
\leq& p^*(x) \theta \lrp{\|x\|_2^2+1} \cdot 3 \delta L \lrp{\|x\|_2 + 1} \cdot 2\delta^{1/2} d L^{1/2} \lrp{\|x\|_2^2 + 1} \\
\leq& 32 \delta^{3/2}p^*(x) d  L^{3/2}\theta\lrp{\|x\|_2^5 + 1},
\end{align*}
where the first inequality is by Cauchy-Schwarz, and the second inequality is by Lemma~\ref{l:logp^*issmooth}.1 and our upperbound on $\lrabs{\Delta_2}$ at the start of the proof.
\begin{align*}
\lrabs{\eqref{e:fx:4}}
\leq& \lrn{\nabla p^*(x)}_2 \cdot \Ep{q(\eta)}{\lrabs{\Delta_1} \cdot \lrabs{\det\lrp{\nabla F_\eta(F_\eta^{-1} (x))}^{-1}}}\\
\leq& p^*(x) \theta \lrp{\|x\|_2^2+1} \cdot \lrp{16 \delta^{3/2} L^{3/2} \lrp{\|x\|_2^2 + 1}}\cdot \lrp{1 + 2\delta^{1/2} d L^{1/2} \lrp{\|x\|_2^2 + 1}}\\
\leq& 128 \delta^{3/2} p^*(x)  L^{3/2}\theta\lrp{\|x\|_2^6 + 1},
\end{align*}
where the first inequality is by Cauchy-Schwarz, and the second inequality is by Lemma~\ref{l:logp^*issmooth}.1 and our upperbound on $\lrn{\Delta_1}_2$ and $\lrabs{\Delta_2}$ at the start of the proof.\\
Defining $\Delta := \eqref{e:fx:2}+\eqref{e:fx:3}+\eqref{e:fx:4}$, we have
\begin{align*}
\lrabs{\Delta}
\leq& 16 \delta^{3/2} p^*(x) d^2  L^{3/2} \theta\lrp{\|x\|_2^5 + 1}\\
&\quad + 32 \delta^{3/2}p^*(x) d  L^{3/2}\theta\lrp{\|x\|_2^5 + 1}\\
&\quad + 128 \delta^{3/2} p^*(x) \theta L^{3/2}\lrp{\|x\|_2^6 + 1}\\
\leq& p^*(x) \cdot 256 \delta^{3/2} d^2  L^{3/2}\theta\lrp{\|x\|_2^6 + 1}.
\end{align*}
\end{proof}
\begin{lemma}\label{l:circled6times2}
For $\delta \leq \frac{1}{2^8 d^2 L}$,
\begin{align*}
& \frac{1}{2} \Ep{q(\eta)}{\lin{\nabla^2 p^*(x), \lrp{F_\eta^{-1}(x) - x}\lrp{F_\eta^{-1}(x) - x}^T}\cdot \det\lrp{\nabla F_\eta(F_\eta^{-1} (x))}^{-1}}\\
=& \delta \lin{\nabla^2 p^*(x), \sigma_x \sigma_x^T} + \Delta
\end{align*}
for some $\lrabs{\Delta}\leq p^*(x)\cdot 256 \delta^{3/2} d L^{3/2} \lrp{\theta^2 + \theta} \lrp{\|x\|_2^{10} + 1}$.
\end{lemma}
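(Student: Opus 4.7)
The plan is to mirror the proof of Lemma~\ref{l:s_circled6times2} and of the preceding Lemma~\ref{l:circled5times2}, treating this lemma as the ``second-order'' discretization cross-term in the expansion of $p_\delta(x)/p^*(x)$, now with $x$-dependent noise.

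First I would introduce the two deviations
\begin{align*}
\Delta_1 &:= F_\eta^{-1}(x) - x - \lrp{-\sqrt{2\delta}\,T_\eta(x)},\\
\Delta_2 &:= \det\lrp{\nabla F_\eta(F_\eta^{-1}(x))}^{-1} - 1,
\end{align*}
which are the inhomogeneous analogs of the deviations used in Lemma~\ref{l:s_circled6times2}. By an inhomogeneous version of Lemma~\ref{l:s_onestepdiscretizationbounds}.3 (bounding $\|F_\eta^{-1}(x)-x+\sqrt{2\delta}T_\eta(x)\|_2$ via the $\delta\nabla U(x)$ drift and the Lipschitzness of $T_\eta(\cdot)$ from Assumption~\ref{ass:gisregular}.3), one gets $\|\Delta_1\|_2 \lesssim \delta L (\|x\|_2^2+1)$. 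By Corollary~\ref{c:determinantinversenaivenaivebound}, $|\Delta_2| \lesssim \delta^{1/2} d L^{1/2}(\|x\|_2^2+1)$, and under our hypothesis $\delta \leq 1/(2^8 d^2 L)$ both quantities are small.

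Next I would expand the outer product by adding and subtracting $-\sqrt{2\delta}T_\eta(x)$:
\begin{align*}
\lrp{F_\eta^{-1}(x)-x}\lrp{F_\eta^{-1}(x)-x}^T
= 2\delta\, T_\eta(x) T_\eta(x)^T
 - \sqrt{2\delta}\lrp{T_\eta(x)\Delta_1^T + \Delta_1 T_\eta(x)^T}
 + \Delta_1\Delta_1^T,
\end{align*}
and then multiply by $\det(\nabla F_\eta(F_\eta^{-1}(x)))^{-1} = 1 + \Delta_2$. The dominant term after taking $\Ep{q(\eta)}{\cdot}$ and dividing by $2$ is
\begin{align*}
\delta \lin{\nabla^2 p^*(x),\, \Ep{q(\eta)}{T_\eta(x)T_\eta(x)^T}}
= \delta \lin{\nabla^2 p^*(x),\, \sigma_x\sigma_x^T},
\end{align*}
where the last equality uses the definition \eqref{d:sigmax} of $\sigma_x$. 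Everything else is collected into $\Delta$, which consists of three pieces: (a) $\delta\,\Ep{q(\eta)}{\lin{\nabla^2 p^*(x), T_\eta T_\eta^T}\Delta_2}$; (b) $\tfrac{1}{2}\Ep{q(\eta)}{\lin{\nabla^2 p^*(x), -\sqrt{2\delta}(T_\eta\Delta_1^T+\Delta_1 T_\eta^T)+\Delta_1\Delta_1^T}}$; and (c) the cross-product of those terms with $\Delta_2$.

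To bound these pieces I would apply Cauchy--Schwarz together with the ingredients $\|T_\eta(x)\|_2 \leq \sqrt{L}(\|x\|_2+1)$ (Assumption~\ref{ass:gisregular}.2), the bounds on $\Delta_1, \Delta_2$ above, and the curvature estimate $\|\nabla^2 p^*(x)\|_2 \leq p^*(x)\lrp{\|\nabla^2 f(x)\|_2 + \|\nabla f(x)\|_2^2} \leq p^*(x)\lrp{\theta(\|x\|_2+1) + \theta^2(\|x\|_2^2+1)^2}$ obtained from Assumption~\ref{ass:p^*isregular} (this is the natural analog of Lemma~\ref{l:s_logp^*issmooth}.2, and I expect the already-referenced Lemma~\ref{l:logp^*issmooth} to state exactly this). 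Piece (a) contributes roughly $p^*(x)(\theta^2+\theta)\delta^{3/2} dL^{3/2}(\|x\|_2^8+1)$; piece (b) contributes $p^*(x)(\theta^2+\theta)\delta^{3/2}L^{3/2}(\|x\|_2^{8}+1)$; and piece (c) is strictly smaller (of order $\delta^2$) under the stated smallness of $\delta$, though its highest $\|x\|_2$ power climbs to $\|x\|_2^{10}$ after accounting for the extra $(\|x\|_2^2+1)$ from $|\Delta_2|$. Summing the three and absorbing constants into $256$ yields the stated bound $p^*(x)\cdot 256 \delta^{3/2}dL^{3/2}(\theta^2+\theta)(\|x\|_2^{10}+1)$.

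The main obstacle will be the bookkeeping in step~(c): tracking how the $(\|x\|_2^4+1)$ from $\|\nabla^2 p^*\|_2$, the $(\|x\|_2+1)^2$ from $\|T_\eta\|_2^2$ (or the $\|T_\eta\|_2\|\Delta_1\|_2$ product), and one or two additional $(\|x\|_2^2+1)$ factors from $|\Delta_2|$ combine, and verifying that the resulting polynomial in $\|x\|_2$ is dominated by $(\|x\|_2^{10}+1)$ with constant at most $256$. The strong-convexity of $U$ and our assumption $\delta \leq 1/(2^8 d^2 L)$ are what keep $|\Delta_2|$ and $\|\Delta_1\|_2$ small enough that the $O(\delta^2)$ and $O(\delta^{5/2})$ leftovers do not overwhelm the targeted $O(\delta^{3/2})$ rate.
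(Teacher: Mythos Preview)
Your proposal is correct and follows essentially the same approach as the paper's own proof. The only cosmetic difference is in how the remainder is partitioned: the paper keeps your pieces (b) and (c) together as a single term by leaving the full factor $\det(\nabla F_\eta(F_\eta^{-1}(x)))^{-1}=1+\Delta_2$ intact (and bounding it by $1+|\Delta_2|$), whereas you split off the $\times 1$ and $\times\Delta_2$ contributions separately; the ingredients (Lemma~\ref{l:onestepdiscretizationbounds}.3, Corollary~\ref{c:determinantinversenaivenaivebound}, Lemma~\ref{l:logp^*issmooth}.2, Assumption~\ref{ass:gisregular}.2) and the resulting arithmetic are the same.
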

\begin{proof}[Proof of Lemma \ref{l:circled6times2}]
Define
\begin{align*}
&\Delta_1 := F_\eta^{-1}(x) - x - \lrp{-\sqrt{2\delta}T_\eta(x)}, &
&\Delta_2 := \det\lrp{\nabla F_\eta(F_\eta^{-1} (x))}^{-1} -  1.
\end{align*}
By Lemma~\ref{l:onestepdiscretizationbounds}.3 and Corollary \ref{c:determinantinversenaivenaivebound},
\begin{align*}
\lrabs{\Delta_1} \leq&  16 \delta L \lrp{\|x\|_2^2 + 1}&
\lrabs{\Delta_2} \leq&  2\delta^{1/2} d L^{1/2} \lrp{\|x\|_2^2 + 1}
\end{align*}
Then
\begin{align}
\nonumber
& \Ep{q(\eta)}{\lin{\nabla^2 p^*(x), \lrp{F_\eta^{-1}(x) - x}\lrp{F_\eta^{-1}(x) - x}^T}\cdot \det\lrp{\nabla F_\eta(F_\eta^{-1} (x))}^{-1}}\\
\label{e:ta:1}
=& 2\delta \Ep{q(\eta)}{\lin{\nabla^2 p^*(x), T_\eta(x) T_\eta(x)^T}}\\
\label{e:ta:2}
&\quad + 2\delta \Ep{q(\eta)}{\lin{\nabla^2 p^*(x), T_\eta(x) T_\eta(x)^T}\cdot \Delta_2}\\
\label{e:ta:3}
&\quad + \Ep{q(\eta)}{\lin{\nabla^2 p^*(x), \Delta_1 \Delta_1^T - \sqrt{2\delta}T_\eta(x) \Delta_1^T - \sqrt{2\delta}\Delta_1 T_\eta(x) ^T}\cdot \det\lrp{\nabla
F_\eta(F_\eta^{-1} (x))}^{-1}}.
\end{align}
We are mainly interested in \eqref{e:ta:1}, which evaluates to 
\begin{align*}
& 2\delta \Ep{q(\eta)}{\lin{\nabla^2 p^*(x), T_\eta(x) T_\eta(x)^T}}\\
=& 2\delta \lin{\nabla^2 p^*(x), \Ep{q(\eta)}{T_\eta(x) T_\eta(x)^T}}\\
=& 2\delta \lin{\nabla^2 p^*(x), \sigma_x \sigma_x^T},
\end{align*}
where the last equality is by definition of $T_\eta(x)$ and $\sigma_x$.
We now bound the magnitude of~$\eqref{e:ta:2}$ and~$\eqref{e:ta:3}$.
\begin{align*}
\lrabs{\eqref{e:ta:2}}
=& \lrabs{2\delta \Ep{q(\eta)}{\lin{\nabla^2 p^*(x), T_\eta(x) T_\eta(x)^T}\cdot \Delta_2}}\\
\leq& 2\delta \lrn{\nabla^2 p^*(x)}_2 \Ep{q(\eta)}{\lrn{T_\eta(x)}_2^2 \lrabs{\Delta_2}}\\
\leq& 4\delta p^*(x) \lrp{\theta^2 +\theta}\lrp{\|x\|_2^4 +1} \cdot L \lrp{\|x\|_2^2 + 1}\cdot 2\delta^{1/2} d L^{1/2} \lrp{\|x\|_2^2 + 1}\\
\leq& 32 \delta^{3/2} p^*(x) d L^{3/2} (\theta + \theta^2) \lrp{\|x\|_2^8 + 1},
\end{align*}
where the first inequality is by Cauchy-Schwarz, and the second inequality is by Lemma~\ref{l:logp^*issmooth}.2 and our upper bound on $\lrabs{\Delta_2}$ at the start of the proof.
\begin{align*}
\lrabs{\eqref{e:ta:3}}
=& \Ep{q(\eta)}{\lin{\nabla^2 p^*(x), \Delta_1 \Delta_1^T + \sqrt{2\delta}T_\eta(x) \Delta_1^T + \sqrt{2\delta}\Delta_1 T_\eta(x) ^T}\cdot \det\lrp{\nabla F_\eta(F_\eta^{-1} (x))}^{-1}}\\
\leq& \lrn{\nabla^2 p^*(x)}_2 \Ep{q(\eta)}{\lrp{\lrn{\Delta_1}_2^2 + 2\sqrt{2\delta}\lrn{T_\eta(x)}_2\lrn{\Delta_1}_2} \lrabs{\det\lrp{\nabla F_\eta(F_\eta^{-1} (x))}^{-1}}}\\
\leq& p^*(x) \lrp{\theta^2 +\theta}\lrp{\|x\|_2^4 +1} \cdot \lrp{\lrp{16 \delta L \lrp{\|x\|_2^2 + 1}}^2 + 2\sqrt{2\delta} \lrp{L^{1/2} \lrp{\|x\|_2 + 1}}\lrp{16 \delta L \lrp{\|x\|_2^2 + 1}}}\\
&\quad \cdot \lrp{1 + 2\delta^{1/2} d L^{1/2} \lrp{\|x\|_2^2 + 1}}\\
\leq& 256 \delta^{3/2} p^*(x) d L^{3/2} \lrp{\theta^2 + \theta} \lrp{\|x\|_2^{10} + 1},
\end{align*}
where the first inequality is by Cauchy-Schwarz, and the second inequality is by Lemma~\ref{l:logp^*issmooth}.2 and our upper bound on $\lrabs{\Delta_1}$ at the start of the proof.\\
Defining $\Delta := \eqref{e:ta:2} + \eqref{e:ta:3}$, we have
\begin{align*}
\lrabs{\Delta} 
\leq& 32 \delta^{3/2} p^*(x) d L^{3/2} (\theta + \theta^2) \lrp{\|x\|_2^8 + 1} + 256 \delta^{3/2} p^*(x) d L^{3/2} \lrp{\theta^2 + \theta} \lrp{\|x\|_2^{10} + 1}\\
\leq& 512 \delta^{3/2} p^*(x) d L^{3/2} (\theta^2 + \theta)\lrp{\|x\|_2^{10} + 1}.
\end{align*}
\end{proof}
\begin{lemma}\label{l:circled7times2}
For $\delta \leq \min\lrbb{\frac{1}{2^8 d ^2 L}, \frac{1}{2^{15}\lrp{\|x\|_2^6 + 1} \theta^2 L}}$,
\begin{align*}
& \lrabs{\Ep{q(\eta)}{\lrp{ \int_0^1 \int_0^t \int_0^s \lin{\nabla^3 p^*\lrp{(1-r) x + r F_\eta^{-1}(x)}, \lrp{F_\eta^{-1}(x) - x}^{3}}dr ds dt }\cdot \det\lrp{\nabla F_\eta(F_\eta^{-1} (x))}^{-1}}}\\
\leq & p^*(x) \cdot 2^{14} \delta^{3/2} L^{3/2}\lrp{\theta^3+\theta^2+\theta}\lrp{\|x\|_2^{11} + 1}.
\end{align*}
\end{lemma}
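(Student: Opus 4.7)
My plan is to mirror the structure of Lemma~\ref{l:s_circled7times2} in the homogeneous case, with two modifications to accommodate inhomogeneous noise. First, the closed-form bounds on $\nabla^3 p^*$, which previously followed from $p^* \propto e^{-U}$ and Assumption~\ref{ass:uissmooth}, must now be derived from Assumption~\ref{ass:p^*isregular} applied to $f := \log p^*$. Second, the control of $p^*$ along the segment from $x$ to $F_\eta^{-1}(x)$ uses a first-order Taylor expansion of $f$, rather than of $U$. After moving the absolute value inside the expectation over $\eta$ (Jensen) and inside the triple integral (triangle inequality together with Cauchy--Schwarz on the trace inner product), the integrand splits into three deterministic factors: $\|\nabla^3 p^*(\cdot)\|_2$ along the segment, $\|F_\eta^{-1}(x) - x\|_2^3$, and $|\det \nabla F_\eta(F_\eta^{-1}(x))^{-1}|$.

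I would first invoke Lemma~\ref{l:onestepdiscretizationbounds}.1 (the inhomogeneous analog of Lemma~\ref{l:s_onestepdiscretizationbounds}.1) to show $\|F_\eta^{-1}(x) - x\|_2 \leq O(\sqrt{\delta L})(\|x\|_2 + 1)$; under $\delta \leq 1/(2^8 d^2 L)$ this forces $\|F_\eta^{-1}(x) - x\|_2 \leq \tfrac{1}{2}(\|x\|_2 + 1)$, so any interpolant $y_r := (1-r)x + r F_\eta^{-1}(x)$ satisfies $\|y_r\|_2 \leq 2\|x\|_2 + 1$. I would then differentiate $p^* = e^f$ to get
\begin{align*}
\nabla^3 p^*(y) = p^*(y)\lrp{\nabla^3 f(y) + 3\, \nabla^2 f(y) \otimes \nabla f(y) + \nabla f(y)^{\otimes 3}},
\end{align*}
and apply Assumption~\ref{ass:p^*isregular} termwise to obtain $\|\nabla^3 p^*(y_r)\|_2 \leq C\, p^*(y_r)\,(\theta^3+\theta^2+\theta)(\|x\|_2^6 + 1)$ for an absolute constant $C$. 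To swap $p^*(y_r)$ for $p^*(x)$, I would Taylor-expand $f$ along the segment and use Assumption~\ref{ass:p^*isregular}.3, yielding $|f(y_r) - f(x)| \leq \theta(2\|x\|_2^2 + 1)\cdot \|F_\eta^{-1}(x) - x\|_2 \leq C'\theta \sqrt{\delta L}(\|x\|_2^3 + 1)$. The second clause of the hypothesis, $\delta \leq 1/(2^{15}\theta^2 L(\|x\|_2^6 + 1))$, is precisely the condition that forces this quantity to be bounded by $1$, and hence $p^*(y_r) \leq e\, p^*(x)$.

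Combining these with the cubed displacement bound $\|F_\eta^{-1}(x) - x\|_2^3 \leq O((\delta L)^{3/2})(\|x\|_2^3 + 1)$ and the determinant bound $|\det \nabla F_\eta(F_\eta^{-1}(x))^{-1}| \leq 2$, which follows from Corollary~\ref{c:determinantinversenaivenaivebound} under our $\delta$ hypothesis, and then integrating over $(r,s,t) \in [0,1]^3$ (which contributes only a $1/6$ factor) and taking the expectation over $\eta$, should produce a bound of the form $p^*(x) \cdot O(\delta^{3/2} L^{3/2} (\theta^3+\theta^2+\theta))(\|x\|_2^9 + 1)$. Absorbing the slack into the polynomial in $\|x\|_2$ and the numerical constant delivers the stated $2^{14}(\|x\|_2^{11}+1)$ bound. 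The main obstacle will be the polynomial bookkeeping: keeping track of the degrees in $\|x\|_2$ across all the factors, and verifying that the somewhat unusual $\|x\|_2$-dependent form of the second hypothesis on $\delta$ is the sharpest condition that keeps $p^*(y_r)/p^*(x)$ bounded on the segment.
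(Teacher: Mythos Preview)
Your plan is essentially the paper's own argument; the auxiliary results you propose to derive inline (the bound on $\lrn{\nabla^3 p^*}_2$ from differentiating $p^*=e^f$, and the control of $p^*(y_r)/p^*(x)$ via a first-order expansion of $f$) are exactly Lemma~\ref{l:logp^*issmooth}.3 and Lemma~\ref{l:p^*growsslowly}, which the paper invokes directly. One point to correct: in the inhomogeneous setting Corollary~\ref{c:determinantinversenaivenaivebound} gives
\[
\lrabs{\det\lrp{\nabla F_\eta(F_\eta^{-1}(x))}^{-1}-1}\leq 2\delta^{1/2}dL^{1/2}\lrp{\|x\|_2^2+1},
\]
which is \emph{not} bounded by a universal constant under the hypothesis on $\delta$; the factor $\lrp{\|x\|_2^2+1}$ must be carried through, and it is precisely this factor (multiplied against your $\|x\|_2^6$ from $\nabla^3 p^*$ and $\|x\|_2^3$ from the cubed displacement) that produces the stated $\|x\|_2^{11}$ rather than $\|x\|_2^9$.
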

\begin{proof}[Proof of Lemma \ref{l:circled7times2}]
Using Lemma~\ref{l:onestepdiscretizationbounds}.1, by our choice of $\delta$, $\lrn{x-F_\eta^{-1}(x)}_2\leq \frac{1}{2}\lrp{\|x\|_2 + 1}$, thus $\lrn{F_\eta^{-1}(x)} \leq
2\|x\|_2 + 1$. Hence, for all $t\in[0,1]$,
\begin{align}
\label{e:he:1}
\lrn{(1-t)x + t F_\eta^{-1}(x)}_2 \leq 2\|x\|_2 + 1,
\end{align}
and
\begin{align*}
&\!\!\!\!\!\! \lrabs{\Ep{q(\eta)}{\lrp{ \int_0^1 \int_0^t \int_0^s \lin{\nabla^3 p^*\lrp{(1-t) x + t F_\eta^{-1}(x)}, \lrp{F_\eta^{-1}(x) - x}^{3}}dr ds dt }\cdot \det\lrp{\nabla F_\eta(F_\eta^{-1} (x))}^{-1}}}\\
\leq& \Ep{q(\eta)}{ \int_0^1 \int_0^t \int_0^s \lrn{\nabla^3 p^*\lrp{(1-t) x + t F_\eta^{-1}(x)}_2} dr ds dt dt\cdot \lrn{F_\eta^{-1}(x)-x}_2^3\cdot \lrabs{\det\lrp{\nabla F_\eta(F_\eta^{-1} (x))}^{-1}}}\\
\leq& \Ep{q(\eta)}{2p^*\lrp{(1-t) x + tF_\eta^{-1}(x)} \lrp{\theta^3 + \theta^2 + \theta}\lrp{\lrn{(1-t) x + tF_\eta^{-1}(x)}_2^6+1}}\\
&\quad \cdot \lrp{2 \delta^{1/2} L^{1/2}\lrp{\|x\|_2 + 1}}^3\cdot \lrp{1 + 2\delta^{1/2} d L^{1/2} \lrp{\|x\|_2^2 + 1}}\\
\leq& p^*(x)\exp\lrp{2\theta\lrp{\|x\|_2^2 + \lrn{F_\eta^{-1}(x)}_2^2}\lrn{F_\eta^{-1}(x)-x}_2}\cdot 2^{13} \delta^{3/2}  L^{3/2} \lrp{\theta^3+\theta^2+\theta} \lrp{\|x\|_2^{11} + 1}\\
\leq& p^*(x)\exp\lrp{8\theta\lrp{\|x\|_2^2 + 1}\lrp{2 \delta^{1/2} L^{1/2}\lrp{\|x\|_2 + 1}}}\cdot 2^{13} \delta^{3/2}  L^{3/2} \lrp{\theta^3+\theta^2+\theta} \lrp{\|x\|_2^{11} + 1}\\
\leq& p^*(x)\exp\lrp{32 \delta^{1/2} \theta \lrp{\|x\|_2^3 + 1} L^{1/2}}\cdot 2^{13} \delta^{3/2}  L^{3/2} \lrp{\theta^3+\theta^2+\theta} \lrp{\|x\|_2^{11} + 1}\\
\leq& 2^{14} \delta^{3/2} p^*(x) L^{3/2}\lrp{\theta^3+\theta^2+\theta}\lrp{\|x\|_2^{11} + 1},
\end{align*}
where the first inequality is by Jensen's inequality, the triangle inequality and Cauchy-Schwarz, the second inequality is by Lemma~\ref{l:logp^*issmooth}.3 and
\eqref{e:he:1}, the third inequality is by Lemma~\ref{l:p^*growsslowly}, tnd he fourth inequality is by Lemma~\ref{l:onestepdiscretizationbounds}.1,
Assumption~\ref{ass:uissmooth}.2, Assumption~\ref{ass:gisregular}.2, and our assumption that $\delta\leq\frac{1}{d^2 L }$, so that $\lrn{F_\eta(x)}_2\leq 2\|x\|_2 + 2$. The fifth
inequality is by moving terms around, and the sixth inequality is by our assumption that $\delta\leq \frac{1}{2^{15}\theta^2 \lrp{\|x\|_2^6 + 1}L}$.
\end{proof}

\begin{lemma}\label{l:allthediscretizationbounds}
For any $\delta$, for any $x,y$, and for $\eta$ a.s.,
\begin{align*}
&1.\ \lrn{T_\eta(x)- T_\eta(y)}_2 \leq L^{1/2} \lrn{x-y}_2,\\
&2.\ \lrn{G_\eta(x) - G_\eta(y)}_2\leq L^{1/2} \lrn{x-y}_2, \\
&3.\ \lrn{T_\eta(y) - T_\eta(x) - G_\eta(x) (y-x)}_2 \leq L^{1/2} \lrn{y-x}_2^2,
\\
&4.\ 
\lrn{G_\eta(x) - G_\eta(y) - \lint{M_\eta(x), y-x}}_2\leq L^{1/2}\lrn{x-y}_2^2.
\end{align*}
\end{lemma}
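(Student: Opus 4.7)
The plan is to deduce all four inequalities from the uniform operator-norm bounds on the derivative tensors $G_\eta$, $M_\eta$, $N_\eta$ given in Assumption~\ref{ass:gisregular}, combined with the fundamental theorem of calculus / Taylor's theorem applied along the line segment joining $x$ and $y$. Throughout, I will fix $\eta$ (all statements hold $\eta$ a.s.) and write $\gamma(t) := x + t(y-x)$ for $t \in [0,1]$, so that $\gamma(0)=x$ and $\gamma(1)=y$.

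For item 1, I would use $T_\eta(y) - T_\eta(x) = \int_0^1 G_\eta(\gamma(t)) (y-x) \, dt$, take norms, apply the triangle inequality under the integral, and invoke the operator-norm bound $\lrn{G_\eta(\gamma(t))}_2 \leq \sqrt{L}$ from Assumption~\ref{ass:gisregular}.3; this yields $\lrn{T_\eta(y)-T_\eta(x)}_2 \leq \sqrt{L}\lrn{y-x}_2$. Item 2 is entirely analogous, with $M_\eta$ playing the role of $G_\eta$: write $G_\eta(y) - G_\eta(x) = \int_0^1 \lint{M_\eta(\gamma(t)), y-x} \, dt$ and apply Assumption~\ref{ass:gisregular}.4 together with the fact that $\|\cdot\|_2$ is a norm on the tensor spaces (as noted after~\eqref{d:operatornorm}), which gives $\lrn{\lint{M_\eta(\gamma(t)), y-x}}_2 \leq \sqrt{L}\lrn{y-x}_2$.

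For items 3 and 4, which are second-order Taylor remainders, I would use the integral-form remainder. For item 3, write
\begin{align*}
T_\eta(y) - T_\eta(x) - G_\eta(x)(y-x)
= \int_0^1 \lrp{G_\eta(\gamma(t)) - G_\eta(x)} (y-x) \, dt,
\end{align*}
then apply item 2 inside the integral to get $\lrn{G_\eta(\gamma(t)) - G_\eta(x)}_2 \leq \sqrt{L}\,t\,\lrn{y-x}_2$, and integrate the resulting $t$ to obtain a factor of $1/2 \leq 1$, giving the claimed $\sqrt{L}\lrn{y-x}_2^2$ bound. Item 4 proceeds identically one derivative up: write
\begin{align*}
G_\eta(y) - G_\eta(x) - \lint{M_\eta(x), y-x}
= \int_0^1 \lint{M_\eta(\gamma(t)) - M_\eta(x), y-x} \, dt,
\end{align*}
and bound $\lrn{M_\eta(\gamma(t)) - M_\eta(x)}_2$ by first applying the same fundamental-theorem argument as in item 2 but one level higher (using $\lrn{N_\eta}_2 \leq \sqrt{L}$ from Assumption~\ref{ass:gisregular}.5) to get $\sqrt{L}\,t\,\lrn{y-x}_2$, then integrating.

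I do not anticipate any real obstacle here; the only mild subtlety is that the products $G_\eta(\cdot)(y-x)$, $\lint{M_\eta(\cdot),y-x}$, and $\lint{N_\eta(\cdot),y-x}$ are tensor contractions, so one must check that the operator norm $\lrn{\cdot}_2$ in the sense of~\eqref{d:operatornorm} satisfies the sub-multiplicative inequality $\lrn{\lint{M v}}_2 \leq \lrn{M}_2 \lrn{v}_2$ and similarly at higher orders. This is immediate from the definition of the operator norm as a supremum over unit vectors, so each step is a single line of estimate followed by integration in $t$.
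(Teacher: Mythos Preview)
Your proposal is correct and follows essentially the same approach as the paper: both argue via the fundamental theorem of calculus / integral-form Taylor remainder along the segment from $x$ to $y$, invoking the uniform operator-norm bounds on $G_\eta$, $M_\eta$, $N_\eta$ from Assumption~\ref{ass:gisregular}. The only cosmetic difference is that the paper writes out the nested double integral explicitly for items~3 and~4, whereas you bound the first-order remainder by reusing the already-proved Lipschitz estimate (item~2, respectively its analogue for $M_\eta$); these are equivalent.
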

\begin{proof}[Proof of Lemma \ref{l:allthediscretizationbounds}]
$ $\\
\begin{enumerate}
\item We use Assumption \ref{ass:gisregular}.3 and a Taylor exapansion:
\begin{align*}
\lrn{T_\eta(x)- T_\eta(y)}_2
=& \lrn{\int_0^1 G_\eta(t(x) + (1-t) y) (x-y)\,dt}_2\\
\leq& L^{1/2} \lrn{x-y}_2.
\end{align*}

\item We use Assumption \ref{ass:gisregular}.4 and a Taylor expansion:
\begin{align*}
\lrn{G_\eta(x) - G_\eta(y)}_2
=& \lrn{\int_0^1 \M{t(x) + (1-t) y}(\eta) (x-y)\,dt}_2\\
\leq& L^{1/2} \lrn{x-y}_2.
\end{align*}

\item Using Taylor's theorem and the definitions of $T_\eta$, $G_\eta$ and $M_\eta$ from Assumption~\ref{ass:gisregular}:
\begin{align*}
T_\eta(y) 
=& T_\eta(x) + \int_0^1 \lint{G_\eta ((1-t) x + t y), (y-x)} \,dt\\
=& T_\eta(x) + \int_0^1 \lint{\lrp{G_\eta(x) + \int_0^t \lint{M_\eta ((1-s) x + s y), (y-x)} ds}, (y-x)} \,dt\\
=& T_\eta(x) + \lint{G_\eta(x), y-x} + \int_0^1 \int_0^t \lint{\lint{ M_\eta ((1-s) x + s y), y-x}, y-x} \,ds \,dt,
\end{align*}
therefore,
\begin{align*}
&\lrn{T_\eta(y) - T_\eta(x) - G_\eta(x)(y-x)}_2\\
\leq& \int_0^1 \int_0^t \lrn{\lint{\lint{ M_\eta ((1-s) x + s y), y-x}, y-x} }_2 \,ds \,dt\\
\leq& \int_0^1 \int_0^t \lrn{M_\eta ((1-s) x + s y)}_2 \lrn{y-x}_2^2 \,ds \,dt\\
\leq& L^{1/2} \lrn{y-x}_2^2,
\end{align*}
where the first inequality is by the triangle inequality, the second inequality is by definition of the $\|\cdot\|_2$ norm in \eqref{d:operatornorm},
and the last inequality is by Assumption~\ref{ass:gisregular}.4.

\item Using Taylor's theorem and the definitions of $T_\eta$, $G_\eta$, $M_\eta$ and $N_\eta$,
\begin{align*}
G_\eta(y) 
=& G_\eta(x) + \int_0^1 {M_\eta ((1-t) x + t y) (y-x)} \,dt\\
=& G_\eta(x) + \int_0^1 {\lrp{M_\eta(x) + \int_0^t \lint{N_\eta ((1-s) x + s y) (y-x)} \,ds} (y-x)} \,dt\\
=& G_\eta(x) + {M_\eta(x) \lrp{y-x}} + \int_0^1 \int_0^t \lrp{\lint{ N_\eta ((1-s) x + s y), y-x}}\lrp{y-x} \,ds \,dt.
\end{align*}
Therefore,
\begin{align*}
&\lrn{G_\eta(y) - G_\eta(x) - M_\eta(x)(y-x)}_2\\
\leq& \int_0^1 \int_0^t \lrn{\lint{\lint{ N_\eta ((1-s) x + s y), y-x}, y-x} }_2 \,ds \,dt\\
\leq& \int_0^1 \int_0^t \lrn{N_\eta ((1-s) x + s y)}_2 \lrn{y-x}_2^2 \,ds \,dt\\
\leq& L^{1/2} \lrn{y-x}_2^2,
\end{align*}
\end{enumerate}
where the first inequality is by our expansion above and Jensen's inequality, the second inequality is by definition of $\lrn{\cdot}_2$
in~\eqref{d:operatornorm}, and the third inequality is by Assumption~\ref{ass:gisregular}.5.
\end{proof}
\begin{lemma}\label{l:onestepdiscretizationbounds}
For any $\delta \leq \frac{1}{32 L}$, for any $x,y$ such that $x = F_\eta(y)$ and for $\eta$ a.s.
\begin{align*}
&1.\ \lrn{y-x}_2 \leq  2 \delta^{1/2} L^{1/2}\lrp{\|x\|_2 + 1},\\
&2.\ \lrn{y - x - \lrp{-\sqrt{2\delta} T_\eta(x) + \delta \nabla U(x)  + {2\delta} G_\eta(x) T_\eta(x)}}_2
\leq 16 \delta^{3/2} L^{3/2} \lrp{\|x\|_2^2 + 1},\\
&3.\ \lrn{y - x -\lrp{-\sqrt{2\delta} T_\eta(x)}}_2
\leq 16 \delta L \lrp{\|x\|_2^2 + 1}.
\end{align*}
\end{lemma}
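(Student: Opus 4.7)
The plan is to prove the three bounds in the order (1), (3), (2), since each subsequent bound will use the preceding ones together with a higher-order Taylor expansion. Throughout the proof, I would use the defining identity $y - x = \delta \nabla U(y) - \sqrt{2\delta}\, T_\eta(y)$, which follows directly from $x = F_\eta(y)$ and the definition of $F_\eta$ in \eqref{d:Feta}.

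For part (1), I would take norms of the defining identity and apply the triangle inequality, then bound $\lrn{\nabla U(y)}_2 \leq L\lrn{y}_2$ (from Assumption \ref{ass:uissmooth}.1--2) and $\lrn{T_\eta(y)}_2 \leq \sqrt{L}(\lrn{y}_2 + 1)$ (from Assumption \ref{ass:gisregular}.2). Substituting $\lrn{y}_2 \leq \lrn{x}_2 + \lrn{y-x}_2$ and moving terms around yields an inequality of the form $(1 - \delta L - \sqrt{2\delta L})\lrn{y-x}_2 \leq (\delta L + \sqrt{2\delta L})(\lrn{x}_2 + 1)$. The stepsize hypothesis $\delta \leq 1/(32L)$ makes $\sqrt{2\delta L} \leq 1/4$ and $\delta L \leq 1/32$, so the coefficient on the left is bounded below by a constant and the right-hand side is dominated by $\sqrt{2\delta L}(\lrn{x}_2+1)$; tracking constants yields the claimed bound $2\delta^{1/2}L^{1/2}(\lrn{x}_2+1)$.

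For part (3), I would rewrite $y - x + \sqrt{2\delta}\, T_\eta(x) = \delta \nabla U(y) - \sqrt{2\delta}(T_\eta(y) - T_\eta(x))$. The first term is bounded by $\delta L \lrn{y}_2$ (Assumption \ref{ass:uissmooth}.2), and Lemma \ref{l:allthediscretizationbounds}.1 gives $\lrn{T_\eta(y)-T_\eta(x)}_2 \leq \sqrt{L}\lrn{y-x}_2$. Plugging in part (1) to bound $\lrn{y-x}_2$ and using $\lrn{y}_2 \leq \lrn{x}_2 + \lrn{y-x}_2 \lesssim \lrn{x}_2 + 1$ produces the desired $O(\delta L)(\lrn{x}_2^2+1)$ estimate.

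Part (2) is the heart of the lemma and the main technical obstacle, because it requires carefully controlling second-order residuals from two simultaneous expansions. I would write the discrepancy as $\delta(\nabla U(y) - \nabla U(x)) - \sqrt{2\delta}(T_\eta(y) - T_\eta(x)) - 2\delta G_\eta(x)T_\eta(x)$, then Taylor-expand $T_\eta(y) - T_\eta(x) = G_\eta(x)(y-x) + R_T$ using Lemma \ref{l:allthediscretizationbounds}.3 (so that $\lrn{R_T}_2 \leq \sqrt{L}\lrn{y-x}_2^2$), and further substitute $y - x = -\sqrt{2\delta}\, T_\eta(x) + R_3$ from part (3) (so $\lrn{R_3}_2 \leq 16\delta L(\lrn{x}_2^2+1)$) into the linear piece $\sqrt{2\delta}\, G_\eta(x)(y-x)$. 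The key cancellation is $\sqrt{2\delta}\, G_\eta(x) \cdot (-\sqrt{2\delta}\, T_\eta(x)) = -2\delta G_\eta(x) T_\eta(x)$, which exactly cancels the target correction term. The remaining residuals are: the $\sqrt{2\delta}\, G_\eta(x) R_3$ term, controlled using $\lrn{G_\eta(x)}_2 \leq \sqrt{L}$ (Assumption \ref{ass:gisregular}.3); the $\sqrt{2\delta}\, R_T$ term, controlled using part (1) to bound $\lrn{y-x}_2^2$; and $\delta(\nabla U(y) - \nabla U(x))$, controlled by $L$-smoothness of $\nabla U$ and part (1). Each such residual is at most $O(\delta^{3/2}L^{3/2}(\lrn{x}_2^2+1))$, and summing with the stepsize restriction $\delta \leq 1/(32L)$ to absorb lower-order corrections gives the stated constant $16$. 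The main difficulty is purely the bookkeeping of these residuals and tracking that no term picks up an extra factor of $\lrn{x}_2$ beyond the quadratic budget.
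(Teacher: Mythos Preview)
Your proposal is correct and uses essentially the same ingredients as the paper's proof: the self-referential inequality for part~(1), and for parts~(2)--(3) the Taylor expansion of $T_\eta$ via Lemma~\ref{l:allthediscretizationbounds}.3 together with the key cancellation $\sqrt{2\delta}\,G_\eta(x)\cdot\sqrt{2\delta}\,T_\eta(x)=2\delta\,G_\eta(x)T_\eta(x)$. The only organizational difference is the order: you prove (3) directly and then feed it into (2), whereas the paper first establishes an intermediate bound $\lrn{T_\eta(y)-T_\eta(x)+\sqrt{2\delta}G_\eta(x)T_\eta(x)}_2\le 8\delta L^{3/2}(\lrn{x}_2^2+1)$ (by substituting the exact identity $y-x=\delta\nabla U(y)-\sqrt{2\delta}T_\eta(y)$ into $G_\eta(x)(y-x)$ rather than your part-(3) approximation), uses that to obtain (2), and then derives (3) from (2) by adding back $\delta\nabla U(x)+2\delta G_\eta(x)T_\eta(x)$. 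Both orderings are valid and yield the same estimates up to constants.
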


\begin{proof}[Proof of Lemma \ref{l:onestepdiscretizationbounds}]
$ $\\
\begin{enumerate}
\item \begin{align*}
\|y-x\|_2 
=& \lrn{\delta \nabla U(y) + \sqrt{2\delta} T_\eta(y)}_2\\
\leq& \lrn{\delta \nabla U(x) + \sqrt{2\delta} T_\eta(x)}_2 + \delta \lrn{\nabla U(y) - \nabla U(x)}_2 + \sqrt{2\delta}\lrn{T_\eta(y) - T_\eta(x)}_2\\
\leq& \lrn{\delta \nabla U(x) + \sqrt{2\delta} T_\eta(x)}_2 + \delta L \lrn{y-x}_2 + \sqrt{2\delta L}\lrp{\lrn{y-x}_2+1},
\end{align*}
where the first inequality is by the triangle inequality, the second inequality is by Assumptions~\ref{ass:uissmooth}.2 and~\ref{ass:gisregular}.3.
Moving terms around,
\begin{align*}
(1-\delta L - \sqrt{2\delta L}) \|y-x\|_2 
\leq& \lrn{\delta \nabla U(x) + \sqrt{2\delta} T_\eta(x)}_2\\
\leq& \delta L \|x\|_2 + \sqrt{2\delta L}(\|x\|_2 +1)\\
\Rightarrow \qquad\qquad\qquad\qquad \|y-x\|_2 \leq& 2 \lrp{\delta L + \sqrt{2 \delta L}}\lrp{\|x\|_2 + 1}\\
\leq& \delta^{1/2} {L}^{1/2}\lrp{\|x\|_2 + 1},
\end{align*}
where the second inequality is by Assumptions~\ref{ass:uissmooth}.1, \ref{ass:uissmooth}.2 and~\ref{ass:gisregular}.2, and the third inequality is by our assumption that $\delta\leq 1/(32L)$.

\item 
We first bound the expression $T_\eta(y) - T_\eta(x) + \sqrt{2\delta} G_\eta(x) T_\eta(x)$.

Plugging in $x = F_\eta(y) := y - \delta \nabla U(y) + \sqrt{2\delta} T_\eta(y)$, we get
\begin{align*}
& \lrn{T_\eta(y) - T_\eta(x) - G_\eta(x) (y-x)}_2\\
=& \lrn{T_\eta(y) - T_\eta(x) - G_\eta(x) \lrp{\delta \nabla U(y) - \sqrt{2\delta} T_\eta(y)}}_2\\
\geq& \lrn{T_\eta(y) - T_\eta(x) - G_\eta(x) \lrp{ - \sqrt{2\delta} T_\eta(x)}}_2\\
&\quad - \lrn{G_\eta(x) \lrp{\delta \nabla U(x) - \delta \nabla U(y)}}_2\\
&\quad - \lrn{G_\eta(x) \lrp{\sqrt{2\delta} T_\eta(x) -\sqrt{2\delta} T_\eta(y)}}_2\\
&\quad - \lrn{G_\eta(x) \delta \nabla U(x)}_2\\
\geq& \lrn{T_\eta(y) - T_\eta(x) - G_\eta(x) \lrp{ - \sqrt{2\delta} T_\eta(x)}}_2\\
&\quad - \delta L^{3/2} \lrn{x-y}_2 - \sqrt{2\delta}L \lrn{x-y}_2 - \delta L^{3/2}\|x\|_2,
\end{align*}
where the first inequality is by triangle inequality, and the second inequality is by Assumptions~\ref{ass:uissmooth} and~\ref{ass:gisregular} and Lemma~\ref{l:allthediscretizationbounds}.
Moving terms around, we get
\begin{align*}
& \lrn{T_\eta(y) - T_\eta(x) + \sqrt{2\delta}G_\eta(x) T_\eta(x)}_2\\
\leq& \lrn{T_\eta(y) - T_\eta(x) - G_\eta(y-x)}_2 + \delta L^{3/2} \lrn{x-y}_2 + \sqrt{2\delta}L\lrn{x-y}_2 + \delta L^{3/2}\|x\|_2\\
\leq& L^{1/2} \lrn{x-y}_2^2 + \delta L^{3/2} \lrn{x-y}_2 + \sqrt{2\delta}L\lrn{x-y}_2 + \delta L^{3/2}\|x\|_2\\
\leq& 8\delta L^{3/2} \lrp{\|x\|_2^2 + 1},
\numberthis \label{e:gl:1}
\end{align*}
where the second inequality is by Lemma~\ref{l:allthediscretizationbounds}.3, Lemma~\ref{l:onestepdiscretizationbounds}.1, and Young's Inequality, and the third inequality
is by our assumption that $\delta \leq 1/(32L)$.
Finally, by definition of $F_\eta(x)$, 
\begin{align*}
& x = y - \delta \nabla U(y)  + \sqrt{2\delta} T_\eta(y)\\
\Rightarrow \quad &y = x + \delta \nabla U(y) - \sqrt{2\delta} T_\eta(y)\\
\Rightarrow \quad & \lrn{y - x - \lrp{-\sqrt{2\delta} T_\eta(x) + \delta \nabla U(x)  + {2\delta} G_\eta(x)T_\eta(x)}}_2\\
=& \lrn{\delta \nabla U(y) - \sqrt{2\delta} T_\eta(y) - \lrp{-\sqrt{2\delta} T_\eta(x) + \delta \nabla U(x) + {2\delta} G_\eta(x)T_\eta(x)}}_2\\
=& \lrn{\delta \lrp{\nabla U(y) - \nabla U(x)} + \sqrt{2\delta} \lrp{T_\eta(x) - T_\eta(y) - \sqrt{2\delta} G_\eta(x)T_\eta(x)}}_2\\
\leq& \delta \lrn{\nabla U(x) - \nabla U(y)}_2 + \sqrt{2\delta} \lrn{T_\eta(y) - T_\eta(x) + \sqrt{2\delta} G_\eta(x)T_\eta(x)}_2\\
\leq& \delta L \lrn{x-y}_2 + 8\sqrt{2} \delta^{3/2}L^{3/2} \lrp{\|x\|_2^2+1}\\
\leq& 2 \delta^{3/2} L^{3/2} \lrp{\|x\|_2 + 1} + 8\sqrt{2} \delta^{3/2} L^{3/2} \lrp{\|x\|_2^2+1}\\
\leq& 16 \delta^{3/2} L^{3/2} \lrp{\|x\|_2^2 + 1},
\end{align*}
where the first inequality is by triangle inequality, the second inequality is by Assumptions~\ref{ass:uissmooth}.2 and~\eqref{e:gl:1}, and
the third inequality is by Lemma~\ref{l:onestepdiscretizationbounds}.1.

\item 
\begin{align*}
& \lrn{y - x - \lrp{-\sqrt{2\delta} T_\eta(x)}}_2\\
\leq& \lrn{T_\eta(y) - T_\eta(x) + \sqrt{2\delta}G_\eta(x) T_\eta(x)}_2 + \lrn{\delta \nabla U(x)  + {2\delta} G_\eta(x) T_\eta(x)}_2\\
\leq& 16 \delta^{3/2} L^{3/2} \lrp{\|x\|_2^2 + 1} + \delta L \|x\|_2 + 2\delta L \lrp{\|x\|_2 + 1}\\
\leq& 16 \delta^{3/2} L^{3/2} \lrp{\|x\|_2^2 + 1} + \delta L \|x\|_2 + 2\delta L (\|x\|_2 + 1)\\
\leq& 16 \delta L \lrp{\|x\|_2^2 + 1},
\end{align*}
where the first inequality is by the triangle inequality, and the second inequality is by Lemma~\ref{l:onestepdiscretizationbounds}.2 and Assumptions~\ref{ass:uissmooth} and~\ref{ass:gisregular}. The last inequality is by our assumption that $\delta\leq 1/(32 L)$.
\end{enumerate}
\end{proof}

\begin{lemma}\label{l:auxfordeterminant}
For any $\delta \leq \frac{1}{32 L}$, for any $x,y$ such that $x = F_\eta(y)$ and for $\eta$ a.s.
\begin{align*}
&1.\ \lrabs{\tr\lrp{G_\eta(x)} - \tr\lrp{G_\eta(y)} - \sqrt{2\delta} \tr\lrp{\lint{M_\eta(x), T_\eta(x)}}}\leq 8 \delta d L^{3/2}\lrp{\|x\|_2^2 + 1},\\
&2.\ \lrabs{\tr\lrp{\nabla^2 U(y)} - \tr\lrp{\nabla^2 U(x)}} \leq 2\delta^{1/2} d L^{3/2}\lrp{\|x\|_2 + 1},\\
&3.\ \lrabs{\tr\lrp{G_\eta(y)}^2 - \tr\lrp{G_\eta(x)}^2}\leq 4 \delta^{1/2} d^2 L^{3/2}\lrp{\|x\|_2 + 1},\\
&4.\ \lrabs{\tr\lrp{G_\eta(y)^2 - G_\eta(x)^2}} \leq 4 \delta^{1/2} d L^{3/2} \lrp{\|x\|_2 + 1}.
\end{align*}
\end{lemma}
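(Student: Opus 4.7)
My plan is to prove each of the four parts using the trace-to-operator-norm inequality $\lrabs{\tr(A)} \leq d\lrn{A}_2$ from Lemma \ref{l:traceupperbound}, combined with the smoothness / Taylor-expansion tools already established, namely Lemmas \ref{l:allthediscretizationbounds} and \ref{l:onestepdiscretizationbounds} and Assumptions \ref{ass:uissmooth} and \ref{ass:gisregular}.

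Parts 2, 3, and 4 are routine first-order estimates. For Part 2, I chain $\lrabs{\tr\lrp{\nabla^2 U(y) - \nabla^2 U(x)}} \leq d \lrn{\nabla^2 U(y) - \nabla^2 U(x)}_2 \leq dL\lrn{y-x}_2$ using Assumption \ref{ass:uissmooth}.4, then apply Lemma \ref{l:onestepdiscretizationbounds}.1 to bound $\lrn{y-x}_2 \leq 2\delta^{1/2}L^{1/2}(\|x\|_2+1)$. For Part 3, I factor $a^2 - b^2 = (a-b)(a+b)$, bound $\lrabs{\tr G_\eta(y) + \tr G_\eta(x)} \leq 2d\sqrt{L}$ via Assumption \ref{ass:gisregular}.3 and the trace inequality, and bound $\lrabs{\tr G_\eta(y) - \tr G_\eta(x)} \leq d L^{1/2}\lrn{y-x}_2$ via Lemma \ref{l:allthediscretizationbounds}.2. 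For Part 4, I write $G_\eta(y)^2 - G_\eta(x)^2 = G_\eta(y)(G_\eta(y) - G_\eta(x)) + (G_\eta(y) - G_\eta(x))G_\eta(x)$, apply the operator-norm triangle inequality along with submultiplicativity, and use the same Lipschitz and boundedness facts. In each case the one-step bound $\lrn{y-x}_2 = O(\delta^{1/2}L^{1/2}(\|x\|_2+1))$ immediately yields the stated rate.

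Part 1 is the main obstacle, because the naive estimate $\lrabs{\tr G_\eta(y) - \tr G_\eta(x)} = O(\delta^{1/2})$ is too weak; I must extract the leading-order discretization term $\sqrt{2\delta}\,\tr\lrp{\lint{M_\eta(x), T_\eta(x)}}$ explicitly as a correction. The key tool is the second-order Taylor remainder bound from Lemma \ref{l:allthediscretizationbounds}.4,
\begin{align*}
\lrn{G_\eta(y) - G_\eta(x) - \lint{M_\eta(x), y-x}}_2 \leq L^{1/2}\lrn{y-x}_2^2,
\end{align*}
combined with the refined one-step identity from Lemma \ref{l:onestepdiscretizationbounds}.3, which gives $y - x = -\sqrt{2\delta}T_\eta(x) + \epsilon$ with $\lrn{\epsilon}_2 \leq 16\delta L(\|x\|_2^2 + 1)$. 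Substituting yields
\begin{align*}
G_\eta(x) - G_\eta(y) - \sqrt{2\delta}\lint{M_\eta(x), T_\eta(x)} = -\lint{M_\eta(x), \epsilon} - R,
\end{align*}
where $\lrn{R}_2 \leq L^{1/2}\lrn{y-x}_2^2$. Taking traces, applying $\lrabs{\tr(\cdot)} \leq d\lrn{\cdot}_2$ together with $\lrn{M_\eta(x)}_2 \leq \sqrt{L}$ from Assumption \ref{ass:gisregular}.4, and bounding $\lrn{y-x}_2^2$ via Lemma \ref{l:onestepdiscretizationbounds}.1 gives an overall bound of order $\delta d L^{3/2}(\|x\|_2^2 + 1)$.

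The only remaining subtlety is the constant bookkeeping: I will use $(\|x\|_2 + 1)^2 \leq 2(\|x\|_2^2 + 1)$ freely, and rely on $\delta \leq 1/(32L)$ to absorb the genuinely higher-order $\delta^{3/2}$ contributions into the leading $\delta$ term so that the prefactors $8d$, $2d$, $4d^2$, and $4d$ stated in the lemma are achieved.
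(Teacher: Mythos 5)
Your proposal is correct and follows essentially the same route as the paper's proof: Parts 2--4 use the identical trace-to-operator-norm reduction, Lipschitz bounds from Assumptions~\ref{ass:uissmooth} and~\ref{ass:gisregular}, and Lemma~\ref{l:onestepdiscretizationbounds}.1; and Part 1 uses the same second-order Taylor remainder from Lemma~\ref{l:allthediscretizationbounds}.4. The one small departure is that you invoke the packaged remainder bound from Lemma~\ref{l:onestepdiscretizationbounds}.3 to replace $y-x$ by $-\sqrt{2\delta}T_\eta(x)$, whereas the paper re-expands $y-x = \delta\nabla U(y) - \sqrt{2\delta}T_\eta(y)$ by hand and bounds the three replacement errors separately; since $16\delta L^{3/2}(\|x\|_2^2+1)$ from that lemma already exceeds the stated leading constant, your route yields a prefactor closer to $24$ than $8$ in Part 1 (and this loss is not a $\delta^{3/2}$ term, so smallness of $\delta$ does not rescue it), but as the downstream bounds only need the $O(\delta d L^{3/2}(\|x\|_2^2+1))$ order this is cosmetic.
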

\begin{proof}[Proof of Lemma \ref{l:auxfordeterminant}]
$ $\\
\begin{enumerate}

\item
By our definition of $x$ and $y$, 
\begin{align*}
&\lrn{G_\eta(y) - G_\eta(x) - \lint{M_\eta(x), y-x}}_2\\
=&\lrn{G_\eta(y) - G_\eta(x) - \lint{M_\eta(x), \delta \nabla U(y) - \sqrt{2\delta} T_\eta(y)}}_2\\
\geq& \lrn{G_\eta(y) - G_\eta(x) - \lint{M_\eta(x), - \sqrt{2\delta} T_\eta(x)}}_2 \\
&\quad - \lrn{\lint{M_\eta(x), \delta \nabla U(x)-\delta \nabla U(y)} }_2 \\
&\quad - \lrn{\lint{M_\eta(x), \sqrt{2\delta} T_\eta(x) - \sqrt{2\delta} T_\eta(y)}}_2\\
&\quad - \lrn{\lint{M_\eta(x), \delta \nabla U(x)}}_2\\
\geq& \lrn{G_\eta(y) - G_\eta(x) - \lint{M_\eta(x), - \sqrt{2\delta} T_\eta(x)}}_2 \\
&\quad - \delta L^{3/2} \lrn{x-y}_2 -  \sqrt{2\delta}L \lrn{x-y}_2 - \delta L^{3/2} \lrn{x}_2,
\end{align*}
where the first equality is by definition of $x$ and $y$, the first inequality is by the triangle ienquality, and the second inequality is by
Assumptions~\ref{ass:gisregular}.4, \ref{ass:gisregular}.3, and~\ref{ass:uissmooth}.2 and Lemma~\ref{l:allthediscretizationbounds}.1.
Moving terms around, we get
\begin{align*}
&\lrn{G_\eta(y) - G_\eta(x) + \lint{M_\eta(x), \sqrt{2\delta} T_\eta(x)}}_2\\
\leq& \lrn{G_\eta(y) - G_\eta(x) - \lint{M_\eta(x), y-x}}_2 + \delta L^{3/2} \lrn{x-y}_2 + \sqrt{2\delta}L \lrn{x-y}_2 + \delta L^{3/2} \lrn{x}_2\\
\leq& L^{1/2} \lrn{x-y}_2^2 + \delta L^{3/2} \lrn{x-y}_2 + \sqrt{2\delta}L \lrn{x-y}_2 + \delta L^{3/2} \lrn{x}_2\\
\leq& 8 \delta L^{3/2} \lrp{\|x\|_2^2 + 1},
\end{align*}
where the second inequality is by Lemma~\ref{l:allthediscretizationbounds}.4, and the third inequality is by Lemma~\ref{l:onestepdiscretizationbounds}.1 and our assumption that $\delta \leq 1/(32 L)$.
Finally, using the inequality $\tr{A} \leq d \lrn{A}_2$ from Lemma~\ref{l:traceupperbound}, we get
\begin{align*}
& \lrabs{\tr\lrp{G_\eta(y) - G_\eta(x) + \lint{M_\eta(x), \sqrt{2\delta} T_\eta(x)}}}\\
\leq & d\lrn{G_\eta(y) - G_\eta(x) + \lint{M_\eta(x), \sqrt{2\delta} T_\eta(x)}}_2\\
\leq& 8 \delta d L^{3/2}\lrp{\|x\|_2^2 + 1}.
\end{align*}

\item 
\begin{align*}
& \lrabs{\tr\lrp{\nabla^2 U(y)} - \tr\lrp{\nabla^2 U(x)}}\\
=& \lrabs{\tr\lrp{\nabla^2 U(y) - \nabla^2 U(x)}}\\
\leq& d\lrn{\nabla^2 U(y) - \nabla^2 U(x)}_2\\
\leq& d L \lrn{x-y}_2\\
\leq& 2 \delta^{1/2}d L^{3/2} \lrp{\|x\|_2 + 1},
\end{align*}
where the first inequality is by Lemma~\ref{l:traceupperbound}, the second inequality is by Assumption \ref{ass:uissmooth}.4, the third inequality is by Lemma~\ref{l:onestepdiscretizationbounds}.1.

\item
\begin{align*}
& \lrabs{\tr\lrp{G_\eta(y)}^2 - \tr\lrp{G_\eta(x)}^2}\\
=& \lrabs{\tr\lrp{G_\eta(y) - G_\eta(x)}\tr\lrp{G_\eta(y) + G_\eta(x)}}\\
\leq& d^2 \lrn{G_\eta(y) + G_\eta(x)}_2\lrn{G_\eta(y) - G_\eta(x)}_2\\
\leq& 2 d^2 L \lrn{x-y}_2\\
\leq& 4 \delta^{1/2} d^2 L^{3/2}\lrp{\|x\|_2 + 1},
\end{align*}
where the first inequality is by Lemma~\ref{l:traceupperbound}, the second inequality is by Assumptions \ref{ass:gisregular}.3 and \ref{ass:gisregular}.4, the last
inequality is by Lemma~\ref{l:onestepdiscretizationbounds}.1.

\item
\begin{align*}
& \lrabs{\tr\lrp{G_\eta(y)^2 - G_\eta(x)^2}}\\
=& \lrabs{\tr\lrp{G_\eta(y)^2 + G_\eta(y) G_\eta(x) - G_\eta(y)G_\eta(x)- G_\eta(x)^2}}\\
=& \lrabs{\tr\lrp{G_\eta(y)^2 + G_\eta(y) G_\eta(x) - G_\eta(x)G_\eta(y)- G_\eta(x)^2}}\\
=& \lrabs{\tr\lrp{\lrp{G_\eta(y) - G_\eta(x)}\lrp{G_\eta(y) + G_\eta(x)}}}\\
\leq& d\lrn{\lrp{G_\eta(y) + G_\eta(x)} \lrp{G_\eta(y) - G_\eta(x)}}_2\\
\leq& d\lrn{G_\eta(y) + G_\eta(x)}_2\lrn{G_\eta(y) - G_\eta(x)}_2\\
\leq& 2 d L \lrn{x-y}_2\\
\leq& 4 \delta^{1/2} d L^{3/2} \lrp{\|x\|_2 + 1},
\end{align*}
where the second inequality is because $\tr\lrp{AB} = \tr\lrp{BA}$, the first inequality is by Lemma~\ref{l:traceupperbound}, the second inequality is by Cauchy Schwarz,
the third inequality is by Assumption \ref{ass:gisregular}.4, the fourth inequality is by Lemma~\ref{l:onestepdiscretizationbounds}.1.
\end{enumerate}
\end{proof}

\begin{lemma}\label{l:determinantexpansioninverse}
For any $\delta \leq \frac{1}{2^8 d^2 L}$, for any $x$, and for $\eta$ a.s.,
\begin{align*}
& \left|\det\lrp{I - \lrp{{\delta} \nabla^2 U(x) - \sqrt{2\delta} G_\eta(x)} }^{-1} \right. \\
&\quad \left.-\lrp{ 1 - \sqrt{2\delta} \tr\lrp{G_\eta(x)} + \delta \tr\lrp{\nabla^2 U(x)}  + \delta \tr\lrp{G_\eta(x)}^2 + \delta \tr\lrp{\lrp{G_\eta(x)}^2}}\right|\\
\leq& 90 \delta^{3/2} d^3 L^{3/2}.
\end{align*}
\end{lemma}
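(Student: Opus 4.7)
Define $B := \sqrt{2\delta}\, G_\eta(x) - \delta \nabla^2 U(x)$, so the quantity to expand is $\det(I+B)^{-1}$. By Assumptions \ref{ass:uissmooth}.2 and \ref{ass:gisregular}.3, $\lrn{B}_2 \leq \sqrt{2\delta L} + \delta L$, which under the hypothesis $\delta \leq 1/(2^8 d^2 L)$ is $O(\sqrt{\delta L})$, so $B$ is a valid small perturbation of $0$. The plan is to mimic Lemma \ref{l:s_determinantexpansioninverse}: first apply the matrix-determinant Taylor expansion (Lemma \ref{l:determinanttaylor}) to second order, and then invert using the scalar Taylor bound $\lrabs{(1+a)^{-1} - (1 - a + a^2)} \leq |2a|^3$ for $|a| \leq 1/2$, which is exactly \eqref{e:s_re:1}.

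\textbf{Main computation.} The matrix Taylor expansion yields
\begin{align*}
\det(I+B) = 1 + \tr(B) + \tfrac{1}{2}\lrp{\tr(B)^2 - \tr(B^2)} + \Delta_{\det},
\end{align*}
with $\lrabs{\Delta_{\det}} \lesssim \lrn{B}_2^3 d^3 = O(\delta^{3/2} d^3 L^{3/2})$. Since $\tr(B) = \sqrt{2\delta}\tr(G_\eta(x)) - \delta \tr(\nabla^2 U(x))$, and the cross-scale and $\delta^2$ contributions to $\tr(B)^2$ and $\tr(B^2)$ (such as $-2\sqrt{2}\delta^{3/2}\tr(G_\eta(x))\tr(\nabla^2 U(x))$ and $-2\sqrt{2}\delta^{3/2}\tr(G_\eta(x)\nabla^2 U(x))$) are all $O(\delta^{3/2} d^2 L^{3/2})$ by Lemma \ref{l:traceupperbound} together with the operator-norm bounds on $G_\eta$ and $\nabla^2 U$, this collapses to
\begin{align*}
\det(I+B) = 1 + \sqrt{2\delta}\tr(G_\eta(x)) - \delta \tr(\nabla^2 U(x)) + \delta\lrp{\tr(G_\eta(x))^2 - \tr(G_\eta(x)^2)} + \tilde{\Delta},
\end{align*}
with $\lrabs{\tilde{\Delta}} = O(\delta^{3/2} d^3 L^{3/2})$. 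Writing this as $1 + a$, our hypothesis on $\delta$ gives $|a| \leq 1/2$; then $(1+a)^{-1} = 1 - a + a^2 + O(|a|^3)$ produces an extra piece $a^2 = 2\delta \tr(G_\eta(x))^2 + O(\delta^{3/2})$. Adding $-a$ flips the signs of the explicit terms, and the crucial cancellation is that $-\delta \tr(G_\eta(x))^2$ coming from $-a$ combines with $+2\delta \tr(G_\eta(x))^2$ from $a^2$ to yield $+\delta \tr(G_\eta(x))^2$. The net result matches the claimed expansion exactly, with total error $O(\delta^{3/2} d^3 L^{3/2})$ after absorbing the cubic scalar remainder $|2a|^3$.

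\textbf{Main obstacle.} The proof is conceptually identical to Lemma \ref{l:s_determinantexpansioninverse}; the only nuisance is bookkeeping the new order-$\delta^{3/2}$ cross terms introduced because $B$ now has two different scales ($\sqrt{\delta}$ and $\delta$). Each such term must be checked against the universal bound $90\,\delta^{3/2} d^3 L^{3/2}$ using $|\tr(A)| \leq d\lrn{A}_2$, and the final constant $90$ is simply an accounting of how many pieces are summed. The converse lower bound (analogous to the ``conversely, one can show that\ldots'' step in Lemma \ref{l:s_determinantexpansioninverse}) follows by the same argument with the scalar Taylor inequality reversed.
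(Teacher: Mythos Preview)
Your proposal is correct and follows essentially the same route as the paper's proof: both apply Lemma~\ref{l:determinanttaylor} to the symmetric perturbation, invert via the scalar Taylor estimate \eqref{e:s_re:1}/\eqref{e:re:1}, and then absorb the $\delta^{3/2}$ cross terms using $|\tr(A)|\le d\lrn{A}_2$. The only cosmetic difference is that the paper factors the perturbation as $\epsilon A$ with $\epsilon=\sqrt{\delta}$ and $A=-\sqrt{\delta}\,\nabla^2 U(x)+\sqrt{2}\,G_\eta(x)$, deriving the general bound \eqref{e:re:2} first and then specializing, whereas you work directly with $B$; the algebra and the key cancellation you identify are identical.
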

\begin{proof}[Proof of Lemma~\ref{l:determinantexpansioninverse}]
First, let's consider an arbitrary symmetric matrix $A\in \Re^{2d}$, let $c$ be a constant such that $\|A\|_2 \leq c$ and let $\epsilon$ be a constant satisfying $\epsilon \leq 1/(2cd)$.

By Lemma~\ref{l:determinanttaylor}, we have
\begin{align*}
\det \lrp{I + \epsilon A} = 1 + \epsilon \tr\lrp{A} + \frac{\epsilon^2}{2} \lrp{\tr\lrp{A}^2-\tr\lrp{A^2}} + \Delta
\end{align*}
for some $|\Delta|\leq \epsilon^3 c^3 d^3$.

On the other hand, using Taylor expansion of $1/(1+x)$ about $x=0$, we can verify that for any $a\in [-1/2,1/2]$
\begin{align*}
\lrabs{(1+a)^{-1} -\lrp{1-a + a^2}} \leq  |2a|^3.
\numberthis \label{e:re:1}
\end{align*}
By our assumption on $\epsilon$, we have $\epsilon \tr\lrp{A} + \frac{\epsilon^2}{2} \lrp{\tr\lrp{A}^2-\tr\lrp{A^2}} + \Delta \in [-1/2,1/2]$, therefore
\begin{align*}
& \lrp{\det \lrp{I + \epsilon A}}^{-1}\\
=& \lrp{1 + \epsilon \tr\lrp{A} + \epsilon^2/2 \lrp{\tr\lrp{A}^2-\tr\lrp{A^2}} + \Delta}^{-1} \\
\leq& 1 - \epsilon \tr(A) - \epsilon^2/2 \lrp{\tr\lrp{A}^2-\tr\lrp{A^2}} - \Delta  \\
&\quad +\lrp{\epsilon \tr\lrp{A}+ \epsilon^2/2 \lrp{\tr\lrp{A}^2-\tr\lrp{A^2}} + \Delta}^2\\
&\quad + 2 \lrp{\epsilon \tr\lrp{A}+ \epsilon^2/2 \lrp{\tr\lrp{A}^2-\tr\lrp{A^2}} + \Delta}^3\\
=& 1 - \epsilon \tr(A) - \epsilon^2/2 \lrp{\tr\lrp{A}^2-\tr\lrp{A^2}} + \epsilon^2 \tr\lrp{A}^2\\
&\quad + \lrp{\epsilon^2/2 \lrp{\tr\lrp{A}^2-\tr\lrp{A^2}} + \Delta}\lrp{\epsilon \tr\lrp{A}+ \epsilon^2/2 \lrp{\tr\lrp{A}^2-\tr\lrp{A^2}} + \Delta}\\
&\quad + 2 \lrp{\epsilon \tr\lrp{A}+ \epsilon^2/2 \lrp{\tr\lrp{A}^2-\tr\lrp{A^2}} + \Delta}^3\\
\leq& 1 - \epsilon \tr(A) - \epsilon^2/2 \lrp{\tr\lrp{A}^2-\tr\lrp{A^2}} + \epsilon^2 \tr\lrp{A}^2 + 10 \lrp{\epsilon c d}^3 \\
=& 1 - \epsilon \tr(A) + \epsilon^2/2 \lrp{\tr\lrp{A}^2+\tr\lrp{A^2}} + 10 \lrp{\epsilon c d}^3,
\end{align*}
where the first inequality is by \eqref{e:re:1}, the first inequality is by moving terms around, the second inequality is by our assumption that $\|A\|_2 \leq c$ and the
fact that $\lrabs{\Delta}\leq \epsilon^3c^3d^3$ and by Lemma~\ref{l:traceupperbound}, and the last two lines are by collecting terms.
Conversely, one can show that
\begin{align*}
& \lrp{1 + \epsilon \tr\lrp{A} + \epsilon^2/2 \lrp{\tr\lrp{A}^2-\tr\lrp{A^2}} + \Delta}^{-1} \\
\geq& 1 - \epsilon \tr(A) + \epsilon^2/2 \lrp{\tr\lrp{A}^2+\tr\lrp{A^2}} - 10 \lrp{\epsilon c d}^3.
\end{align*}
The proof is similar and is omitted.

Therefore
\begin{align*}
\lrabs{\det\lrp{I + \epsilon A}^{-1} -\lrp{ 1 - \epsilon \tr(A) + \epsilon^2/2 \lrp{\tr\lrp{A}^2+\tr\lrp{A^2}}}}\leq  10 \lrp{\epsilon c d}^3. \numberthis \label{e:re:2}
\end{align*}

Now, we consider the case that $A:= -\sqrt{\delta} \nabla^2 U(x) + \sqrt{2}G_\eta(x)$, $\epsilon := \sqrt{\delta}$ and $c:= 2 L^{1/2}$. Recall our assumption that $\delta \leq \frac{1}{2^8 d^2 L }$. Combined with Assumption \ref{ass:uissmooth}.2 and \ref{ass:gisregular}.3, we get
\begin{align*}
&1.\ \lrn{A}_2 \leq c,\\
&2.\ \epsilon = \sqrt{\delta} \leq {1}/\lrp{2^4 d L^{1/2}}\leq 1/(2cd).
\end{align*}
Using \eqref{e:re:2},
\begin{align*}
& \det \lrp{I - \sqrt{\delta} \lrp{\sqrt{\delta} \nabla^2 U(x) - \sqrt{2} G_\eta(x)} }^{-1}\\
=:& \det\lrp{I+\epsilon A}^{-1}\\
\leq& 1 - \epsilon \tr(A) + \epsilon^2/2 \lrp{\tr\lrp{A}^2+\tr\lrp{A^2}} + 10 \lrp{\epsilon c d}^3\\
=& 1 + \sqrt{\delta} \tr\lrp{\sqrt{\delta}\nabla^2 U(x) - \sqrt{2} \tr\lrp{G_\eta(x)}}\\
&\quad + \delta/2 \lrp{\tr\lrp{\sqrt{\delta}\nabla^2 U(x) - \sqrt{2} \lrp{G_\eta(x)}}^2 + \tr\lrp{\lrp{\sqrt{\delta}\nabla^2 U(x) - \sqrt{2} G_\eta(x)}^2}}\\
&\quad + 80 \delta^{3/2} d^3 L^{3/2}\\
=& 1 + \delta \tr\lrp{\nabla^2 U(x)} - \sqrt{2\delta} \tr\lrp{G_\eta(x)}\\
&\quad + \delta \tr\lrp{G_\eta(x)}^2 + \delta \tr\lrp{\lrp{G_\eta(x)}^2} \\
&\quad + \delta^2/2 \tr\lrp{\nabla^2 U(x)}^2 + 2\delta^{3/2} \tr\lrp{\nabla^2 U(x)} \tr\lrp{G_\eta(x)}\\
&\quad + \delta^2/2 \tr\lrp{\lrp{\nabla^2 U(x)}^2} + 2\delta^{3/2} \tr\lrp{\nabla^2 U(x)G_\eta(x)}\\
&\quad + 80 \delta^{3/2} d^3 L^{3/2}\\
\leq& 1 + \delta \tr\lrp{\nabla^2 U(x)} - \sqrt{2\delta} \tr\lrp{G_\eta(x)}\\
&\quad + \delta \tr\lrp{G_\eta(x)}^2 + \delta \tr\lrp{\lrp{G_\eta(x)}^2} \\
&\quad + \delta^{3/2} dL^{3/2} + 2\delta^{3/2} d^2 L^{3/2}\\
&\quad + \delta^{3/2} L^{3/2} + 2\delta^{3/2} dL^{3/2}\\
&\quad + 80 \delta^{3/2} d^3 L^{3/2}\\
\leq& 1 + \delta \tr\lrp{\nabla^2 U(x)} - \sqrt{2\delta} \tr\lrp{G_\eta(x)} + \delta \tr\lrp{G_\eta(x)}^2 + \delta \tr\lrp{\lrp{G_\eta(x)}^2} + 90 \delta^{3/2} d^3 L^{3/2},
\end{align*}
where the first inequality is by \eqref{e:re:2}, the second equality is by definition of $A$ and $\epsilon$, the third equality is by moving terms around, the second inequality is by Assumption \ref{ass:uissmooth}.2 and \ref{ass:gisregular}.3, the third inequality is again by moving terms around.

Conversely, one can show that
\begin{align*}
& \det\lrp{I - \sqrt{\delta} \lrp{\sqrt{\delta} \nabla^2 U(x) - \sqrt{2} G_\eta(x)} }^{-1}\\
\geq& 1 + \delta \tr\lrp{\nabla^2 U(x)} - \sqrt{2\delta} \tr\lrp{G_\eta(x)} + \delta \tr\lrp{G_\eta(x)}^2 + \delta \tr\lrp{\lrp{G_\eta(x)}^2} - 90 \delta^{3/2} d^3 L^{3/2}.
\end{align*}
The proof is similar and is omitted.
\end{proof}

\begin{lemma}\label{l:discretizeddeterminantexpansioninverse}
For any $\delta \leq \frac{1}{2^8 d^2 L}$, for any $x$, and for $\eta$ a.s., 
\begin{align*}
&\det\lrp{\nabla F_\eta(F_\eta^{-1} (x))}^{-1}\\
=& 1 - \sqrt{2\delta} \tr\lrp{G_\eta(x)} + 2\delta \tr\lrp{\lint{M_\eta(x),T_\eta(x)}} + \delta \tr\lrp{\nabla^2 U(x)} + \delta \tr\lrp{G_\eta(x)}^2 + \delta
\tr\lrp{\lrp{G_\eta(x)}^2} + \Delta
\end{align*}
for some $\lrabs{\Delta}\leq 128 \delta^{3/2} d^3 L^{3/2}\lrp{\|x\|_2^2 + 1}$.
\end{lemma}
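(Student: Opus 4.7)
The plan is to reduce the computation of $\det(\nabla F_\eta(F_\eta^{-1}(x)))^{-1}$ to the already-established expansion in Lemma~\ref{l:determinantexpansioninverse}, followed by a change of evaluation point from $y := F_\eta^{-1}(x)$ to $x$ using the four bounds in Lemma~\ref{l:auxfordeterminant}. First, I would differentiate $F_\eta(z) = z - \delta \nabla U(z) + \sqrt{2\delta} T_\eta(z)$ to obtain $\nabla F_\eta(y) = I - (\delta \nabla^2 U(y) - \sqrt{2\delta} G_\eta(y))$. Since $G_\eta(y)$ is symmetric by Assumption~\ref{ass:gisregular}.1 and $\nabla^2 U(y)$ is symmetric by construction, Lemma~\ref{l:determinantexpansioninverse} applies with $y$ in place of $x$, giving
\[
\det(\nabla F_\eta(y))^{-1} = 1 - \sqrt{2\delta}\tr(G_\eta(y)) + \delta\tr(\nabla^2 U(y)) + \delta\tr(G_\eta(y))^2 + \delta\tr(G_\eta(y)^2) + \Delta_0,
\]
with $|\Delta_0|\le 90\delta^{3/2}d^3 L^{3/2}$. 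Note that the step-size hypothesis $\delta \leq 1/(2^8 d^2 L)$ fits into both this lemma and into Lemma~\ref{l:auxfordeterminant} (whose threshold is $\delta\le 1/(32L)$).

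Next, I would translate each of the four trace quantities from $y$ back to $x$ using Lemma~\ref{l:auxfordeterminant}. The crucial one is the first trace: Lemma~\ref{l:auxfordeterminant}.1 tells us $\tr(G_\eta(y)) = \tr(G_\eta(x)) + \sqrt{2\delta}\tr(\lint{M_\eta(x), T_\eta(x)}) + \Delta_1$ with $|\Delta_1|\le 8\delta d L^{3/2}(\|x\|_2^2+1)$; multiplying by $-\sqrt{2\delta}$ generates precisely the $-\sqrt{2\delta}\tr(G_\eta(x))$ and $+2\delta\tr(\lint{M_\eta(x), T_\eta(x)})$ terms appearing in the statement, plus an error $-\sqrt{2\delta}\Delta_1$ of size at most $16\delta^{3/2} d L^{3/2}(\|x\|_2^2+1)$. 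The other three comparisons in Lemma~\ref{l:auxfordeterminant} similarly replace $\tr(\nabla^2 U(y))$, $\tr(G_\eta(y))^2$, and $\tr(G_\eta(y)^2)$ by their counterparts at $x$, each at the cost of an $O(\delta^{1/2} d^i L^{3/2}(\|x\|_2+1))$ discrepancy; once premultiplied by $\delta$ these all become $O(\delta^{3/2} d^2 L^{3/2}(\|x\|_2+1))$ corrections.

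Finally, I would collect $\Delta := \Delta_0 - \sqrt{2\delta}\Delta_1 + \delta\Delta_2 + \delta\Delta_3 + \delta\Delta_4$ and apply the triangle inequality, bounding each piece by the respective estimate above. Summing the five contributions and using $(\|x\|_2+1) \leq (\|x\|_2^2+1) + 1 \leq 2(\|x\|_2^2+1)$ to homogenize the $x$-dependence, one checks that the total is comfortably below $128\delta^{3/2}d^3 L^{3/2}(\|x\|_2^2+1)$. The converse inequality follows identically from the two-sided bounds of Lemmas~\ref{l:determinantexpansioninverse} and~\ref{l:auxfordeterminant}.

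There is no genuine analytic obstacle here; the work is entirely bookkeeping of constants and polynomial factors of $\|x\|_2$ and $d$. The only subtle point worth flagging is the asymmetric treatment of the $\sqrt{2\delta}\tr(G_\eta(\cdot))$ term: its first-order Taylor correction around $x$ produces a term of order $\delta$ (namely $2\delta\tr(\lint{M_\eta(x), T_\eta(x)})$) which must be kept in closed form rather than absorbed into $\Delta$, because $\Delta$ is only guaranteed to be of order $\delta^{3/2}$. All the other trace mismatches are already of order $\delta^{1/2}$ in magnitude, so after multiplying by their $\delta$ prefactor they land safely in the remainder.
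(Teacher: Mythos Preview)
Your approach is essentially identical to the paper's: compute the Jacobian at $y=F_\eta^{-1}(x)$, invoke Lemma~\ref{l:determinantexpansioninverse} there, then use the four parts of Lemma~\ref{l:auxfordeterminant} to transfer each trace from $y$ to $x$ and absorb the discrepancies into $\Delta$. One small slip: Lemma~\ref{l:auxfordeterminant}.1 actually gives $\tr(G_\eta(y)) = \tr(G_\eta(x)) - \sqrt{2\delta}\,\tr(\lint{M_\eta(x),T_\eta(x)}) + \Delta_1$ (minus, not plus), which is exactly what produces the $+2\delta\,\tr(\lint{M_\eta(x),T_\eta(x)})$ you correctly claim after multiplying by $-\sqrt{2\delta}$; your stated intermediate formula has the sign reversed, but your conclusion is right.
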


\begin{proof}[Proof of Lemma~\ref{l:discretizeddeterminantexpansioninverse}]
Consider the Jacobian matrix inside the determinant. By definition of $F_\eta$, we know that 
$$\nabla F_\eta\lrp{F_\eta^{-1}(x)} = I - \delta \nabla^2 U\lrp{F_\eta^{-1}(x)} + \sqrt{2\delta} G_\eta\lrp{F_\eta^{-1}(x)}.$$
Thus,
\begin{align*}
&\det\lrp{\nabla F_\eta(F_\eta^{-1}(x)}^{-1}\\
=& \det\lrp{I - \sqrt{\delta} \lrp{\sqrt{\delta} \nabla^2 U(F_\eta^{-1}(x)) - \sqrt{2} G_\eta(F_\eta^{-1}(x))} }^{-1} \\
\leq& 1 - \sqrt{2\delta} \tr\lrp{G_\eta(F_\eta^{-1}(x))}+ \delta \tr\lrp{\nabla^2 U(F_\eta^{-1}(x))}  + \delta \tr\lrp{G_\eta(F_\eta^{-1}(x))}^2 + \delta
\tr\lrp{\lrp{G_\eta(F_\eta^{-1}(x))}^2}\\*
& \quad {} + 80 \delta^{3/2} d^{3} L^{3/2}\\
\leq& 1 - \sqrt{2\delta} \tr\lrp{G_\eta(x)} + 2\delta \tr\lrp{\lint{M_\eta(x),T_\eta(x)}} + \delta \tr\lrp{\nabla^2 U(x)} + \delta \tr\lrp{G_\eta(x)}^2 + \delta \tr\lrp{\lrp{G_\eta(x)}^2}\\
&\quad + \sqrt{2\delta}\lrabs{\tr\lrp{G_\eta(x)} - \tr\lrp{G_\eta(F_\eta^{-1}(x))} - \sqrt{2\delta} \tr\lrp{\lint{M_\eta(x), T_\eta(x)}}}\\
&\quad + \delta \lrabs{\tr\lrp{\nabla^2 U(x)} - \tr\lrp{\nabla^2 U(F_\eta^{-1}(x)}}\\
&\quad + \delta \lrabs{\tr\lrp{G_\eta(x)}^2 - \tr\lrp{G_\eta(F_\eta^{-1}(x))}^2}\\
&\quad + \delta \lrabs{\tr\lrp{G_\eta(x)^2} - \tr\lrp{G_\eta(F_\eta^{-1}(x))^2}}\\
&\quad + 80 \delta^{3/2} d^{3} L^{3/2}\\
\leq& 1 - \sqrt{2\delta} \tr\lrp{G_\eta(x)} + 2\delta \tr\lrp{\lint{M_\eta(x),T_\eta(x)}} + \delta \tr\lrp{\nabla^2 U(x)} + \delta \tr\lrp{G_\eta(x)}^2 + \delta \tr\lrp{\lrp{G_\eta(x)}^2}\\
&\quad + 8 \delta^{3/2} d L^{3/2}\lrp{\|x\|_2^2 + 1}\\
&\quad + 2 \delta^{3/2} d L^{3/2} \lrp{\|x\|_2 + 1}\\
&\quad + 4\delta^{3/2} d^2 L^{3/2}\lrp{\|x\|_2 + 1}\\
&\quad + 4\delta^{3/2} d L^{3/2}\lrp{\|x\|_2 + 1}\\
&\quad + 90 \delta^{3/2} d^{3} L^{3/2}\\
\leq& 1 - \sqrt{2\delta} \tr\lrp{G_\eta(x)} + 2\delta \tr\lrp{\lint{M_\eta(x),T_\eta(x)}} + \delta \tr\lrp{\nabla^2 U(x)} + \delta \tr\lrp{G_\eta(x)}^2 + \delta \tr\lrp{\lrp{G_\eta(x)}^2}\\
&\quad + 128 \delta^{3/2} d^3 L^{3/2}\lrp{\|x\|_2^2 + 1},
\end{align*}
where the first inequality is by Lemma~\ref{l:determinantexpansioninverse}, the second inequality is by triangle inequality, the third inequality is by Lemma~\ref{l:auxfordeterminant}, the fourth inequality is by collecting terms.\\
Conversely, one can show that
\begin{align*}
&\det\lrp{\nabla F_\eta(F_\eta^{-1}(x)}^{-1}\\
\geq& 1 - \sqrt{2\delta} \tr\lrp{G_\eta(x)} + 2\delta \tr\lrp{\lint{M_\eta(x),T_\eta(x)}} + \delta \tr\lrp{\nabla^2 U(x)} + \delta \tr\lrp{G_\eta(x)}^2 + \delta \tr\lrp{\lrp{G_\eta(x)}^2}\\
&\quad - 128 \delta^{3/2} d^3 L^{3/2}\lrp{\|x\|_2^2 + 1}.
\end{align*}
The proof is similar and is omitted.
\end{proof}

\begin{corollary}\label{c:determinantinversenaivebound}
For any $\delta \leq \frac{1}{2^8 d^2 L}$, for any $x$, and for $\eta$ a.s.,
\begin{align*}
&\lrabs{\det\lrp{\nabla F_\eta(F_\eta^{-1} (x))}^{-1} - \lrp{1 - \sqrt{2\delta} \tr\lrp{G_\eta(x)}}}\\
\leq& 8 \delta d^2 L \lrp{\|x\|_2^2 + 1}.
\end{align*}
\end{corollary}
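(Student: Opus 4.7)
The plan is to derive this corollary from Lemma~\ref{l:discretizeddeterminantexpansioninverse} by a direct application of the triangle inequality, just as Corollary~\ref{c:s_determinantinversenaivenaivebound} was derived from Lemma~\ref{l:s_discretizeddeterminantexpansioninverse} in the homogeneous case. Lemma~\ref{l:discretizeddeterminantexpansioninverse} gives an equality of the form
\[
\det\lrp{\nabla F_\eta(F_\eta^{-1}(x))}^{-1} = 1 - \sqrt{2\delta}\tr\lrp{G_\eta(x)} + 2\delta \tr\lrp{\lint{M_\eta(x),T_\eta(x)}} + \delta \tr\lrp{\nabla^2 U(x)} + \delta \tr\lrp{G_\eta(x)}^2 + \delta \tr\lrp{G_\eta(x)^2} + \Delta,
\]
with $|\Delta| \leq 128 \delta^{3/2}d^3 L^{3/2}(\|x\|_2^2 + 1)$. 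So I would subtract $1 - \sqrt{2\delta}\tr\lrp{G_\eta(x)}$ from both sides and apply the triangle inequality to bound the left-hand side of the corollary by the sum of the absolute values of the four remaining order-$\delta$ terms plus $|\Delta|$.

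Next, I would bound each of the four order-$\delta$ trace terms using Lemma~\ref{l:traceupperbound} ($|\tr(A)| \leq d \|A\|_2$) together with the regularity assumptions. Specifically, $|2\delta \tr\lrp{\lint{M_\eta(x),T_\eta(x)}}| \leq 2\delta d \|M_\eta(x)\|_2 \|T_\eta(x)\|_2 \leq 2\delta d L (\|x\|_2 + 1)$ by Assumptions~\ref{ass:gisregular}.2 and~\ref{ass:gisregular}.4; $|\delta \tr\lrp{\nabla^2 U(x)}| \leq \delta d L$ by Assumption~\ref{ass:uissmooth}.2; and $|\delta \tr\lrp{G_\eta(x)}^2|, |\delta \tr\lrp{G_\eta(x)^2}| \leq \delta d^2 L$ by Assumption~\ref{ass:gisregular}.3 (using $|\tr(G_\eta(x))^2| \leq d^2 \|G_\eta(x)\|_2^2$ and $|\tr(G_\eta(x)^2)| \leq d \|G_\eta(x)\|_2^2$).

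Finally, to handle $|\Delta|$, I would use the stepsize assumption $\delta \leq 1/(2^8 d^2 L)$, which implies $\sqrt{\delta} \cdot d L^{1/2} \leq 2^{-4}$, so $128 \delta^{3/2} d^3 L^{3/2}(\|x\|_2^2+1) \leq 8\delta d^2 L (\|x\|_2^2 + 1)$. Combining all terms and using the trivial estimate $\|x\|_2 + 1 \leq \tfrac{3}{2}(\|x\|_2^2 + 1)$ to merge the four polynomial bounds yields the stated bound of $8\delta d^2 L (\|x\|_2^2 + 1)$ (with constants absorbed). There is no real obstacle here: this is purely a bookkeeping lemma whose content is that the $\delta^{3/2}$ error $\Delta$ and the lower-order $O(\delta)$ trace terms all fit inside a single $O(\delta d^2 L (\|x\|_2^2 + 1))$ envelope once the stepsize constraint is invoked. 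The only mildly delicate point is keeping track of which terms give $d$ versus $d^2$ factors, but the $d^2$ bound dominates.
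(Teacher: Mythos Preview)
Your proposal is correct and follows essentially the same route as the paper's own proof: both invoke Lemma~\ref{l:discretizeddeterminantexpansioninverse}, subtract $1 - \sqrt{2\delta}\tr(G_\eta(x))$, apply the triangle inequality, bound each of the four order-$\delta$ trace terms via Lemma~\ref{l:traceupperbound} and the regularity assumptions, and absorb the $128\delta^{3/2}d^3L^{3/2}(\|x\|_2^2+1)$ remainder using $\delta \leq 1/(2^8 d^2 L)$. Your term-by-term bounds match the paper's (up to a harmless $\delta d$ versus $\delta dL$ discrepancy on $\tr(G_\eta(x)^2)$), and the final collection into $8\delta d^2 L(\|x\|_2^2+1)$ is identical.
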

\begin{proof}[Proof of Corollary \ref{c:determinantinversenaivebound}]
$ $\\
Let
\begin{align*}
\Delta :=& \det\lrp{\nabla F_\eta(F_\eta^{-1} (x))}^{-1}\\
&\quad - \lrp{1 - \sqrt{2\delta} \tr\lrp{G_\eta(x)} + 2\delta \tr\lrp{\lint{M_\eta(x),T_\eta(x)}} + \delta \tr\lrp{\nabla^2 U(x)} + \delta \tr\lrp{G_\eta(x)}^2 + \delta
\tr\lrp{\lrp{G_\eta(x)}^2}}.
\end{align*}
By Lemma~\ref{l:discretizeddeterminantexpansioninverse}, 
\begin{align*}
\lrabs{\Delta}\leq 128 \delta^{3/2} d^3 L^{3/2}\lrp{\|x\|_2^2 + 1}.
\end{align*}
Thus 
\begin{align*}
&\lrabs{\det\lrp{\nabla F_\eta(F_\eta^{-1} (x))}^{-1} - \lrp{1 - \sqrt{2\delta} \tr\lrp{G_\eta(x)}}}\\
=& \lrabs{\Delta + 2\delta \tr\lrp{\lint{M_\eta(x),T_\eta(x)}} + \delta \tr\lrp{\nabla^2 U(x)} + \delta \tr\lrp{G_\eta(x)}^2 + \delta \tr\lrp{\lrp{G_\eta(x)}^2}}\\
\leq& 128 \delta^{3/2} d^3 L^{3/2}\lrp{\|x\|_2^2 + 1} + 2\delta d L \lrp{\|x\|_2 + 1} + \delta d L + \delta d^2 L + \delta d\\
\leq& 8 \delta d^2 L \lrp{\|x\|_2^2 + 1},
\end{align*}
where the first line is by our definition of $\Delta$, the second line is by our bound on $\lrabs{\Delta}$ above and by Assumptions \ref{ass:uissmooth} and \ref{ass:gisregular}, the third inequality is by moving terms around.
\end{proof}

\begin{corollary}\label{c:determinantinversenaivenaivebound}
For any $\delta \leq \frac{1}{2^8 d^2 L}$, for any $x$, and for $\eta$ a.s.,
\begin{align*}
&\lrabs{\det\lrp{\nabla F_\eta(F_\eta^{-1} (x))}^{-1} - 1}\\
\leq& 2\delta^{1/2} d L^{1/2} \lrp{\|x\|_2^2 + 1}.
\end{align*}
\end{corollary}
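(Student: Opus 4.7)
The plan is to obtain this bound directly from the preceding Corollary~\ref{c:determinantinversenaivebound} by applying the triangle inequality to split off the $\sqrt{2\delta}\tr(G_\eta(x))$ term. More precisely, I would write
\begin{align*}
\lrabs{\det\lrp{\nabla F_\eta(F_\eta^{-1}(x))}^{-1} - 1}
\leq \lrabs{\det\lrp{\nabla F_\eta(F_\eta^{-1}(x))}^{-1} - \lrp{1 - \sqrt{2\delta}\tr\lrp{G_\eta(x)}}} + \lrabs{\sqrt{2\delta}\tr\lrp{G_\eta(x)}}.
\end{align*}
The first term is bounded by $8\delta d^2 L(\|x\|_2^2 + 1)$ via Corollary~\ref{c:determinantinversenaivebound}.

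For the second term, I would use the standard bound $|\tr(A)| \leq d\|A\|_2$ (Lemma~\ref{l:traceupperbound}, referenced elsewhere in the appendix) together with Assumption~\ref{ass:gisregular}.3, giving $|\sqrt{2\delta}\tr(G_\eta(x))| \leq \sqrt{2\delta}\,d\sqrt{L}$. Combining, the total is at most $8\delta d^2 L(\|x\|_2^2 + 1) + \sqrt{2\delta}\,d\sqrt{L}$.

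The only remaining step is to collapse both contributions into the clean form $2\delta^{1/2} d L^{1/2}(\|x\|_2^2 + 1)$. Under the assumption $\delta \leq \tfrac{1}{2^8 d^2 L}$, we have $\sqrt{\delta}\,d\sqrt{L} \leq 1/16$, so $8\delta d^2 L = 8\sqrt{\delta}\,d\sqrt{L} \cdot \sqrt{\delta}\,d\sqrt{L} \leq \tfrac{1}{2}\sqrt{\delta}\,d\sqrt{L}$, and trivially $\sqrt{2\delta}\,d\sqrt{L} \leq \sqrt{2\delta}\,d\sqrt{L}(\|x\|_2^2 + 1)$. Summing gives a constant at most $\sqrt{2} + 1/2 \leq 2$, which yields the stated bound. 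There is no real obstacle here — the step size constraint is exactly what is needed to make the trace term dominate the quadratic remainder, and the whole argument is a few lines of triangle inequality plus arithmetic.
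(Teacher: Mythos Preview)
Your proposal is correct and matches the paper's own proof essentially line for line: the paper also splits via the triangle inequality into $\lrabs{\sqrt{2\delta}\tr\lrp{G_\eta(x)}}$ and the remainder controlled by Corollary~\ref{c:determinantinversenaivebound}, bounds the trace term by $\sqrt{2\delta}\,dL^{1/2}$, and then absorbs both into $2\delta^{1/2} d L^{1/2}(\|x\|_2^2+1)$ using the step size assumption.
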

\begin{proof}[Proof of Corollary \ref{c:determinantinversenaivenaivebound}]
From Lemma~\ref{l:discretizeddeterminantexpansioninverse}, we get
\begin{align*}
&\lrabs{\det\lrp{\nabla F_\eta(F_\eta^{-1} (x))}^{-1} - 1}\\
\leq& \lrabs{\sqrt{2\delta}\tr\lrp{G_\eta(x)}} + \lrabs{\det\lrp{\nabla F_\eta(F_\eta^{-1} (x))}^{-1} - \lrp{1 - \sqrt{2\delta} \tr\lrp{G_\eta(x)}}}\\
\leq& \sqrt{2\delta} d L^{1/2} + 8 \delta d^2L\lrp{\|x\|_2^2 + 1}\\
\leq& 2\delta^{1/2} d L^{1/2} \lrp{\|x\|_2^2 + 1},
\end{align*}
where the first inequality is by triangle inequality, and the second inequality is by Corollary \ref{c:determinantinversenaivebound}.
\end{proof}

\begin{lemma}
\label{l:logp^*issmooth}
Under Assumption \ref{ass:p^*isregular}, for all $x$, 
\begin{align*}
1.\ &\lrn{\nabla p^*(x)}_2\leq p^*(x) \theta \lrp{\|x\|_2^2 + 1}\\
2.\ &\lrn{\nabla^2 p^*(x)}_2 \leq p^*(x) \lrp{\theta^2 +\theta}\lrp{\|x\|_2^4 +1}\\
3.\ &\lrn{\nabla^3 p^*(x)}_2\leq 2 p^*(x) \lrp{\theta^3+\theta^2+\theta}\lrp{\|x\|_2^6+1}.
\end{align*}
\end{lemma}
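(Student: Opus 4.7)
The plan is to write $p^*(x) = e^{f(x)}$ and apply the chain rule repeatedly so that each derivative of $p^*$ becomes $p^*$ times a polynomial in the derivatives of $f$; then invoke Assumption \ref{ass:p^*isregular} term by term.

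For part 1, differentiating the exponential once gives $\nabla p^*(x) = p^*(x)\,\nabla f(x)$, so $\lrn{\nabla p^*(x)}_2 \leq p^*(x)\lrn{\nabla f(x)}_2 \leq p^*(x)\cdot\theta(\|x\|_2^2+1)$ directly from Assumption \ref{ass:p^*isregular}.3.

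For part 2, the product rule yields $\nabla^2 p^*(x) = p^*(x)\lrp{\nabla f(x)\nabla f(x)^T + \nabla^2 f(x)}$. Hence the triangle inequality and submultiplicativity of $\lrn{\cdot}_2$ give
\begin{align*}
\lrn{\nabla^2 p^*(x)}_2 \leq p^*(x)\lrp{\lrn{\nabla f(x)}_2^2 + \lrn{\nabla^2 f(x)}_2} \leq p^*(x)\lrp{\theta^2(\|x\|_2^2+1)^2 + \theta(\|x\|_2+1)},
\end{align*}
and the elementary bounds $(\|x\|_2^2+1)^2 \leq 2(\|x\|_2^4+1)$ and $\|x\|_2+1 \leq \|x\|_2^4+2$ fold everything into the stated form $p^*(x)(\theta^2+\theta)(\|x\|_2^4+1)$.

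For part 3, differentiating once more yields $\nabla^3 p^*(x) = p^*(x)\lrp{\nabla f(x)^{\otimes 3} + \nabla f(x) \otimes \nabla^2 f(x) + \nabla^2 f(x) \otimes \nabla f(x) + (\nabla^2 f \otimes \nabla f)\text{ (third arrangement)} + \nabla^3 f(x)}$. Each of the five terms is bounded via Assumption \ref{ass:p^*isregular}: the cube term gives $\theta^3(\|x\|_2^2+1)^3$, each of the three mixed terms gives $\theta^2(\|x\|_2^2+1)(\|x\|_2+1)$, and the pure third-derivative term gives $\theta$. Using $(\|x\|_2^2+1)^3 \leq 4(\|x\|_2^6+1)$ and $(\|x\|_2^2+1)(\|x\|_2+1) \leq 4(\|x\|_2^6+1)$ (each verified by splitting into $\|x\|_2 \leq 1$ vs $\|x\|_2 > 1$), all pieces assemble into $2p^*(x)(\theta^3+\theta^2+\theta)(\|x\|_2^6+1)$ after collecting constants. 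The only real obstacle here is bookkeeping — checking that the absolute constants absorb into the factor $2$ in the stated bound — and this is purely routine arithmetic, with no analytical difficulty.
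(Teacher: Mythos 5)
Your proposal is essentially the paper's own proof: both write $\nabla p^*=p^*\,\nabla\log p^*$ (i.e.\ $p^*=e^f$), differentiate with the product/chain rule so that $\nabla^k p^*$ equals $p^*$ times a polynomial in the derivatives of $f=\log p^*$, and then invoke Assumption~\ref{ass:p^*isregular} term by term. You are more careful than the paper in spotting that there are three mixed $\nabla^2 f\otimes\nabla f$ terms in $\nabla^3 p^*$ (the paper's display lists only two and then multiplies by~$2$), but your final bookkeeping does not quite close: your bounds give a $4\theta^3(\|x\|_2^6+1)+12\theta^2(\|x\|_2^6+1)+\theta$ type expression, which is not $\le 2(\theta^3+\theta^2+\theta)(\|x\|_2^6+1)$; the paper's own proof shares this constant looseness, and it is harmless because the lemmas that invoke this one (e.g.\ Lemma~\ref{l:circled7times2}) carry much larger absolute constants.
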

\begin{proof}[Proof of Lemma~\ref{l:logp^*issmooth}]
To prove the first claim:
\begin{align*}
\lrn{\nabla p^*(x)}_2
=& \lrn{p^*(x) \nabla \log p^*(x)}_2\\
\leq& p^*(x) \theta \lrp{\|x\|_2^2 + 1}.
\end{align*}
To prove the second claim:
\begin{align*}
\lrn{\nabla^2 p^*(x)}_2
=& p^*(x)\lrn{\nabla^2 \log p^*(x) + \nabla \log p^*(x) \nabla \log p^*(x)^T}_2\\
\leq& p^*(x)\lrp{\lrn{\nabla^2 \log p^*(x)}_2 + \lrn{\nabla \log p^*(x)}_2^2}\\
\leq& p^*(x) \lrp{\theta^2 +\theta}\lrp{\|x\|_2^4 +1},
\end{align*}
where the second and third inequalities are by Assumption \ref{ass:p^*isregular}.

To prove the third claim:
\begin{align*}
&\lrn{\nabla^3 p^*(x)}_2 \\
=& p^*(x) \left\|\nabla^3 \log p^*(x)  + \nabla^2 \log p^*(x) \otimes \nabla \log p^*(x) + \nabla \log p^*(x) \otimes \nabla^2 \log p^*(x)\right.\\
&\quad \left. + \nabla \log p^*(x) \otimes \nabla \log p^*(x) \otimes \nabla \log p^*(x)\right\|_2\\
\leq& p^*(x) \lrp{\lrn{\nabla^3 \log p^*(x)}_2 + 2\lrn{\nabla^2 \log p^*(x)}_2\lrn{\nabla \log p^*(x)}_2 + \lrn{\nabla \log p^*(x)}_2^3}\\
\leq& 2 p^*(x) \lrp{\theta^3+\theta^2+\theta}\lrp{\|x\|_2^6+1},
\end{align*}
where $\otimes$ denotes the tensor outer product.
\end{proof}

\begin{lemma}\label{l:p^*growsslowly}
Under Assumption \ref{ass:p^*isregular}, for all $x,y$, 
\begin{align*}
p^*(y) \leq p^*(x)\cdot \exp\lrp{\theta \lrp{\|x\|_2^2 + \|y\|_2^2}\lrn{y-x}_2}.
\end{align*}
\end{lemma}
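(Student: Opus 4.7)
The plan is to express $\log p^*(y) - \log p^*(x)$ as the line integral of $\nabla \log p^*$ along the straight segment joining $x$ to $y$, and then to control the integrand using the quadratic growth bound on $\nabla \log p^*$ from Assumption~\ref{ass:p^*isregular}.3. Parameterize $\gamma(t) := (1-t)x + ty$ for $t\in[0,1]$. By the fundamental theorem of calculus and the chain rule,
\begin{align*}
\log p^*(y) - \log p^*(x) = \int_0^1 \lin{\nabla \log p^*(\gamma(t)),\, y-x}\, dt.
\end{align*}

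The next step is to bound the integrand pointwise. Cauchy--Schwarz combined with Assumption~\ref{ass:p^*isregular}.3 gives
\begin{align*}
\lrabs{\lin{\nabla \log p^*(\gamma(t)),\, y-x}} \leq \theta\lrp{\lrn{\gamma(t)}_2^2 + 1}\lrn{y-x}_2.
\end{align*}
Then I would control $\lrn{\gamma(t)}_2^2$ by convexity of the squared norm along the segment: $\lrn{\gamma(t)}_2^2 \leq (1-t)\lrn{x}_2^2 + t\lrn{y}_2^2 \leq \lrn{x}_2^2 + \lrn{y}_2^2$, uniformly in $t\in[0,1]$. Integrating and exponentiating then produces $p^*(y) \leq p^*(x)\exp\lrp{\theta(\lrn{x}_2^2 + \lrn{y}_2^2 + 1)\lrn{y-x}_2}$, matching the stated claim modulo the harmless additive $+1$ inside the exponent (which is either absorbed into $\lrn{x}_2^2 + \lrn{y}_2^2$ when that quantity is at least $1$, or else swallowed by the extra factor of $2$ that appears when the lemma is invoked in Lemma~\ref{l:circled7times2}).

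This is essentially a one-dimensional mean-value estimate and I do not anticipate a real obstacle; the only point that requires care is using the convexity inequality $\lrn{\gamma(t)}_2^2 \leq (1-t)\lrn{x}_2^2 + t\lrn{y}_2^2$ rather than the weaker triangle-inequality bound $\lrn{\gamma(t)}_2 \leq \lrn{x}_2 + \lrn{y}_2$, since it is crucial that the $\lrn{x}_2$ and $\lrn{y}_2$ contributions appear as a sum of squared norms rather than a single $(\lrn{x}_2 + \lrn{y}_2)^2$ term.
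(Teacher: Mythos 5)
Your proposal is correct and follows essentially the same route the paper takes: write $\log p^*(y) - \log p^*(x)$ as a line integral of $\nabla\log p^*$ along the segment, apply Cauchy--Schwarz and Assumption~\ref{ass:p^*isregular}.3 to the integrand, and control $\lrn{(1-t)x+ty}_2^2$ by convexity of $\|\cdot\|_2^2$. You are in fact more careful than the paper: the paper's own penultimate line reads $2\theta\lrp{\|x\|_2^2+\|y\|_2^2}\|y-x\|_2$, which silently absorbs the additive $1$ from Assumption~\ref{ass:p^*isregular}.3 into the quadratic term (invalid when $\|x\|_2^2+\|y\|_2^2<1$), and then the final displayed inequality drops the factor of $2$ without comment. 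The bound you derive, $p^*(y)\leq p^*(x)\exp\lrp{\theta\lrp{\|x\|_2^2+\|y\|_2^2+1}\|y-x\|_2}$, is the correct statement that the argument actually delivers; the version with no $+1$ fails near the origin, as a first-order Taylor expansion of $\log p^*$ around $x=0$ would give an $O(\|y\|_2)$ change rather than the claimed $O(\|y\|_2^3)$. Your closing remark about the $+1$ being acceptable when the lemma is invoked is essentially right: in the proof of Lemma~\ref{l:circled7times2} the paper already uses a $2\theta$ prefactor in place of $\theta$, and further enlarges this at the next step, so the $+1$ in your version is more than covered there. The only imprecision is phrasing: when $\|x\|_2^2+\|y\|_2^2<1$ the $+1$ is not literally ``swallowed by the factor of $2$'' (since $\|x\|_2^2+\|y\|_2^2+1 > 2\lrp{\|x\|_2^2+\|y\|_2^2}$ in that regime), but both your bound and the paper's invocation are then $O(\theta\|y-x\|_2)$ and the downstream estimates still go through.
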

\begin{proof}
Under Assumption \ref{ass:p^*isregular},
\begin{align*}
& \lrabs{\log p^*(y) - \log p^*(x)}\\
=& \lrabs{\int_0^1 \lin{\nabla \log p^*((1-t)x + ty), y-x} dt}\\
\leq& \int_0^1 \lrn{\nabla \log p^*((1-t)x + ty)}_2 \cdot \lrn{y-x}_2 dt \\
\leq& \int_0^1 \theta \lrp{\lrn{(1-t)x+ty}_2^2+1} \lrn{y-x}_2 dt\\
\leq& 2\theta \lrp{\|x\|_2^2 + \|y\|_2^2}\lrn{y-x}_2.
\end{align*}
Therefore,
\begin{align*}
\exp\lrp{-\theta \lrp{\|x\|_2^2 + \|y\|_2^2}\lrn{y-x}_2}\leq \frac{p^*(y)}{p^*(x)} \leq \exp\lrp{\theta \lrp{\|x\|_2^2 + \|y\|_2^2}\lrn{y-x}_2}.
\end{align*}
\end{proof}

\begin{lemma}
\label{l:fokkerplanckcharacterizationofstationarydistribution}
The stationary distribution $p^*$ of \eqref{e:exactsde} satisfies the equality (for all $x$)
\begin{align}
0 =&  p^*(x)\lrp{\sum_{i=1}^d \sum_{j=1}^d \frac{\del^2}{\del x_i \del x_j} \lrb{\sigma_{x}\sigma_{x}^T}_{i,j} + \tr\lrp{\nabla^2 U(x)}}\\
&\quad +  \sum_{i=1}^d \dd{x_i} p^*(x) \cdot \dd{x_i} U(x) \\
&\quad + 2 {\sum_{i=1}^{d}\lrp{\dd{x_i} p^*(x)  \lrp{\sum_{j=1}^d\dd{x_j} \lrb{\sigma_{x}\sigma_{x}^T}_{i,j}}}}\\
&\quad +   \lin{\nabla^2p^*(x), \sigma_{x}\sigma_{x}^T}.
\end{align}

\end{lemma}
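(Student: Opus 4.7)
[Proof Plan for Lemma~\ref{l:fokkerplanckcharacterizationofstationarydistribution}]
The claim is nothing more than the stationary Fokker--Planck equation for \eqref{e:exactsde} written out coordinatewise. The plan is thus to (i) specialize the standard Fokker--Planck operator to the drift $b(x) = -\nabla U(x)$ and diffusion coefficient $\sqrt{2}\sigma_x$, (ii) set its action on $p^*$ to zero using the fact that $p^*$ is the invariant distribution of \eqref{e:exactsde}, and (iii) expand the resulting divergence terms by the product rule to match the four groups of terms in the statement.

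For step (i), recall that for an It\^o SDE $dx = b(x)\,dt + \Sigma(x)\,dB_t$ the forward Kolmogorov (Fokker--Planck) equation for the density $p(x,t)$ reads
\begin{align*}
\dd{t} p(x,t) = -\sum_{i=1}^d \dd{x_i}\lrp{b_i(x) p(x,t)} + \sum_{i,j=1}^d \frac{\del^2}{\del x_i \del x_j}\lrp{D_{ij}(x) p(x,t)},
\end{align*}
with $D(x) := \tfrac12 \Sigma(x)\Sigma(x)^T$. In our setting $\Sigma(x) = \sqrt{2}\sigma_x$, so $D(x) = \sigma_x \sigma_x^T$; and since $p^*$ is invariant, the time derivative vanishes.

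For step (ii)--(iii), I expand each divergence. With $b_i = -\dd{x_i} U$, the drift piece gives
\begin{align*}
-\sum_i \dd{x_i}\lrp{b_i p^*} = \sum_i \dd{x_i}\lrp{\dd{x_i} U \cdot p^*} = p^*(x) \tr\lrp{\nabla^2 U(x)} + \sum_i \dd{x_i} p^*(x) \cdot \dd{x_i} U(x).
\end{align*}
For the diffusion piece, applying the product rule twice with $D_{ij} := \lrb{\sigma_x \sigma_x^T}_{ij}$ yields
\begin{align*}
\sum_{ij} \frac{\del^2}{\del x_i \del x_j}\lrp{D_{ij} p^*} = p^*(x)\sum_{ij} \frac{\del^2 D_{ij}}{\del x_i \del x_j} + \sum_{ij}\lrp{\dd{x_i} D_{ij}} \dd{x_j} p^* + \sum_{ij}\lrp{\dd{x_j} D_{ij}} \dd{x_i} p^* + \lin{\nabla^2 p^*(x), D(x)}.
\end{align*}
The two middle sums would coincide after relabeling indices provided $D$ is symmetric, which it is because $D = \sigma_x \sigma_x^T$; hence they combine into the factor of $2$ that appears in the third line of the stated identity. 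Adding the drift and diffusion contributions and setting the sum equal to zero reproduces exactly the four groups in the lemma.

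There is no substantive obstacle here; the only care required is in step (iii) where the symmetry of $D$ must be used to collapse the two mixed gradient-of-$p^*$--times-derivative-of-$D$ terms into a single term with a factor of $2$. Everything else is a direct invocation of the stationary Fokker--Planck equation and the product rule.
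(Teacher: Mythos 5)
Your proof is correct and follows essentially the same route as the paper: write the Fokker--Planck (forward Kolmogorov) equation for \eqref{e:exactsde} with drift $-\nabla U$ and diffusion matrix $D = \sigma_x\sigma_x^T$, set the time derivative to zero by invariance of $p^*$, and expand by the product rule to produce the four stated groups. The only cosmetic difference is that you explicitly justify the factor of $2$ in the mixed term by appealing to the symmetry of $D$ when relabeling indices; the paper performs the same collapse silently.
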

\begin{proof}[Proof of Lemma~\ref{l:fokkerplanckcharacterizationofstationarydistribution}]
For a distribution $p_t$, the Fokker Planck equation under \eqref{e:exactsde} is
\begin{align*}
\ddt p_t(x) =& \sum_{i=1}^d \lrp{\dd{x_i} \lrp{\lrb{\nabla U(x)}_i\cdot p_t(x)}} + \sum_{i=1}^d \sum_{j=1}^d \frac{\del^2}{\del x_i \del x_j} \lrp{\lrb{\sigma_x \sigma_x^T}_{i,j}\cdot p_t(x)}\\
=& \sum_{i=1}^d \lrp{\dd{x_i} \lrp{\lrb{\nabla U(x)}_i\cdot p_t(x)}}\\
&\quad + \lrp{\sum_{i=1}^d \sum_{j=1}^d \frac{\del^2}{\del x_i \del x_j} \lrb{\sigma_x \sigma_x^T}_{i,j}}\cdot p_t(x)\\
&\quad + 2\sum_{i=1}^d \sum_{j=1}^d \lrp{\lrp{\dd{x_j} \lrb{\sigma_x \sigma_x^T}_{i,j}}\cdot \lrp{\dd{x_i} p_t(x)}}\\
&\quad + \sum_{i=1}^d \sum_{j=1}^d \lrp{ \lrb{\sigma_x \sigma_x^T}_{i,j}\cdot \lrp{\frac{\del^2}{\del x_i \del x_j} p_t(x)}}\\
=& p_t(x)\lrp{\sum_{i=1}^d \sum_{j=1}^d \frac{\del^2}{\del x_i \del x_j} \lrb{\sigma_{x}\sigma_{x}^T}_{i,j} + \tr\lrp{\nabla^2 U(x)}}\\
&\quad +  \sum_{i=1}^d \dd{x_i} p_t(x) \cdot \dd{x_i} U(x) \\
&\quad + 2 {\sum_{i=1}^{d}\lrp{\dd{x_i} p_t(x)  \lrp{\sum_{j=1}^d\dd{x_j} \lrb{\sigma_{x}\sigma_{x}^T}_{i,j}}}}\\
&\quad +   \lin{\nabla^2p_t(x), \sigma_{x}\sigma_{x}^T}.
\end{align*}
Observe that by definition of $p^*$ being the stationary distribution of \eqref{e:exactsde}, $\at{\ddt p_t(x)}{p_t=p^*}=0$. Thus, we have
\begin{align*}
& p^*(x)\lrp{\sum_{i=1}^d \sum_{j=1}^d \frac{\del^2}{\del x_i \del x_j} \lrb{\sigma_{x}\sigma_{x}^T}_{i,j} + \tr\lrp{\nabla^2 U(x)}}\\
&\quad +  \sum_{i=1}^d \dd{x_i} p^*(x) \cdot \dd{x_i} U(x) \\
&\quad + 2 {\sum_{i=1}^{d}\lrp{\dd{x_i} p^*(x)  \lrp{\sum_{j=1}^d\dd{x_j} \lrb{\sigma_{x}\sigma_{x}^T}_{i,j}}}}\\
&\quad +   \lin{\nabla^2p^*(x), \sigma_{x}\sigma_{x}^T}\\
=0.
\end{align*}
\end{proof}

\begin{lemma}\label{l:reallyannoyinglemma}

For any $\delta$ satisfying
\begin{align*}
\frac{1}{\delta} \geq \max
\begin{cases}
2^8 d^2 L\\
2^{37} L\theta^2\\
2^{37}L\theta^2 \lrp{\frac{c_{\sigma}^2}{m}\log \frac{c_{\sigma}^2}{m}}^3\\
2^{72} L \theta^{2} \frac{c_{\sigma}^2}{m}\log \lrp{\frac{2^{62} L c_{\sigma}^2}{m}}
\end{cases}
\numberthis \label{e:ye:3}
\end{align*}
and for
\begin{align*}
R:=& 2^7\sqrt{\max\lrbb{\frac{c_{\sigma}^2}{m} \log\frac{c_{\sigma}^2}{m}, {\frac{c_{\sigma}^2}{m}\log \lrp{\frac{1}{2^{124}d^6 L^2 \lrp{\theta^3 + \theta^2 + \theta}^2 \delta^3 }}}, 1}},
\end{align*}
$\delta$ and $R$ satisfy
\begin{align*}
\delta \leq \min\lrbb{\frac{1}{2^8 d^2 L}, \frac{1}{2^{15}L \theta^2 \lrp{R^6 + 1}}}.
\end{align*}
\end{lemma}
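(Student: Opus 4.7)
The claim splits into two inequalities. The first, $\delta\leq 1/(2^8 d^2 L)$, is literally the first clause of \eqref{e:ye:3}. For the second inequality, $\delta\leq 1/(2^{15}L\theta^2(R^6+1))$, my strategy is to expand $R^6$ using the elementary bound $\max\{a,b,c\}^3\leq a^3+b^3+c^3$ for nonnegative reals, yielding
\begin{align*}
R^6 \leq 2^{42}\left[\left(\frac{c_\sigma^2}{m}\log\frac{c_\sigma^2}{m}\right)^3 + \left(\frac{c_\sigma^2}{m}\log\frac{1}{2^{124}d^6 L^2(\theta^3+\theta^2+\theta)^2\delta^3}\right)^3 + 1\right].
\end{align*}
Each of the three resulting summands, together with the extra $+1$ in $R^6+1$, then needs to be controlled by $1/\delta$ up to a universal constant. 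The $\bigl(\frac{c_\sigma^2}{m}\log\frac{c_\sigma^2}{m}\bigr)^3$ piece matches clause 3 of \eqref{e:ye:3}, the constant pieces fall under clause 2, and the middle piece will be handled by clause 4 after a bootstrap step.

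The only delicate piece is the middle summand, in which $\delta$ appears inside a logarithm, so the inequality is implicit. My plan is a single-step bootstrap: use clause 4 of \eqref{e:ye:3} to obtain a preliminary lower bound $1/\delta \geq K$ where $K := 2^{72}L\theta^2 (c_\sigma^2/m)^3 \log(2^{62}L c_\sigma^2/m)$, then substitute this into the implicit log to get $\log(1/\delta) \leq \log K$ and consequently $\log\bigl(1/(2^{124}d^6 L^2(\theta^3+\theta^2+\theta)^2\delta^3)\bigr)\leq 3\log K + O(\log(\text{explicit constants in } d,L,\theta))$. Plugging this explicit bound back converts the implicit constraint into an explicit one, and the constants in clause 4 were evidently chosen so that this explicit form is satisfied with room to spare.

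The main obstacle is purely bookkeeping: the cube on the logarithm amplifies any slack, so I must track constants carefully through the bootstrap and verify that the product $2^{57}L\theta^2(c_\sigma^2/m)^3 (\log K)^3$ still lies below $1/\delta$ after appealing back to clause 4. Once that is done, the conclusion follows by a direct comparison of each summand against the appropriate clause of \eqref{e:ye:3}, with the factor-of-three loss from splitting the $\max^3$ absorbed by the generous constants already present. No ideas beyond this bootstrap are required.
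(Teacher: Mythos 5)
Your decomposition via $\max\{a,b,c\}^3\leq a^3+b^3+c^3$ (versus the paper's case analysis on which term attains the max) is a legitimate alternative framing, and your handling of the two $\delta$-free summands against clauses 2 and 3 is fine. The genuine gap is in the "bootstrap" for the middle summand, where $\delta$ appears inside the logarithm.

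You write that from clause 4, $1/\delta\geq K$, and then "substitute this into the implicit log to get $\log(1/\delta)\leq\log K$." This inequality runs the wrong way: $1/\delta\geq K$ gives $\log(1/\delta)\geq\log K$, not $\leq$. And no upper bound on $1/\delta$ (hence on $\log(1/\delta)$) is available --- the hypothesis of the lemma permits $\delta$ arbitrarily small, so $\log(1/\delta)$ is unbounded. Consequently the step "Plugging this explicit bound back converts the implicit constraint into an explicit one" is founded on a false inequality and cannot be repaired by mere bookkeeping. What actually makes the argument close is the observation that the inequality one needs, $y\geq C(\log y)^3$ with $y=1/\delta$, is preserved under increasing $y$ once $y$ is past a threshold (because $y-C(\log y)^3$ is eventually increasing), so it suffices to verify it at the threshold prescribed by clause 4. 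That monotonicity fact is exactly what the paper's Lemma~\ref{l:xlogxbound}/Corollary~\ref{c:xlogxbound} (a Lambert~$W$ estimate) supplies, and it is the missing ingredient in your sketch; without stating and using it, the bootstrap does not go through. To fix the proposal, replace the $\log(1/\delta)\leq\log K$ step with an appeal to Corollary~\ref{c:xlogxbound} (or an explicit monotonicity argument), verifying its hypothesis from clause~4 exactly as the paper does.
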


\begin{proof}[Proof of Lemma~\ref{l:reallyannoyinglemma}]
Our first assumption in \eqref{e:ye:3} immediately implies that $\delta \leq \lrp{2^8 d^2 L}^{-1}$, so we only need to verify that
\begin{align*}
\delta 
\leq& \frac{1}{2^{15}L \theta^2 \lrp{R^6 + 1}}.
\numberthis \label{e:ye:1}
\end{align*}

Since $R$ is a max of three terms, we will consider 2 cases:\\
\textbf{Case 1: $R= 2^7\sqrt{\max\lrbb{\frac{c_{\sigma}^2}{m} \log\frac{c_{\sigma}^2}{m}, 1}}$}

In this case, \eqref{e:ye:1} follows immediately from our second and third assumption in \eqref{e:ye:3}.

\textbf{Case 2: $R = 2^7 \sqrt{\frac{c_{\sigma}^2}{m}\log \lrp{\frac{1}{2^{124}d^6 L^2 \lrp{\theta^3 + \theta^2 + \theta}^2 \delta^3 }}}$}\\

Recall that we would like to prove that
\begin{align*}
\delta \leq \lrp{2^{15} L \theta^2 \lrp{R^6 + 1}}^{-1}
\end{align*}
Since $R^6 + 1\leq \max\lrbb{2R^6, 2}$, it suffices to prove that
\begin{align*}
\frac{1}{\delta} \geq& 2^{16} L \theta^2\\
& \quad \text{and}\\
\frac{1}{\delta} \geq& 2^{16} L \theta^2 R^6.
\end{align*}

The first inequality follows immediately from our second assumption in \eqref{e:ye:3}.

The second inequality expands to be
\begin{align*}
\frac{1}{\delta} \geq 2^{58} L \theta^2 \frac{c_{\sigma}^6}{m^3} \lrp{\log \lrp{\frac{1}{2^{124}d^6 L^2 \lrp{\theta^3 + \theta^2 + \theta}^2 \delta^3}}}^3.
\end{align*}
Moving terms around, we see that it is sufficient to prove
\begin{align*}
\delta^{-1/3} \geq \lrp{2^{20} L^{1/3} \theta^{2/3}  \frac{c_{\sigma}^2}{m} } \log \lrp{\delta^{-1/3} \lrp{2^{124}d^6 L^2 \lrp{\theta^3 + \theta^2 + \theta}^2}^{-1/9}}.
\numberthis \label{e:ye:2}
\end{align*}

We define $a:= \lrp{2^{124}d^6 L^2 \lrp{\theta^3 + \theta^2 + \theta}^2}^{-1/9}$, $c:= \lrp{2^{20} L^{1/3} \theta^{2/3}  \frac{c_{\sigma}^2}{m} }^{-1}$ and $x:=\delta^{-1/3}$. We verify that $a$ and $c$ are both strictly positive quantities. 
By the fourth assumption in \eqref{e:ye:3},
\begin{align*}
& \delta^{-1/3}\\
\geq& 2^{24} L^{1/3} \theta^{2/3} \frac{c_{\sigma}^2}{m}\log \lrp{2^{62} L \frac{c_{\sigma}^2}{m}}\\
\geq& 3 \cdot 2^{20} L^{1/3} \theta^{2/3} \frac{c_{\sigma}^2}{m} \cdot \log \lrp{\frac{2^{20}L^{1/3} \theta^{2/3} \frac{c_{\sigma}^2}{m}}{\lrp{2^{124}d^6 L^2 \lrp{\theta^3 + \theta^2 + \theta}^2}^{1/9}}}\\
=:& 3\cdot \frac{1}{c}\log \frac{a}{c}.
\end{align*}
We can thus apply Corollary \ref{c:xlogxbound}, with the given $a,c,x$, to prove \eqref{e:ye:2} ($\delta^{1/3} > 0$ is guaranteed by the first assumption of \ref{e:ye:3}.

We have concluded the proof of Case 2, and hence \eqref{e:ye:1}.

\end{proof}

\begin{lemma}\label{l:reallyannoyinglemma2}
For any $\epsilon > 0$, and for any stepsize $\delta$  satisfying
\begin{align*}
\frac{1}{\delta} \geq
\frac{d^7}{\epsilon^2} \cdot \frac{2^{142} L^2 \lrp{\theta^3 + \theta^2 + \theta}^2}{\lambda^2} \cdot \max
\begin{cases}
\lrp{\frac{c_{\sigma}^2}{m} \log \frac{c_{\sigma}^2}{m}}^{12}\\
1\\
\lrp{\frac{c_{\sigma}^2}{m} \log \lrp{2^{324} d^5 L \lrp{\theta^3 + \theta^2 + \theta}\lambda^{-6}\epsilon^{-6}}}^{12}
\end{cases}
\numberthis \label{e:iw:1}
\end{align*}
then
\begin{align*}
2^{70}\delta^{1/2} d^{7/2} L \lrp{\theta^3 + \theta^2 + \theta} \max\lrbb{\frac{c_{\sigma}^2}{m} \log \frac{c_{\sigma}^2}{m}, \frac{c_{\sigma}^2}{m}\log
\lrp{\frac{1}{2^{124}d^6 L^2 \lrp{\theta^3 + \theta^2 + \theta} \delta^3 }}, 1}^{6} \lambda^{-1} \leq \frac{\epsilon}{2}.
\end{align*}
\end{lemma}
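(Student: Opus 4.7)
The plan is to reduce the stated inequality to an implicit condition on $\delta$, split it into three cases matching the three clauses of the hypothesis, and handle the one implicit case via Corollary \ref{c:xlogxbound}, in complete parallel with the Case~2 argument in the proof of Lemma \ref{l:reallyannoyinglemma}.

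First I would square both sides of the target inequality and regroup constants, rewriting it in the equivalent form
\[
\delta \cdot K \cdot \max\lrbb{A,\, B(\delta),\, 1}^{12} \leq 1,
\]
where $K := \frac{d^7}{\epsilon^2} \cdot \frac{2^{142}L^2(\theta^3+\theta^2+\theta)^2}{\lambda^2}$, $A := \frac{c_\sigma^2}{m}\log\frac{c_\sigma^2}{m}$, and $B(\delta) := \frac{c_\sigma^2}{m}\log\frac{1}{2^{124}d^6L^2(\theta^3+\theta^2+\theta)\delta^3}$. The hypothesis \eqref{e:iw:1} then reads $\delta^{-1} \geq K \cdot \max\{A^{12},\, 1,\, C^{12}\}$, with $C := \frac{c_\sigma^2}{m}\log(2^{324}d^5L(\theta^3+\theta^2+\theta)\lambda^{-6}\epsilon^{-6})$. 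Since $\delta K (\max\{T_1,T_2,T_3\})^{12} \leq 1$ is equivalent to $\delta K T_i^{12} \leq 1$ for each $i$, it suffices to verify the three clauses one at a time.

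The cases $T=A$ and $T=1$ are immediate from the first two clauses of \eqref{e:iw:1}. The case $T = B(\delta)$ is implicit in $\delta$. Substituting $x := \delta^{-1/12}$ and writing $Y := 2^{124}d^6L^2(\theta^3+\theta^2+\theta)$, the identity $36\log x - \log Y = 36\log(xY^{-1/36})$ lets me rearrange the target inequality into the form
\[
x \geq c^{-1}\log(ax), \qquad c^{-1} := 36\, K^{1/12}\,\frac{c_\sigma^2}{m}, \qquad a := Y^{-1/36},
\]
which is precisely the form handled by Corollary \ref{c:xlogxbound}. That corollary reduces the implicit bound to the explicit bound $x \geq 3\, c^{-1}\log(a/c)$; raising this to the $12$th power in $x$ yields a lower bound on $\delta^{-1}$ of the form $K(c_\sigma^2/m)^{12}\log^{12}(\cdot)$, matching the structure of the third clause of the hypothesis.

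The main obstacle is the algebraic verification that the explicit constant $2^{324}$ and the exponents $\lambda^{-6}\epsilon^{-6}$ inside the log of the third clause (together with the polynomial factors in $d,L,\theta$) suffice to dominate the expanded form of $3c^{-1}\log(a/c)$ after one substitutes $a = Y^{-1/36}$ and $c^{-1} = 36K^{1/12}(c_\sigma^2/m)$. This is routine constant-chasing of the same flavor as in Lemma \ref{l:reallyannoyinglemma}'s proof and introduces no new ideas beyond the $x \geq c^{-1}\log(ax)$ resolution technique; it is the reason the exponents inside the log in the third clause of \eqref{e:iw:1} are as large as they are.
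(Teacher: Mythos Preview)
Your proposal is correct and follows essentially the same route as the paper's own proof: split the max into its three clauses, dispatch the two explicit ones directly from the first two clauses of \eqref{e:iw:1}, and for the implicit $B(\delta)$ clause substitute $x=\delta^{-1/12}$, rewrite the log as $36\log(a x)$, and invoke Corollary~\ref{c:xlogxbound} with the resulting $a,c$. The paper's version differs only in cosmetic bookkeeping (it raises to the $1/12$ power directly rather than first squaring), and the remaining work is exactly the routine constant-matching you flag at the end.
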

\begin{proof}[Proof of Lemma~\ref{l:reallyannoyinglemma2}]
By the first two cases in the max in \eqref{e:iw:1} and moving terms around, one can immediately verify that
\begin{align*}
2^{70}\delta^{1/2} d^{7/2} L \lrp{\theta^3 + \theta^2 + \theta} \max\lrbb{\frac{c_{\sigma}^2}{m} \log \frac{c_{\sigma}^2}{m}, 1}^{6} \lambda^{-1} \leq \frac{\epsilon}{2}.
\end{align*}
Thus we only need to prove that
\begin{align*}
2^{70}\delta^{1/2} d^{7/2} L \lrp{\theta^3 + \theta^2 + \theta} \lrp{\frac{c_{\sigma}^2}{m}\log \lrp{\frac{1}{2^{124}d^6 L^2 \lrp{\theta^3 + \theta^2 + \theta}^2 \delta^3
}}}^{6} \lambda^{-1} \leq \frac{\epsilon}{2}.
\end{align*}
The above is equivalent to 
\begin{align*}
\delta^{-1/12} \geq \log \lrp{\lrp{2^{124}d^6 L^2 \lrp{\theta^3 + \theta^2 + \theta}^2}^{-1/36} \delta^{-1/12}} \lrp{2^{71} d^{7/2} L \lrp{\theta^3 + \theta^2 + \theta}
\frac{c_{\sigma}^{12}}{m^6}\lambda^{-1}\epsilon^{-1}}^{1/6}. \numberthis \label{e:iw:2}
\end{align*}
Let us define
\begin{align*}
a:=& \lrp{2^{124}d^6 L^2 \lrp{\theta^3 + \theta^2 + \theta}^2}^{-1/36},\\
c:=&  \lrp{2^{71} d^{7/2} L \lrp{\theta^3 + \theta^2 + \theta} \frac{c_{\sigma}^{12}}{m^6}\lambda^{-1}\epsilon^{-1}}^{-1/6},\\
x:=& \delta^{-1/12}.
\end{align*}
Then by the third case in our max in \eqref{e:iw:1},
\begin{align*}
\delta^{-1/12}
\geq& \lrp{2^{71} d^{7/2} L \lrp{\theta^3 + \theta^2 + \theta} \frac{c_{\sigma}^{12}}{m^6}\lambda^{-1}\epsilon^{-1}}^{-1/6} \cdot \log \lrp{2^{324} d^5 L \lrp{\theta^3 + \theta^2 + \theta}\lambda^{-6}\epsilon^{-6}}\\
\geq& 3 \cdot \lrp{2^{71} d^{7/2} L \lrp{\theta^3 + \theta^2 + \theta} \frac{c_{\sigma}^{12}}{m^6}\lambda^{-1}\epsilon^{-1}}^{-1/6} \cdot \log \lrp{\frac{\lrp{2^{71} d^{7/2} L \lrp{\theta^3 + \theta^2 + \theta} \frac{c_{\sigma}^{12}}{m^6}\lambda^{-1}\epsilon^{-1}}^{1/6}}{\lrp{2^{124}d^6 L^2 \lrp{\theta^3 + \theta^2 + \theta}^2}^{1/36}}}\\
\geq& 3 \frac{1}{c} \log \frac{a}{c}.
\end{align*}
Thus \eqref{e:iw:2} follows immediately from Corollary \ref{c:xlogxbound} with the $a$, $c$, $x$ as defined above.
\end{proof}

\begin{lemma}\label{l:discrete_contraction}
For any $\delta \leq \frac{1}{2L}$ and for $x_k$ with dynamics defined in \eqref{e:sg:1}. If $U_i(x)$ is $m$ strongly convex and has $L$ lipschitz gradients for all $i\in \lrbb{1...S}$, then Assumption \ref{ass:discreteprocesscontracts} holds with $\lambda = m$, i.e. for any two distributions $p$ and $q$, 
\begin{align*}
W_2(\Phi_\delta(p),\Phi_\delta(q)) \leq e^{-m \delta} W_2(p,q)
\end{align*}
\end{lemma}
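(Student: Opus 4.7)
The plan is to use a synchronous coupling argument, essentially mirroring the proof of Lemma~\ref{l:s_discrete_contraction} but now with the observation (from the derivation of \eqref{e:sg:1}) that the discrete update can be written as pure per-sample gradient descent $F_\eta(x) = x - \delta \nabla U_\eta(x)$. Concretely, I would take an optimal $W_2$ coupling $\gamma^*$ between $p$ and $q$, then define the pushforward coupling $\gamma' := (F_\eta, F_\eta)_{\#}\gamma^*$, in which the \emph{same} random sample index $\eta$ drives both marginals. By construction $\gamma'$ is a valid coupling of $\Phi_\delta(p)$ and $\Phi_\delta(q)$, so $W_2^2(\Phi_\delta(p),\Phi_\delta(q)) \leq \mathbb{E}_{\gamma'}\|x'-y'\|_2^2 = \mathbb{E}_{\gamma^*}\mathbb{E}_\eta\|F_\eta(x)-F_\eta(y)\|_2^2$.

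The core step is the deterministic per-sample contraction. Fix any realization of $\eta$; then
\[
\|F_\eta(x)-F_\eta(y)\|_2 = \|x-y - \delta(\nabla U_\eta(x) - \nabla U_\eta(y))\|_2,
\]
and the standard textbook fact for an $m$-strongly convex, $L$-smooth function asserts that the map $z \mapsto z - \delta \nabla U_\eta(z)$ is Lipschitz with constant $\max\{|1-m\delta|, |1-L\delta|\}$, and this maximum equals $1 - m\delta$ as soon as $\delta \leq 2/(m+L)$. Since $m \leq L$, the hypothesis $\delta \leq 1/(2L)$ comfortably implies $\delta \leq 2/(m+L)$, so we obtain $\|F_\eta(x) - F_\eta(y)\|_2 \leq (1-m\delta)\|x-y\|_2 \leq e^{-m\delta}\|x-y\|_2$ pointwise in $\eta$.

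Squaring and integrating against $\gamma^*$ then yields $W_2^2(\Phi_\delta(p),\Phi_\delta(q)) \leq e^{-2m\delta} W_2^2(p,q)$, and the claim follows after taking square roots. The hypotheses that $U_i$ is $m$-strongly convex and $L$-smooth for \emph{every} $i$ are used exactly once, to guarantee that the per-sample contraction holds uniformly in $\eta$, so that the expectation over $\eta$ inherits the same rate. The main ``obstacle'' is really a bookkeeping check rather than a conceptual one: verifying that the quoted step-size range $\delta \leq 1/(2L)$ is inside the regime $\delta \leq 2/(m+L)$ where the gradient-descent map on a single strongly convex smooth function is truly a $(1-m\delta)$-contraction, as opposed to only something like $(1-2m\delta+L^2\delta^2)^{1/2}$ which would not be sharp enough to give the desired $e^{-m\delta}$ factor. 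No appeal to any of the homogeneity or Assumption~\ref{ass:gisregular} machinery is required, because synchronous coupling eliminates the noise term $\sqrt{2\delta}\,T_\eta$ from the difference entirely in this formulation.
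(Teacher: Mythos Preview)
Your proposal is correct and follows the same synchronous-coupling strategy as the paper: take an optimal coupling $\gamma^*$, push forward by $(F_\eta,F_\eta)$ with shared $\eta$, and reduce to the per-sample contraction of the gradient-descent map $z\mapsto z-\delta\nabla U_\eta(z)$. The only difference is that you invoke the sharp Lipschitz constant $\max\{|1-m\delta|,|1-L\delta|\}=1-m\delta$ for $\delta\le 2/(m+L)$, whereas the paper expands the square and uses strong convexity plus smoothness separately to get a cruder factor $(1-m\delta/2)$ on $\|x-y\|_2^2$; your route actually delivers the stated rate $e^{-m\delta}$ exactly, while the paper's own argument as written only yields $e^{-m\delta/8}$ after taking square roots.
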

\begin{proof}[Proof of Lemma \ref{l:discrete_contraction}]
Let $\gamma^*$ be an optimal coupling between $p$ and $q$, i.e.
\begin{align*}
W_2^2(p,q) = \Ep{\gamma^*(x,y)}{\|x-y\|_2^2}
\end{align*}
We define a coupling $\gamma'$ as follows:
\begin{align*}
\gamma'(x,y):= \lrp{F_\eta, F_\eta}_{\#} \gamma^*
\end{align*}
Where $\#$ denotes the push-forward operator. (See \eqref{d:Feta} for the definition of $F_\eta$.)
It is thus true by definition that $\gamma'$ is a valid coupling between $\Phi_\delta(p)$ and $\Phi_\delta(q)$. 

Thus
\begin{align*}
W_2(\Phi_\delta(p),\Phi_\delta(q))
\leq& \Ep{\gamma'(x,y)}{\|x-y\|_2^2}\\
:=& \Ep{\gamma^*(x,y)}{\|F_\eta(x)-F_\eta(y)\|_2^2}\\
=& \Ep{\gamma^*(x,y)}{\|x-\delta \nabla U(x) + \sqrt{2\delta} T_\eta(x) - \lrp{y-\delta \nabla U(y) + \sqrt{2\delta} T_\eta(y)}\|_2^2}\\
=:& \Ep{\gamma^*(x,y)}{\|x- \delta \nabla U_\eta(x) - \lrp{y-\nabla U_\eta(y)}\|_2^2}\\
=& \Ep{\gamma^*(x,y)}{\|x-y - \delta \lrp{\nabla U_\eta(x) - \nabla U_\eta(y)}\|_2^2}\\
\leq& \Ep{\gamma^*(x,y)}{\lrp{1-m\delta/2}\lrn{x-y}_2^2}\\
\leq& e^{-m\delta/4}\Ep{\gamma^*(x,y)}{\lrn{x-y}_2^2}\\
=& e^{-m\delta/4} W_2^2(p,q)
\end{align*}
Where the second inequality follows from our assumption that $U_i(x)$ is m strongly convex and has $L$ lipschitz gradients, and our assumption that $\delta\leq \frac{1}{2L}$, and the third inequality is by the fact that $m\delta/2\leq m/(2L) \leq 1/2$.
\end{proof}
\end{section}
\newpage
\begin{section}{Subgaussian Bounds}

\begin{lemma}
\label{l:p^*hasboundedexponent}
Let $p^*$ be the invariant distribution to \eqref{e:exactsde}. Under the assumptions of Section \ref{ss:mainresult_inhomogeneousnoise}, $p^*$ satisfies
\begin{align*}
\Ep{p^*(x)}{\exp\lrp{\frac{m}{8c_{\sigma}^2}\|x\|_2^2}}\leq 8d
\end{align*}
\end{lemma}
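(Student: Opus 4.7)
The plan is to run a Lyapunov / generator argument with the exponential test function $V(x) := \exp(\alpha \|x\|_2^2)$ for $\alpha = m/(8 c_\sigma^2)$, extract a moment inequality by invariance of $p^*$, and then convert this into a direct bound on $\Ep{p^*}{V(x)}$ by a truncation step.

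First, I would compute the generator $\mathcal{L}$ of~\eqref{e:exactsde} applied to $V$. Since $\mathcal{L}f(x) = -\lin{\nabla U(x), \nabla f(x)} + \lin{\sigma_x \sigma_x^T, \nabla^2 f(x)}$, and $\nabla V = 2\alpha x V(x)$, $\nabla^2 V = V(x)\,(2\alpha I + 4\alpha^2 x x^T)$, this gives
\begin{align*}
\mathcal{L}V(x) = V(x)\lrp{-2\alpha \lin{\nabla U(x), x} + 2\alpha \tr(\sigma_x\sigma_x^T) + 4\alpha^2\, x^T \sigma_x\sigma_x^T x}.
\end{align*}
Then I would apply the standing assumptions: $m$-strong convexity of $U$ together with $\nabla U(0)=0$ yields $\lin{\nabla U(x), x} \geq m\|x\|_2^2$, while Assumption~\ref{ass:ximeanandvariance}.2 gives $\tr(\sigma_x\sigma_x^T)\leq c_\sigma^2 d$ and $x^T \sigma_x \sigma_x^T x \leq c_\sigma^2 \|x\|_2^2$. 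With the specific choice $\alpha = m/(8c_\sigma^2)$, the coefficient of $\|x\|_2^2$ collapses to $4\alpha^2 c_\sigma^2 - 2\alpha m = -\tfrac{3}{2}\alpha m$, so
\begin{align*}
\mathcal{L}V(x) \leq V(x)\lrp{-\tfrac{3 m \alpha}{2}\|x\|_2^2 + 2\alpha c_\sigma^2 d}.
\end{align*}

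Second, I would invoke invariance of $p^*$ under~\eqref{e:exactsde} to conclude $\int \mathcal{L}V(x)\, p^*(x)\,dx = 0$. Writing $M := \Ep{p^*}{V(x)}$ and $N := \Ep{p^*}{\|x\|_2^2 V(x)}$, this rearranges to
\begin{align*}
N \leq \frac{4 c_\sigma^2 d}{3 m}\, M.
\end{align*}
To turn this into a bound on $M$ alone, I would truncate at a radius $R$:
\begin{align*}
M \leq e^{\alpha R} + \frac{1}{R}\,\Ep{p^*}{\|x\|_2^2 V(x)\, \mathbbm{1}\lrbb{\|x\|_2^2 \geq R}} \leq e^{\alpha R} + \frac{N}{R} \leq e^{\alpha R} + \frac{4 c_\sigma^2 d}{3 m R}\, M,
\end{align*}
and pick $R = \tfrac{8 c_\sigma^2 d}{3m}$ so that the last term absorbs $M/2$, giving $M \leq 2 e^{\alpha R}$.

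The main obstacle is the final tightening: the naive truncation above only delivers $M \lesssim 2 e^{\alpha R} = 2 e^{d/3}$, which is exponential in $d$, whereas the lemma claims the linear bound $8d$. To close this gap I would either bootstrap the generator inequality to higher moments (running the same computation for $\|x\|_2^{2k} V(x)$ and combining the resulting recursion in $k$ into a sharpened tail bound), or exploit that in the homogeneous regime $p^*\propto e^{-U}$ is strongly log-concave and hence satisfies a log-Sobolev inequality with constant $1/m$, so that the exponential moment of $\|x\|_2^2$ can be controlled by Herbst's argument applied to the $1$-Lipschitz observable $\|x\|_2$ together with the mean bound $\Ep{p^*}{\|x\|_2^2}\leq c_\sigma^2 d/m$ that also falls out of the generator identity above (taking $V(x) = \|x\|_2^2$).
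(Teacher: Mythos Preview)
Your approach is exactly the paper's: apply the generator (equivalently, It\^o's lemma along $t\mapsto\Ep{p_t}{V(x_t)}$) to $V(x)=\exp(\alpha\|x\|_2^2)$ with $\alpha=m/(8c_\sigma^2)$, bound the drift using $\lin{\nabla U(x),x}\geq m\|x\|_2^2$ and $\sigma_x\sigma_x^T\prec c_\sigma^2 I$, and then close via invariance of $p^*$ combined with a truncation in $\|x\|_2^2$. The paper splits at the fixed threshold $\|x\|_2^2=8c_\sigma^2/m$, but the mechanism is identical to your $R$-truncation.

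The discrepancy you flag as ``the main obstacle'' is not a defect in your argument but an arithmetic slip in the paper's. After the split, the paper asserts that on $\{\|x\|_2^2\geq 8c_\sigma^2/m\}$ one has $-m\alpha\|x\|_2^2+4d\alpha c_\sigma^2\leq -4\alpha c_\sigma^2$; this only holds for $d=1$, since absorbing the $4d\alpha c_\sigma^2$ term requires a threshold of order $dc_\sigma^2/m$, which then forces $e^{\alpha R}=e^{\Theta(d)}$ exactly as you found. In fact the stated bound $8d$ cannot hold in general: for $U(x)=\tfrac{m}{2}\|x\|_2^2$ and $\sigma_x=c_\sigma I$ the invariant law is $\mathcal N\!\bigl(0,(c_\sigma^2/m)I\bigr)$ and a direct Gaussian computation gives $\Ep{p^*}{e^{\alpha\|x\|_2^2}}=(4/3)^{d/2}$, which exceeds $8d$ for large $d$. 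So do not pursue the Herbst or higher-moment bootstraps hoping to reach $8d$; the $2e^{\alpha R}$ bound with $R\asymp dc_\sigma^2/m$ that your truncation already yields is the honest outcome of this Lyapunov argument, and it is of the correct order.
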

\begin{proof}
Let $p_0$ be an initial distribution for which the above expectation is finite. Let $x_t$ be as defined in \eqref{e:exactsde} (we use $x_t$ instead of $x(t)$ to reduce clutter). For convenience of notation, let $s:=\frac{m}{8c_{\sigma}^2}$.
\begin{align*}
& \ddt \E{\exp\lrp{s\|x_t\|_2^2}}\\
=& \E{\exp\lrp{s\|x_t\|_2^2}\cdot \lrp{\lin{-\nabla U(x_t), 2sx_t} + \lin{2sI + 4s^2 x_t x_t^T, 2\sigma_{x_t} \sigma_{x_t}^T}}}\\
\leq& \E{\exp\lrp{s\|x_t\|_2^2}\cdot \lrp{-2ms\|x_t\|_2^2 + 4ds c_{\sigma}^2 + 8s^2 c_{\sigma}^2 \|x_t\|_2^2}}\\
\leq& \E{\exp\lrp{s\|x_t\|_2^2}\cdot \lrp{-ms\|x_t\|_2^2 + 4ds c_{\sigma}^2}}\\
=& \E{\exp\lrp{s\|x_t\|_2^2}\cdot \lrp{-ms\|x_t\|_2^2 + 4ds c_{\sigma}^2}\cdot \ind{\|x_t\|_2^2 \geq \frac{8c_{\sigma}^2}{m}}} \\
&\quad + \E{\exp\lrp{s\|x_t\|_2^2}\cdot \lrp{-ms\|x_t\|_2^2 + 4ds c_{\sigma}^2}\cdot \ind{\|x_t\|_2^2 < \frac{8c_{\sigma}^2}{m}}}\\
\leq& -4sc_{\sigma}^2 \E{\exp\lrp{s\|x_t\|_2^2}\cdot \ind{\|x_t\|_2^2 \geq \frac{8c_{\sigma}^2}{m}}} + 4dsc_{\sigma}^2 e\\
\leq& -4sc_{\sigma}^2 \E{\exp\lrp{s\|x_t\|_2^2}} + 8dsc_{\sigma}^2 e,
\end{align*}
where the first line is by Ito's lemma, the second line is by Assumption \ref{ass:uissmooth} and Assumption \ref{ass:ximeanandvariance}.2 , the third line is by definition of $s$, the fifth line is again by definition of $s$.

Since $p_t \to p^*$, the above implies that
$$\Ep{p^*}{\exp\lrp{s\|x_t\|_2^2}} < \infty$$

Furthermore, by invariance of $p^*$ under \eqref{e:exactsde}, we have that if $p_0=p^*$ then $\ddt \E{\exp\lrp{s\|x_t\|_2^2}}=0$, so
\begin{align*}
& 0 = \ddt \E{\exp\lrp{s\|x_t\|_2^2}} \leq -4sc_{\sigma}^2 \E{\exp\lrp{s\|x_t\|_2^2}} + 8dsc_{\sigma}^2 e \\
\Rightarrow \quad & 4sc_{\sigma}^2 \E{\exp\lrp{s\|x_t\|_2^2}} \leq  8dsc_{\sigma}^2 e \\
\Rightarrow \quad & \Ep{p^*(x)}{\exp\lrp{\frac{m}{8c_{\sigma}^2}\|x\|_2^2}} \leq  8d
\numberthis \label{e:expexpecttionbound}
\end{align*}
\end{proof}

\begin{lemma}
\label{l:p^*issubgaussian}
Let $p^*$ be the invariant distribution to \eqref{e:exactsde}. Under the assumptions of Section \ref{ss:mainresult_inhomogeneousnoise}, $p^*$ satisfies
\begin{align*}
p^*\lrp{\|x\|_2^2 \geq t} \leq 8 \exp\lrp{-\frac{mt}{8c_{\sigma}^2}},
\end{align*}
where $m$ and $c_{\sigma}$ are as defined in Section \ref{s:definitionsandassumptions}.
\end{lemma}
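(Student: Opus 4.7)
The plan is to reduce this tail bound to the exponential moment bound already established in Lemma~\ref{l:p^*hasboundedexponent} via the standard Chernoff/Markov argument. Since the preceding lemma gives a uniform bound on $\Ep{p^*}{\exp(s\|x\|_2^2)}$ at the specific value $s = \frac{m}{8c_\sigma^2}$, no optimization over the exponential parameter is required; I will just plug this $s$ in directly.

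Concretely, first I would write, for any $t \geq 0$,
\begin{align*}
p^*\lrp{\|x\|_2^2 \geq t}
= p^*\lrp{\exp\lrp{s \|x\|_2^2} \geq \exp(st)}
\leq \exp(-st) \cdot \Ep{p^*}{\exp\lrp{s \|x\|_2^2}},
\end{align*}
by Markov's inequality applied to the nonnegative random variable $\exp(s\|x\|_2^2)$. Then I would specialize to $s = \frac{m}{8c_\sigma^2}$ and invoke Lemma~\ref{l:p^*hasboundedexponent}, which controls exactly this expectation and yields the claimed exponential decay rate in the exponent.

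There is no real obstacle here; the only subtlety is that Lemma~\ref{l:p^*hasboundedexponent} produces the constant $8d$ rather than $8$, so the literal Chernoff bound gives $p^*(\|x\|_2^2 \geq t) \leq 8d \exp\lrp{-\frac{mt}{8c_\sigma^2}}$. I would note this dimensional prefactor explicitly and interpret the stated bound accordingly (either absorbing the $d$ into the constant or treating the ``$8$'' in the statement as a mild typo for ``$8d$''). Either way, the substantive content---that $p^*$ is subgaussian with scale parameter of order $c_\sigma^2/m$---follows immediately.
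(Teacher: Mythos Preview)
Your proposal is correct and matches the paper's proof essentially line for line: the paper also applies Markov's inequality to $\exp\lrp{\frac{m}{8c_\sigma^2}\|x\|_2^2}$ and invokes Lemma~\ref{l:p^*hasboundedexponent}, obtaining the bound $8d\exp\lrp{-\frac{mt}{8c_\sigma^2}}$. Your observation about the $8$ versus $8d$ discrepancy is accurate; the paper's own proof in fact yields $8d$, so the constant in the lemma statement is a typo.
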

\begin{proof}[Proof of Lemma~\ref{l:p^*issubgaussian}]
From Lemma~\ref{l:p^*hasboundedexponent},
$$\E{\exp\lrp{\frac{m}{8c_{\sigma}^2}\|x\|_2^2}}\leq 8d$$
By Markov's inequality:
\begin{align*}
\P\lrp{\|x\|_2^2 \geq t}
=& \P\lrp{\exp\lrp{\frac{m}{8c_{\sigma}^2}\|x\|_2^2} \geq \exp\lrp{\frac{m}{8c_{\sigma}^2}t}}\\
\leq& \frac{\E{\exp\lrp{\frac{m}{8c_{\sigma}^2}\|x\|_2^2}}}{\exp\lrp{\frac{m}{8c_{\sigma}^2}t}}\\
\leq& 8d\exp\lrp{-\frac{mt}{8c_{\sigma}^2}}
\end{align*}
\end{proof}

As a Corollary to Lemma~\ref{l:p^*issubgaussian}, we can bound $\E{\|x\|_2^2 \ind{\|x\|_2^2 \geq t}}$ for all $t$:
\begin{corollary}
\label{c:boundedvarianceoutsideradius}
Let $p^*$ be the invariant distribution to \eqref{e:exactsde}. Under the assumptions of Section \ref{ss:mainresult_inhomogeneousnoise}, for any $S\geq \frac{48c_{\sigma}^2}{m} \max\lrbb{\log \lrp{\frac{16c_{\sigma}^2}{m}},1}$, 
$$\Ep{p^*}{\|x\|_2^2 \ind{\|x\|_2^2 \geq S}} \leq 12d\exp\lrp{-\frac{mS}{16c_{\sigma}^2}}$$

\end{corollary}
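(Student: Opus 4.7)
The plan is to reduce the truncated second moment to an integral of tail probabilities, then plug in the subgaussian tail estimate established in Lemma \ref{l:p^*issubgaussian} and simplify using the lower bound on $S$. No new probabilistic input is needed; this is essentially a calculation.

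First I would use the layer-cake identity for the second moment of a nonnegative random variable, specialized to the truncation at level $S$:
\[
\Ep{p^*}{\|x\|_2^2 \ind{\|x\|_2^2 \geq S}} \;=\; S \cdot p^*\lrp{\|x\|_2^2 \geq S} \;+\; \int_S^\infty p^*\lrp{\|x\|_2^2 \geq t}\,dt.
\]
Substituting the bound $p^*(\|x\|_2^2 \geq t) \leq 8d \exp(-mt/(8c_\sigma^2))$ from Lemma \ref{l:p^*issubgaussian} (the sharper form derived at the end of its proof) and evaluating the resulting exponential integral gives
\[
\Ep{p^*}{\|x\|_2^2 \ind{\|x\|_2^2 \geq S}} \;\leq\; 8d\lrp{S + \frac{8c_\sigma^2}{m}} \exp\lrp{-\frac{mS}{8c_\sigma^2}}.
\]

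Next I would split a factor $\exp(-mS/(16c_\sigma^2))$ off the exponential, so that matching the target bound reduces to showing
\[
8\lrp{S + \frac{8c_\sigma^2}{m}} \exp\lrp{-\frac{mS}{16c_\sigma^2}} \;\leq\; 12.
\]
Since the hypothesis $S \geq \frac{48c_\sigma^2}{m}$ gives $S + 8c_\sigma^2/m \leq \frac{7}{6}S$, and setting $u := mS/(16c_\sigma^2) \geq 3\max\{\log(16c_\sigma^2/m),1\}$ turns the left side into a constant multiple of $u e^{-u}$ (times $c_\sigma^2/m$). The elementary bound $u e^{-u} \leq e^{-u/2}$ for $u \geq 2$ then lets us absorb the remaining $c_\sigma^2/m$ factor using the $\log(16c_\sigma^2/m)$ part of the assumption on $S$, yielding the constant $12$.

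The main obstacle is purely the constant-tracking in the last step: one must split cases according to whether $\max\{\log(16c_\sigma^2/m),1\}$ is attained by the $\log$ term or by $1$, so that the logarithmic threshold is large enough to dominate the $c_\sigma^2/m$ prefactor produced by converting $S$ to $u$. Apart from this careful bookkeeping, every step is routine.
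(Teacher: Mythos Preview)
Your proposal is correct and follows essentially the same approach as the paper: both use the layer-cake representation to reduce to the tail bound of Lemma \ref{l:p^*issubgaussian}, arrive at the same intermediate estimate $8d\lrp{S + 8c_\sigma^2/m}\exp(-mS/(8c_\sigma^2))$, and then absorb the polynomial prefactor into half of the exponential using the lower bound on $S$. The only cosmetic difference is that the paper invokes Lemma \ref{l:xlogxbound} for the last step whereas you use the equivalent elementary inequality $ue^{-u}\le e^{-u/2}$ together with a case split on $\max\{\log(16c_\sigma^2/m),1\}$.
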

\begin{proof}[Proof of Corollary \ref{c:boundedvarianceoutsideradius}]
Let $y$ be a real valued random variable that is always positive. We use the equality
\begin{align*}
\E{y} = \int_0^\infty \P(y\geq s) ds
\end{align*}
Let $y := \|x\|_2^2 \cdot \ind {\|x\|_2^2 \geq t}$.
Then
\begin{align*}
\P(y\geq s) = 
\begin{cases}
1 & \text{if}\ s=0 \\
\P(\|x\|_2^2 \geq t) & \text{if}\ s\in (0,t)\\
\P(\|x\|_2^2 \geq s) & \text{if}\ s\geq t
\end{cases}
\end{align*}

Therefore,
\begin{align*}
&\Ep{p^*}{\|x\|_2^2 \cdot \ind {\|x\|_2^2 \geq S}}\\
=& \E{y}\\
=& \int_0^\infty \P(y\geq s) ds\\
=& \int_0^t \P(\|x\|_2^2 \geq s) ds + \int_S^\infty \P(\|x\|_2^2 \geq s) ds\\
\leq& 8dS\exp\lrp{-\frac{mS}{8c_{\sigma}^2}} + \int_S^\infty 8d\exp\lrp{-\frac{ms}{8c_{\sigma}^2}} ds\\
=& 8dS\exp\lrp{-\frac{mS}{8c_{\sigma}^2}} +  \frac{64dc_{\sigma}^2}{m}\exp\lrp{-\frac{mS}{8c_{\sigma}^2}}\\
=& \lrp{8dS+\frac{64dc_{\sigma}^2}{m}}\exp\lrp{-\frac{mS}{8c_{\sigma}^2}}
\numberthis \label{e:l:boundedvarianceoutsideradius:1}\\
\leq& 12dS\exp\lrp{-\frac{mS}{8c_{\sigma}^2}}\\
\leq& 12d\exp\lrp{-\frac{mS}{16c_{\sigma}^2}},
\end{align*}
where the first inequality above uses Lemma~\ref{l:p^*issubgaussian}, the second inequality uses our assumption on $S$, and the third inequality is by our assumption on $S$
combined with Lemma~\ref{l:kthmomentbound}.
\end{proof}
\begin{corollary}\label{c:pdeltaissubgaussian}
Let $p_\delta:=\Phi_\delta(p^*)$, then for all $t\geq 1$ and $\delta \leq \frac{1}{16L}$
\begin{enumerate}
\item $p_\delta\lrp{\|x\|_2^2 \geq t} \leq 8d \exp\lrp{-\frac{mt}{32c_{\sigma}^2}}$
\item $\Ep{p_\delta}{\|x\|_2^2 \ind{\|x\|_2^2 \geq t}} \leq 12d\exp\lrp{-\frac{mt}{64c_{\sigma}^2}}$
\end{enumerate}
\end{corollary}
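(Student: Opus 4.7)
The plan is to transfer the subgaussian estimates of Lemma~\ref{l:p^*issubgaussian} (and the exponential moment bound of Lemma~\ref{l:p^*hasboundedexponent}) from $p^*$ to $p_\delta$ using the pushforward relation $p_\delta = (F_\eta)_\# p^*$, together with the observation that one step of the discrete process $F_\eta$ does not drastically inflate $\|x\|_2$ when $\delta \leq 1/(16L)$.

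First I would establish a deterministic bound of the form $\|F_\eta(x)\|_2^2 \leq C_1 \|x\|_2^2 + C_2$ for all $x$ and $\eta$ a.s., where $C_1$ is a small constant (I expect $C_1 \leq 4$) and $C_2$ is an absolute constant. This follows from $F_\eta(x) = x - \delta \nabla U(x) + \sqrt{2\delta} T_\eta(x)$, Assumption~\ref{ass:uissmooth} (giving $\|\nabla U(x)\|_2 \leq L\|x\|_2$), Assumption~\ref{ass:gisregular}.2 (giving $\|T_\eta(x)\|_2 \leq \sqrt{L}(\|x\|_2+1)$), and a Young-type inequality $\|a+b\|_2^2 \leq (1+\alpha)\|a\|_2^2 + (1+1/\alpha)\|b\|_2^2$ with $\alpha$ chosen so that, under $\delta \leq 1/(16L)$, the resulting $C_1$ matches the desired exponent in the tail bound.

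Second, I would transfer the exponential moment bound: for any $s > 0$,
\[
\mathbb{E}_{y \sim p_\delta}\bigl[\exp(s\|y\|_2^2)\bigr] = \mathbb{E}_{x \sim p^*, \eta}\bigl[\exp(s\|F_\eta(x)\|_2^2)\bigr] \leq e^{sC_2}\, \mathbb{E}_{x \sim p^*}\bigl[\exp(sC_1\|x\|_2^2)\bigr].
\]
Choosing $s = m/(32c_\sigma^2)$ so that $sC_1 \leq m/(8c_\sigma^2)$, Lemma~\ref{l:p^*hasboundedexponent} bounds the right-hand side by $8d\,e^{sC_2}$. Applying Markov's inequality then yields claim~1. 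For claim~2, I would integrate the tail exactly as in Corollary~\ref{c:boundedvarianceoutsideradius}, splitting $\mathbb{E}_{p_\delta}[\|y\|_2^2 \mathbbm{1}\{\|y\|_2^2 \geq t\}] = \int_0^\infty \mathbb{P}_{p_\delta}(\|y\|_2^2 \mathbbm{1}\{\|y\|_2^2\geq t\} \geq s)\,ds$ and using claim~1 together with the assumption $t \geq 1$.

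The main obstacle will be bookkeeping the constants: because $C_2 > 0$, the naive argument produces a prefactor $e^{sC_2} = \exp(mC_2/(32c_\sigma^2))$ which cannot be absorbed into the stated $8d$ without further work. The fix is either to use that $t \geq 1$ so that $e^{sC_2}$ can be absorbed by slightly degrading the exponent (from $mt/(32c_\sigma^2)$ in the MGF to $mt/(32c_\sigma^2)$ in the Markov step after trading a constant factor for shrinking $s$), or to tighten $C_1$ down toward $2$ by taking the Young parameter $\alpha$ small; either way the shape of the subgaussian bound is preserved and only constants change. Everything else is essentially routine given the two preceding lemmas.
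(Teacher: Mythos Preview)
Your approach is correct and close in spirit to the paper's, but you take an unnecessary detour through the exponential moment that creates the constant-tracking issue you flag. The paper's proof avoids the MGF entirely: it uses Lemma~\ref{l:onestepdiscretizationbounds}.1 (stated for $y = F_\eta^{-1}(x)$) to get $\|F_\eta^{-1}(x)\|_2 \geq \tfrac{1}{2}\|x\|_2 - \tfrac{1}{8}$ when $\delta \leq 1/(16L)$, so for $t\geq 1$ the event $\{\|y\|_2^2 \geq t\}$ under $p_\delta$ pulls back into $\{\|x\|_2^2 \geq t/4\}$ under $p^*$. Then Lemma~\ref{l:p^*issubgaussian} is applied directly, yielding the factor $32$ in the exponent as a clean $4$-fold degradation of the $8$ in the $p^*$ bound, with no extraneous $e^{sC_2}$ prefactor. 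Claim~2 is then handled exactly as you propose, by repeating the argument of Corollary~\ref{c:boundedvarianceoutsideradius}.

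Your deterministic bound $\|F_\eta(x)\|_2^2 \leq C_1\|x\|_2^2 + C_2$ is essentially the same fact viewed from the forward direction, and if you simply compare tails from it---$\mathbb{P}_{p_\delta}(\|y\|_2^2 \geq t) = \mathbb{P}_{p^*}(\|F_\eta(x)\|_2^2 \geq t) \leq \mathbb{P}_{p^*}(\|x\|_2^2 \geq (t-C_2)/C_1)$---and then invoke Lemma~\ref{l:p^*issubgaussian}, you recover the paper's argument without the MGF step. The MGF route is not wrong, just less economical here.
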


\begin{proof}[Proof of Corollary \ref{c:pdeltaissubgaussian}]
By Lemma~\ref{l:onestepdiscretizationbounds} and our assumption that $\delta \leq 1/\lrp{16 L}$ and Triangle inequality, we get
\begin{align*}
\lrn{F_\eta^{-1}(x)}_2 
\geq& \|x\|_2 - 2 \delta^{1/2} L^{1/2}\lrp{\|x\|_2 + 1}\\
\geq& 1/2 \|x\|_2 - 1/8
\end{align*}
Thus for $t\geq 1$ and $\delta \leq \frac{1}{4L}$
\begin{align*}
\|x|_2 \geq \sqrt{t} \quad \Rightarrow \quad  \|F_\eta^{-1}(x)|_2 
\geq& 1/2 \|x\|_2 - 1/8\\
\geq& 1/4 \|x\|_2\\
\geq& \sqrt{t}/2
\end{align*}
Thus
\begin{align*}
&p_\delta (\|x\|_2^2 \geq t)\\
\leq& p^*\lrp{\lrn{x}_2^2 \geq t/4}\\
\leq& 8d \exp\lrp{-\frac{mt}{32c_{\sigma}^2}}
\end{align*}
This proves the first claim.

Using the first claim, and an identical proof as Corollary \ref{c:boundedvarianceoutsideradius}, we can prove the second claim.
\end{proof}

\begin{lemma}\label{l:kthmomentbound}
For any $k$, we have the bound
\begin{align*}
\Ep{p^*}{\|x\|_2^{2k}}\leq \max\lrbb{\lrp{2^6 (k-1) \frac{c_{\sigma}^2}{m}\log \lrp{\frac{16(k-1) c_{\sigma}^2}{m}}}^{k-1},128 k d \frac{c_{\sigma}^2}{m}}
\end{align*}
\end{lemma}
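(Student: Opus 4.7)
I will use the layer-cake identity
\begin{align*}
\Ep{p^*}{\|x\|_2^{2k}} = \int_0^\infty k\,t^{k-1}\,\P(\|x\|_2^2 \geq t)\,dt,
\end{align*}
combined with the subgaussian tail bound of Lemma~\ref{l:p^*issubgaussian}, namely $\P(\|x\|_2^2 \geq t) \leq 8d\exp(-mt/(8c_\sigma^2))$. I would split the integration at the threshold $R := 2^6(k-1)\tfrac{c_\sigma^2}{m}\log(16(k-1)c_\sigma^2/m)$, chosen large enough that (i) the polynomial factor $t^{k-1}$ is already past its peak against the exponential weight at $t = R$ (the maximum of $t^{k-1}e^{-mt/(8c_\sigma^2)}$ sits near $t = 8(k-1)c_\sigma^2/m$, and $R$ exceeds this by a logarithmic factor), and (ii) $\exp(-mR/(8c_\sigma^2))$ is small enough to dominate the prefactor $8d$ in the tail contribution.

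On $[0,R]$, rather than the crude bound $\int_0^R k t^{k-1}\,dt = R^k$, I would exploit $\|x\|_2^{2k}\cdot \ind{\|x\|_2^2 \leq R} \leq R^{k-1}\,\|x\|_2^2$ to reduce to a first-moment estimate, yielding a bulk contribution of $R^{k-1}\Ep{p^*}{\|x\|_2^2}$. The first moment itself is bounded via the $k=1$ specialization of the same layer-cake computation (or via Taylor expansion of the exponential moment bound of Lemma~\ref{l:p^*hasboundedexponent}), giving $\Ep{p^*}{\|x\|_2^2} \leq 128\,d\,c_\sigma^2/m$. On $(R,\infty)$, the tail integral is controlled using the monotonicity of $t^{k-1}\exp(-mt/(16c_\sigma^2))$ past $R$: the integral is bounded by its boundary value times the geometric factor $16c_\sigma^2/m$, and the particular choice of $R$ makes $8d\exp(-mR/(8c_\sigma^2))\cdot R^{k-1}$ negligible relative to $R^{k-1}$ itself.

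Combining the two pieces yields an upper bound controlled by the dominant of the two radius-based contributions. The max structure in the conclusion then emerges from a dichotomy: whenever $R^{k-1}$ dominates, the first argument of the max is the operative bound; in the degenerate regime, notably $k = 1$ where $R^{k-1} = 1$ by convention, the $128\,k\,d\,c_\sigma^2/m$ first-moment term takes over. The main obstacle is the constant bookkeeping required to land on the precise form of the max rather than a product: the logarithmic factor inside $R$ must be large enough for the tail to decay faster than the polynomial prefactor and to absorb the factor of $d$, yet structured so that the two regimes of the max separate cleanly and the stated constants $2^6$ and $16$ line up with the thresholds required for monotonicity.
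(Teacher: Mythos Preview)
Your overall strategy---layer-cake formula, split at a threshold $R$ of order $(k-1)\tfrac{c_\sigma^2}{m}\log(\cdot)$, and control the tail by absorbing the polynomial $t^{k-1}$ into half of the subgaussian exponential---is exactly the paper's approach. The paper defines $S := \max\{48(k-1)\tfrac{c_\sigma^2}{m}\log(16(k-1)c_\sigma^2/m),0\}$, uses Lemma~\ref{l:p^*issubgaussian} on $[S,\infty)$, and invokes Lemma~\ref{l:xlogxbound} to justify $s^{k-1}\leq e^{ms/(16c_\sigma^2)}$ for $s\geq S$, leaving a pure exponential whose integral is $128kd\,c_\sigma^2/m$.

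Where you diverge from the paper, and where the gap sits, is the bulk term. The paper uses the crude bound
\[
k\int_0^S s^{k-1}\,p^*(\|x\|_2^2\geq s)\,ds \;\leq\; k\int_0^S s^{k-1}\,ds \;=\; S^k,
\]
so the two pieces are simply $S^k$ and $128kd\,c_\sigma^2/m$, and the final bound is their sum (hence controlled by their max up to a constant). Your refinement $\|x\|_2^{2k}\ind{\|x\|_2^2\leq R}\leq R^{k-1}\|x\|_2^2$ is pointwise correct, but it yields a \emph{product} $R^{k-1}\cdot\Ep{p^*}{\|x\|_2^2}$ of the two quantities appearing in the claimed max, not one of them alone. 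A product of two large numbers is not bounded by their maximum, so the ``dichotomy'' you describe does not actually recover the stated form: for $k\geq 2$ and generic parameters both $R^{k-1}$ and $128d\,c_\sigma^2/m$ exceed $1$, and their product can be arbitrarily larger than either. The remedy is simply to drop the refinement and use the crude $S^k$ bound on $[0,S]$, as the paper does; then bulk and tail each match one arm of the max and the argument closes.
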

\begin{proof}[Proof of Lemma~\ref{l:kthmomentbound}]
Let us define the fixed radius $S:= \max\lrbb{\frac{48 (k-1) c_{\sigma}^2}{m}\log \lrp{\frac{16(k-1) c_{\sigma}^2}{m}},0}$
\begin{align*}
\Ep{p^*}{\|x\|_2^{2k}}
=& \int_0^\infty p^*(\|x\|_2^{2k} \geq t) dt\\
=& k \int_0^\infty p^*(\|x\|_2^{2k} \geq s^{k}) s^{k-1} ds\\
=& k \int_0^\infty p^*(\|x\|_2^2 \geq s) s^{k-1} ds\\
=& k \int_0^S p^*(\|x\|_2^2 \geq s) s^{k-1} ds\\
&\quad + k \int_S^\infty p^*(\|x\|_2^2 \geq s) s^{k-1} ds\\
\leq&  S^k + k \int_S^\infty 8d \exp\lrp{-\frac{ms}{8c_{\sigma}^2}}s^{k-1} ds\\
\leq&  S^k + k \int_S^\infty 8d \exp\lrp{-\frac{ms}{16c_{\sigma}^2}} ds\\
\leq& \max\lrbb{\lrp{2^6 (k-1) \frac{c_{\sigma}^2}{m}\log \lrp{\frac{16(k-1) c_{\sigma}^2}{m}}}^{k-1},128 kd \frac{c_{\sigma}^2}{m}},
\end{align*}
where the first inequality is by Lemma~\ref{l:p^*issubgaussian} and the second inequality is by Lemma~\ref{l:xlogxbound} and our choice of $S$, the third inequality is by some algebra and our choice of $S$.

\end{proof}

\begin{lemma}\label{l:upperboundw2bychisquared}
For any two densities $p$, $q$ over $\Re^d$, and for any radius $R\in \Re^+$, let $c = \max \lrbb{p(\|x\|_2 > R), q(\|x\|_2 > R)}$, then
\begin{align*}
W_2^2(p,q) 
\leq& 4R^2 \int_{B_{{R}}} \lrp{\frac{p(x)}{q(x)}-1}^2 dx  + 32 c^2R^2 + 2cR + 2 \Ep{p}{ \|x\|_2^2 \ind{\|x\|_2 > R}} + 2\Ep{q}{ \|x\|_2^2 \ind{\|x\|_2 > R}}
\end{align*}
\end{lemma}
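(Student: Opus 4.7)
The plan is to construct an explicit coupling $\gamma$ of $p$ and $q$, bound $\int \|x-y\|_2^2\, d\gamma$ directly, and then convert an intermediate total-variation integral over $B_R$ into the weighted $\chi^2$-style quantity on the right-hand side via Cauchy--Schwarz. Concretely, set $m(x) := \min\lrbb{p(x), q(x)}$ and define the nonnegative measures $\mu := p - m$ and $\nu := q - m$, which by construction have the same total mass $\alpha$. The coupling $\gamma$ places $m(x)\,dx$ on the diagonal (zero transport cost) and handles the rest with the rescaled product coupling $\gamma_{\mathrm{rem}} := \mu \otimes \nu / \alpha$; this is a valid coupling of $p$ and $q$ (and the case $\alpha = 0$ is trivial, giving $p = q$).

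Applying $\|x - y\|_2^2 \leq 2\|x\|_2^2 + 2\|y\|_2^2$ to $\gamma_{\mathrm{rem}}$ gives $W_2^2(p,q) \leq 2\int \|x\|_2^2\, d\mu + 2\int \|y\|_2^2\, d\nu$. I would then split each integral over $B_R$ and $B_R^c$. Since $\mu \leq p$ pointwise (and similarly $\nu \leq q$), the $B_R^c$ pieces are bounded by $\Ep{p}{\|x\|_2^2\, \ind{\|x\|_2 > R}}$ and $\Ep{q}{\|y\|_2^2\, \ind{\|y\|_2 > R}}$, which produces the last two terms of the stated bound. On $B_R$ we have $\mu\ind{B_R} = (p-q)_+\ind{B_R}$ and $\nu\ind{B_R} = (q-p)_+\ind{B_R}$, summing to $|p-q|\ind{B_R}$; combined with $\|x\|_2^2 \leq R^2$ on $B_R$, the $B_R$ contributions come to at most $2R^2 \int_{B_R} |p-q|\, dx$.

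The remaining step converts the $L^1$ quantity $\int_{B_R} |p-q|\, dx$ into the weighted $L^2$ integral appearing in the bound. Writing $|p - q| = |p/q - 1|\, q$ and applying Cauchy--Schwarz yields
\[
\int_{B_R} |p-q|\, dx \;\leq\; \lrp{\int_{B_R} \lrp{\frac{p}{q} - 1}^2 q\, dx}^{1/2} \lrp{\int_{B_R} q\, dx}^{1/2}.
\]
Since $\int_{B_R} q\, dx \leq 1$, an application of $2ab \leq a^2 + b^2$ lets us absorb $2R^2$ times this square root into $4R^2 \int_{B_R}(p/q - 1)^2 q\, dx$ plus a slack term. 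Using that $p(B_R^c), q(B_R^c) \leq c$ and the consequent bound $|p(B_R) - q(B_R)| \leq c$, the slack term is controlled by quantities that scale at most like $c^2 R^2$ and $cR$, yielding the stated constants $32c^2 R^2$ and $2cR$.

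The main obstacle is the bookkeeping in this final conversion: naively applying AM--GM to $2R^2 \sqrt{\chi^2_{B_R}}$ would introduce a rogue additive $R^2$ that does not appear in the stated bound. Removing this requires using the smallness of $p, q$ outside $B_R$ (i.e., the $c$-bound on tail mass) to show that either $\int_{B_R} |p-q|\, dx$ is already small enough to fit inside the $4R^2 \chi^2$ term, or its excess can be repackaged into the $c$-dependent correction terms. The rest of the argument is routine splitting of integrals and direct moment estimates.
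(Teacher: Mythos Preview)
Your coupling construction is fine through the split over $B_R$ and $B_R^c$, but the final conversion has a genuine gap that the tail parameter $c$ cannot close. After the split you hold the bound $W_2^2(p,q)\le 2R^2\int_{B_R}|p-q|\,dx + (\text{tail terms})$, and the trouble is that $\int_{B_R}|p-q|\,dx$ is of order $\sqrt{\chi^2_{B_R}}$ in the worst case, not $\chi^2_{B_R}$. Take the clean case $c=0$ (both $p,q$ supported in $B_R$): with $q$ any density on $B_R$ and $p(x)=q(x)\bigl(1+\epsilon\,\phi(x)\bigr)$ for a bounded mean-zero $\phi$, one has $\int_{B_R}|p-q|\,dx=\epsilon\int q|\phi|$ while $\int_{B_R}(p/q-1)^2 q\,dx=\epsilon^2\int q\phi^2$. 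Your intermediate bound is then of order $R^2\epsilon$, but the target $4R^2\chi^2_{B_R}+32c^2R^2+2cR$ is of order $R^2\epsilon^2$, and with $c=0$ there is nothing left to absorb the discrepancy. Any AM--GM splitting of $2R^2\sqrt{\chi^2_{B_R}}$ necessarily produces a slack term of order $R^2$, independent of $c$, so the ``repackaging into $c$-dependent corrections'' you sketch cannot succeed. The structural reason is that the product coupling on the remainders $(\mu,\nu)$ only controls $W_2^2$ by (diameter)$^2$ times total variation, and $TV$ is generically the square root of $\chi^2$; this is one full order too weak for the stated inequality.

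The paper avoids this by a different decomposition. It conditions $p$ and $q$ on $B_R$ to obtain densities $p_R,q_R$, writes $p=(1-b)\,p_R+b\,p_R^c$ and $q=(1-b)\,q_R+b\,q_R^c$ with suitable complementary pieces carrying mass $b\le c$, and couples the two mixtures componentwise. The $p_R^c/q_R^c$ component is handled exactly as you do (product coupling, giving the tail-moment and $cR^2$ terms). For the in-ball component, however, the paper does \emph{not} use a maximal coupling; it instead passes through the chain
\[
W_2^2(p_R,q_R)\ \le\ R^2\,TV(p_R,q_R)^2\ \le\ R^2\,KL(p_R\|q_R)\ \le\ R^2\,\chi^2(p_R\|q_R),
\]
so that the bound lands directly on $\chi^2$ rather than on $\sqrt{\chi^2}$. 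The $c$-dependent corrections $32c^2R^2$ then appear only when relating $\chi^2(p_R\|q_R)$ back to $\int_{B_R}(p/q-1)^2 q\,dx$ (the normalisations $1-a$ and $1-b$ differ by $O(c)$). The missing idea in your route is precisely this squaring of $TV$ via Pinsker's inequality; a single maximal coupling of $p$ and $q$ cannot reproduce it.
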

\begin{proof}[Proof of Lemm \ref{l:upperboundw2bychisquared}]

Let $p$ and $q$ be two distributions. 

Let $a := p(\|x\|_2 > R)$ and $b:= q(\|x\|_2 > R)$, let $c=\max\lrbb{a,b}$. To simplify the proof, assume that $a \leq b$. The proof for the case $b\leq a$ is almost identical and omitted.

For a radius $R$, let 
\begin{align*}
p_R(x) :=& \frac{1}{1-a} \cdot \ind{\|x\|_2\leq R} \cdot p(x)\\
q_R(x) :=& \frac{1}{1-b} \cdot \ind{\|x\|_2\leq R} \cdot q(x)
\end{align*}

I.e. $p$ and $q$ conditioned on $\|x\|_2\leq R$. 

(The proof for when $b\leq a$ is almost identical and is omitted)

We will also define
\begin{align*}
p_R^c(x) 
:=& \frac{1}{b}\lrp{\frac{b-a}{1-a}\cdot \ind{\|x\|_2\leq R} \cdot p(x)} + \frac{1}{b}\lrp{\ind{\|x\|_2> R} \cdot p(x)}\\
q_R^c(x) 
:=& \frac{1}{b} \ind{\|x\|_2^2 > R} \cdot q(x)
\end{align*} 

One can verify that 
\begin{align*}
p(x) =& (1-b)\cdot p_R(x) + b\cdot p_R^c(x)\\
q(x) =& (1-b)\cdot q_R(x) + b\cdot q_R^c(x)
\end{align*}

Suppose that we have a coupling $\gamma_R$ between $p_R$ and $q_R$ (i.e. $\gamma_R$ is a density over $\Re^{2d}$). Then one can verify that $(1-b) \gamma_R+ b \gamma_R^c$ is a valid coupling for $p$ and $q$. Thus
\begin{align*}
W_2^2(p,q) 
\leq& \Ep{(x,y) \sim (1-b) \gamma_R+ b \gamma_R^c}{\|x-y\|_2^2}\\
=& (1-b) \cdot \Ep{(x,y)\sim \gamma_R}{\|x-y\|_2^2} + b \cdot \Ep{(x,y)\sim \gamma_R^c}{\|x-y\|_2^2}\\
\leq& (1-b) \cdot \Ep{(x,y)\sim \gamma_R}{\|x-y\|_2^2} + b \cdot \lrp{2\Ep{p_R^c}{\|x\|_2^2 + 2\Ep{q_R^c}{\|y\|_2^2}}}
\end{align*}

Since the above holds for all valid $\gamma_R$, it holds for the optimal $\gamma_R^*$, thus
\begin{align*}
W_2^2(p,q) \leq (1-b) \cdot W_2^2(p_R,q_R) + 2b \cdot \lrp{\Ep{p_R^c}{\|x\|_2^2 + \Ep{q_R^c}{\|y\|_2^2}}}
\end{align*}

Since $p_R$ and $q_R$ are constrained to the ball of radius $R$, we can upper bound $W_2$ by $TV$:
\begin{align*}
W_2^2(p_R,q_R)
\leq& TV(p_R,q_R)^2 R^2\\
\leq& \KL(p_R,q_R) R^2\\
\leq& \chi^2(p_R,q_R) R^2
\end{align*}

We can upper bound $\chi^2(p_R,q_R)$ as 
\begin{align*}
\chi^2(p_R,q_R)
:=& \int q_R(x) \lrp{\frac{p_R(x)}{q_R(x)}-1}^2 dx\\
=& \int_{B_R} \frac{1}{1-b} q(x) \lrp{\frac{(1-b)}{(1-a)}\frac{p(x)}{q(x)}-1}^2 dx\\
\leq& (1+2c) \int_{B_{\sqrt{R}}} q(x) \lrp{\lrp{1+4c}\frac{p(x)}{q(x)}-1}^2 dx\\
=& (1+2c) \int_{B_R} q(x) \lrp{\lrp{1+4c}\frac{p(x)}{q(x)}-(1+4c) + 4c}^2 dx\\
\leq& 2(1+64c) \int_{B_R} q(x) \lrp{\frac{p(x)}{q(x)}-1}^2 dx + 64 c^2,
\end{align*}
where in the above, $B_R$ is defined as the ball of radius $R$ centered at 0. The two inequalities use Taylor expansion and our assumption that $c\leq \frac{1}{64}$. We also use Young's inequality for the second inequality.

Thus, we get
\begin{align*}
W_2^2(p,q) \leq 4R^2  \int_{B_R} q(x) \lrp{\frac{p(x)}{q(x)}-1}^2 dx  + 64 c^2 R^2 + 2b \lrp{\Ep{p_R^c}{\|x\|_2^2 + \Ep{q_R^c}{\|y\|_2^2}}}
\end{align*}

Using the definition of $p_R^c$:
\begin{align*}
& b \cdot \Ep{p_R^c}{\|x\|_2^2}\\
=& \int \|x\|_2^2 \lrp{\frac{b-a}{1-a}\cdot \ind{\|x\|_2\leq R} \cdot p(x) + \ind{\|x\|_2> R} \cdot p(x)} dx\\
\leq& b \cdot \int \|x\|_2^2 \ind{\|x\|_2 \leq R} \cdot p(x) dx  + \int \|x\|_2^2 \ind{\|x\|_2> R} \cdot p(x) dx\\
\leq& b R^2 + \Ep{p}{ \|x\|_2^2 \ind{\|x\|_2 > R}}
\end{align*}

Using the defintion of $q_R^c$:
\begin{align*}
& b\cdot \Ep{q_R^c}{\|x\|_2^2}\\
=& \Ep{q}{ \|x\|_2^2 \ind{\|x\|_2 > R}}
\end{align*}

Thus we get
\begin{align*}
W_2^2(p,q) 
\leq& 4R^2 \int_{B_R} \lrp{\frac{p(x)}{q(x)}-1}^2 p^*(x) dx  + 32 c^2R^2 + 2bR^2 + 2 \Ep{p}{ \|x\|_2^2 \ind{\|x\|_2 > R}} + 2\Ep{q}{ \|x\|_2^2 \ind{\|x\|_2 > R}}\\
\leq& 4R^2 \int_{B_R} \lrp{\frac{p(x)}{q(x)}-1}^2 p^*(x) dx  + 32 c^2R^2 + 2cR^2 + 2 \Ep{p}{ \|x\|_2^2 \ind{\|x\|_2 > R}} + 2\Ep{q}{ \|x\|_2^2 \ind{\|x\|_2 > R}}
\end{align*}
\end{proof}

\begin{corollary}\label{c:upperboundw2bychisquared}
For any $\epsilon\in[0,1]$, and for $R\geq \sqrt{\max\lrbb{2^{13}\frac{c_{\sigma}^2}{m} \lrp{\log \lrp{\frac{2^{11}c_{\sigma}^2}{m}}},1}}$, and for $\delta \leq \frac{1}{16 L}$
\begin{align*}
W_2^2(p^*,p_\delta) \leq 4R^2 \int_{B_R} \lrp{\frac{p_\delta(x)}{p^*(x)}-1}^2 p^*(x) dx + 84 d \exp\lrp{-\frac{mR^2}{64c_{\sigma}^2}},
\end{align*}
where $p_\delta := \Phi_\delta(p^*)$.
\end{corollary}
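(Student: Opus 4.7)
The plan is to apply Lemma~\ref{l:upperboundw2bychisquared} with $p := p_\delta$ and $q := p^*$ and then absorb the four residual terms into a single exponential bound using the sub-Gaussian estimates already proved for $p^*$ and $p_\delta$. The hypothesis on $R$ is engineered precisely so that the various auxiliary lemmas apply: $R^2 \geq 1$ activates Corollary~\ref{c:pdeltaissubgaussian}, $\delta \leq 1/(16L)$ is the matching stepsize restriction for that corollary, and $R^2 \geq 2^{13}\frac{c_\sigma^2}{m}\log(2^{11}c_\sigma^2/m)$ is (up to constants) at least the threshold $\frac{48c_\sigma^2}{m}\max\{\log(16c_\sigma^2/m),1\}$ required by Corollary~\ref{c:boundedvarianceoutsideradius}.

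First, I would invoke Lemma~\ref{l:upperboundw2bychisquared} to get
\[
W_2^2(p^*,p_\delta) \leq 4R^2\int_{B_R}\lrp{\frac{p_\delta(x)}{p^*(x)}-1}^2 p^*(x)\,dx + 32 c^2 R^2 + 2cR + 2\,\Ep{p_\delta}{\|x\|_2^2\ind{\|x\|_2>R}} + 2\,\Ep{p^*}{\|x\|_2^2\ind{\|x\|_2>R}},
\]
where $c = \max\{p_\delta(\|x\|_2>R), p^*(\|x\|_2>R)\}$. The first term already matches the target form, so the remaining task is to show that the sum of the last four terms is at most $84 d\exp\lrp{-mR^2/(64c_\sigma^2)}$.

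Next, I would bound $c$ and the two tail expectations. By Lemma~\ref{l:p^*issubgaussian}, $p^*(\|x\|_2^2 > R^2)\leq 8d\exp(-mR^2/(8c_\sigma^2))$, and by Corollary~\ref{c:pdeltaissubgaussian}.1, $p_\delta(\|x\|_2^2>R^2) \leq 8d\exp(-mR^2/(32c_\sigma^2))$; the latter is the worse bound so $c \leq 8d\exp(-mR^2/(32c_\sigma^2))$. Similarly, by Corollary~\ref{c:pdeltaissubgaussian}.2 and Corollary~\ref{c:boundedvarianceoutsideradius} (applied with $S = R^2$), the two tail expectations are bounded by $12d\exp(-mR^2/(64c_\sigma^2))$ and $12d\exp(-mR^2/(16c_\sigma^2))$ respectively. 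Adding gives an aggregate bound of the form $32 c^2 R^2 + 2cR + 48d\exp(-mR^2/(64c_\sigma^2))$ plus a smaller exponential term.

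The only nontrivial step is to absorb the polynomially growing prefactors in $32 c^2 R^2$ and $2cR$ into the slower-decaying exponential $\exp(-mR^2/(64c_\sigma^2))$. Concretely, $c^2 R^2 \leq 64 d^2 R^2\exp(-mR^2/(16c_\sigma^2))$ and $cR \leq 8dR\exp(-mR^2/(32c_\sigma^2))$; each carries a strictly faster-decaying exponential than the target, with a leftover factor of the form $R^k \exp(-\alpha R^2)$ for some $\alpha>0$. These leftover factors are bounded by an absolute constant once $R^2 \gtrsim \frac{c_\sigma^2}{m}\log(c_\sigma^2/m)$, which is exactly what the lower bound on $R$ in the hypothesis guarantees (up to the factor $2^{13}$, chosen generously for this purpose). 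Doing the arithmetic carefully and summing the constants ($48 + \text{small}$) yields the stated bound of $84 d\exp(-mR^2/(64c_\sigma^2))$, completing the proof. The main obstacle is purely bookkeeping: choosing the constants in the assumption on $R$ large enough to dominate all the polynomial prefactors while staying within the claimed numerical constant $84$.
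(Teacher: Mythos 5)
Your proposal is correct and follows essentially the same route as the paper: invoke Lemma~\ref{l:upperboundw2bychisquared}, bound $c$ and the two tail expectations via Lemma~\ref{l:p^*issubgaussian}, Corollary~\ref{c:boundedvarianceoutsideradius}, and Corollary~\ref{c:pdeltaissubgaussian}, then use the lower bound on $R$ (effectively Lemma~\ref{l:xlogxbound}) to fold the $32c^2R^2$ and $cR^2$ terms into the single $\exp(-mR^2/(64c_\sigma^2))$ bound. You even got the role of $p$ and $q$ in the invocation of Lemma~\ref{l:upperboundw2bychisquared} right ($p := p_\delta$, $q := p^*$, so that the integrand comes out as $(p_\delta/p^*-1)^2 p^*$); the paper's own proof writes the opposite assignment, which appears to be a typo.
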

\begin{proof}[Proof of Corollary \ref{c:upperboundw2bychisquared}]
By Lemma~\ref{l:p^*issubgaussian}, Corollary \ref{c:boundedvarianceoutsideradius}, and Corollary \ref{c:pdeltaissubgaussian}, and by our assumption that $R^2 \geq 1$ and $\delta \leq \frac{1}{16L}$, we know show that
\begin{enumerate}
\item $p^*\lrp{\|x\|_2 \geq t} \leq 8d \exp\lrp{-\frac{mt^2}{8c_{\sigma}^2}}$
\item $p_\delta\lrp{\|x\|_2 \geq t} \leq 8d \exp\lrp{-\frac{mt^2}{32c_{\sigma}^2}}$
\item $\Ep{p^*}{\|x\|_2^2 \ind{\|x\|_2 \geq R}} \leq 12d\exp\lrp{-\frac{mR^2}{16c_{\sigma}^2}}$
\item $\Ep{p_\delta}{\|x\|_2^2 \ind{\|x\|_2 \geq R}} \leq 12d\exp\lrp{-\frac{mR^2}{64c_{\sigma}^2}}$
\end{enumerate}

Let $p:= p^*$ and $q:=p_\delta$, by the above results, we have
\begin{align*}
\max\lrbb{p(\|x\|_2 \geq R), q(\|x\|_2 \geq R)}\leq 8d \exp\lrp{-\frac{mR^2}{32c_{\sigma}^2}}
\end{align*}
(note that $c_{\sigma}$ is defined in Assumption \ref{ass:ximeanandvariance}.2 and is unrelated to the $c$ we defined in this proof).

Therefore, we apply Lemma~\ref{l:upperboundw2bychisquared} to get
\begin{align*}
W_2^2(p_\delta,p^*) 
\leq& 4R^2 \int_{B_R} \lrp{\frac{p_\delta(x)}{p^*(x)}-1}^2 p^*(x) dx  + 32 R^2\exp\lrp{-\frac{mR^2}{32c_{\sigma}^2}} + 4R^2\exp\lrp{-\frac{mR^2}{64c_{\sigma}^2}}\\
&\quad + 2 \Ep{p_\delta}{ \|x\|_2^2 \ind{\|x\|_2 > R}} + 2\Ep{p^*}{ \|x\|_2^2 \ind{\|x\|_2 > R}}\\
\leq&  4R^2 \int_{B_R} \lrp{\frac{p_\delta(x)}{p^*(x)}-1}^2 p^*(x) dx  + 32 R^2\exp\lrp{-\frac{mR^2}{32c_{\sigma}^2}} + 4R^2\exp\lrp{-\frac{mR^2}{64c_{\sigma}^2}} + 48 d \exp\lrp{-\frac{mR^2}{64c_{\sigma}^2}}\\
\leq& 4R^2 \int_{B_R} \lrp{\frac{p_\delta(x)}{p^*(x)}-1}^2 p^*(x) dx + 36 \exp\lrp{-\frac{mR^2}{128c_{\sigma}^2}} + 48 d \exp\lrp{-\frac{mR^2}{64c_{\sigma}^2}}\\
\leq& 4R^2 \int_{B_R} \lrp{\frac{p_\delta(x)}{p^*(x)}-1}^2 p^*(x) dx + 84 d \exp\lrp{-\frac{mR^2}{64c_{\sigma}^2}},
\end{align*}
where the third inequality is by Lemma~\ref{l:xlogxbound} and our assumption that 
\begin{align*}
R^2 
\geq& \max\lrbb{2^{13}\frac{c_{\sigma}^2}{m} \lrp{\log \lrp{\frac{2^{11}c_{\sigma}^2}{m}}},0}
\end{align*}
\end{proof}
\end{section}
\newpage
\begin{section}{Miscellaneous Lemmas}
\begin{lemma}\label{l:traceupperbound}
For any matrix $A\in \Re^{2d}$, 
\begin{align*}
\tr{A} \leq d\lrn{A}_2
\end{align*}
\end{lemma}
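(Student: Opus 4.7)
The plan is to reduce the trace to a sum of diagonal entries and then bound each diagonal entry by the operator norm applied to a unit coordinate vector. Writing $\{e_i\}_{i=1}^d$ for the standard basis of $\Re^d$, I would begin from the identity $\tr(A) = \sum_{i=1}^d A_{i,i} = \sum_{i=1}^d \lin{e_i, A e_i}$, which is just the definition of the trace together with the definition of the matrix--vector product.

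Next, I would apply Cauchy--Schwarz to each term: $\lin{e_i, A e_i} \leq \lrn{e_i}_2 \cdot \lrn{A e_i}_2 = \lrn{A e_i}_2$, since $\lrn{e_i}_2 = 1$. By the definition of the operator norm in \eqref{d:operatornorm}, $\lrn{A e_i}_2 \leq \lrn{A}_2$ for every $i$.

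Summing the $d$ resulting inequalities gives $\tr(A) \leq d \lrn{A}_2$. This is really a three-line calculation and I do not foresee any obstacle; the only subtlety is that the inequality as stated (without an absolute value) is in fact tight when $A$ is the identity, and the same argument with $\lrabs{\lin{e_i, A e_i}} \leq \lrn{A}_2$ would give the slightly stronger $\lrabs{\tr(A)} \leq d \lrn{A}_2$, which is the form that is actually invoked in the rest of the appendix (e.g.\ in Lemma~\ref{l:auxfordeterminant}).
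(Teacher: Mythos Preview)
Your proof is correct and in fact more direct than the paper's. The paper takes a different route: it writes the singular value decomposition $A = UDV$, uses cyclicity of trace to get $\tr(A) = \tr(DVU) = \lin{D,(VU)^T}_F$, then applies Cauchy--Schwarz in the Frobenius inner product to bound this by $\sqrt{\tr(D^2)}\sqrt{\tr(U^TV^TVU)} \leq \sqrt{d\lambda^2}\sqrt{d} = d\lrn{A}_2$, where $\lambda$ is the largest singular value. Your approach avoids the SVD entirely and works coordinate-by-coordinate, which is shorter and needs no structural assumption on $A$. Both arguments yield the same constant, and as you note, both immediately give the absolute-value version $\lrabs{\tr(A)} \leq d\lrn{A}_2$ that the appendix actually uses.
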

\begin{proof}[Proof of Lemma~\ref{l:traceupperbound}]
For any matrices $A\in \Re^{2d}$ and $B\in\Re^{2d}$, we use the fact that 
\begin{align*}
\lin{A,B}_F:=\tr\lrp{A B^T}
\end{align*}
is an inner product.

Let $A = UDV$ where $U$ and $V$ are two orthonormal matrices and $D$ is a diagonal of positive singular values. Let $\lambda := \max_i D_{i,i}$. It is known that $\lambda = \lrn{A}_2$.

Then
\begin{align*}
\tr\lrp{A}
=& \tr\lrp{U D V}\\
=& \tr\lrp{D V U}\\
=& \lin{D, \lrp{VU}^T}_F\\
\leq& \sqrt{\lin{D,D}_F}\sqrt{\lin{\lrp{VU}^T,\lrp{VU}^T}_F}\\
:=& \sqrt{\tr\lrp{D^2}}\sqrt{\tr\lrp{U^T V^T V U}}\\
\leq& \sqrt{d\lambda^2}\sqrt{d}\\
=& d\lambda \\
=& d\lrn{A}_2,
\end{align*}
where the first inequality is by Cauchy Schawrz, and the second inequality uses the fact that $U^TV^TVU=I$
\end{proof}

\begin{lemma}\label{l:determinanttaylor}
Let $A \in \Re^d \to \Re^d$ be a symmetric matrix such that $\|A\|_2 \leq c$. Let $\epsilon \leq \frac{1}{2cd}$ then 
\begin{align*}
\lrabs{\det \lrp{I + \epsilon A} - \lrp{1 + \epsilon \tr\lrp{A} + \frac{\epsilon^2}{2} \lrp{\tr\lrp{A}^2-\tr\lrp{A^2}} }}\leq \epsilon^3c^3d^3
\end{align*}
\end{lemma}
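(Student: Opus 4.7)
The plan is to diagonalize and then read off the determinant as a product of scalar factors, so that the claim reduces to a finite power series identity plus a tail bound.

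Since $A$ is symmetric, write $A = U \Lambda U^T$ with $\Lambda = \mathrm{diag}(\lambda_1,\ldots,\lambda_d)$, where by assumption $|\lambda_i| \leq \|A\|_2 \leq c$ for every $i$. Then
\begin{align*}
\det(I + \epsilon A) = \prod_{i=1}^d (1 + \epsilon \lambda_i) = \sum_{k=0}^{d} \epsilon^k e_k(\lambda_1,\ldots,\lambda_d),
\end{align*}
where $e_k$ denotes the $k$-th elementary symmetric polynomial in the eigenvalues. The first three terms are exactly what the lemma claims: $e_0 = 1$, $e_1 = \sum_i \lambda_i = \tr(A)$, and Newton's identity gives $e_2 = \tfrac{1}{2}(e_1^2 - \sum_i \lambda_i^2) = \tfrac{1}{2}(\tr(A)^2 - \tr(A^2))$. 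So the error in the claim is precisely the tail $\sum_{k=3}^d \epsilon^k e_k(\lambda)$.

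For the tail, I would bound $|e_k(\lambda_1,\ldots,\lambda_d)|$ by $\binom{d}{k} c^k$ using $|\lambda_i|\leq c$, and then use $\binom{d}{k} \leq d^k/k!$ to get
\begin{align*}
\left|\sum_{k=3}^d \epsilon^k e_k(\lambda)\right| \leq \sum_{k=3}^d \epsilon^k \binom{d}{k} c^k \leq \sum_{k=3}^\infty \frac{(\epsilon c d)^k}{k!}.
\end{align*}
Setting $t := \epsilon c d$, the hypothesis $\epsilon \leq 1/(2cd)$ gives $t \leq 1/2$. The remaining step is to verify the elementary inequality $e^t - 1 - t - t^2/2 \leq t^3$ for $t\in[0,1/2]$, which holds because the series $e^t - 1 - t - t^2/2 = t^3(1/6 + t/24 + \cdots) \leq t^3 \cdot e^t/6 \leq t^3 \cdot e^{1/2}/6 < t^3$. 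This yields the desired bound $\epsilon^3 c^3 d^3$.

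The only step that requires any care is the tail estimate; the main obstacle is nothing more than keeping track of constants, and the choice of the cutoff $\epsilon cd \leq 1/2$ is comfortably strong enough to make the exponential tail smaller than the leading omitted term $t^3/6$ times a small slack factor, leaving room for the clean bound $t^3$ claimed in the lemma.
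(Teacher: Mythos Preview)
Your proposal is correct and follows essentially the same approach as the paper: diagonalize $A$, write $\det(I+\epsilon A)=\prod_i(1+\epsilon\lambda_i)$, identify the degree-$0,1,2$ terms with $1$, $\tr(A)$, and $\tfrac{1}{2}(\tr(A)^2-\tr(A^2))$, and bound the remaining tail using $|\lambda_i|\leq c$. The only cosmetic difference is in the tail estimate: the paper uses the cruder $\binom{d}{k}\leq d^k$ and sums the resulting geometric series $\sum_{k\geq 3}(\epsilon c d)^k$, whereas you keep the extra $1/k!$ and bound the exponential tail; both immediately give $\epsilon^3 c^3 d^3$ under $\epsilon c d\leq 1/2$.
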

\begin{proof}[Proof of Lemma~\ref{l:determinanttaylor}]
Let the eigenvalue decomposition of $A$ be $A= U D U^T$, where $U$ is orthogonal, and $D$ is the diagonal matrix of $A$'s eigenvalues. Let $\lambda_i := D_{i,i}$, and let $D$ be chosen such that 
\begin{align*}
\lrabs{\lambda_1} \geq \lrabs{\lambda_2} ... \geq \lrabs{\lambda_d}
\end{align*}
It is known that $\lrabs{\lambda_1} = \lrn{A}_2 \leq c$.

The matrix $I+\epsilon A$ can be written as 
\begin{align*}
U \lrp{I + \epsilon D} U^T
\end{align*}

Since the determinant of products is the product of determinants,
\begin{align*}
\det\lrp{I+ \epsilon A} 
=& \det\lrp{I + \epsilon D} \cdot \lrp{\det\lrp{U} \det\lrp{U}}\\
=& \det\lrp{I + \epsilon D} \cdot det\lrp{UU^T}\\
=& \det\lrp{I + \epsilon D}\\
=& \prod_{i=1}^d \lrp{1 + \epsilon \lambda_i}\\
=& 1 + \epsilon \sum_{i=1}^d \lambda_i + \frac{\epsilon^2}{2} \sum_{i=1}^d\sum_{j\neq i} \lambda_i \lambda_j + ...
\end{align*}
Thus
\begin{align*}
\numberthis \label{e:l:determinantexpansion:1}
& \lrabs{\det\lrp{I + \epsilon A} - \lrp{1 + \epsilon \sum_{i=1}^d \lambda_i + \frac{\epsilon^2}{2} \sum_{i=1}^d\sum_{j\neq i} \lambda_i \lambda_j}}\\
\leq& \sum_{k=3}^d \epsilon^k c^k {d \choose k}\\
\leq& \sum_{k=3}^d \epsilon^k c^k d^k\\
\leq& \epsilon^3 c^3 d^3 ,
\end{align*}
where the last inequality is by the assumption that $\epsilon \leq \frac{1}{2cd}$

It can be verified that 
\begin{enumerate}
\item $\tr\lrp{A} = \sum_{i=1}^d \lambda_i$
\item $\tr\lrp{A^2} = \sum_{i=1}^d \lambda_i^2$
\item $\tr\lrp{A}^2 = \lrp{\sum_{i=1}^d \lambda_i}^2 = \sum_{i=1}^d \lambda_i^2 + \sum_{i=1}^d \sum_{j\neq i} \lambda_i \lambda_j$
\end{enumerate}

Thus, we can rewrite \eqref{e:l:determinantexpansion:1} as 
\begin{align*}
\lrabs{\det \lrp{I + \epsilon A} - \lrp{1 + \epsilon \tr\lrp{A} + \frac{\epsilon^2}{2} \lrp{\tr\lrp{A}^2-\tr\lrp{A^2}}}}\leq  \epsilon^3c^3d^3
\end{align*}
\end{proof}

\begin{lemma}\label{l:xlogxbound}
For any $c> 0$, $x> 3 \max\lrbb{\frac{1}{c} \log \frac{1}{c},0}$, the inequality
\begin{align*}
\frac{1}{c}\log(x) \leq x
\end{align*}
holds.
\end{lemma}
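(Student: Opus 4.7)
The plan is to reduce the claim to a monotonicity argument about the function $\phi(x) := cx - \log x$, whose minimum we can locate explicitly, and then verify a single numerical inequality at the threshold $x = 3\frac{1}{c}\log(1/c)$. Specifically, differentiating gives $\phi'(x) = c - 1/x$, so $\phi$ attains its unique minimum at $x^* := 1/c$, is strictly decreasing on $(0, 1/c)$, and is strictly increasing on $(1/c, \infty)$, with minimum value $\phi(1/c) = 1 - \log(1/c) = 1 + \log c$.

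First I would dispose of the easy regime $c \geq 1/e$. In this regime $\phi(x^*) = 1 + \log c \geq 0$, so $\phi(x) \geq 0$ for every $x > 0$, which is exactly what we need (note that the threshold $3\max\{\frac{1}{c}\log(1/c), 0\}$ is $0$ when $c \geq 1$, and when $1/e \leq c < 1$ the threshold is positive but the claim still holds for all $x > 0$).

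Next, for the nontrivial regime $c < 1/e$, I would exploit monotonicity of $\phi$ on $(1/c, \infty)$. Since $\log(1/c) > 1$, the threshold $x_\# := 3\frac{1}{c}\log(1/c)$ satisfies $x_\# > 1/c = x^*$, so it suffices to show $\phi(x_\#) \geq 0$. A direct computation gives
\begin{equation*}
\phi(x_\#) = 3\log(1/c) - \log 3 - \log(1/c) - \log\log(1/c) = 2t - \log 3 - \log t,
\end{equation*}
where $t := \log(1/c) > 1$. Writing $\psi(t) := 2t - \log 3 - \log t$, note that $\psi'(t) = 2 - 1/t > 0$ for $t > 1/2$, so $\psi$ is increasing on $(1, \infty)$; since $\psi(1) = 2 - \log 3 > 0$, we conclude $\psi(t) > 0$ for all $t > 1$, which is the desired inequality. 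Combining the two regimes finishes the proof.

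There is no real obstacle here; the only mildly delicate step is keeping the case split clean so that the threshold $3\max\{\frac{1}{c}\log(1/c), 0\}$ lies in the increasing portion of $\phi$ precisely when it matters, namely when $c < 1/e$. Everything else is routine calculus on a one-variable convex-like function.
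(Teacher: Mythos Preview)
Your proof is correct and takes a genuinely different, more elementary route than the paper. The paper splits into the same two cases $c \geq 1/e$ and $c < 1/e$, but in the nontrivial case it invokes the Lambert $W$ function: it cites an external bound (Chatzigeorgiou 2013) to show $-W_{-1}(-c) \leq 3\log(1/c)$, and then unwinds the defining relation $W_{-1}(-c)e^{W_{-1}(-c)} = -c$ to conclude that $x \geq \tfrac{3}{c}\log(1/c)$ forces $xe^{-cx} \leq 1$. By contrast, you bypass the Lambert function entirely: you observe that $\phi(x) = cx - \log x$ is unimodal with minimum at $1/c$, verify that the threshold $x_\# = \tfrac{3}{c}\log(1/c)$ lies to the right of that minimum when $c < 1/e$, and check directly that $\phi(x_\#) = 2t - \log 3 - \log t > 0$ for $t = \log(1/c) > 1$. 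Your argument is fully self-contained and needs no external reference; the paper's route has the conceptual virtue of identifying the exact crossing point of $\log x = cx$ (via $W_{-1}$), which would matter if one wanted to sharpen the constant $3$, but for the lemma as stated your approach is both simpler and cleaner.
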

\begin{proof}
We will consider two cases:

\textbf{Case 1}: If $c\geq \frac{1}{e}$, then the inequality 
$$\log(x) \leq c x$$ is true for all $x$.

\textbf{Case 2}: $c \leq \frac{1}{e}$.

In this case, we consider the Lambert W function, defined as the inverse of $f(x) = x e^x$. We will particularly pay attention to $W_{-1}$ which is the lower branch of $W$. (See Wikipedia for a description of $W$ and $W_{-1}$).

We can lower bound $W_{-1}(-c)$ using Theorem 1 from 
\cite{chatzigeorgiou2013bounds}:
\begin{align*}
& \forall u>0,\quad W_{-1} (-e^{-u-1}) > -u-\sqrt{2u} -1\\
\text{equivalently}\quad &\forall c\in (0,1/e),\quad -W_{-1} (-c) < \log\lrp{\frac{1}{c}} + 1 + \sqrt{2\lrp{\log\lrp{\frac{1}{c}}-1}}-1 \\
&\qquad \qquad\qquad\qquad\qquad\ \ = \log\lrp{\frac{1}{c}} + \sqrt{2\lrp{\log\lrp{\frac{1}{c}}-1}}\\
&\qquad \qquad\qquad\qquad\qquad\ \ \leq 3\log \frac{1}{c}
\end{align*}

Thus by our assumption,
\begin{align*}
& x\geq 3\cdot \frac{1}{c}\log\lrp{\frac{1}{c}}\\
\Rightarrow & x\geq \frac{1}{c}\lrp{-W_{-1}(-c)}
\end{align*}

then $W_{-1}(-c)$ is defined, so
\begin{align*}
&x \geq \frac{1}{c}\max\lrbb{-W_{-1} (-c), 1}\\
\Rightarrow & (-cx) e^{-cx} \geq -c\\
\Rightarrow & x e^{-cx} \leq 1\\
\Rightarrow & \log(x) \leq cx
\end{align*}

The first implication is justified as follows:
$W_{-1}^{-1}: [-\frac{1}{\epsilon}, \infty) \to (-\infty, -1)$ is monotonically decreasing. Thus its inverse $W_{-1}^{-1}(y) = ye^y$, defined over the domain $(-\infty, -1)$ is also monotonically decreasing. By our assumption, $-cx \leq -3 \log \frac{1}{c} \leq -3$, thus $-cx \in (-\infty, -1]$, thus applying $W_{-1}^{-1}$ to both sides gives us the first implication.
\end{proof}

\begin{corollary}\label{c:xlogxbound}
For any $a>0$, and for any $c> 0$, $x> 3 \max\lrp{\frac{1}{c}\log \frac{a}{c},0}$, the inequality
\begin{align*}
\frac{1}{c}\log(a\cdot x) \leq x
\end{align*}
holds.
\end{corollary}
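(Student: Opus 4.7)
The plan is to reduce Corollary \ref{c:xlogxbound} to Lemma \ref{l:xlogxbound} via a simple change of variables. Specifically, I would introduce $c' := c/a$ and $y := ax$. Multiplying both sides of the target inequality $\frac{1}{c}\log(ax) \leq x$ by $a$ and making the substitution yields $\frac{a}{c}\log(y) \leq y$, which is exactly $\frac{1}{c'}\log(y) \leq y$. This reduction is legitimate because $a > 0$ and $c > 0$, so both $c'$ and the map $x \mapsto y = ax$ are positive and monotone.

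I would then apply Lemma \ref{l:xlogxbound} with the constant $c'$ in place of $c$. The lemma guarantees this inequality whenever $y > 3 \max\lrp{\frac{1}{c'}\log \frac{1}{c'},\, 0} = 3\max\lrp{\frac{a}{c}\log \frac{a}{c},\, 0}$. Translating back through $y = ax$ by dividing by $a$, this condition becomes $x > 3\max\lrp{\frac{1}{c}\log \frac{a}{c},\, 0}$, which is precisely the hypothesis assumed in the corollary.

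The only bookkeeping point concerns the degenerate regime $a \leq c$, where $\log(a/c) \leq 0$ and the stated hypothesis collapses to $x > 0$. In that regime $c' = c/a \geq 1 > 1/e$, so Case~1 of Lemma \ref{l:xlogxbound} applies and the conclusion $\frac{1}{c'}\log(y) \leq y$ holds for every $y > 0$; the reduction above therefore yields the corollary for all $x > 0$, consistent with the collapsed hypothesis. I do not anticipate any genuine obstacle: the corollary is essentially a linear rescaling of the argument of the logarithm in Lemma \ref{l:xlogxbound}, and the proof is a one-line substitution modulo verifying the two cases of the max separately.
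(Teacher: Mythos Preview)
Your proposal is correct and follows essentially the same approach as the paper: both introduce $c' := c/a$, apply Lemma~\ref{l:xlogxbound} to the rescaled variable $y = ax$, and translate the hypothesis back via division by $a$. Your additional remark about the degenerate regime $a \le c$ is a nice bit of bookkeeping that the paper leaves implicit, but otherwise the arguments coincide.
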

\begin{proof}[Proof of Corollary \ref{c:xlogxbound}]
Let $c' := \frac{c}{a}$. Then for any $x' > 3\max\lrbb{\frac{1}{c'} \log \frac{1}{c'},0}$, Lemma~\ref{l:xlogxbound} gives
\begin{align*}
\log \lrp{x'} \leq c'x' = \frac{c}{a} x'
\end{align*}
Thus with a change of variables $x'=ax$, we get that for any $x > \frac{3}{a} \max\lrbb{\frac{1}{c'} \log \frac{1}{c'},0} = 3 \max\lrp{\frac{1}{c}\log \frac{a}{c},0}$,
\begin{align*}
\log (ax) \leq c x
\end{align*}
\end{proof}
\begin{lemma}\label{l:sigmaderivative}
\begin{align*}
\sum_{j=1}^d\dd{x_j} \lrb{\sigma_{x}\sigma_{x}^T}_{i,j}
= \lrb{\Ep{q(\eta)}{G_\eta(x)T_\eta(x) + \tr\lrp{G_\eta(x)}T_\eta(x)}}_i
\end{align*}
\end{lemma}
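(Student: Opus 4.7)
The plan is to unfold the definition of $\sigma_x \sigma_x^T$ and differentiate componentwise. Recall from \eqref{d:sigmax} that $\sigma_x \sigma_x^T = \Ep{q(\eta)}{T_\eta(x) T_\eta(x)^T}$, so entrywise
\[
\lrb{\sigma_x \sigma_x^T}_{i,j} = \Ep{q(\eta)}{\lrb{T_\eta(x)}_i \lrb{T_\eta(x)}_j}.
\]
My first step is to differentiate this expression with respect to $x_j$, pushing $\partial/\partial x_j$ inside the expectation (justified by Assumption \ref{ass:gisregular}, which bounds $T_\eta$ and $G_\eta$ uniformly in $\eta$, giving a dominating integrable envelope), and apply the product rule to obtain
\[
\dd{x_j}\lrb{\sigma_x \sigma_x^T}_{i,j} = \Ep{q(\eta)}{\lrb{G_\eta(x)}_{i,j} \lrb{T_\eta(x)}_j + \lrb{T_\eta(x)}_i \lrb{G_\eta(x)}_{j,j}},
\]
using the definition $\lrb{G_\eta(x)}_{i,j} = \frac{\partial}{\partial x_j}\lrb{T_\eta(x)}_i$ from Section \ref{ss:mainresult_inhomogeneousnoise}.

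Next I would sum over $j$. The first summand contracts to a matrix-vector product: $\sum_{j=1}^d \lrb{G_\eta(x)}_{i,j} \lrb{T_\eta(x)}_j = \lrb{G_\eta(x) T_\eta(x)}_i$. The second summand factors since $\sum_{j=1}^d \lrb{G_\eta(x)}_{j,j} = \tr\lrp{G_\eta(x)}$ does not involve $i$, yielding $\lrb{T_\eta(x)}_i \cdot \tr\lrp{G_\eta(x)}$. Combining the two pieces inside the expectation gives exactly the $i$th component of $\Ep{q(\eta)}{G_\eta(x) T_\eta(x) + \tr\lrp{G_\eta(x)} T_\eta(x)}$, as claimed.

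There is essentially no obstacle here beyond bookkeeping; the only subtle point is the exchange of differentiation and expectation, which is routine given the uniform bounds on $T_\eta$ and $G_\eta$ in Assumption \ref{ass:gisregular}. No symmetry of $G_\eta$ is needed for the identity itself (it is purely a consequence of the product rule and the indexing convention for $G_\eta$), so the lemma holds under the stated assumptions.
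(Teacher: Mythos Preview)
Your proof is correct and follows exactly the same route as the paper: unfold $\sigma_x\sigma_x^T$ via \eqref{d:sigmax}, differentiate componentwise with the product rule to get the $G_\eta$ terms, then sum over $j$ to recover the matrix-vector product and the trace. The paper's version is just the bare computation without the commentary on dominated convergence or symmetry, but the argument is identical.
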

\begin{proof}[Proof of Lemma~\ref{l:sigmaderivative}]
\begin{align*}
& \sum_{j=1}^d\dd{x_j} \lrb{\sigma_{x}\sigma_{x}^T}_{i,j}\\
=& \sum_{j=1}^d\dd{x_j} \Ep{q(\eta)}{\lrb{T_\eta(x)T_\eta(x)^T}_{i,j}}\\
=& \sum_{j=1}^d\dd{x_j} \Ep{q(\eta)}{\lrb{T_\eta(x)}_{i}\lrb{T_\eta(x)}_j}\\
=& \sum_{j=1}^d \Ep{q(\eta)}{\lrb{G_\eta(x)}_{i,j}\lrb{T_\eta(x)}_j + \lrb{T_\eta(x)}_{i}\lrb{G_\eta(x)}_{j,j}}\\
=& \lrb{\Ep{q(\eta)}{G_\eta(x)T_\eta(x) + \tr\lrp{G_\eta(x)}T_\eta(x)}}_i
\end{align*}
\end{proof}

\begin{lemma}\label{l:fisinvertible}
Let $\delta \leq \frac{1}{8L}$, then the function $F_\eta(y)$ as defined in \eqref{d:Feta} is invertible for all $y$ and for $\eta$ a.s. 
\end{lemma}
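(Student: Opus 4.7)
The plan is to show invertibility by reducing the equation $F_\eta(y)=z$ to a fixed-point problem and applying the Banach contraction principle. For each fixed $z\in\Re^d$, define the auxiliary map
\[
\Psi_z(y) := z + \delta\,\nabla U(y) - \sqrt{2\delta}\,T_\eta(y),
\]
and observe that $y$ is a fixed point of $\Psi_z$ if and only if $F_\eta(y)=z$. Thus proving that $\Psi_z$ has a unique fixed point for every $z$ is equivalent to proving that $F_\eta$ is a bijection.

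Next I would show that $\Psi_z$ is a strict contraction with contraction constant independent of $z$. By Assumption~\ref{ass:uissmooth}.2, $\nabla U$ is $L$-Lipschitz, and by Lemma~\ref{l:allthediscretizationbounds}.1, $T_\eta$ is $\sqrt{L}$-Lipschitz almost surely in $\eta$ (in the homogeneous setting of Section~\ref{ss:mainresult_homogeneousnoise}, $T_\eta$ is constant in $y$, so the Lipschitz bound is trivially zero and the argument only improves). The triangle inequality then gives
\[
\lrn{\Psi_z(y_1)-\Psi_z(y_2)}_2 \;\leq\; \delta L\,\lrn{y_1-y_2}_2 + \sqrt{2\delta L}\,\lrn{y_1-y_2}_2 \;=\; \bigl(\delta L + \sqrt{2\delta L}\bigr)\lrn{y_1-y_2}_2.
\]
Plugging in the hypothesis $\delta\leq \tfrac{1}{8L}$ yields $\delta L\leq \tfrac{1}{8}$ and $\sqrt{2\delta L}\leq \tfrac{1}{2}$, so the contraction constant is at most $\tfrac{5}{8}<1$.

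Since $\Re^d$ with the Euclidean metric is complete, Banach's fixed-point theorem applies: for every $z\in\Re^d$ the map $\Psi_z$ admits exactly one fixed point $y_\ast(z)$. Existence of $y_\ast(z)$ gives surjectivity of $F_\eta$, while uniqueness gives injectivity, so $F_\eta$ is a bijection, as claimed. There is no real obstacle here; the only subtlety worth flagging is that the Lipschitz estimate for $T_\eta$ is invoked in the form stated for the inhomogeneous model, but the bound degrades to an even stronger one in the homogeneous case, so a single proof covers both uses of the lemma (namely in Lemma~\ref{l:s_pdeltaoverpstariscloseto1} and Lemma~\ref{l:pdeltaoverpstariscloseto1}).
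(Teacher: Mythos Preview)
Your proof is correct and takes a genuinely different route from the paper. The paper computes the Jacobian $\nabla F_\eta(y) = I - \delta \nabla^2 U(y) + \sqrt{2\delta}\, G_\eta(y)$, bounds it below by $(1-\delta L - \sqrt{2\delta L})I \succ \tfrac{1}{2}I$ using Assumptions~\ref{ass:uissmooth} and~\ref{ass:gisregular}, and then appeals to the Inverse Function Theorem. Your Banach fixed-point argument is more elementary and, strictly speaking, more complete: the Inverse Function Theorem as usually stated yields only \emph{local} invertibility, whereas the contraction argument delivers global bijectivity directly. (The paper's line can be made rigorous by noting that the Jacobian is symmetric and uniformly positive definite, so $F_\eta$ is the gradient of a strongly convex function and hence a global diffeomorphism; but that step is not written out.) Both arguments rest on the same numerical ingredients---the $L$-Lipschitz bound on $\nabla U$ and the $\sqrt{L}$-Lipschitz bound on $T_\eta$---so neither yields a sharper range of admissible $\delta$.
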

\begin{proof}[Proof of Lemma~\ref{l:fisinvertible}]
To prove the invertibility of $F_\eta(x)$, we only need to show that the Jacobian of $F_\eta(x)$ is invertible. The Jacobian of $F_\eta(x)$ is
$$I - \delta \nabla^2 U(x) + \sqrt{2\delta} G_\eta(x) \succ (1-\delta L - \sqrt{2\delta L})I\succ \frac{1}{2} I$$
Where we used Assumption\ref{ass:uissmooth} and Assumption \ref{ass:gisregular}. The existence of $F_{\eta}^{-1}$ thus follows immediately from Inverse Function Theorem.
\end{proof}

\end{section}

\newpage
\begin{section}{Relation to Classical CLT}\label{ss:clt}

\begin{lemma}
\label{l:clt}
Let $\eta_1...\eta_k$ be iid random variables such that $\E{\eta_i}=0$, $\E{\eta_i \eta_i^T} = I$, and $\|\eta_i\|_2$ is a.s. bounded by some constant. Let $\delta_k := \frac{\sqrt{k+1} - \sqrt{k}}{\sqrt{k+1}} \approx \frac{1}{2 (k+1)}$ be a sequence of stepsizes. 
Let $x_{k+1} = x_k - \delta_k x_k + \sqrt{2\delta_k} \eta_k$, and let $p_k$ be the distribution of $x_k$. Let $p^* = N(0,I)$ , then 
\begin{align*}
W_2\lrp{p_k, p^*} = O\lrp{\frac{d^{3/2}}{\sqrt{k}}}
\end{align*}
\end{lemma}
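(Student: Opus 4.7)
The plan is to mirror the two-step structure used in the proof of Theorem \ref{t:s_main} (contraction plus one-step discretization error), but with the variable step size $\delta_k$ tracked carefully. For $U(x) = \tfrac12\|x\|_2^2$ we have $m = L = 1$ and $\nabla^3 U \equiv 0$, and $p^* = \mathcal{N}(0,I)$ is simultaneously the invariant distribution of the continuous SDE \eqref{e:exactsde}. Write $\Phi_{\delta_k}$ for the one-step transition map \eqref{d:Phip} at step $k$, and $W_k := W_2(p_k, p^*)$. By the triangle inequality,
\begin{align*}
W_{k+1} \leq W_2(\Phi_{\delta_k}(p_k), \Phi_{\delta_k}(p^*)) + W_2(\Phi_{\delta_k}(p^*), p^*).
\end{align*}

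For the first (contraction) term I would use a synchronous coupling: since $\eta_k$ is independent of position, $F_{\eta_k}(x) - F_{\eta_k}(y) = (1-\delta_k)(x-y)$ exactly, giving $W_2(\Phi_{\delta_k}(p), \Phi_{\delta_k}(q)) \leq (1-\delta_k) W_2(p,q)$. This is sharper than the $e^{-m\delta/8}$ bound in Lemma \ref{l:s_discrete_contraction}, and critically it telescopes in closed form: with $1-\delta_k = \sqrt{k}/\sqrt{k+1}$,
\begin{align*}
\prod_{i=j+1}^{k-1}(1-\delta_i) = \sqrt{\frac{j+1}{k}}.
\end{align*}

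For the second (discretization) term I would apply Lemma \ref{t:s_chisquaredbound} at each step with $\delta \leftarrow \delta_k$. For $m=L=1$ this yields $W_2(\Phi_{\delta_k}(p^*), p^*) \leq C d^{3/2} \delta_k^{3/2}$ up to logarithmic factors, valid once $\delta_k \leq c/d^2$, i.e.\ for $k \geq \Omega(d^2)$. Iterating the recursion $W_{k+1} \leq (1-\delta_k)W_k + Cd^{3/2}\delta_k^{3/2}$ and unrolling,
\begin{align*}
W_k \leq \sqrt{\frac{k_0}{k}}\, W_{k_0} + Cd^{3/2}\sum_{j=k_0}^{k-1} \delta_j^{3/2} \sqrt{\frac{j+1}{k}}.
\end{align*}
Since $\delta_j = O(1/j)$, the summand is $O(j^{-1}/\sqrt{k})$, so the sum is $O(\log k/\sqrt{k})$, delivering the advertised $\tilde O(d^{3/2}/\sqrt{k})$ rate.

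The main obstacle will be handling the early (large) step sizes: $\delta_0 = 1$ and the first few $\delta_k$'s violate the smallness hypothesis of Lemma \ref{t:s_chisquaredbound}. I would handle this by choosing a cutoff $k_0 = \Theta(d^2)$ beyond which $\delta_k$ is small enough that the lemma applies, and absorbing the pre-$k_0$ behavior into a crude bound on $W_{k_0}$ (finite because $\|\eta_i\|_2$ is a.s. bounded, so all iterates have bounded second moment); the $\sqrt{k_0/k}$ contraction factor then drives this initial error to $O(d/\sqrt{k})$, which is subsumed by the discretization term. A minor secondary issue is that the one-step discretization bound of Lemma \ref{t:s_chisquaredbound} carries extra logarithmic and polynomial-in-$d$ factors; since $U$ is quadratic, $\nabla^3 U = 0$ and several error terms in Lemma \ref{l:s_pdeltaoverpstariscloseto1} vanish, so a direct tightening for this special case should recover the stated $d^{3/2}/\sqrt{k}$ scaling (up to logs absorbed into $\tilde O$).
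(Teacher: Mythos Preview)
Your proposal is correct and matches the paper's proof essentially step for step: the same triangle-inequality split into a contraction term and a one-step discretization term, the same synchronous-coupling contraction (the paper uses $e^{-\delta_k}$ and then approximates $\sum_i \delta_i \approx \tfrac12\log(k/i)$, whereas your exact telescoping $(1-\delta_i)=\sqrt{i}/\sqrt{i+1}$ is a slightly cleaner version of the same computation), the same invocation of Lemma~\ref{t:s_chisquaredbound} for the $O(d^{3/2}\delta_k^{3/2})$ one-step error, the same cutoff index to handle the early large step sizes, and the same harmonic sum yielding the $\log k/\sqrt{k}$ factor.
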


\begin{proof}[Proof of Lemma~\ref{l:clt}]
First, we establish some properties of $\delta_k$.

By performing Taylor expansion of $\sqrt{x+1}$, we get that for $k\geq 2$, 
\begin{align*}
\lrabs{\delta_k - \frac{1}{2(k+1)}} \leq \frac{1}{k^2}
\numberthis \label{e:cl:1}
\end{align*}
We also show that for integers $a \leq b$, 
\begin{align*}
\sum_{i=a}^b \delta_i 
\leq& \sum_{i=1}^{k} \frac{1}{2(i+1)} + \sum_{i=1}^k \frac{1}{k^2}\\
\leq& \frac{1}{2} \log \frac{b}{a} + 1
\end{align*}
A similar argument proves a lower bound, so we have
\begin{align*}
\lrabs{\sum_{i=a}^b \delta_i  - \frac{1}{2}\log \frac{b}{a}} \leq 2
\numberthis \label{e:cl:2}
\end{align*}

Let $K$ be a sufficiently large integer such that 
\begin{align*}
\delta_K = \frac{1}{2(K+1)}\leq \frac{\min\lrbb{m^2,1}}{2^{18} d^2 \lrp{L+1}^3}
\end{align*}

For any $k\geq K$, we can show that
\begin{align*}
&W_2\lrp{p_{k},p^*}\\
\leq& W_2(\Phi_{\delta_{k}}(p_{k-1}), \Phi_{\delta_{k}}(p^*)) + W_2(\Phi_{\delta_{k}}(p^*), p^*)\\
\numberthis \label{e:cl:3}
\leq& e^{-\delta_{k}}W_2(p_{k-1}, p^*) + C \cdot d^{3/2} \cdot k^{-3/2}\\
\leq& ...\\
\leq& \exp\lrp{-\sum_{i=K}^{k} \delta_i}  W_2(p_K, p^*) + \sum_{i=K}^k \lrp{ \exp\lrp{-\sum_{j=i}^k \delta_k}\cdot C\cdot d^{3/2}\cdot i^{-3/2}}\\
\leq& 8 \exp\lrp{\frac{1}{2}\log \frac{k}{K}} W_2(p_K, p^*) + 8 C \cdot d^{3/2}\cdot \sum_{i=1}^k \exp\lrp{\frac{1}{2}\log \frac{k}{i}}\cdot i^{-3/2}\\
\leq& 8\sqrt{\frac{K}{k}} + 8C \cdot d^{3/2} \cdot \sum_{i=1}^k \sqrt{\frac{i}{k}} \cdot i^{-3/2}\\
\leq& 8\sqrt{\frac{K}{k}} + 8C \cdot d^{3/2} \sum_{i=1}^k \frac{1}{\sqrt{k}} \cdot \frac{1}{i}\\
\leq& 8\sqrt{\frac{K}{k}} + 8C \cdot d^{3/2} \frac{1}{\sqrt{k}} \log k\\\leq& C' \cdot d^{3/2} \frac{\log k}{\sqrt{k}},
\end{align*}
where the first inequality is by triangle inequality, the second inequality is by Theorem \ref{t:s_main} (with $k=1$), and our assumption on $\delta_K$ and the fact that $\delta_k \leq \delta_K$, the third and fourth inequalities are by some algebra, the fifth inequality is by \eqref{e:cl:2}, the second last inequality is by harmonic sum.

In applying Theorem \ref{t:s_main} in \eqref{e:cl:3}, we crucially used the fact that $p^*:= N(0,I)$ is the invariant distribution to the SDE
\begin{align*}
d x(t) = - \nabla U(x(t)) dt + \sqrt{2} dB_t
\end{align*}
for $U(x) = \frac{1}{2} \|x\|_2^2$, and the fact that
\begin{align*}
x_{k+1} = x_k - \delta_k \nabla U(x_k) + \sqrt{2\delta_{k+1}} \eta_k
\end{align*}

Note also that the contraction term in \eqref{e:cl:3}, $e^{-\delta_k}$ is tighter than is proven in Theorem \ref{t:s_main}, but this tighter contraction can easily be verified using synchronous coupling as follows: for any two random variables $x_k$ and $y_k$, 
\begin{align*}
\lrn{x_k - \delta_k x_k - \lrp{y_k - \delta_k y_k}}_2^2 \leq \lrp{1 - \delta_k}^2 \lrn{x_k - y_k}_2^2 \leq e^{-2\delta_k} \lrn{x_k - y_k}_2^2
\end{align*}

\end{proof}

\begin{corollary}\label{c:clt}
Let $S_k : = \frac{\sum_{i=1}^k\eta_k}{\sqrt{k}}$. Let $q_k$ be the distribution of $S_k$ and let $p^* = N(0,I)$. Then $W_2\lrp{q_k, p^*} = \tilde{O}\lrp{ \frac{d^{3/2}}{\sqrt{k}}}$
\end{corollary}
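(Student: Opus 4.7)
The plan is to realize $S_k$ as an approximate instance of the discrete process studied in Lemma~\ref{l:clt} and then to transfer the rate via a synchronous coupling. Unrolling the definition of $S_k$ gives the exact recursion $S_{k+1} = \sqrt{k/(k+1)}\,S_k + \eta_{k+1}/\sqrt{k+1}$, while the process $x_k$ in Lemma~\ref{l:clt} with $\delta_k = (\sqrt{k+1}-\sqrt{k})/\sqrt{k+1}$ (so that $1-\delta_k = \sqrt{k/(k+1)}$) satisfies $x_{k+1} = \sqrt{k/(k+1)}\,x_k + \sqrt{2\delta_k}\,\eta_{k+1}$. The two recurrences share the same contractive drift but their noise scales differ by $\alpha_k := 1/\sqrt{k+1} - \sqrt{2\delta_k}$; rationalizing $\sqrt{k+1}-\sqrt{k}$ and Taylor-expanding $\sqrt{1-1/(k+1)}$ shows $\alpha_k = O(k^{-3/2})$.

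Next I would couple $S_k$ and $x_k$ synchronously, initializing $S_0 = x_0 = 0$ and driving both processes by the same iid sequence $\eta_1, \eta_2, \ldots$. The difference $E_k := S_k - x_k$ then obeys $E_{k+1} = \sqrt{k/(k+1)}\,E_k + \alpha_k\eta_{k+1}$, and since $\eta_{k+1}$ is independent of $E_k$ with $\E{\eta_{k+1}\eta_{k+1}^T}=I$, taking second moments gives
\[
\E{\lrn{E_{k+1}}_2^2} = \frac{k}{k+1}\,\E{\lrn{E_k}_2^2} + d\,\alpha_k^2.
\]
Multiplying through by $k+1$ converts this to $(k+1)\E{\lrn{E_{k+1}}_2^2} = k\,\E{\lrn{E_k}_2^2} + (k+1)d\alpha_k^2$, which unrolls to $\E{\lrn{E_k}_2^2} = (d/k)\sum_{i=0}^{k-1}(i+1)\alpha_i^2 = O(d/k)$, since $(i+1)\alpha_i^2 = O(1/i^2)$ is summable. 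Hence $W_2(S_k, x_k) \le \sqrt{\E{\lrn{E_k}_2^2}} = O(\sqrt{d}/\sqrt{k})$.

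Finally, the triangle inequality combined with Lemma~\ref{l:clt} applied to $x_k$ (initialized at $0$) yields $W_2(q_k, p^*) \le W_2(S_k, x_k) + W_2(x_k, p^*) = O(\sqrt{d}/\sqrt{k}) + \tilde O(d^{3/2}/\sqrt{k}) = \tilde O(d^{3/2}/\sqrt{k})$, as claimed. The only nontrivial step is the estimate $\alpha_k = O(k^{-3/2})$; this is a routine Taylor expansion, and once it is in hand, everything else is linear-recurrence bookkeeping. An alternative route would be to re-prove Lemma~\ref{l:clt} with noise coefficient $1/\sqrt{k+1}$ in place of $\sqrt{2\delta_k}$, but the coupling route keeps Lemma~\ref{l:clt} as a black box and isolates the mismatch into a single clean estimate.
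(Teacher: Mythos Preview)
Your proposal is correct and follows essentially the same route as the paper: couple $S_k$ and the process $x_k$ of Lemma~\ref{l:clt} synchronously with the same noise sequence, bound $\E{\lrn{S_k-x_k}_2^2}$ via the resulting linear recurrence, and conclude by the triangle inequality together with Lemma~\ref{l:clt}. Your handling of the noise-coefficient mismatch $\alpha_k=1/\sqrt{k+1}-\sqrt{2\delta_k}=O(k^{-3/2})$ is in fact a bit cleaner than the paper's (which picks up an extra $\log k$ in the coupling bound), though this does not affect the final $\tilde O(d^{3/2}/\sqrt{k})$ rate.
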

\begin{proof}[Proof of Corollary \ref{c:clt}]
Let $\delta_k$, $x_k$ be as defined in Lemma~\ref{l:clt}, with initial $x_0=0$.
It can be verified that
\begin{align*}
S_{k+1} = S_k - \delta_k S_k + \frac{1}{2\lrp{\sqrt{k+1}}} \eta_k
\end{align*}
Thus
\begin{align*}
& \E{\lrn{x_{k+1} - S_{k+1}}_2^2}\\
=& \E{\lrn{ \lrp{1-\delta_k} \lrp{x_k - S_k} + \lrp{\delta_k - \frac{1}{2(k+1)}} \eta_{k+1} }_2^2}\\
=& \E{\lrn{ \lrp{1-\delta_k} \lrp{x_k - S_k}}_2^2} + \E{\lrn{\lrp{\delta_k - \frac{1}{2(k+1)}} \eta_{k+1} }_2^2}\\
\leq& \exp\lrp{-2\delta_k} \E{\lrn{x_k - S_k}_2^2} + \frac{1}{k^2} d,
\end{align*}
where the second last inequality is by the independence of $\eta_k$ and $\E{\eta_k}=0$, and the last ienquality is by \eqref{e:cl:2} and the fact that $\E{\eta \eta^T} = I$.

Applying the above inequality recursively, we get
\begin{align*}
\E{\lrn{x_k - S_k}_2^2} 
\leq& \sum_{i=1}^k \exp\lrp{-\sum_{j=i}^k 2\delta_j}\cdot \frac{d}{i^2} + \exp\lrp{- \sum_{i=1}^k \delta_k} \E{\lrn{x_0 - S_0}_2^2} \\
\leq& 4 \sum_{i=1}^k \exp\lrp{-\log \frac{k}{i}}\cdot \frac{d}{i^2}\\
\leq& 8 \sum_{i=1}^k \frac{d}{k\cdot i}\\
\leq& \frac{16d}{k}\log k,
\end{align*}
where the second inequality is by \eqref{e:cl:2}, and the fact that $x_0=S_0=0$.

Thus
\begin{align*}
W_2 \lrp{x_k, S_k} = \tilde{O} \lrp{\frac{d^{3/2}}{\sqrt{k}}}
\end{align*}

Together with the result from Lemma \ref{l:clt}, we conclude our proof.
\end{proof}

\end{section}

\end{document}